\newtheorem{theorem}{Theorem}
\newtheorem{lemma}[theorem]{Lemma}
\newtheorem{proposition}[theorem]{Proposition}
\newtheorem{corollary}[theorem]{Corollary}
\newtheorem{definition}[theorem]{Definition}
\numberwithin{equation}{section}
\begin{document}

\newcommand{\cc}{\mathfrak{c}}
\newcommand{\N}{\mathbb{N}}
\newcommand{\BB}{\mathbb{B}}
\newcommand{\C}{\mathbb{C}}
\newcommand{\Q}{\mathbb{Q}}
\newcommand{\R}{\mathbb{R}}
\newcommand{\Z}{\mathcal{Z}}
\newcommand{\ZZ}{\mathbb{Z}}
\newcommand{\T}{\mathbb{T}}
\newcommand{\st}{*}
\newcommand{\PP}{\mathbb{P}}
\newcommand{\rin}{\right\rangle}
\newcommand{\SSS}{\mathbb{S}}
\newcommand{\forces}{\Vdash}
\newcommand{\supp}{\text{supp}}
\newcommand{\dom}{\text{dom}}
\newcommand{\osc}{\text{osc}}
\newcommand{\F}{\mathcal{F}}
\newcommand{\A}{\mathcal{A}}
\newcommand{\B}{\mathcal{B}}
\newcommand{\D}{\mathcal{D}}
\newcommand{\I}{\mathcal{I}}
\newcommand{\X}{\mathcal{X}}
\newcommand{\Y}{\mathcal{Y}}
\newcommand{\CC}{\mathcal{C}}
\newcommand{\Car}{\mathfrak{1}}
\newcommand{\sss}{\rm SSW}

\author{Piotr Koszmider}
\address{Institute of Mathematics of the Polish Academy of Sciences,
ul. \'Sniadeckich 8,  00-656 Warszawa, Poland}
\email{\texttt{piotr.math@proton.me}}

\thanks{The author was partially supported by the NCN 
(National Science Centre, Poland) research grant no.\ 2020/37/B/ST1/02613.}

\subjclass[2010]{46B20, 03E75, 46B26, 03E35}
\title[Ramsey-type  properties of the distance in spheres]{ On Ramsey-type   properties
 of the distance in   nonseparable   spheres}

\begin{abstract} 
Given an uncountable subset $\Y$ of a nonseparable Banach space,  is there
an uncountable $\Z\subseteq \Y$ such that the distances between any two distinct points of $\Z$ 
are more or less the same? If an uncountable subset $\Y$ of a nonseparable Banach space does not admit
an uncountable $\Z\subseteq \Y$, where any two points are distant by more than $r>0$, is
it  because $\Y$ is the countable union of sets of diameters not bigger than $r$?

Clearly, these types of questions can be rephrased in the combinatorial language of   partitions 
of pairs of points of a Banach space $\X$  induced by the distance function
$d:[\X]^2\rightarrow \R_+$.   We investigate 
connections between  the set-theoretic phenomena involved  and the geometric properties of uncountable 
subsets  of  nonseparable Banach spaces 
of densities up to $2^\omega$ related to  uncountable $(1+)$-separated sets, equilateral sets or Auerbach systems.

The results include geometric dichotomies for a wide range of classes of Banach spaces, some in {\sf ZFC}, 
some under the assumption
of {\sf OCA}$+${\sf MA}  and some under
a hypothesis on the descriptive complexity of the space as well as constructions (in {\sf ZFC} or under {\sf CH})  
of Banach spaces where the geometry of the 
unit sphere displays anti-Ramsey properties.   This complements classical theorems
for separable spheres and the recent results of H\'ajek, Kania, Russo for densities above $2^\omega$
as well as offers a synthesis of  possible phenomena 
and categorization of examples for uncountable densities up to $2^\omega$
obtained previously by the author and Guzm\'an, Hru\v s\'ak, Ryduchowski and Wark.

\end{abstract}

\maketitle


\section{Introduction and presentation of the main concepts}

The symbol $\R_+$ denotes the set of positive reals and $[M]^2$ the set of all two element subsets of a set $M$.
When $(M, d)$ is a metric space, then we often use $d$ for the function  
$D:[M]^2\rightarrow \R_+$ satisfying $D(\{x, y\})=d(x, y)$ for distinct $x, y\in M$. Moreover we often 
write $d(x, y)$ for $d(\{x, y\})$ when $d:[M]^2\rightarrow \R_+$. 

\begin{definition} Let $M$ be a  set, $d:[M]^2\rightarrow \R_+$  and $r>0$. Then $N\subseteq M$ is said to be
\begin{enumerate}

\item $(r+)$-separated if $d(x, y)>r$ for all distinct $x, y\in N$.
\item $r$-separated if  $d(x, y)\geq r$ for all distinct $x, y\in N$.
\item separated if   it is $r$-separated  for some $r>0$.
\item $r$-equilateral if $d(x,y)=r$ for all distinct $x, y\in N$
\item equilateral if it is $r$-equilateral for some $r>0$.
\item $\varepsilon$-approximately $r$-equilateral if  
$$r-\varepsilon<d(x, y)<r+\varepsilon$$ for all distinct $x, y\in N$.
\item $\varepsilon$-approximately equilateral if it 
is $\varepsilon$-approximately $r$-equilateral for some $r>0$.
\end{enumerate}
As usual the diameter $diam(N)$ is $\sup(\{d(x, y): x, y\in N;\ x\not=y\})$ or $\infty$ if the supremum does not exist. 
A subset $N\subseteq M$ is called
bounded if $diam(N)$ is finite. By the density of a metric space $(M, d)$ we mean the minimal cardinality
of its dense subset, it is denoted by $dens(M)$.
\end{definition}

When talking about  a Banach space $\X$, in particular about its unit sphere,
the notions defined in the above definition refer to $d$ given by
$d(\{x, y\})=d(x, y)=\|x-y\|_\X$ for distinct $x, y$.

Ramsey theory is concerned with combinatorial conditions   on an arbitrary  structure of some kind that yield
the existence of a large regular substructure.  The paradigmatic Ramsey theorem (Theorem \ref{ramsey}) 
asserts the existence of an infinite set $A\subseteq \N$ such that $[A]^2\subseteq K_i$ for some $i\in \{0,1\}$
for an arbitrary partition $[\N]^2=K_0\cup K_1$.  The Ramsey theorem combined with geometric arguments
helped to establish the  existence of an infinite ($1+$)-separated set in the unit sphere of any
infinite dimensional Banach space (Kottman in \cite{kottman}) which was later improved 
to infinite ($1+\varepsilon$)-separated set for some $\varepsilon>0$
using a much more sophisticated Ramsey theory (Elton, Odell in \cite{elton-odell}).
Using the Ramsey theorem one can easily obtain also  infinite $\varepsilon$-approximately equilateral 
subsets  for any $\varepsilon>0$
in any infinite bounded subset of any Banach space (Proposition \ref{omega-ramsey}). However, Ramsey-type  arguments do not
help with infinite equilateral sets: Terenzi showed in \cite{terenzi} that there are infinite dimensional
Banach spaces with no infinite equilateral sets. Thus the geometric and the combinatorial
components both play fundamental roles in this type of results.

If we want to obtain
uncountable regular substructures, we need to consider Ramsey-type results for the uncountable, which
are in general much weaker than the Ramsey theorem. One of such fundamental results concerning
cardinals above the cardinality of the continuum $2^\omega$ is the classical Erd\"os-Rado theorem (Theorem \ref{erdos-rado}).
Together with ingenious
geometric arguments  involving Auerbach systems (for definition see Section 2.1) 
it was used by H\'ajek, Kania 
and Russo to obtain  many deep results, for example,
the existence of uncountable ($1+$)-separated sets in the unit sphere
of any WLD Banach space  of density bigger than $2^\omega$ or in any Banach space
of density bigger than $2^{2^{2^{2^\omega}}}$, also Terenzi noted that direct application
of the Erd\"os-Rado theorem yields uncountable equilateral sets in any Banach space of density bigger than
 $2^{2^\omega}$ (\cite{terenzi}).
The following two basic questions concerning uncountable regular geometric substructures of
nonseparable Banach spaces have been posed in the literature:
\begin{itemize}
 \item[($S$)]  Does the unit sphere
of every nonseparable Banach space admit an uncountable $(1+)$-separated set (see e.g, \cite{hkr})
\item[($E$)]  Does every nonseparable Banach space admit an uncountable equilateral set\footnote{ This is
equivalent to asking if
the unit sphere admits an uncountable $1$-equilateral set, see Proposition \ref{terenzi1}. 
So both problems concern the unit spheres.}, (see e.g., \cite{mer-ck})?
\end{itemize}
Both of the  questions were answered recently in the negative in {\sf ZFC} in \cite{pk-kottman} and
in \cite{pk-hugh}, \cite{pk-kottman} respectively and stronger negative solutions
were obtained consistently in \cite{pk-kamil} and \cite{ad-kottman} .  The spaces 
are of density $2^\omega$. All these  examples, including   the above {\sf ZFC} examples
enjoy rather new metric phenomena in nonseparable Banach spaces, e.g.,  some of them admit 
no uncountable, or even no infinite equilateral sets (\cite{pk-hugh}), 
the unit spheres of some of them are the unions of countably many sets of 
diameters strictly less than $1$ (\cite{ad-kottman}) or some of them  admit
no uncountable Auerbach systems (\cite{pk-kottman}; for definition of Auerbach systems see Section 2.1; 
 first such example was obtained under {\sf CH} in
\cite{hkr}).  At the same time other results showed that the above-mentioned stronger consistent
irregularity properties of Banach spaces of particular types are consistently impossible:
all Banach spaces of the form $C(K)$ admit uncountable
equilateral sets under {\sf MA}+$\neg${\sf CH} (Theorem  5.1 of \cite{equi}), there
are geometric dichotomies for  Johnson-Lindenstrauss spaces and for some of their renormings under {\sf OCA}
(Theorem 2 of \cite{ad-kottman}), the spheres of Banach spaces of Shelah induced by anti-Ramsey colorings
still admit uncountable $(1+)$-separated and equilateral sets under {\sf MA}+$\neg${\sf CH} (Theorem 3 (3) of \cite{pk-kamil}).

The main motivation of the research leading to the results in this paper was to try to understand
the relation of the new geometric phenomena occurring in these examples   with
Ramsey-type combinatorial phenomena at uncountable cardinals up to $2^\omega$.
The issue of Ramsey-type properties
of uncountable cardinals up to $2^\omega$ is, in some sense, much more delicate than the countable
case and than the case above $2^\omega$.  First of all, the purely combinatorial
Ramsey property fails at these cardinals: already Sierpi\'nski constructed
a coloring witnessing this failure  at the first uncountable
cardinal $\omega_1$ or at $2^\omega$ (Theorem \ref{sierpinski1}). Later Todorcevic produced colorings with much stronger 
properties which are particularly useful for constructing structures of size $\omega_1$
with no regular uncountable substructures (Theorem \ref{stevo}).  If one makes
some additional hypothesis like  the continuum hypothesis {\sf CH},
one gets even stronger purely combinatorial (i.e., referring just to colorings without any additional
structure like topology etc.) anti-Ramsey properties (see e.g. \cite{rinot}).

On the other hand there are 
consistent statements of Ramsey-type properties of $2^\omega$ with topological
hypothesis like the Open
Coloring Axiom {\sf OCA} (Theorem \ref{todorcevic}) or Ramsey-type properties of cardinals less than $2^\omega$ with
order-theoretic hypothesis  like Martin's axiom {\sf MA} (Theorem \ref{ma}), or
one can sometimes conclude the existence of uncountable regular substructures under
a descriptive complexity hypothesis of the structure (Theorem \ref{feng}).

The first (of two) main claim  of this paper is that there is
a geometric Ramsey-type property of unit spheres of nonseparable Banach spaces
of densities up to $2^\omega$ which either is implied for wide classes of
Banach spaces by the above mentioned ({\sf OCA}, {\sf MA}) Ramsey-type properties of the uncountable
or holds in {\sf ZFC} for other classes. This property explains the regularity properties of the
{\sf ZFC} examples mentioned above providing the negative answers to Questions (S) and (E).

The regularity properties of the substructures
are revealed through metric dichotomies rather
than the existence of uncountable ($1+$)-separated sets 
and they lead to uncountable approximately equilateral sets in place of uncountable equilateral sets. 
The following is our first (of two) main definition:
\begin{definition}\label{def-dichotomous}
Let $M$ be a set and $d:[M]^2\rightarrow \R_+$. 
We say that $M$ is (almost) dichotomous if for all $0<r\leq diam(M)$ (for all but one\footnote{That this
is a very natural requirement one can see from Proposition \ref{sets-constants}} $0<r\leq diam(M)$) either
$M$ admits an uncountable $(r+)$-separated set  or else
$M$ is the union of countably many sets of diameters not bigger than $r$.
\end{definition}
In the above definition  one sees
 a partition of all pairs of $M$ into two parts: (1) those pairs which are distant (when one
 thinks of
 $d$ as of a metric) by more than $r$ and (2) those pairs
 which are distant by no
 more than $r$. We make a stronger requirement
 on  part (2) than (*) the existence of an uncountable
 set $N\subseteq M$ such that  $[N]^2$ is included in one of the parts. 
 Requiring just (*)  for part (2) would be trivial (in the context of spheres of Banach spaces)
  because  Banach spaces or their unit spheres always admit
 small arcs, that is uncountable sets of small diameters. Half of the focus of this paper is on proving 
 (in {\sf ZFC} or under additional hypotheses)
 that the unit spheres of Banach spaces from many classes 
  are (almost) dichotomous. These are the geometric dichotomies
  mentioned in the abstract and the geometric Ramsey-type regularity properties  mentioned above.
 \begin{theorem}\label{main-zfc} The unit spheres of  the following kinds of Banach 
spaces are dichotomous:
\begin{enumerate}
\item Banach spaces $\X$ which can be isometrically embedded into
$\ell_\infty$ as  analytic\footnote{Recall that a subset of a topological space is analytic
if it is a continuous image of a Borel subset of a Polish space. For more see Section 
2.6 and \cite{kechris}.} subsets of $\R^\N$.  
\item Banach spaces $c_0(\kappa)$,
$\ell_1(\kappa)$, $L_1(\{0,1\}^\kappa)$ for uncountable $\kappa\leq 2^\omega$.
\end{enumerate}
The unit spheres of the following kinds of Banach 
spaces are almost dichotomous:
\begin{enumerate}
  \item[(3)] Banach spaces $\ell_p(\kappa)$ and $L_p(\{0,1\}^\kappa)$
  for uncountable $\kappa\leq 2^\omega$  and $1< p<\infty$.
 \end{enumerate}
\end{theorem}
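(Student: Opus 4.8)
The plan is to establish the three assertions of Theorem~\ref{main-zfc} by reducing each to a combinatorial dichotomy about colorings and then feeding in the appropriate Ramsey-type input. For part (1), the key observation is that an isometric embedding into $\ell_\infty$ realized as an analytic subset of $\R^\N$ makes the distance function $d:[\X]^2\to\R_+$ a Borel, or at least sufficiently definable, map on an analytic set; fixing $r$, the partition of $[\SSS_\X]^2$ into the ``$>r$'' part and the ``$\leq r$'' part is then analytic. I would invoke the descriptive-complexity Ramsey-type input (Theorem~\ref{feng}) to conclude that either there is an uncountable set all of whose pairs lie in the ``$>r$'' part (an uncountable $(r+)$-separated set), or the other alternative holds in the strong form that $\SSS_\X$ is a countable union of sets of diameter $\leq r$. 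The work here is checking that the ``failure'' side of the definable dichotomy genuinely yields a countable decomposition into small-diameter pieces rather than merely the absence of an uncountable homogeneous set; this is where the precise statement of Theorem~\ref{feng} and a separability/second-countability argument on $\R^\N$ must be used carefully.

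For part (2), the spaces $c_0(\kappa)$, $\ell_1(\kappa)$, $L_1(\{0,1\}^\kappa)$ with $\kappa\leq 2^\omega$ should be handled directly in {\sf ZFC} by exploiting their concrete coordinate structure. In each case I would take an uncountable $\Y\subseteq\SSS_\X$ and, using a $\Delta$-system / elementary-submodel argument on the supports (finite supports in $c_0$ and $\ell_1$, approximate finite supports via finite-dimensional conditional expectations in $L_1(\{0,1\}^\kappa)$), pass to an uncountable subset on which the vectors are ``almost disjointly supported'' up to an arbitrarily small error. For disjointly supported unit vectors, the distance in $c_0$ is controlled from below by the sup of the coordinates and in $\ell_1$ equals (close to) $2$, so one gets an uncountable $(r+)$-separated set whenever $r$ is below the relevant threshold; if instead the supports cannot be spread out, the vectors essentially live in a separable subspace indexed by a fixed countable set of coordinates, and then $\SSS_\X$ decomposes into countably many small pieces by a direct covering of that separable part. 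The $L_1$ case needs the extra step that $L_1(\{0,1\}^\kappa)$ is, up to small perturbation, generated by such coordinate pieces, so one reduces to the $\ell_1$-type computation on the finitely-supported approximations; this covering-versus-spreading trichotomy must be arranged to work for \emph{all} $0<r\leq diam(\SSS_\X)$, which is why (2) gives genuine dichotomy and not merely the ``almost'' version.

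For part (3), the spaces $\ell_p(\kappa)$ and $L_p(\{0,1\}^\kappa)$ for $1<p<\infty$ behave the same way except that disjointly supported unit vectors are at distance exactly $2^{1/p}$, so the $\Delta$-system argument produces an uncountable $(r+)$-separated set precisely when $r<2^{1/p}$, and for $r\geq 2^{1/p}$ one expects neither alternative in general — the exceptional value being $r=2^{1/p}$ (or more precisely the single constant where the dichotomy must fail), which is exactly what ``almost dichotomous'' permits. I would organize the proof of (3) by first proving the spreading lemma (an uncountable almost-disjointly-supported subfamily exists unless the family is essentially separable), then reading off the distance asymptotics for disjoint supports in $\ell_p$ and $L_p$, and finally checking that outside the critical radius the covering alternative holds via the separable-reduction.

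The step I expect to be the main obstacle is part (1): making the ``analytic subset of $\R^\N$'' hypothesis interact correctly with Theorem~\ref{feng} so that the \emph{negative} side of the Ramsey dichotomy is converted into an honest countable cover by sets of diameter $\leq r$ for \emph{every} admissible $r$ simultaneously — in particular handling $r=diam(\SSS_\X)$ and verifying that the decomposition can be taken uniform enough that $\SSS_\X$ is dichotomous and not merely almost dichotomous. The $\Delta$-system arguments in (2) and (3) are, by contrast, routine once set up, with the only subtlety being the finite-dimensional approximation needed to bring $L_p(\{0,1\}^\kappa)$ into the same framework as $\ell_p(\kappa)$.
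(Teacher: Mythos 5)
Your plan for part (1) is essentially the paper's: one applies Feng's theorem to the coloring determined by a fixed $r$. Two caveats: Theorem \ref{feng} requires the ``$>r$'' side of the partition to be \emph{open}, not merely analytic, and this is exactly what must be checked --- it holds because the supremum norm is lower semicontinuous with respect to the product topology of $\R^\N$ (this is the content of Lemma \ref{separable-lsc} and Proposition \ref{feng>dich}); one also needs $S_\X$ itself to be analytic, which follows since $S_{\ell_\infty}$ is $G_\delta$ in $\R^\N$. Your worry about uniformity in $r$ is a non-issue: being dichotomous quantifies over each $r$ separately, so Feng's theorem is simply applied once per $r$.

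For parts (2) and (3) there are genuine gaps. First, the covering alternative in Definition \ref{def-dichotomous} is a statement about the \emph{whole} nonseparable sphere, and your $\Delta$-system/``essentially separable'' mechanism only analyzes a given uncountable family; it cannot produce a countable decomposition of $S_\X$ into sets of diameter $\leq r$. Indeed, for $c_0(\kappa)$ and any $r\in(1,2)$ there is no uncountable $(r+)$-separated set at all, so the covering half must be proved outright, and the sphere is certainly not covered by countably many pieces living in separable coordinate blocks. The paper's mechanism is different: by the Hewitt--Marczewski--Pondiczery theorem $D^{(2^\omega)}$ ($D$ countable discrete) is separable, and two finitely supported unit vectors agreeing with the same element of a countable dense set on their supports are at distance $\leq 1$ in $c_0$ (Lemma \ref{c0-delta}) resp.\ $\leq\sqrt[p]{2}$ in $\ell_p$ (Lemma \ref{ellp-delta}); this yields Propositions \ref{c0-dich} and \ref{lp-union} and is precisely where the hypothesis $\kappa\leq 2^\omega$ enters --- your argument never uses it, which is a red flag since the covering side genuinely depends on it. Second, your treatment of $L_p(\{0,1\}^\kappa)$ for $1<p<\infty$ ``the same way as $\ell_p$ with critical radius $2^{1/p}$'' is incorrect: uncountably many independent symmetric $\pm1$-valued unit vectors lie at mutual distance $2^{1-1/p}>\sqrt[p]{2}$ for large $p$, so $\sqrt[p]{2}$ is not the critical constant; the paper instead works with the constant $c=\sup_n c_n$ defined via the finite-dimensional isometries $T_A$ in Proposition \ref{Lp} (whose exact value is left open) and again an HMP covering. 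Finally, a smaller point: in $c_0(\kappa)$ disjointly supported unit vectors give only a $1$-equilateral set, so at $r=1$ your separated-side witness fails; one needs the strictly $(1+)$-separated family of Proposition \ref{c0-notherdich}, where $x_\alpha(\alpha)=1$ and $x_\alpha$ takes small negative values below $\alpha$. For $\ell_1(\kappa)$ and $L_1(\{0,1\}^\kappa)$ your sketch does work, since $(2-\varepsilon)$-separated families of size $\kappa$ settle every $r<2$ and $r=2$ is trivial, which is exactly Propositions \ref{l1} and \ref{Lp-separated}.
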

\begin{proof} This follows from Propositions \ref{feng>dich}, \ref{c0-dich}, \ref{l1}, \ref{Lp-separated},
 \ref{lp-dich}, \ref{Lp}.
\end{proof}
One should add to the above some immediate consequences\footnote{Note that
if the unit sphere $S_\X$ of a Banach space $\X$ admits an uncountable $2$-separated set, then 
it is trivially dichotomous, since the first alternative of Definition \ref{def-dichotomous}
holds for every $0<r\leq diam(S_\X)=2$. The paper \cite{mer-ck}
exactly contains, among other, results on the existence of  uncountable
$2$-equilateral sets in some Banach spaces.} of  known results of Mercourakis and Vassiliadis (\cite{mer-ck}):
The unit spheres of Banach spaces of the form $C(K)$ for $K$ compact, Hausdorff and totally disconnected,
or not hereditarily Lindelof or
not hereditarily separable or carrying a Radon measure of uncountable type are  dichotomous
(see our Proposition \ref{thm-merc} (1)).

\begin{theorem}\label{main-oca} Assume {\sf OCA $+$ MA}. The unit spheres of the following kinds of Banach 
spaces are dichotomous:
\begin{enumerate}
\item Banach spaces whose unit dual ball is separable in the weak$^*$ topology (equivalently
spaces which are isometric to a subspace of $\ell_\infty$).
\item Banach spaces of the form  $C_0(K)$ for a locally compact Hausdorff
space $K$ of weight less than $2^\omega$.
\end{enumerate}
 The unit spheres of the following kinds of Banach 
spaces are almost dichotomous:
\begin{enumerate} 

\item[(3)]  Hilbert generated Banach spaces of  Shelah and their modifications due to
Wark obtained from a coloring $c:[\omega_1]^2\rightarrow\{0,1\}$.
\end{enumerate}
\end{theorem}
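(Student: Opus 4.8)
In each case the idea is that, for a fixed admissible $r$, the distance function turns $[S_\X]^2$ into a two-colouring enjoying a topological or chain-condition property that lets one invoke the abstract {\sf OCA}- or {\sf MA}-dichotomies of Theorems \ref{todorcevic} and \ref{ma}; since $diam(S_\X)=2$ whenever $\X\neq\{0\}$, being ``(almost) dichotomous'' in the sense of Definition \ref{def-dichotomous} means the stated alternative for all (all but one) $r\in(0,2]$.

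\emph{(1).} By the recorded equivalence we may assume, after an isometric embedding into $\ell_\infty\subseteq\R^\N$, that $\X\subseteq\R^\N$, that $\X$ carries the separable metrizable topology $\tau$ inherited from the product topology of $\R^\N$, and that $\|x-y\|_\X=\sup_{n}|x(n)-y(n)|$. For $r\in(0,2]$ put $K_0^r=\{\{x,y\}\in[S_\X]^2:\exists n\ |x(n)-y(n)|>r\}$ and $K_1^r=[S_\X]^2\setminus K_0^r$. Then $\{x,y\}\in K_0^r$ if and only if $\|x-y\|_\X>r$, and $K_0^r$ is $\tau$-open in $[S_\X]^2$ because each $\{(x,y):|x(n)-y(n)|>r\}$ is open in $(S_\X,\tau)^2$. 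Feeding the partition $[S_\X]^2=K_0^r\cup K_1^r$ into the {\sf OCA}-dichotomy (Theorem \ref{todorcevic}) gives either an uncountable $K_0^r$-homogeneous set, i.e.\ an uncountable $(r+)$-separated subset of $S_\X$, or a decomposition $S_\X=\bigcup_{n\in\omega}Y_n$ into $K_1^r$-homogeneous pieces, i.e.\ with $diam(Y_n)\leq r$. Since $r$ was arbitrary, $S_\X$ is dichotomous; only the {\sf OCA} part of the hypothesis is used here.

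\emph{(2).} Here {\sf OCA} gives $2^\omega=\omega_2$, so the weight of $K$, hence $dens(C_0(K))$, is at most $\omega_1$. Given an uncountable $\Y\subseteq S_{C_0(K)}$ and $r\in(0,2]$, I would enumerate $\Y=\{y_\alpha:\alpha<\omega_1\}$, fix an increasing continuous chain of separable subspaces $\X_\alpha\ni y_\alpha$ with $\overline{\bigcup_\alpha\X_\alpha}=C_0(K)$, and work with the poset $\PP_r$ of finite $(r+)$-separated subsets of $\Y$, testing distances through the pointwise description ``$\|f-g\|\leq r$ if and only if $|f(x)-g(x)|\leq r$ for all $x\in K$''. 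The crux is to show that either $\Y$ is already a union of countably many sets of diameter $\leq r$, or $\PP_r$ is ccc; in the latter case {\sf MA} (Theorem \ref{ma}) applied to the $\omega_1$ obvious dense subsets of $\PP_r$ yields an uncountable $(r+)$-separated subset of $\Y$. Sharpening the crude $\Delta$-system estimate so that the leftover pieces have diameter $\leq r$ rather than $\leq 2r$ --- by exploiting that a locally compact $K$ of weight $\leq\omega_1$ decomposes into $\sigma$-compact and ultimately ``separable-like'' pieces --- is where the specific structure of $C_0(K)$ is essential, and is the step I expect to be the main obstacle, rather than the invocation of {\sf MA} itself.

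\emph{(3).} By construction $\X$ is Hilbert generated, carries a long biorthogonal system $(e_\alpha,e_\alpha^*)_{\alpha<\omega_1}$ in which the finitely supported vectors are norm-dense, and has a norm defined from the colouring $c:[\omega_1]^2\to\{0,1\}$; its uniform convexity (or at least weak uniform rotundity) will be used to control $\varepsilon$-approximations. Given an uncountable $\Y\subseteq S_\X$ and $r$ different from the single exceptional value $r_c$ determined by the construction, the plan is: approximate each member of $\Y$ within $\varepsilon$ by a finitely supported vector; pass to an uncountable subfamily whose supports form a $\Delta$-system with fixed finite root and fixed-size, coordinatewise coherent disjoint branches; observe that for such a family $\|y_\alpha-y_\beta\|$ is determined up to $2\varepsilon$ by the values of $c$ on the finitely many pairs joining the branch of $\alpha$ to the branch of $\beta$; and thereby reduce the level-$r$ dichotomy for $\Y$ to a partition statement for $c$ on an uncountable set --- which, once $\omega_1$ is identified with the set of reals underlying the construction and $c$ with an open (respectively closed) colouring, is settled by Theorems \ref{todorcevic} and \ref{ma}. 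This yields, for every $r\neq r_c$, either an uncountable $(r+)$-separated subset of $\Y$ or a covering of $\Y$ by countably many sets of diameter $\leq r$, so $S_\X$ is almost dichotomous. The main obstacles here are keeping the $\varepsilon$-errors under control via rotundity and verifying that the excluded value $r_c$ is genuinely unavoidable.
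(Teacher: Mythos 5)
Your part (1) is correct and is essentially the paper's argument (Lemma \ref{separable-lsc} plus Proposition \ref{oca>dich}): the product topology of $\R^\N$ makes $\{(x,y):\|x-y\|_\infty>r\}$ open on the sphere, and {\sf OCA} does the rest. Parts (2) and (3), however, each contain a genuine gap.

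For (2), the step you yourself flag as ``the main obstacle'' is exactly the missing idea, and your poset is the wrong one. An uncountable antichain in the poset of finite $(r+)$-separated subsets of $\Y$ carries no covering information whatsoever, so the dichotomy ``either $\Y$ is a countable union of diameter-$\leq r$ sets or $\PP_r$ is ccc'' is not something you can extract from Theorem \ref{ma}; and trying to run this for every $r$ separately is hopeless anyway, since for $r$ slightly above $1$ the first alternative can genuinely fail (Proposition \ref{C0K-strange}). The paper (Proposition \ref{C0K-small}) instead proves one coarse dichotomy from one poset: $\PP=\{(U,V):U,V\in\B,\ \overline U\cap\overline V=\emptyset\}$ over a base $\B$ of size $<2^\omega$ built from a Stone--Weierstrass-dense subfamily $\F\subseteq S_{C_0(K)}$. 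An uncountable antichain in $\PP$ produces an uncountable $2$-equilateral set (so ${\mathsf K}^+=[0,2)$, $\Sigma=\{2\}$ and the sphere is trivially dichotomous); otherwise $\PP$ is ccc, hence $\sigma$-centered by Theorem \ref{ma}, and the centered pieces $\PP_n$ cover $\F$ by sets of diameter $\leq 1+\varepsilon$ for every $\varepsilon>0$. Combined with the {\sf ZFC} fact that $C_0(K)$ always admits an uncountable $(1+)$-separated set (Proposition \ref{lc-1+}), this gives ${\mathsf K}^+=[0,1]$ and $\Sigma=(1,2]$, which is again dichotomous. Your remark that {\sf OCA} forces $2^\omega=\omega_2$ is true but irrelevant; only the hypothesis ``weight $<2^\omega$'' is used, to make $|\PP|<2^\omega$.

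For (3), your plan to reduce the level-$r$ dichotomy for an \emph{arbitrary} uncountable $\Y\subseteq S_\X$ to a partition statement about $c$ and then invoke {\sf OCA}/{\sf MA} cannot work as stated: $c$ has no uncountable monochromatic sets by hypothesis, and under {\sf CH} the very same space is hyperlateral and \emph{not} almost dichotomous (Proposition \ref{hilbert-gen-ch}), so no purely combinatorial translation back to $c$ can decide the matter --- the axioms must enter through a specific ccc forcing. The actual proof splits into two halves obtained by entirely different means: the covering half $(\sqrt2,2]\subseteq\Sigma(S_{\X_{\A_c}})$ is a {\sf ZFC} fact (Proposition \ref{sw-unions}, a Hewitt--Marczewski--Pondiczery density argument using Lemma \ref{squares-sw}, exactly as for $\ell_p$), while the separated half $[0,\sqrt2)\subseteq{\mathsf K}^+(S_{\X_{\A_c}})$ comes from the {\sf MA}$+\neg${\sf CH} result of \cite{pk-kamil} that the sphere admits an uncountable $\sqrt2$-equilateral set; together ${\mathsf k}^+=\sigma=\sqrt2$, which is ``almost dichotomous'' by Proposition \ref{sets-constants}(3), with $\sqrt2$ the single exceptional value. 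Also, the appeal to uniform convexity is misplaced: the basic Shelah-type spaces $(\X_{\A_c},\|\cdot\|_{\A_c})$ are not uniformly convex, and no rotundity is used anywhere in the argument.
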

\begin{proof} This follows from Propositions \ref{oca>dich}, \ref{C0K-small}, \ref{hilbert-gen-ma}.
\end{proof}
Let us make five comments on the above two theorems.

First note the fundamental role of having dichotomous unit sphere 
 in the context of questions like ($S$): if
  the sphere is dichotomous, and it does not admit an uncountable $(r+)$-separated set 
  then,  it can only be for the canonical reason 
 that it is the union of countably many sets of diameter not bigger than $r$. 
 This is, for example, the case for the spaces of \cite{pk-kottman} and \cite{ad-kottman}
 with no uncountable $(1+)$-separated sets. 
 
 The second comment is about  the role of uncountable Auerbach systems in Banach spaces with
 dichotomous unit spheres:
 
 \begin{theorem}\label{main-auerbach} Suppose that $\X$ is a Banach space whose unit sphere is dichotomous.
 If $\X$ admits an uncountable Auerbach system, then the unit sphere $S_\X$ admits
 an uncountable  $(1+)$-separated set. 
 \end{theorem}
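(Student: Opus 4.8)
The plan is to apply the dichotomy of $S_\X$ at the value $r=1$, reduce the nontrivial case to an uncountable $1$-equilateral Auerbach system, and then exploit the biorthogonal functionals.

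First I would recall that an uncountable Auerbach system $\{(x_\alpha,x_\alpha^*):\alpha<\omega_1\}$ (we may assume it has size $\omega_1$) yields an uncountable $1$-separated subset of $S_\X$: for distinct $\alpha,\beta$ one has $\|x_\alpha-x_\beta\|\ge x_\alpha^*(x_\alpha-x_\beta)=1$. Since $\mathrm{diam}(S_\X)=2$ (witnessed by any $x$ and $-x$), the dichotomy of $S_\X$ applies at $r=1$: either $S_\X$ carries an uncountable $(1+)$-separated set, and we are done, or else $S_\X=\bigcup_{n<\omega}A_n$ with $\mathrm{diam}(A_n)\le 1$ for every $n$. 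So it remains to refute the second alternative.

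Assume it. By the pigeonhole principle some $A_{n_0}$ contains $\{x_\alpha:\alpha\in I\}$ for an uncountable $I\subseteq\omega_1$, and then for distinct $\alpha,\beta\in I$ we get $1\le\|x_\alpha-x_\beta\|\le\mathrm{diam}(A_{n_0})\le 1$; thus $\{x_\alpha:\alpha\in I\}$ is $1$-equilateral, and replacing the system by its restriction to $I$ we may assume the whole Auerbach system is $1$-equilateral. Two norm identities are then worth recording: $\|x_\alpha^*-x_\beta^*\|=2$ for distinct $\alpha,\beta$ (because $(x_\alpha^*-x_\beta^*)(x_\alpha-x_\beta)=2$ while $\|x_\alpha-x_\beta\|=1$), and $\|x_\alpha-2x_\beta+x_\gamma\|=2$ for distinct $\alpha,\beta,\gamma$ (upper bound from $\|x_\alpha-x_\beta\|+\|x_\gamma-x_\beta\|$, lower bound from $|x_\beta^*(x_\alpha-2x_\beta+x_\gamma)|=2$). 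In particular, whenever $\alpha<\beta<\gamma$, the two vectors $x_\alpha-x_\beta$ and $x_\beta-x_\gamma$ lie on $S_\X$ at distance $>1$, hence in different members of the covering.

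The crux is to convert this rigidity into a contradiction, that is, to prove that an uncountable $1$-equilateral Auerbach system cannot live in a space whose sphere is a countable union of sets of diameter $\le 1$. The natural object is the colouring $c$ of $[\omega_1]^2$ which assigns to $\{\alpha,\beta\}$ (with $\alpha<\beta$) some $n$ with $x_\alpha-x_\beta\in A_n$: it uses only countably many colours, the preceding paragraph shows that $c(\{\alpha,\beta\})=c(\{\beta,\gamma\})$ is impossible for $\alpha<\beta<\gamma$, and more generally $c(\{\alpha,\beta\})=c(\{\gamma,\delta\})$ forces $\|x_\alpha-x_\beta-x_\gamma+x_\delta\|\le 1$. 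One would then like to thin $I$ by a $\Delta$-system / free-set argument so that the supports of the relevant combinations become disjoint and, combining this with the two identities above and with the fact that $\|x_\alpha^*-x_\beta^*\|=2$ supplies, for every $\varepsilon>0$, sphere vectors $y$ with $x_\alpha^*(y)>1-\varepsilon$ and $x_\beta^*(y)<-1+\varepsilon$, derive that no such colouring can exist. Making the combinatorics interlock with these geometric estimates is the step I expect to be the main obstacle. Once the second alternative of the dichotomy is excluded, the first one holds, which is exactly the conclusion.
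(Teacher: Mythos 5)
Your reduction is exactly the paper's: apply the dichotomy at $r=1$, note the Auerbach system is $1$-separated, and in the covering case pigeonhole to extract an uncountable $1$-equilateral Auerbach subsystem $\{(x_\alpha,x_\alpha^*):\alpha<\omega_1\}$. Up to that point you match the paper's Proposition on dichotomous sets and Auerbach systems. But the proof stops there: the passage from the $1$-equilateral Auerbach system to a contradiction is precisely the content of the theorem, and you explicitly leave it as ``the step I expect to be the main obstacle.'' The colouring-plus-$\Delta$-system route you sketch is not a proof and does not obviously close: in a general Banach space the vectors $x_\alpha$ have no supports to make disjoint, and the purely combinatorial constraints you extract (no two ``adjacent'' differences $x_\alpha-x_\beta$, $x_\beta-x_\gamma$ in the same piece of the cover) are far from contradictory for a countable colouring of $[\omega_1]^2$ on their own; you would need substantially more geometric input, and none is supplied.

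The missing idea (taken in the paper from Proposition 3.4 of H\'ajek--Kania--Russo) is a direct construction of an uncountable $(1+)$-separated set from the $1$-equilateral Auerbach system, which contradicts the standing assumption that none exists. Fix positive reals $(c_i)_{i\in\N}$ with $\sum_{i}c_i=1$ and, for $\omega\le\alpha<\omega_1$, a bijection $e_\alpha:\N\to\alpha$; set
\[
y_\alpha=x_\alpha-\sum_{i=0}^{\infty}c_i\,x_{e_\alpha(i)}.
\]
Then $\|y_\alpha\|\le\sum_i c_i\|x_\alpha-x_{e_\alpha(i)}\|=1$ while $x_\alpha^*(y_\alpha)=1$, so $y_\alpha\in S_\X$; and for $\beta<\alpha$, taking $i$ with $e_\alpha(i)=\beta$, one gets $\|y_\alpha-y_\beta\|\ge|x_\beta^*(y_\alpha-y_\beta)|=|-c_i-1|>1$. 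This uses the full enumeration of $\alpha$ by $e_\alpha$ (so that \emph{every} earlier index contributes a definite negative coefficient), an idea entirely absent from your proposal. Without it, or some substitute of comparable strength, the second alternative of the dichotomy is not refuted and the argument is incomplete.
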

 \begin{proof} Use Proposition \ref{dich>auerbach}.
 \end{proof}
 
 This explains (in the context of Theorem \ref{main-oca} (1)) 
 that it was not a coincidence that our {\sf ZFC} example of \cite{pk-kottman} of a Banach subspace
  of $\ell_\infty$ that did not admit an
 uncountable ($1+$)-separated set also did not admit an uncountable Auerbach system.
 Note that the interaction between  ($1+$)-separated sets and Auerbach system is already very much exploited in \cite{hkr}.
 
 As the third comment  let us mention the impact of having the dichotomous unit sphere not
 only on the topic of Question (S) but also on the topic of Question (E) i.e., on equilateral sets:
 \begin{theorem}\label{main-dich>equi} Suppose that $\X$ is a Banach space whose unit sphere is almost dichotomous.
Then the unit sphere $S_\X$ of $\X$ admits an uncountable  $\varepsilon$-approximately $1$-equilateral sets 
for every $\varepsilon>0$.
 \end{theorem}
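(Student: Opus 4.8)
The plan is to construct the required set in two stages. First I use the almost-dichotomy of $S_\X$ to single out one scale $\theta$ at which $S_\X$ contains, for every prescribed error, an uncountable set whose pairwise distances are almost equal to $\theta$; then I normalize the difference vectors of such a set, replacing $x_\alpha$ by $(x_\alpha-x_0)/\|x_\alpha-x_0\|$ for a fixed base point $x_0$, which simultaneously returns me to the unit sphere and rescales the common distance from $\theta$ down to approximately $1$. Throughout I use that $\X$ is nonseparable (a separable $\X$ has trivially almost-dichotomous $S_\X$ but no uncountable subset, so the statement is meant for nonseparable $\X$), and I may assume $0<\varepsilon<1$.

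For the first stage I first note $diam(S_\X)=2$, since $\|x-(-x)\|=2$ for every $x\in S_\X$. Put
$$\theta=\sup\{r>0:\ S_\X\ \text{contains an uncountable}\ (r+)\text{-separated set}\}.$$
A standard transfinite Riesz-lemma construction --- at stage $\alpha<\omega_1$ choose $x_\alpha\in S_\X$ at distance $\geq 1-1/k$ from the (separable, hence proper) closed subspace spanned by $\{x_\beta:\beta<\alpha\}$ --- yields, for each $k\in\N$, an uncountable $(1-1/k)$-separated subset of $S_\X$; hence $1\leq\theta\leq 2$, and moreover $S_\X$ contains an uncountable $(r+)$-separated set for every $r<\theta$. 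Now fix a small $\delta>0$ (and, if $\theta<2$, small enough that $\theta+\delta\leq 2$), choose $r^-\in(\theta-\delta,\theta)$, and let $E\subseteq S_\X$ be an uncountable $(r^-+)$-separated set. If $\theta=2$ then $E$ itself has all pairwise distances in $(r^-,2]\subseteq(\theta-\delta,\theta]$. If $\theta<2$, pick in addition $r^+\in(\theta,\theta+\delta)$ different from the at most one exceptional value permitted in Definition~\ref{def-dichotomous}; since $S_\X$ has no uncountable $(r^++)$-separated set, almost-dichotomy yields $S_\X=\bigcup_n A_n$ with $diam(A_n)\leq r^+$, so some $E\cap A_{n_0}$ is uncountable and all its distances lie in $(r^-,r^+]\subseteq(\theta-\delta,\theta+\delta)$. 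Either way I get an uncountable $\{x_\alpha:\alpha<\omega_1\}\subseteq S_\X$ with $\|x_\alpha-x_\beta\|\in(\theta-\delta,\theta+\delta)$ for all distinct $\alpha,\beta$.

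For the second stage, fix one index, say $0$, and set $v_\alpha=(x_\alpha-x_0)/\|x_\alpha-x_0\|\in S_\X$ for $\alpha>0$, which is legitimate because $\|x_\alpha-x_0\|>\theta-\delta>0$. The numbers $\|x_\alpha-x_0\|$ all lie in $(\theta-\delta,\theta+\delta)$, so by pigeonhole there is an uncountable $I\subseteq\omega_1$ on which they oscillate by less than a prescribed $\eta>0$. For $\alpha,\beta\in I$, writing $a=x_\alpha-x_0$ and $b=x_\beta-x_0$ and estimating
$$\Bigl\|\frac{a}{\|a\|}-\frac{b}{\|b\|}\Bigr\|=\Bigl\|\frac{a-b}{\|a\|}+b\Bigl(\frac{1}{\|a\|}-\frac{1}{\|b\|}\Bigr)\Bigr\|$$
from above and below using $\|a-b\|=\|x_\alpha-x_\beta\|\in(\theta-\delta,\theta+\delta)$, $\bigl|\,\|a\|-\|b\|\,\bigr|<2\eta$ and $\|a\|\in(\theta-\delta,\theta+\delta)$, one obtains
$$\frac{\theta-\delta-2\eta}{\theta+\delta}<\|v_\alpha-v_\beta\|<\frac{\theta+\delta+2\eta}{\theta-\delta}.$$
Since $\theta\geq 1$, both bounds lie inside $(1-\varepsilon,1+\varepsilon)$ once $\delta$ and $\eta$ are taken small enough in terms of $\varepsilon$; the points $v_\alpha$, $\alpha\in I$, being pairwise distinct (their distances exceed $1-\varepsilon>0$), then form an uncountable $\varepsilon$-approximately $1$-equilateral subset of $S_\X$.

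The main obstacle is the first stage. The natural temptation would be to look for an uncountable $(1+)$-separated set in $S_\X$, but such a set need not exist (this is precisely the negative answer to Question~(S)); the point is instead to use almost-dichotomy to locate the space's own separation threshold $\theta$, at which the two competing alternatives essentially coincide and therefore deliver an uncountable almost-$\theta$-equilateral set. One must be careful with the single exceptional value allowed in the definition and with the degenerate case $\theta=diam(S_\X)=2$, where the decomposition alternative is vacuous but a sufficiently separated uncountable set is already almost $2$-equilateral. The rescaling of the second stage is routine; the one thing to watch is that its error is of order $(\delta+\eta)/\theta$, which causes no trouble because $\theta\geq 1$.
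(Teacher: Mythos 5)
Your proof is correct and follows essentially the same route as the paper: your Stage 1 is precisely the argument of Proposition \ref{dich>ramsey} (identify $\theta={\mathsf k}^+(S_\X)=\sigma(S_\X)\geq 1$ and intersect an uncountable $(\theta-\delta+)$-separated set with one uncountable piece of a countable cover by sets of diameter at most $\theta+\delta$), and your Stage 2 is the normalization of difference vectors $\frac{x_\alpha-x_0}{\|x_\alpha-x_0\|}$ carried out in the proof of Proposition \ref{sphere-no} ((5)$\Rightarrow$(1)), which is how the paper converts approximately $\theta$-equilateral into approximately $1$-equilateral. The only cosmetic difference is that you force near-constancy of $\|x_\alpha-x_0\|$ by pigeonhole where the paper first rescales by $s_\varepsilon$; both give the same estimates.
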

 \begin{proof} This follows from Proposition \ref{dich>ramsey}.
 \end{proof}
 
 In the fourth comment we observe that already analytic (in fact Borel) subsets of $\R^\N$ falling
 under Theorem \ref{main-zfc} (1) may provide strong negative answers to questions (S) and (E)
 (Proposition \ref{examples-analytic}).
 So we are additionally justified to  consider the property of having dichotomous unit sphere  as a regularity property.
 
 Finally, these results unify,  complement, generalize and/or extend
some known results. Theorem \ref{main-zfc} (1)
and \ref{main-oca} (2) concern wider classes of spaces than in known results
($C(K)$ spaces for $K$ separable Rosenthal compacta in Proposition 4.7 of Kania and Kochanek's \cite{tt} 
in the first case and $C(K)$ spaces for $K$ compact
 in Theorem 5.1. of \cite{equi} in the second case). These enlargements of the considered classes
of Banach 
spaces are done at the price of allowing the second alternative of Definition \ref{def-dichotomous}.
This is necessary as  there are  {\sf ZFC} examples which satisfy the second part of the dichotomy
and not the first (Propositions
\ref{examples-analytic} and \ref{C0K-strange}).  On the other hand the dichotomy
of Theorem \ref{main-oca} generalizes the dichotomies for
Johnson-Lindenstrauss spaces  and for some of their renormings in \cite{ad-kottman}.
Parts of the content of Theorems \ref{main-zfc} (2), (3) include and  generalize 
the argument  of Erd\"os and Preiss (\cite{erdos-p}) that the unit ball of $\ell_2(2^\omega)$
is the countable union of sets of diameters less than $\sqrt2+\varepsilon$ for every $\varepsilon>0$
and a result of Preiss and R\"odl (\cite{preiss-r}) that it is not  the countable union of sets of diameters not
bigger than $\sqrt2$.
Similarly Theorem \ref{main-oca} (3) includes Theorem 3 (3) of \cite{pk-kamil} asserting the existence
of uncountable $\sqrt 2$-equilateral (and so $\sqrt 2$-separated) set in the unit ball of the Banach spaces
of these results.

 The second main claim of this paper is that the strong failure of
 the purely combinatorial Ramsey property yields the existence of 
 strong anti-Ramsey-type  unit  spheres of Banach spaces. The following
 is our second main definition. 
 \begin{definition}\label{def-hyperlateral}
Let $M$ be a set and $d:[M]^2\rightarrow \R_+$.
We say that $M$ is hyperlateral  if for every uncountable separated $N\subseteq M$
there is $\varepsilon>0$ such that $N$ does not contain an
uncountable $\varepsilon$-approximately equilateral subset.
\end{definition}
In the above definition one sees a collection of partitions $\mathcal P_\varepsilon$ for
$\varepsilon>0$ of pairs of $M$ each into countably many parts such that
if two pairs $\{m, m'\}$ and $\{n, n'\}$ are in the same part, then 
$|d(m, m')-d(n, n')|<2\varepsilon$.  $M$ is hyperlateral if no uncountable separated
$N\subseteq M$ admits uncountable monochromatic sets for all partitions $\mathcal P_\varepsilon$,
i.e., such $N'\subseteq N$ that $[N']^2$ is included in one part of the partition.
Here we need to consider infinitely many partitions because in the unit spheres of nonseparable Banach spaces
 we always have uncountably many points close to each other.
 
It turns out that for $M$ being the unit sphere of
a Banach space $\X$ to be hyperlateral is equivalent to the entire space $\X$ being hyperlateral
and equivalent to the nonexistence of uncountable $\varepsilon$-approximately $1$-equilateral sets
in the unit sphere and to the existence for every $s>0$ of an $\varepsilon>0$ such that
all uncountable $s$-separated sets do not admit an uncountable $\varepsilon$-approximately equilateral
sets (Proposition \ref{sphere-no}).  It should be also clear that being hyperlateral corresponds to
anti-Ramsey properties of Sierpi\'nski's or Todorcevic's colorings (Theorems \ref{sierpinski1}, \ref{stevo})
as any uncountable separated set hits many colors-distances, where many means with fixed nonzero
oscillation (fixed for a given separated set where subsets are considered).
Our results on hyperlateral Banach spaces consist of proving (in some cases, just noting)  that  some spaces 
considered in the papers \cite{ad-kottman, equi, pk-kamil} are hyperlateral and exploring their additional properties
in this context.

 \begin{theorem}\label{main-ch}{\rm (\cite{ad-kottman}, \cite{equi}, \cite{pk-kamil})}   
Assume {\sf CH}. There are nonseparable Banach spaces 
of density $\omega_1$ whose unit spheres are  hyperlateral
and which are members of the following classes:
\begin{enumerate}
\item  Banach subspaces of $\ell_\infty$.
\item  Banach spaces of the form $C(K)$ for $K$ compact, Hausdorff and separable.
\item  Hilbert generated Banach spaces of  Shelah and their modifications due to
Wark obtained from a coloring $c:[\omega_1]^2\rightarrow\{0,1\}$.
\end{enumerate}
\end{theorem}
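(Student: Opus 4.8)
These three items restate, under {\sf CH}, constructions carried out in \cite{ad-kottman}, \cite{equi} and \cite{pk-kamil} respectively, so the plan is, in each case, to recall the space produced there and to check that the anti-Ramsey property of the underlying coloring (a Sierpi\'nski/Todorcevic-style coloring of $[\omega_1]^2$, sharpened under {\sf CH} as in Theorems \ref{sierpinski1} and \ref{stevo}) forces the unit sphere to be hyperlateral. By Proposition \ref{sphere-no} it suffices, for each of the spaces $\X$, to show that for every $s>0$ there is $\varepsilon>0$ such that no uncountable $s$-separated subset of $S_\X$ contains an uncountable $\varepsilon$-approximately equilateral set --- equivalently, that $S_\X$ has no uncountable $\varepsilon$-approximately $1$-equilateral set for any $\varepsilon>0$. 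I would work with the $s$-separated formulation, since $s$-separated sets are precisely the objects on which the chosen colorings oscillate with a fixed gap.

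The common core of the three arguments is as follows. Each space is generated by a transfinite family $(x_\alpha)_{\alpha<\omega_1}$ of unit vectors whose pairwise distances $\|x_\alpha-x_\beta\|$ are, up to an error that is fixed arbitrarily small in the construction, a monotone function of a combinatorial invariant $c(\alpha,\beta)$; the anti-Ramsey property of $c$ is that no uncountable index set is ``approximately $c$-monochromatic'', i.e.\ on every uncountable set the values of $c$ oscillate by at least a fixed $\eta>0$ in the relevant sense. Fixing $s>0$ and an arbitrary uncountable $s$-separated $N\subseteq S_\X$, I would first run a closing-off and $\Delta$-system argument: replace each element of $N$ by a finitely supported rational combination of the $x_\alpha$ up to a tiny error, thin $N$ so the essential supports form a $\Delta$-system with root $R$ and all elements of $N$ agree up to a small error on $R$, and use $s$-separation to force, for distinct $x,y\in N$, a coordinate of $x-y$ of absolute value at least some $\delta=\delta(s)>0$ on a generator index outside $R$. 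This pins $\|x-y\|$, up to the built-in tolerance, to the combinatorial distance attached to a single pair of ``new'' indices, and then the anti-Ramsey property of $c$ produces, inside any uncountable $N'\subseteq N$, two pairs whose combinatorial distances differ by more than $\eta$. Taking $\varepsilon$ smaller than $\eta/3$ and than the construction tolerance, $N'$ is not $\varepsilon$-approximately equilateral, so $\X$ is hyperlateral.

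It remains to feed in the case-specific data: for (1), the generators of the subspace of $\ell_\infty$ built in \cite{ad-kottman} from a {\sf CH}-sharpened Todorcevic coloring; for (2), the family $(f_\alpha)$ in $C(K)$ with $K$ separable compact from \cite{equi}, where the combinatorial distance is read off $\|f_\alpha-f_\beta\|_\infty$; for (3), Shelah's Hilbert-generated space and Wark's renormings from a coloring $c:[\omega_1]^2\to\{0,1\}$ as in \cite{pk-kamil}, where $\|x_\alpha-x_\beta\|$ takes one of two values according to $c(\alpha,\beta)$ and the anti-Ramsey statement is simply that under {\sf CH} no uncountable index set is $c$-homogeneous --- here the orthogonalization built into a Hilbert-generated norm replaces and simplifies the $\Delta$-system step. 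The step I expect to be the genuine obstacle is the uniformity of the perturbation estimate: the members of $N$ are arbitrary unit vectors, not the distinguished generators, so one must guarantee, simultaneously over the uncountably many pairs from $N$, that $\|x-y\|$ stays within the fixed tolerance of the combinatorial distance of a single new pair of coordinates. This is exactly where {\sf CH} is consumed --- through the transfinite bookkeeping that lets the general vectors be replaced by finitely supported rational approximations and ultimately by the generators --- and where the particular geometry of each of the three spaces has to be invoked.
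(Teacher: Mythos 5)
There is a genuine gap, and it sits exactly at the step you flag as ``the genuine obstacle''. Your common core asserts that, after a $\Delta$-system thinning, the norm $\|x-y\|$ of two arbitrary members of an uncountable separated set is pinned, up to a small tolerance, to the coloring value at a single pair of new indices, and that the only anti-Ramsey input needed is that $c$ has no uncountable (approximately) monochromatic set. The paper itself refutes this for item (3): by Proposition \ref{hilbert-gen-ma}, under {\sf MA} and the negation of {\sf CH}, for \emph{every} coloring $c:[\omega_1]^2\rightarrow\{0,1\}$ without uncountable monochromatic sets the sphere of $(\X_{\A_c},\|\ \|_{\A_c})$ admits an uncountable $\sqrt2$-equilateral set and is almost dichotomous, hence not hyperlateral. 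Since such colorings exist in {\sf ZFC} (Theorems \ref{sierpinski1}, \ref{stevo}), your sketch, if it worked, would prove a {\sf ZFC} statement that is consistently false. The reason it fails is that in these norms $\|x-y\|$ is \emph{not} determined by one new pair of coordinates: it is a supremum over the whole family $\A_c$ (respectively, it depends on the global interaction of the supports with the almost disjoint family in \cite{ad-kottman}, or with the functions $f_\xi$ in \cite{equi}), and whether uncountably many pairs of supports can be linked by large monochromatic sets is precisely the combinatorial question that {\sf MA} and {\sf CH} decide in opposite ways. Consequently {\sf CH} is not consumed in ``bookkeeping that replaces general vectors by rational finitely supported approximations'' (that reduction is a {\sf ZFC} density plus $\Delta$-system argument); it is consumed in building the coloring, the almost disjoint family, or the anti-Ramsey system itself with a property quantified over \emph{all} uncountable families of finite rational configurations, which is what makes the transfer from the generators to arbitrary uncountable $(1-\varepsilon)$-separated subsets of the sphere possible (Theorem 3 of \cite{ad-kottman} for (1), Proposition 23 of \cite{pk-kamil} for (3)).

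The paper's own proof is therefore of a different shape: it quotes Propositions \ref{ad-hyper}, \ref{hyper-ck} and \ref{hilbert-gen-ch}, which in turn either cite the strong separated-set oscillation properties established in \cite{ad-kottman} and \cite{pk-kamil} and convert them into hyperlaterality via Proposition \ref{sphere-no}~(5), or, in the $C(K)$ case, rerun the proof of Theorem 3.3 of \cite{equi} (whose mechanism is a \emph{doubling} of distances, $\|g_{\alpha'}-g_{\beta'}\|=2\|g_\alpha-g_\beta\|$, via Proposition 3.2 of \cite{equi}, not a readout of a two-valued coloring) and then carry out the genuinely new work of constructing the anti-Ramsey system $(r_\xi,f_\xi)_{\xi<\omega_1}$ from a strongly Luzin set in place of the forcing of \cite{equi}. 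Your proposal does not engage with either of these: it neither reproduces the strong {\sf CH}-dependent properties of the underlying combinatorial objects nor reflects the actual geometric mechanism in case (2). To repair it you would have to replace ``no uncountable monochromatic set'' by the capturing-type properties that {\sf CH} (or a nonmeager/strongly Luzin set of size $\omega_1$) provides, and prove the transfer to arbitrary separated subsets of the sphere separately for each of the three norms.
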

\begin{proof} This follows from Propositions \ref{ad-hyper}, \ref{hyper-ck}, \ref{hilbert-gen-ch}. 
\end{proof}
The properties corresponding to being
hyperlateral were first obtained in \cite{ad-kottman} by Guzm\'an,  Hru\v s\'ak and the author
and also presented in \cite{pk-kamil} by Ryduchowski and the author, but we later
realized that the example of \cite{equi} is hyperlateral as well which is proved
in Proposition \ref{hyper-ck} together with deriving the construction from (an axiom
weaker than) {\sf CH} in place of a forcing construction presented in \cite{equi}.

 Despite the above two groups of results showing the impact of Ramsey-type and anti-Ramsey
 set-theoretic phenomena on the  distance function in unit spheres of nonseparable
 Banach spaces we do not settle
 completely the questions concerning the extent of this impact. We do not know the answers to the
 following version of Question (\textit{S}) and a reversed version of Question (\textit{E}):
 
 \begin{itemize}
\item[($S'$)] Is it consistent that all Banach spaces of density $\omega_1$ have
almost dichotomous unit spheres?
\item[($E'$)] Is there in {\sf ZFC} a nonseparable Banach space which is  hyperlateral?
\end{itemize}
Note that $\omega_1$ cannot be replaced in ($S'$)  by $2^\omega$ because we obtain the following
 \begin{theorem}\label{main-zfc-c}
  There are Banach spaces of density $2^\omega$
of the form $C_0(K)$ for $K$ locally compact and Hausdorff whose unit sphere $S_{C_0(K)}$
is not   almost dichotomous.
 \end{theorem}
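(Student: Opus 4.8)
The plan is to build $K$ so that the unit sphere of $C_0(K)$ is at the same time \emph{rich at the top} --- on a whole interval of radii $r<2=diam(S_{C_0(K)})$ it is not the union of countably many sets of diameter $\le r$ --- and \emph{thin at the top} --- on that same interval it carries no uncountable $(r+)$-separated set. Any such $C_0(K)$ is not almost dichotomous, because that interval contains far more than one value of $r$. The richness will be produced by realizing an anti-Ramsey (Sierpi\'nski-type) coloring of $[2^\omega]^2$ inside the sphere through pairs of almost-antipodal unit vectors, and the thinness by showing that, up to small perturbations, this coloring is the \emph{only} large-scale separation structure that $C_0(K)$ contains.

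For the combinatorial input, fix as in Theorem \ref{sierpinski1} a coloring $c\colon[2^\omega]^2\to\{0,1\}$ with no uncountable homogeneous set in either colour (identify $2^\omega$ with $\R$ and with a well-ordering of $\R$, and let $c(\{\alpha,\beta\})$ record whether the usual order and the well-order agree on $\{\alpha,\beta\}$; a subset of $\R$ well-ordered or reverse-well-ordered by $<$ is countable). One then constructs a locally compact Hausdorff $K$ with $w(K)=2^\omega$ and vectors $x_\alpha\in S_{C_0(K)}$ ($\alpha<2^\omega$) such that $\|x_\alpha-x_\beta\|=1$ when $c(\{\alpha,\beta\})=0$ and $\|x_\alpha-x_\beta\|=2$ when $c(\{\alpha,\beta\})=1$: each colour-$1$ pair contributes a point of $K$ at which one of the two vectors equals $1$ and the other $-1$. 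The nontrivial point here is to equip $K$ with a topology making every $x_\alpha$ vanish at infinity even though $\alpha$ lies in uncountably many colour-$1$ pairs; this is done by a suitable compactification of the auxiliary set of ``pair points'', after which each $x_\alpha$, being locally constant with values in $\{-1,0,1\}$, is continuous and of norm $1$. Put $N=\{x_\alpha:\alpha<2^\omega\}$: it is $1$-separated and of size $2^\omega$, so (together with $w(K)=2^\omega$) $dens\,C_0(K)=2^\omega$.

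With such a $K$ in hand, both clauses of Definition \ref{def-dichotomous} fail for every $r\in[1,2)$. For the covering clause: by the choice of $c$ no uncountable $N'\subseteq N$ is colour-$0$ homogeneous, so every uncountable $N'\subseteq N$ has two members at distance $2$, hence diameter $2$; therefore for $r<2$ no countable family of sets of diameter $\le r$ covers $S_{C_0(K)}$, since one member would meet $N$ in an uncountable set of diameter $<2$. (Equivalently, the graph of pairs of $S_{C_0(K)}$ at distance $>r$ contains the Sierpi\'nski graph on $N$, whose independent sets are countable, so it has uncountable chromatic number.) For the separation clause one proves the Kottman-type fact that $S_{C_0(K)}$ has \emph{no} uncountable $(1+)$-separated set, hence none for any $r\ge1$. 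Within $N$ this is immediate: such a set would have all pairwise distances equal to $2$, i.e.\ be colour-$1$ homogeneous, which is impossible. For the whole sphere the idea is that a unit function $f\in C_0(K)$ has a compact set $\{|f|\ge 1/2\}$ whose combinatorial type inside $K$ is essentially finite, so an uncountable family of pairwise $(1+)$-separated unit functions would, after a $\Delta$-system/sunflower reduction on these types, be forced to realize its separations on a fixed finite ``root'' --- impossible for more than boundedly many functions.

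Granting these two points, every $r\in[1,2)$ witnesses the failure of both clauses of the dichotomy; this is strictly (indeed continuum-many times) more than one value of $r$, so $S_{C_0(K)}$ is not almost dichotomous, which is Theorem \ref{main-zfc-c}. The decisive difficulty --- the part that must be carried out by an explicit construction rather than by quoting earlier results --- is the Kottman-type claim: $K$ must be rich enough to host the Sierpi\'nski family $N$ (which already has abundant distance-$2$ pairs) yet so economical that $N$ accounts for \emph{all} of the sphere's uncountable separation behaviour, leaving behind no uncountable $(1+)$-separated set. This is precisely the balance engineered in the {\sf ZFC} examples that answer question $(S)$ negatively, refined here so as to keep, in addition, the covering clause violated throughout $[1,2)$.
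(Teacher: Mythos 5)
Your overall shape (exhibit a concrete locally compact $K$ of weight $2^\omega$ and kill both clauses of the dichotomy on a whole interval of radii) is the right one, and your idea for the covering clause is sound: a set $N$ of size $2^\omega$ realizing a Sierpi\'nski coloring with distances $1$ and $2$ has no uncountable subset of diameter $<2$, so no countable cover by sets of diameter $\le r<2$ can exist. But there are two genuine problems. First, your separation claim is false as stated: you assert that $S_{C_0(K)}$ has \emph{no} uncountable $(1+)$-separated set, whereas Proposition \ref{lc-1+} of the paper is a {\sf ZFC} theorem that \emph{every} $C_0(K)$ with $K$ locally compact, Hausdorff and nonmetrizable admits an uncountable $(1+)$-separated set. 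So no space of the kind you describe exists. The claim must be weakened to ``no uncountable $(1+\varepsilon)$-separated set for any $\varepsilon>0$''; this still suffices, since both clauses then fail for every $r\in(1,2)$, which is more than the single exceptional value allowed by almost dichotomy. This is exactly the calibration in the paper's Proposition \ref{C0K-counter}, where ${\mathsf K}^+(S_{C_0(K)})=[0,1]$ (so $1$ is attained) and $\Sigma(S_{C_0(K)})=\{2\}$.

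Second, and more seriously, the entire content of the theorem is the construction of $K$ together with the proof that the \emph{whole} sphere has no uncountable $(1+\varepsilon)$-separated set, and you explicitly leave this out. Your sketch of it does not survive contact with your own $K$: each $x_\alpha$ lies in continuum many colour-$1$ pairs, so $\{|x_\alpha|\ge 1/2\}$ contains continuum many ``pair points'' and is in no sense of ``essentially finite combinatorial type'', which blocks the $\Delta$-system reduction you invoke. The paper's actual construction avoids this by taking $K$ to be a $\Psi$-space-like object over $\R$: the generators are $1_{S_\alpha}$ for almost disjoint convergent sequences $S_\alpha\subseteq\R$ with distinct limits $l_\alpha$, so supports are countable and thin, every unit vector is approximated by a finite rational combination, and a careful double $\Delta$-system argument then produces, in any uncountable family, two vectors at distance $<1+\varepsilon$. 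The non-covering is obtained there not from a Sierpi\'nski coloring but by a transfinite diagonalization (over ``Cantor nets'') which, for any countable decomposition of the sphere, plants a pair $1_{S_\alpha}-1_{\{l_\alpha\}}$, $1_{S_\beta}-1_{\{l_\beta\}}$ with $l_\beta\in S_\alpha$ (hence at distance $2$) inside a single piece. Until you produce a $K$ hosting your coloring for which the $(1+\varepsilon)$-separation bound on the full sphere can actually be verified, the proposal does not prove the theorem.
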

 \begin{proof} This follows from Proposition \ref{C0K-counter}.
 \end{proof}
However, $\omega_1$ in Question $({S'})$ can be considered quite natural if we remember that the original
 Question $(S)$ concerns the existence of uncountable subsets, and so of cardinality $\omega_1$.
 Also in the spheres of spaces like $c_0(\kappa)$ one cannot hope
 for $(1+)$-separated sets of cardinalities bigger than $\omega_1$ regardless of $\kappa$
 (Theorem A (iii) of \cite{hkr}).
 The positive answer to Question $({S'})$ would imply the negative answer to Question $({E'})$ by Theorem \ref{main-dich>equi}
 and Proposition \ref{sphere-no}.
 
As Sierpi\'nski's or Todorcevic's colorings exist in {\sf ZFC} one could expect that
the answer to Question $({E'})$ is positive and so the answer to question $({S'})$ is negative.
In fact the failure of the Ramsey property as in Definition \ref{def-hyperlateral}
 but on a single uncountable subset of the unit sphere of a Banach space can be easily obtained in {\sf ZFC} from
 known anti-Ramsey colorings of uncountable squares.
 This is because  any combinatorial coloring can be ``translated'' to the unit sphere of a Banach space
(Lemmas \ref{color-dist} and \ref{kuratowski})  and so one can inject strong anti-Ramsey colorings of Sierpi\'nski or Todorcevic  
witnessing negative square-bracket partition relations 
into the spheres of Banach spaces (Proposition  \ref{antisubbanach}  (1)).
 The rich algebraic and topological structure in a Banach space
seem to be here an obstruction to transferring the property of the coloring to all uncountable subsets of the sphere
even if the   Banach space is generated by
 subset of the unit sphere which originates from an anti-Ramsey coloring and is hyperlateral (see 
Proposition \ref{hilbert-gen-ma}). However,  similar obstructions were eventually somehow overcame in many
contexts where anti-Ramsey structures
were constructed in {\sf ZFC},  for  example for groups (\cite{extra, poor})
 or in functional analysis (\cite{antonio, ss}).

Combining the above results sometimes with their more specific versions in the 
further text we may obtain many corollaries. Let us mention some examples of them:
 
 \begin{corollary} The following statements are independent from {\sf ZFC}:
 \begin{enumerate}
  \item Every uncountable set in $\ell_\infty$ admits an uncountable
 $\varepsilon$-approximately equilateral subset for every $\varepsilon>0$.
 \item There is a nonmetrizable locally compact Hausdorff space of weight $\omega_1$
 such that the unit sphere of the Banach space $C_0(K)$ is not the countable union of 
 sets of diameters  $r<2$ and it does not admit an uncountable
 $(1+\varepsilon)$-separated set for any $\varepsilon>0$.
 \end{enumerate}
 \end{corollary}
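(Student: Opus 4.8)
The plan is to exhibit, for each of the two statements, a model of {\sf ZFC} in which it holds and one in which it fails. Since ${\rm Con}({\sf ZFC})$ yields both ${\rm Con}({\sf ZFC}+{\sf OCA}+{\sf MA})$ and ${\rm Con}({\sf ZFC}+{\sf CH})$, it suffices to analyse each statement under {\sf OCA}$+${\sf MA} and under {\sf CH}: as it will turn out, (1) holds under {\sf OCA}$+${\sf MA} and fails under {\sf CH}, while the space described in (2) fails to exist under {\sf OCA}$+${\sf MA} but exists under {\sf CH}.

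For statement (1), under {\sf OCA}$+${\sf MA}, let $Y\subseteq\ell_\infty$ be uncountable. Writing $Y=\bigcup_{n}(Y\cap nB_{\ell_\infty})$ we may pass to an uncountable $Y\cap nB_{\ell_\infty}$, and note that $nB_{\ell_\infty}$ is, as a set, the compact metrizable space $[-n,n]^\N$ in the product topology. Giving $Y$ this (second countable) topology, the norm distance $d(x,y)=\sup_k|x_k-y_k|$ is a supremum of continuous functions, hence lower semicontinuous, so for each $r$ the partition of $[Y]^2$ into $\{d>r\}$ (open) and $\{d\le r\}$ (closed) is an open colouring; {\sf OCA} then gives, for every $0<r\le{\rm diam}(Y)$, either an uncountable $(r+)$-separated subset of $Y$ or a cover of $Y$ by countably many sets of diameter $\le r$. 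Thus $Y$ is dichotomous --- this is the argument underlying Theorem \ref{main-oca}(1) (Proposition \ref{oca>dich}), run on $Y$ in place of a unit sphere --- and by Proposition \ref{dich>ramsey} (which drives Theorem \ref{main-dich>equi}) $Y$ contains an uncountable $\varepsilon$-approximately equilateral subset for every $\varepsilon>0$. Under {\sf CH}, I would instead use Theorem \ref{main-ch}(1) to fix a Banach subspace $X\subseteq\ell_\infty$ of density $\omega_1$ with hyperlateral unit sphere; since $X$ is nonseparable, a standard recursion using Riesz's lemma produces an uncountable separated set $N\subseteq S_X$, and then Definition \ref{def-hyperlateral} supplies an $\varepsilon>0$ for which the uncountable set $N\subseteq\ell_\infty$ has no uncountable $\varepsilon$-approximately equilateral subset; so (1) fails.

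For statement (2), under {\sf OCA}$+${\sf MA} there is no such $K$: if $K$ is locally compact Hausdorff of weight $\omega_1$ then $\omega_1<2^\omega$ (as {\sf OCA} implies $2^\omega>\omega_1$), so Theorem \ref{main-oca}(2) makes $S_{C_0(K)}$ dichotomous; applying the dichotomy at $r=\tfrac32$ and using that $S_{C_0(K)}$ is not a countable union of sets of diameter $\le\tfrac32<2$, we obtain an uncountable $(\tfrac32+)$-separated set, which is in particular $(1+\tfrac14)$-separated --- contradicting the hypothesis on $K$. Under {\sf CH} such a $K$ does exist, via the construction behind Theorem \ref{main-zfc-c} (Proposition \ref{C0K-counter}): it produces a nonmetrizable locally compact Hausdorff $K$ with $C_0(K)$ of density $2^\omega$ --- hence of weight $\omega_1$ under {\sf CH} --- whose unit sphere is not almost dichotomous. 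What one must check is that the failure it realizes has exactly the required shape. The natural way to produce such a failure is to translate, via Lemmas \ref{color-dist} and \ref{kuratowski}, a {\sf ZFC} anti-Ramsey colouring $c:[\omega_1]^2\to\{0,1\}$ with no uncountable monochromatic set (Sierpi\'nski's) into distances, with $0$-pairs at distance $1$ and $1$-pairs at distance $2$: then every subset of diameter $<2$ is $0$-homogeneous --- so the sphere is not a countable union of such sets --- and every $(1+\varepsilon)$-separated set is $1$-homogeneous --- so none is uncountable; the real work of the construction is to extend this control from the distinguished colour-coded set to every uncountable subset of $S_{C_0(K)}$.

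The only non-routine step is this last verification: that the {\sf CH} instance of the Theorem \ref{main-zfc-c} construction has weight $\omega_1$ and realizes the failure of the dichotomy exactly throughout the interval $(1,2)$ --- $S_{C_0(K)}$ not a countable union of sets of diameter $<2$, and no uncountable $(1+\varepsilon)$-separated set for any $\varepsilon>0$. The remaining three pieces are short: the {\sf OCA}$+${\sf MA} half of (1) is the {\sf OCA}-dichotomy argument of the second paragraph together with Proposition \ref{dich>ramsey}; the {\sf CH} half of (1) is an immediate unwinding of hyperlaterality; and the {\sf OCA}$+${\sf MA} half of (2) is a three-line deduction from Theorem \ref{main-oca}(2).
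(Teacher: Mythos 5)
Your proposal is correct, and three of its four halves coincide with the paper's own argument: part (1) is exactly Proposition \ref{oca>dich} plus Proposition \ref{dich>ramsey} under {\sf OCA}$+${\sf MA} (the reduction to a bounded piece and the degenerate case $\sigma=0$ are harmless) and Theorem \ref{main-ch}(1) under {\sf CH}, while the nonexistence half of part (2) is the intended application of Theorem \ref{main-oca}(2). Where you genuinely diverge is the witness for part (2) under {\sf CH}: the paper takes the \emph{compact}, separable, hyperlateral space of Theorem \ref{main-ch}(2) (Proposition \ref{hyper-ck}, built from a strongly Luzin set) and then rules out uncountable $(1+\varepsilon)$-separated sets via the Mercourakis--Vassiliadis equivalence between such sets and uncountable $2$-equilateral sets for compact $K$, whereas you take the {\sf ZFC} space of Proposition \ref{C0K-counter}, which has weight $2^\omega$ and hence weight $\omega_1$ exactly under {\sf CH}. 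Your route is more economical in that Proposition \ref{C0K-counter} asserts \emph{both} required properties of the sphere verbatim, so no detour through Theorem 2.6 of \cite{mer-ck} is needed; the paper's route buys a stronger witness (compact and separable, so nonmetrizability is immediate from nonseparability of $C(K)$) and needs {\sf CH} only through the existence of a strongly Luzin set.

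Two small repairs to your version. First, you never check that the Proposition \ref{C0K-counter} space is nonmetrizable; this does follow, but from the covering property rather than from the weight alone (the discrete space of size $\omega_1$ shows weight $\omega_1$ does not suffice): a metrizable locally compact space of weight at most $2^\omega$ splits into at most $2^\omega$ second countable clopen summands, so its $C_0$ is a $c_0$-sum of separable spaces and the Hewitt--Marczewski--Pondiczery argument of Proposition \ref{c0-dich} covers its sphere by countably many sets of diameter at most $3/2$, which the sphere of Proposition \ref{C0K-counter} does not admit. Second, your sketch of the ``mechanism'' behind Proposition \ref{C0K-counter} --- translating a Sierpi\'nski coloring through Lemmas \ref{color-dist} and \ref{kuratowski} --- describes Proposition \ref{antisubbanach}, which only controls a single distinguished subset of a sphere; the actual construction uses an almost disjoint family of convergent sequences of reals diagonalized against all Cantor nets. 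Since you invoke the proposition only for its stated conclusion, this misattribution does not affect correctness, but the aside should be dropped or corrected.
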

 \begin{proof} For (1) use Proposition \ref{oca>dich} together with Proposition
 \ref{dich>ramsey} for one consistency and Theorem \ref{main-ch} (1) for the other.
 For (2) one of the consistencies follows from  Theorem \ref{main-oca} (2). The other
 consistency is a consequence of 
 Theorem \ref{main-ch} (2). Indeed, for a compact $K$ the existence
 of an uncountable $2$-equilateral set in the unit sphere is equivalent to the 
 existence of $\varepsilon>0$ and an  uncountable $(1+\varepsilon)$-separated  subset in the unit sphere
 by Theorem 2.6 of \cite{mer-ck}. So the space $C(K)$ with the hyperlateral
 unit sphere of Theorem \ref{main-ch} (2) cannot admit any of the above sets
 as the spheres with uncountable $2$-equilateral sets are clearly dichotomous.
 
 \end{proof}
 
 \begin{corollary} $ $
 \begin{enumerate}
 \item If a nonseparable  subspace of $\ell_\infty$ has an uncountable analytic Auerbach system, then its 
unit sphere admits an uncountable $(1+)$-separated set.
\item  If a nonseparable subspace of $\ell_\infty$ is analytic, then its 
unit sphere admits an uncountable $\varepsilon$-approximately $1$-equilateral sets for every $\varepsilon>0$.
 \end{enumerate}
 \end{corollary}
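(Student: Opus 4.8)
The plan is to reduce both parts to the main theorems already established, the only real work being a descriptive-set-theoretic step needed for~(1).

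For~(2) the argument is immediate: a nonseparable linear subspace $\X\subseteq\ell_\infty$ which is analytic as a subset of $\R^\N$ is, via the inclusion map (an isometry onto its image), a Banach space isometrically embedded into $\ell_\infty$ as an analytic subset of $\R^\N$. Hence Theorem~\ref{main-zfc}(1) applies, $S_\X$ is dichotomous and in particular almost dichotomous, and since $\X$ is nonseparable Theorem~\ref{main-dich>equi} yields an uncountable $\varepsilon$-approximately $1$-equilateral subset of $S_\X$ for every $\varepsilon>0$.

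For~(1) I would like to invoke Theorem~\ref{main-auerbach}, whose hypothesis, however, asks for a dichotomous unit sphere, and this we cannot read off from $\X$ itself since $\X$ need not be analytic. The remedy is to pass to the closed linear span of the Auerbach vectors. So let $\{(x_\alpha,f_\alpha):\alpha\in I\}$ be the given uncountable Auerbach system, let $Y=\{x_\alpha:\alpha\in I\}$, which is analytic in $\R^\N$ (directly, or as a continuous image of the analytic system), and set $\X_0=\overline{\text{span}}\,Y\subseteq\ell_\infty$ with the inherited norm. Then $\X_0$ is nonseparable, because $\|x_\alpha-x_\beta\|\geq|f_\alpha(x_\alpha-x_\beta)|=1$ for $\alpha\neq\beta$, so $Y$ is an uncountable $1$-separated subset of $\X_0$; moreover $\{(x_\alpha,f_\alpha|_{\X_0}):\alpha\in I\}$ is an uncountable Auerbach system in $\X_0$, since biorthogonality is inherited and $1\leq\|f_\alpha|_{\X_0}\|\leq\|f_\alpha\|=1$. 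The key point is that $\X_0$ is again an analytic subset of $\R^\N$; granting this, Theorem~\ref{main-zfc}(1) makes $S_{\X_0}$ dichotomous, Theorem~\ref{main-auerbach} then produces an uncountable $(1+)$-separated subset of $S_{\X_0}$, and since $S_{\X_0}\subseteq S_\X$ this finishes the proof.

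It remains to see that $\X_0$ is analytic, and this is the one place where an argument is needed. The rational span $V=\bigcup_{k\in\N}\{\,q_1y_1+\dots+q_ky_k:\ q_1,\dots,q_k\in\Q,\ y_1,\dots,y_k\in Y\,\}$ is analytic, being a countable union of continuous images of the analytic sets $Y^k\subseteq(\R^\N)^k$, and it is norm-dense in $\X_0$. Since $\|\cdot\|_\infty$ is lower semicontinuous on $\R^\N$, being the supremum of the continuous coordinate functions, the set $\{(z,v)\in\R^\N\times\R^\N:\|z-v\|_\infty<1/k\}$ is $F_\sigma$ for each $k$; hence $U_k=\{z\in\R^\N:\mathrm{dist}(z,V)<1/k\}$ is the projection to the first coordinate of the analytic set $(\R^\N\times V)\cap\{(z,v):\|z-v\|_\infty<1/k\}$ and is therefore analytic, and $\X_0=\overline{V}^{\|\cdot\|}=\bigcap_{k\in\N}U_k$ is analytic because analytic sets are closed under countable intersections. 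This is the general fact, of the kind collected in Section~2.6, that the norm-closed linear span of an analytic subset of $\ell_\infty$ is analytic, and I expect it to be the main (essentially the only) obstacle. One could instead bypass $\X_0$ and apply the Feng-type dichotomy behind Proposition~\ref{feng>dich} directly to the partition of $[Y]^2$ into pairs with $\|x-y\|_\infty>1$ and pairs with $\|x-y\|_\infty\leq1$ (which is open because the norm is lower semicontinuous), at the cost that the ``countable union of sets of diameter $\leq1$'' alternative, which on an Auerbach system forces an uncountable $1$-equilateral subsystem, must then be turned into a $(1+)$-separated set directly, which is precisely the work done in Proposition~\ref{dich>auerbach}.
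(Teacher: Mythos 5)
Your proposal is correct, and for part (2) it is essentially the paper's argument: analyticity gives a dichotomous sphere via Theorem~\ref{main-zfc}(1), and nonseparability then yields the approximately $1$-equilateral sets (the paper routes this through Propositions~\ref{dich>ramsey+} and~\ref{sphere-no}(5), you through Theorem~\ref{main-dich>equi}; the content is the same).

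For part (1) you take a genuinely different, somewhat longer route. The paper simply invokes Proposition~\ref{feng-auerbach}(1), which rests on the fact that Proposition~\ref{feng>dich} makes \emph{any} analytic subset $\Y$ of a subspace of $\ell_\infty$ dichotomous (not merely the unit sphere of an analytic subspace), while Proposition~\ref{dich>auerbach} was deliberately stated so that it only requires the Auerbach vectors to lie in \emph{some} dichotomous $\Y\subseteq S_\X$, e.g.\ $\Y=\{x_\alpha:\alpha\in I\}$ itself. Thus the Feng dichotomy is applied directly to the analytic set of Auerbach vectors, and no passage to a closed linear span is needed. You instead pass to $\X_0=\overline{\mathrm{span}}\,Y$ and must therefore supply an extra descriptive-set-theoretic lemma, namely that the norm-closed span of an analytic subset of $\ell_\infty$ is analytic; your argument for this is correct (the rational span is analytic, $\{(z,v):\|z-v\|_\infty<1/k\}$ is $F_\sigma$ by lower semicontinuity of the norm on $\R^\N\times\R^\N$, projections and countable intersections preserve analyticity), and it is the analytic analogue of the paper's Lemma~\ref{span-Borel}, in fact easier since analytic sets are closed under projections whereas Borel sets are not. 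What each approach buys: yours yields the mildly useful byproduct that closed spans of analytic sets are analytic and lets you quote only the headline Theorems~\ref{main-zfc} and~\ref{main-auerbach}; the paper's is shorter and shows why Proposition~\ref{dich>auerbach} was formulated for a dichotomous subset rather than a dichotomous sphere. Your closing remark correctly identifies the paper's actual route as the alternative that bypasses $\X_0$.
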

 \begin{proof} 
 For (1) use Proposition \ref{feng-auerbach}.
 Item (2) follows from Theorem \ref{main-zfc} (1) and Proposition \ref{dich>ramsey+} and
 Proposition \ref{sphere-no} (5).
 \end{proof}

The structure of the paper is as follows. In the next section we gather well known results needed in the following sections
and prove some general results relevant to the rest of the paper. In the first subsection 2.1
we review our terminology and notation. In Section 2.3 we introduce
coefficients which say for which $r$ which part of the dichotomies $D_r$ hold. In fact all results in
the paper are supplied with this information whenever possible. We skipped this in 
the results presented in the introduction  not to clog it.
In sections 3, 4, 5 we present the parts of  Theorems \ref{main-zfc}, \ref{main-oca},
\ref{main-ch} pertaining to Banach spaces with weakly$^*$ separable dual balls, 
some WCG Banach spaces and
Banach spaces of continuous functions respectively.
Besides general theorems the paper contains one new construction in Proposition
\ref{C0K-counter}. However, many constructions already
 presented in the literature have their properties
substantially refined here. The fifth section also contains 
the analysis of the unit spheres of $C_0(K)$ spaces for $K$ locally compact
in the context of uncountable equilateral and $(1+)$-separated sets
which has not been done much in the literature so far and may be of
independent interest. The last section contains concluding remarks.

We would like to thank W. Marciszewski for two reasons. First, for 
motivating the author to prepare a survey seminar presentation concerning
the results of papers \cite{equi, pk-kottman, pk-hugh, ad-kottman, pk-kamil}. 
This lead to some observations which started the research which resulted in this paper. 
And secondly, for convincing the author  that the spaces
in Proposition \ref{examples-analytic} are not only analytic but in fact Borel.
In particular the proof of Lemma \ref{span-Borel} was provided by W. Marciszewski to the author.
We would like also  to thank the anonymous referee for very careful reading of the initial
version of this paper and indicating many inaccuracies.

\section{Preliminaries}

\subsection{Notation and terminology} 

Undefined set-theoretic notions and symbols could be found in \cite{jech} and
additional descriptive set-theoretic notions like related to Polish spaces and analytic sets can be found in \cite{kechris}.
Undefined Banach space theoretic notions and symbols could be found in \cite{fabianetal} . 

In particular $[X]^2$ denotes the family of all two element subsets of a set $X$. 
If $c:[X]^2\rightarrow Y$, then $X'\subseteq X$ is said to be $y$-monochromatic 
or just monochromatic if $c[[X']^2]=\{y\}$.  By $\kappa^+$ we mean
the smallest cardinal bigger than cardinal $\kappa$. The symbol $\omega_1$ denotes the first
uncountable cardinal and $2^\omega$ the cardinality of $\R$. If $A$ is a  subset of
a set $X$ clear from the context by $1_A$ we denote the characteristic function of $A$
whose domain is $X$.
 By
the continuum hypothesis {\sf CH} we mean the statement that $2^\omega=\omega_1$.
{\sf ZFC} denotes the Zermelo-Fraenkel set theory with the axiom of choice.
The Open Coloring Axiom {\sf OCA} is defined in Theorem \ref{todorcevic}.
The only consequence of Martin's axiom {\sf MA} that we will be using is Theorem \ref{ma}. 
For more details see \cite{jech}.

If $\X$ is a Banach space, then $S_\X$ denotes its unit sphere and $B_{\X^*}$ the
unit ball in the dual space $\X^*$.  By a Banach space of the form $C(K)$ ($C_0(K)$)
we mean  the Banach space of all real valued continuous functions on $K$ (of all continuous
functions where sets $\{x\in K: |f(x)|\leq\varepsilon\}$ are compact for all $\varepsilon>0$)
with the supremum norm denoted by $\|\ \|_\infty$, where $K$ is a compact Hausdorff space ($K$ is a locally 
compact Hausdorff space).
Here $K$ does not need to be specified if we talk about Banach spaces of the form $C(K)$ ($C_0(K)$).
The symbol $\ell_\infty$ stands for the Banach spaces of all bounded elements of $\R^\N$ 
with the supremum norm denoted by $\|\ \|_\infty$.
If $\kappa$ is a cardinal by the support of an element $x\in \R^\kappa$ we mean
$\{\alpha<\kappa: x(\alpha)\not=0\}$, it is denoted by $supp(x)$. By 
 $c_{00}(\kappa)$ we mean the subset of $\R^\kappa$ of all elements with finite supports.
 The Banach space $c_0(\kappa)$ consisting of all elements $x$
 of $\R^\kappa$ where $\{\alpha<\kappa: |x(\alpha)|\geq\varepsilon\}$ is finite
 for every $\varepsilon>0$ is considered as usual with the supremum norm denoted by $\|\ \|_\infty$.
 Similarly the Banach spaces $\ell_p(\kappa)$  for $1\leq p<\infty$ consisting of all elements $x$
 of $\R^\kappa$ where 
 $$\|x\|_p=\sqrt[p]{\sup(\{\Sigma_{\alpha\in F}|x(\alpha)|^p: F\subseteq\kappa, \ \hbox{is finite}\})}$$ is finite
 is considered as usual with the above $\|\ \|_p$ norm. 
 If $\X$ is a Banach space, by an Auerbach system we mean 
 $\{(x_i, x^*_i): i\in I\}\subseteq S_\X\times S_{\X^*}$ such that $x^*_i(x_j)=0$ unless
  $i=j$ and then $x^*_i(x_j)=1$. For more on Auerbach and biorthogonal systems see \cite{hajek}.
 
 \subsection{Classes of Banach spaces}
 Let us make a few comments on classes of nonseparable Banach spaces  of density not bigger than $2^\omega$
 that we consider in this paper.  They fall into three  subclasses: subspaces of $\ell_\infty$, 
 WCG (weakly compactly generated) Banach spaces and Banach spaces
 of continuous functions on nonmetrizable locally compact
 Hausdorff spaces.  The first subclass contains
 all Banach spaces of the form $C_0(K)$ for $K$ separable nonmetrizable and $\ell_1(\kappa)$
 for $\omega_1<\kappa\leq2^\omega$ while the second contains all
 spaces $\ell_p(\kappa)$, $c_0(\kappa)$
 for $\omega_1<\kappa\leq2^\omega$ and $1<p<\infty$. The first two classes are disjoint.
 To see how these classes differ it is perhaps better to consider  the
  class of WLD (weakly Lindel\"of determined) Banach spaces which contains all WCG
  Banach spaces and is characterized as the class of Banach
spaces $\X$ which admit a linearly dense set $D\subseteq \X$ (i.e., whose
span is dense) such that $\{d\in D: x^*(d)\not=0\}$ is
countable for each $x^*\in \X^*$ (\cite{wld}). On the other hand
subspaces of $\ell_\infty$ are exactly the Banach spaces
whose dual ball is weak$^*$ separable, in particular
they admit a countable $\{x_n^*: n\in \N\}\subseteq\X^*$
such that for any $x\in \X$ the condition  $x_n^*(x)=0$ for all $n\in \N$ implies that $x=0$.
In the topological language one class has separable nonmetrizable dual balls
while the other Corson compact dual balls (and so with each separable subspace metrizable).
The class of spaces of continuous functions intersects first two classes
but there are its elements which do not belong to any of 
them like the space of Proposition \ref{C0K-counter}.

\subsection{Numerical constants related to the metric dichotomies}

\begin{definition}\label{def-sets-constants}
Let $M$ be a set and $d:[M]^2\rightarrow \R_+$ be bounded. We introduce the following
$$\Sigma(M, d)=\{r\in \R_+\cup\{0\}:  M=\bigcup_{n\in \N} M_n, \ diam(M_n)\leq r \leq diam(M)\
 \ \hbox{\rm for all}\ n\in \N\}$$
 $$\sigma(M, d)=\inf(\Sigma(M, d))$$
 $${\mathsf K}^+(M, d)=\{r\in \R_+\cup\{0\}:  \exists N\subseteq M  \ N \ 
 \hbox{\rm is uncountable and $(r+)$-separated}\}$$
 $${\mathsf k}^+(M, d)=\sup({\mathsf K}^+(M, d)).$$
${\mathsf k}^+(M, d)$ will be  called the uncountable Kottman's $(+)$-constant of $M$.
 If $d$ is clear from the context we use $\Sigma(M)$, $\sigma(M)$, ${\mathsf K}^+(M)$, 
 ${\mathsf k}^+(M)$
instead of $\Sigma(M, d)$, $\sigma(M, d )$, ${\mathsf K}^+(M, d)$, ${\mathsf k}^+(M, d)$.
\end{definition}

If we replace uncountable $(r+)$-separated sets by infinite $r$-separated sets in the definition
of the uncountable Kottman's $(+)$-constant we obtain a well investigated Kottman's constant,
see for example \cite{castillo}. We immediately obtain the following

\begin{proposition}\label{sets-constants}
Suppose that $M$ is a set and $d:[M]^2\rightarrow\R_+$ is bounded. Then
\begin{enumerate}
\item ${\mathsf K}^+(M)$ and $\Sigma(M)$ are disjoint subintervals of $[0,diam(M)]$
with $$0\leq\sup({\mathsf K}^+(M))={\mathsf k}^+(M)\leq\sigma(M)=\inf(\Sigma(M))\leq diam(M).$$
\item $(M, d)$ is dichotomous if and only if ${\mathsf K}^+(M)\cup\Sigma(M)=[0,diam(M)]$.
\item $(M, d)$ is almost dichotomous if and only if ${\mathsf k}^+(M)=\sigma(M)$.
\end{enumerate}
\end{proposition}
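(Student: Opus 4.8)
The plan is to establish each of the three items essentially by unwinding the definitions of $\Sigma(M)$, $\mathsf K^+(M)$ and the two dichotomy notions, with the only nontrivial content being the interval (convexity/``downward--upward closure'') structure asserted in item (1). Throughout, let $D=\mathrm{diam}(M)$ and fix the bounded $d:[M]^2\to\R_+$.

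\emph{Item (1).} First I would observe the two ``closure'' facts. If $N\subseteq M$ is uncountable and $(r+)$-separated and $0<r'\le r$, then $N$ is also $(r'+)$-separated, so $\mathsf K^+(M)$ is downward closed in $[0,D]$; together with $0\in\mathsf K^+(M)$ (witnessed by any uncountable $N\subseteq M$, noting $d>0$ on pairs) and the trivial bound $r\le D$ for any $(r+)$-separated set of size $\ge 2$, this makes $\mathsf K^+(M)$ a subinterval of $[0,D]$ containing $0$. Dually, if $M=\bigcup_n M_n$ with $\mathrm{diam}(M_n)\le r$ and $r\le r'\le D$, the same decomposition witnesses $r'\in\Sigma(M)$, so $\Sigma(M)$ is upward closed in $[0,D]$; it contains $D$ (take $M_0=M$) and is contained in $[0,D]$ by definition, hence is a subinterval of $[0,D]$ with right endpoint $D$. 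Next, disjointness: if $r\in\mathsf K^+(M)\cap\Sigma(M)$, pick an uncountable $(r+)$-separated $N$ and a decomposition $M=\bigcup_n M_n$ with $\mathrm{diam}(M_n)\le r$; since $N$ is uncountable, some $M_n$ meets $N$ in two distinct points $x,y$, and then $d(x,y)>r$ (from $N$) while $d(x,y)\le\mathrm{diam}(M_n)\le r$ (from the decomposition), a contradiction. So $\mathsf K^+(M)$ and $\Sigma(M)$ are disjoint subintervals of $[0,D]$, with $\mathsf K^+(M)$ lying to the left of $\Sigma(M)$ (any point of one sits below any point of the other, by disjointness plus the closure directions). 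The displayed chain of inequalities
$$0\le \sup(\mathsf K^+(M))=\mathsf k^+(M)\le \sigma(M)=\inf(\Sigma(M))\le D$$
is then immediate: $0\in\mathsf K^+(M)$ gives the first inequality, the middle equalities are the definitions of $\mathsf k^+$ and $\sigma$, the bound $\mathsf k^+(M)\le\sigma(M)$ is exactly ``$\mathsf K^+$ lies below $\Sigma$'', and $\sigma(M)\le D$ since $D\in\Sigma(M)$.

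\emph{Items (2) and (3).} For (2), by Definition \ref{def-dichotomous}, $M$ is dichotomous iff for every $r\in(0,D]$ either $r\in\mathsf K^+(M)$ (there is an uncountable $(r+)$-separated set) or $r\in\Sigma(M)$ (there is a countable decomposition into sets of diameter $\le r$). Since $0$ is automatically in $\mathsf K^+(M)$, this says precisely $[0,D]=\mathsf K^+(M)\cup\Sigma(M)$. For (3), by the ``for all but one $r$'' clause of Definition \ref{def-dichotomous}, $M$ is almost dichotomous iff $\mathsf K^+(M)\cup\Sigma(M)$ omits at most one point of $[0,D]$. Given that $\mathsf K^+(M)$ and $\Sigma(M)$ are disjoint intervals with $\mathsf K^+(M)$ an initial segment and $\Sigma(M)$ a final segment of $[0,D]$, their union is all of $[0,D]$ minus at most one point exactly when the gap between them is at most a single point, i.e.\ $\sup\mathsf K^+(M)=\inf\Sigma(M)$, which is $\mathsf k^+(M)=\sigma(M)$. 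The only mild subtlety is bookkeeping about whether the common value $\mathsf k^+(M)=\sigma(M)$ itself belongs to one of the two sets or to neither — in the first case the union is all of $[0,D]$, in the second it omits exactly that one point — but in both cases ``at most one point of $[0,D]$ is omitted'' holds, and conversely if at most one point is omitted then the two intervals cannot have a gap of positive length, forcing $\mathsf k^+(M)=\sigma(M)$. I expect the main (and still quite routine) obstacle to be this careful handling of endpoints in item (1) and the endpoint case analysis in item (3); no genuinely hard mathematics is involved.
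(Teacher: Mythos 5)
Your proof is correct and is exactly the routine unwinding of Definition \ref{def-sets-constants} that the paper has in mind: the paper states this proposition without proof (``We immediately obtain the following''), and your interval/disjointness/gap analysis is the intended argument. The only implicit hypothesis you use is that $M$ is uncountable (so that $0\in{\mathsf K}^+(M)$ and the suprema are meaningful), which is the paper's intended setting throughout.
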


The above proposition explains why being almost dichotomous as in Definition \ref{def-dichotomous}
is a natural condition.

\begin{proposition}\label{riesz} Let $\kappa$
be an infinite cardinal and $\X$ be an infinite dimensional  Banach space of density $\kappa$
and let $\varepsilon>0$.
 $S_\X$ admits a $(1-\varepsilon)$-separated set of cardinality $\kappa$ and is of diameter $2$.
 In particular, if $\X$ is nonseparable,  then 
 \begin{enumerate}
 \item $[0,1)\subseteq{\mathsf K}^+(S_\X)$, ${\mathsf k}^+(S_\X)\geq1$,
 \item $[0, diam(S_\X)]\setminus ({\mathsf K}^+(S_\X)\cup \Sigma(S_\X))$ is 
 a subinterval of $[1,2)$,
 \item  $2\in \Sigma(S_\X)$, $\sigma(S_\X)\leq2$.
 \end{enumerate}
\end{proposition}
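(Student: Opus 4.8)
The plan is to establish the three numbered conclusions as consequences of the single opening assertion of Proposition~\ref{riesz}, namely that in an infinite dimensional Banach space $\X$ of density $\kappa$ the sphere $S_\X$ contains a $(1-\varepsilon)$-separated set of cardinality $\kappa$ and has diameter $2$. First I would recall Riesz's lemma: given a proper closed subspace $Y\subsetneq\X$ and $\theta\in(0,1)$, there is $x\in S_\X$ with $\mathrm{dist}(x,Y)>\theta$. Using transfinite recursion of length $\kappa$, I would build $\{x_\alpha:\alpha<\kappa\}\subseteq S_\X$ so that $x_\alpha$ has distance greater than $1-\varepsilon$ from $Y_\alpha:=\overline{\mathrm{span}}\{x_\beta:\beta<\alpha\}$; at each step $Y_\alpha$ is a proper closed subspace because $\mathrm{dens}(Y_\alpha)\le|\alpha|<\kappa=\mathrm{dens}(\X)$, so Riesz applies and the recursion does not terminate prematurely. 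Then for $\beta<\alpha$ we have $\|x_\alpha-x_\beta\|\ge\mathrm{dist}(x_\alpha,Y_\alpha)>1-\varepsilon$, so the set is $(1-\varepsilon)$-separated of size $\kappa$. That $diam(S_\X)\le 2$ is the triangle inequality with $\|x\|=\|y\|=1$; that $diam(S_\X)\ge 2$ (hence $=2$) follows because the above construction already produces points whose mutual distance can be taken arbitrarily close to $2$ — take $\varepsilon$ small — or alternatively: pick any $x\in S_\X$ and note $x,-x\in S_\X$ with $\|x-(-x)\|=2$.

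Next I would derive the three itemized statements under the assumption that $\X$ is nonseparable, so $\kappa=\mathrm{dens}(\X)\ge\omega_1$, hence the $(1-\varepsilon)$-separated sets produced above are uncountable. For (1): for every $r<1$ choose $\varepsilon$ with $1-\varepsilon>r$; the resulting uncountable $(1-\varepsilon)$-separated set is in particular $(r+)$-separated, so $r\in{\mathsf K}^+(S_\X)$. This gives $[0,1)\subseteq{\mathsf K}^+(S_\X)$, and therefore ${\mathsf k}^+(S_\X)=\sup{\mathsf K}^+(S_\X)\ge 1$. For (3): $S_\X$ is trivially the union of countably many (indeed one) set of diameter $\le 2=diam(S_\X)$, so $2\in\Sigma(S_\X)$, whence $\sigma(S_\X)=\inf\Sigma(S_\X)\le 2$. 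For (2): by Proposition~\ref{sets-constants}(1), ${\mathsf K}^+(S_\X)$ and $\Sigma(S_\X)$ are disjoint subintervals of $[0,2]$ with ${\mathsf K}^+(S_\X)$ lying to the left of $\Sigma(S_\X)$; the complement $[0,2]\setminus({\mathsf K}^+(S_\X)\cup\Sigma(S_\X))$ is then the "gap" interval between them, which by (1) starts at or after $1$ (since $[0,1)\subseteq{\mathsf K}^+(S_\X)$) and by (3) ends strictly before $2$ (since $2\in\Sigma(S_\X)$); hence it is a subinterval of $[1,2)$. (If the gap is empty, it is vacuously a subinterval of $[1,2)$.)

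I do not expect a serious obstacle here; this is essentially a packaging of Riesz's lemma together with the bookkeeping of Proposition~\ref{sets-constants}. The only point requiring a little care is the transfinite recursion: one must check that at a limit stage $\alpha$ the subspace $Y_\alpha$ is still \emph{proper}, which comes down to the density argument $\mathrm{dens}(Y_\alpha)\le|\alpha|<\kappa$, and that the separation constant $1-\varepsilon$ is preserved — which it is, since $x_\alpha$ is kept at distance $>1-\varepsilon$ from \emph{all} previously chosen points simultaneously by keeping it far from their closed span. The mild subtlety is that we want a \emph{single} set of size $\kappa$ that is $(1-\varepsilon)$-separated for a \emph{fixed} $\varepsilon$ (to feed into item (1) we simply run the construction once per value of $\varepsilon$), rather than trying to squeeze $(1+)$-separation, which need not be available — consistent with the fact that item (2) explicitly allows a nonempty gap above $1$.
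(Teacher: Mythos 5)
Your proof is correct and follows essentially the same route as the paper: a transfinite recursion of length $\kappa$ using the Riesz lemma (properness of the spans guaranteed by the density count), the triangle inequality plus antipodal points for $diam(S_\X)=2$, and then items (1)--(3) read off from the first part together with Proposition \ref{sets-constants}. The extra details you supply (the $x,-x$ observation and the explicit "gap is an interval" argument for (2)) are exactly the routine verifications the paper leaves implicit.
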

\begin{proof} Construct the set from the first part of the proposition by transfinite induction.
Having constructed its first $\lambda$ elements for some $\lambda<\kappa$
consider the closed subspace $\Y$ of $\X$ generated by the constructed elements.
To find the next element use the Riesz lemma which says that
for every Banach space $\X$ and its proper closed subspace $\Y\subseteq \X$, for every $\varepsilon>0$
there is $x\in S_\X$ such that $\|x-y\|>1-\varepsilon$ for every $y\in \Y$.
The diameter $2$ follows from the triangle inequality. The remaining items follow
from the first part and Proposition \ref{sets-constants}.
\end{proof}

\subsection{Dichotomous spheres, approximately equilateral sets and Auerbach systems}

\begin{proposition}\label{terenzi1} {\rm (Terenzi \cite{terenzi})}
A Banach space $\X$ admits an uncountable equilateral set if and only if
its unit sphere $S_{\X}$ admits $1$-equilateral set.
\end{proposition}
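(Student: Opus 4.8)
The plan is to prove both implications, where the nontrivial direction is normalizing an arbitrary uncountable equilateral set to a $1$-equilateral set inside the unit sphere. The easy direction is immediate: if $S_\X$ admits an uncountable $1$-equilateral set then $\X$ admits an uncountable equilateral set, since a $1$-equilateral set is in particular equilateral. So the content is: given an uncountable $r$-equilateral set $\{x_\alpha : \alpha<\omega_1\}\subseteq \X$ (for some $r>0$), produce an uncountable $1$-equilateral subset of $S_\X$.

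First I would consider the differences. Fix a point, say $x_0$, and look at the vectors $y_\alpha = (x_\alpha - x_0)/r$ for $\alpha \geq 1$. Each $y_\alpha$ has norm exactly $1$ (since $\|x_\alpha - x_0\| = r$), so $\{y_\alpha : \alpha \geq 1\}\subseteq S_\X$; and for distinct $\alpha,\beta\geq 1$ we have $\|y_\alpha - y_\beta\| = \|x_\alpha - x_\beta\|/r = 1$. Hence $\{y_\alpha : \alpha\geq 1\}$ is already an uncountable $1$-equilateral set in $S_\X$, without needing any further pruning. That closes the proposition directly.

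The only point requiring a little care is that $\{y_\alpha\}$ is genuinely uncountable, i.e. that $\alpha\mapsto y_\alpha$ is injective on $\{\alpha : \alpha\geq 1\}$ — but this is clear since $\|y_\alpha - y_\beta\| = 1 \neq 0$ for $\alpha\neq\beta$. So there is no real obstacle here; the statement is a pure normalization and translation argument, and the referenced Terenzi attribution is presumably for having recorded this reduction. I would also remark that the same argument shows the quantitative fact used elsewhere in the paper: an uncountable $r$-equilateral set in $\X$ for any $r>0$ yields an uncountable $1$-equilateral set in $S_\X$, and conversely the unit sphere having an uncountable $1$-equilateral set is the weakest such hypothesis, so all the problems of type (E) can be phrased purely in terms of $S_\X$.
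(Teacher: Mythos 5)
Your proposal is correct and is essentially the paper's own proof: the paper likewise fixes $y_0$ in an $r$-equilateral set $\Y$ and passes to $\{(y-y_0)/r : y\in\Y\setminus\{y_0\}\}$, which is exactly your translate-and-rescale argument. Nothing is missing, and the converse direction is, as you say, immediate.
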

\begin{proof}
Suppose that $\Y\subseteq\X$ is $r$-equilateral for some $r>0$.
Consider $\{(y/r)-(y_0/r): y\in \Y\setminus\{y_0\}\}$ for some $y_0\in \Y$.
\end{proof}

Recall that if  $\X$ is a Banach space, by Auerbach system we mean 
 $\{(x_i, x^*_i): i\in I\}\subseteq S_\X\times S_{\X^*}$ such that $x^*_i(x_j)=0$ unless $i=j$ and then $x^*_i(x_j)=1$. 

\begin{proposition}\label{dich>auerbach} Let $\X$ be a Banach which admits an uncountable Auerbach system
$\{(x_i, x^*_i): i\in I\}$  such that $\{x_i: i\in I\}\subseteq \Y$, where $\Y \subseteq S_\X$
is  dichotomous
{\rm (}e.g., $\Y=S_\X$ or $\Y=\{x_i: i\in I\}${\rm )}. Then $S_\X$ admits an uncountable
$(1+)$-separated set.  
\end{proposition}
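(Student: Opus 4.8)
The plan is to split into two cases according to the dichotomy applied to $\Y$ at the value $r=1$. If $\Y$ admits an uncountable $(1+)$-separated set, then since $\Y\subseteq S_\X$ we are done immediately: that same set is an uncountable $(1+)$-separated subset of $S_\X$. So the real work is the other alternative, where $\Y=\bigcup_{n\in\N}\Y_n$ with $\operatorname{diam}(\Y_n)\leq 1$ for each $n$. I would then fix one such $n$ for which $\{i\in I: x_i\in\Y_n\}$ is uncountable (possible by uncountability of $I$ and the pigeonhole principle), relabel so that $\{x_i:i\in I'\}\subseteq \Y_n$ for uncountable $I'\subseteq I$, and work inside this piece of small diameter.

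The key point is that an Auerbach system packed into a set of diameter $\leq 1$ cannot have all its pairwise distances $\leq 1$ in a ``flat'' way: I would use the functionals $x^*_i$ to perturb. Concretely, for distinct $i,j\in I'$ consider the vector $x_i-x_j$. We have $x^*_i(x_i-x_j)=1$ and $x^*_j(x_i-x_j)=-1$, so $\|x_i-x_j\|\geq 1$, and in fact $\operatorname{diam}(\Y_n)\leq 1$ forces $\|x_i-x_j\|=1$ for all distinct $i,j\in I'$; thus $\{x_i:i\in I'\}$ is already uncountable and $1$-equilateral. To push $1$ up to ``$>1$'' one replaces each $x_i$ by a slight move toward the direction seen by its own functional. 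The natural trick: pick a small $\delta>0$ and set $y_i = \frac{x_i + \delta v_i}{\|x_i+\delta v_i\|}$ for a suitable fixed-norm perturbation $v_i$, or more simply work with $z_{ij}=x_i - x_j$ and observe that for a third index $k$, $x^*_k(z_{ij})=0$, which lets one separate the ``$ij$ pair'' from index $k$ and iterate. I would then extract by a $\Delta$-system / counting argument an uncountable subfamily on which the functionals behave coherently (e.g., the values $x^*_i(x_j)$ for $j$ in a fixed finite ``root'' are controlled), and define the $(1+)$-separated set as normalized combinations $x_i + \delta\,x_{\phi(i)}$ for a fixing $\phi$, checking via the biorthogonality that $\|(x_i+\delta x_{\phi(i)}) - (x_j+\delta x_{\phi(j)})\| \geq 1 + c\delta$ for some $c>0$ once the indices are pairwise distinct and disjoint from the roots.

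I expect the main obstacle to be the bookkeeping in this last extraction: the perturbation vectors must be normalized (to land in $S_\X$), and normalizing distorts the distances, so one needs the perturbations to be uniformly small and the lower bound $1+c\delta$ to survive the renormalization; this is where an uncountable-pigeonhole step on the (possibly uncountably many) values of the auxiliary functionals, together with a $\Delta$-system lemma to make supports/roots disjoint, does the heavy lifting. Everything else — the case split from the dichotomy, the observation that diameter $\leq 1$ plus Auerbach forces $1$-equilaterality, and the final triangle-inequality estimate — should be routine. Note also the parenthetical special cases: if $\Y=\{x_i:i\in I\}$ itself is the dichotomous set, the argument is identical since $I'=I$ after passing to the small-diameter piece; if $\Y=S_\X$, there is nothing extra to check.
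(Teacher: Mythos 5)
Your case split and the first reduction are fine: in the second alternative of the dichotomy, pigeonhole gives an uncountable subfamily $\{x_\alpha:\alpha<\omega_1\}$ of the Auerbach system lying in a single set of diameter at most $1$, and since $1=x^*_\alpha(x_\alpha-x_\beta)\leq\|x_\alpha-x_\beta\|$ this subfamily is $1$-equilateral. But the crucial step --- producing an uncountable $(1+)$-separated subset of $S_\X$ from a $1$-equilateral Auerbach system --- is exactly what your sketch does not deliver. A perturbation $y_i=(x_i+\delta x_{\phi(i)})/\|x_i+\delta x_{\phi(i)}\|$ with a single ``target'' $\phi(i)$ per index cannot work: for a pair $i\neq j$ with $\phi(i),\phi(j)\notin\{i,j\}$ and $\phi(i)\neq\phi(j)$ (which is unavoidable for most pairs, whatever $\phi$ you choose), the biorthogonal functionals applied to $(x_i+\delta x_{\phi(i)})-(x_j+\delta x_{\phi(j)})$ only take the values $\pm1$ and $\pm\delta$, so they certify a lower bound of $1$ and nothing more; there is no reason the actual norm exceeds $1$, and after normalization (the denominators can be as large as $1+\delta$) the distances may even drop below $1$. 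Also, the $\Delta$-system extraction you invoke has nothing to act on: by biorthogonality the values $x^*_i(x_j)$ are already exactly $0$ or $1$, and there are no ``supports'' in an abstract Banach space to make disjoint.

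The paper's proof (following Proposition 3.4 of \cite{hkr}) gets around this by perturbing each vector by an \emph{infinite} combination involving \emph{all} earlier vectors: reindex the equilateral subsystem as $\{x_\alpha:\alpha<\omega_1\}$, fix positive reals $c_i$ with $\sum_{i=0}^\infty c_i=1$ and bijections $e_\alpha:\N\rightarrow\alpha$, and set $y_\alpha=x_\alpha-\sum_{i=0}^\infty c_i x_{e_\alpha(i)}$. Then $1$-equilaterality gives $\|y_\alpha\|\leq\sum_i c_i\|x_\alpha-x_{e_\alpha(i)}\|=1$, while $x^*_\alpha(y_\alpha)=1$ forces $\|y_\alpha\|=1$, so the $y_\alpha$ lie exactly on $S_\X$ and no renormalization is needed; and for $\beta<\alpha$, taking $i$ with $e_\alpha(i)=\beta$, one gets $|x^*_\beta(y_\alpha-y_\beta)|=1+c_i>1$. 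The structural point your construction misses is that for \emph{every} pair $\beta<\alpha$ the functional $x^*_\beta$ must see a strictly negative coefficient in $y_\alpha$ against the coefficient $1$ it sees in $y_\beta$; one perturbation direction per vector cannot guarantee this, whereas subtracting a convex combination running over all earlier indices does.
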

\begin{proof}
Suppose that there is no uncountable $(1+)$-separated set in $S_\X$ and let us arrive at a contradiction.
Under  the hypothesis $\Y$ and so $\{x_i: i\in I\}$ is the countable union of sets of diameters not
bigger than $1$.  By renumerating a subsystem of the Auerbach  system we conclude that   there is an uncountable
Auerbach system $\{x_\alpha: \alpha<\omega_1\}$ which has diameter not bigger than $1$
and so it is an uncountable  $1$-equilateral set since $1=x^*_\alpha(x_\alpha-x_\beta)\leq\|x_\alpha-x_\beta\|$.
 The rest of the proof follows 
the proof of Proposition 3.4 of \cite{hkr}.
Let $(c_i)_{i\in\N}$ have all positive terms and $\Sigma_{i=0}^\infty c_i=1$. For
each $\omega\leq\alpha<\omega_1$ let $e_\alpha:\N\rightarrow \alpha$ be a bijection.
Consider
$$y_\alpha= x_\alpha-\Sigma_{i=0}^\infty c_i x_{e_\alpha(i)}.$$
Note that $\|y_\alpha\|\leq \Sigma_{i=0}^\infty c_i \|x_\alpha-x_{e_\alpha(i)}\|=1$ and
$1=x^*_\alpha(y_\alpha)\leq\|y_\alpha\|$, so $\|y_\alpha\|=1$.
On the other hand for every $\beta<\alpha<\omega_1$ for $i\in\N$ such that $e_\alpha(i)=\beta$ we have
$$|-c_i-1|=|x^*_\beta(y_\alpha-y_\beta)|\leq\|y_\alpha-y_\beta\|.$$
So $\{y_\alpha: \alpha<\omega_1\}$ is $(1+)$-separated. A contradiction.
\end{proof}

\begin{proposition}\label{dich>ramsey} Let $d: [M]^2\rightarrow \R_+$. 
If $M$ is uncountable, bounded and  almost dichotomous with  ${\mathsf k}^+(M)=\sigma(M)=s>0$,
then for every $\varepsilon>0$ the set $M$ contains an uncountable 
subset $N$ which is   $\varepsilon$-approximately $s$-equilateral.  
\end{proposition}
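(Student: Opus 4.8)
The plan is a pigeonhole argument on top of Proposition \ref{sets-constants}. Fix $\varepsilon>0$; since an $\varepsilon'$-approximately $s$-equilateral set is also $\varepsilon$-approximately $s$-equilateral whenever $\varepsilon'<\varepsilon$, I may assume $0<\varepsilon<s$. The target is an uncountable $N\subseteq M$ with $s-\varepsilon<d(x,y)<s+\varepsilon$ for all distinct $x,y\in N$, and the idea is to obtain the lower bound from a large separated set and the upper bound from a single block of a countable decomposition into small pieces, and then intersect.

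For the lower bound: because $d$ is $\R_+$-valued, every uncountable subset of $M$ is $(0+)$-separated, so $0\in {\mathsf K}^+(M)$; by Proposition \ref{sets-constants}(1) the set ${\mathsf K}^+(M)$ is a subinterval of $[0,diam(M)]$ with $\sup {\mathsf K}^+(M)={\mathsf k}^+(M)=s$, hence $[0,s)\subseteq {\mathsf K}^+(M)$, and in particular $s-\varepsilon\in {\mathsf K}^+(M)$. So I would fix an uncountable $(r+)$-separated set $A\subseteq M$ with $r=s-\varepsilon$. For the upper bound: put $t=\min\{s+\varepsilon/2,\ diam(M)\}$, so that $s\le t\le diam(M)$. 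Since $diam(M)\in\Sigma(M)$ and $\inf\Sigma(M)=\sigma(M)=s$, Proposition \ref{sets-constants}(1) yields $t\in\Sigma(M)$ (if $t=s$ then $t=diam(M)\in\Sigma(M)$; otherwise $t\in(s,diam(M)]\subseteq\Sigma(M)$). Thus $M=\bigcup_{n\in\N}M_n$ with $diam(M_n)\le t\le s+\varepsilon/2$ for every $n$.

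Now combine the two ingredients: $A=\bigcup_{n\in\N}(A\cap M_n)$ is uncountable, so there is an $n$ with $N:=A\cap M_n$ uncountable. As $N\subseteq A$ we get $d(x,y)>s-\varepsilon$ for distinct $x,y\in N$, and as $N\subseteq M_n$ we get $d(x,y)\le diam(M_n)\le s+\varepsilon/2<s+\varepsilon$; hence $N$ is an uncountable $\varepsilon$-approximately $s$-equilateral set, as required. I do not anticipate a genuine obstacle here: the entire content is the interval picture of Proposition \ref{sets-constants} together with a pigeonhole. The only point needing mild care is the endpoint bookkeeping — shrinking the upper target to $s+\varepsilon/2$ and taking the minimum with $diam(M)$ is precisely what lets one avoid deciding whether $s$ itself lies in ${\mathsf K}^+(M)$ or in $\Sigma(M)$.
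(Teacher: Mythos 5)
Your proof is correct and follows essentially the same route as the paper: extract an uncountable $(s-\varepsilon+)$-separated set from ${\mathsf k}^+(M)=s$, decompose $M$ into countably many small-diameter pieces using $\sigma(M)=s$, and intersect by pigeonhole. Your $\varepsilon/2$ bookkeeping with $t=\min\{s+\varepsilon/2,\,diam(M)\}$ is in fact slightly more careful than the paper's own write-up, which only records the non-strict bound $d(y,y')\leq s+\varepsilon$ on the upper side.
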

\begin{proof}
By Proposition \ref{sets-constants} (3) the fact that $M$ is almost dichotomous implies that ${\mathsf k}^+(M)=\sigma(M)$.
Let $\varepsilon>0$ and let  $ M$ be of diameter $R$ and uncountable.
Since $s={\mathsf k}^+(M)>0$, 
there is an uncountable $(s-\varepsilon+)$-separated subset $N'$ of $M$.
Since $s=\sigma(M)<\infty$ as $M$ is bounded,
$M$ is the union of countably many
  sets $M_n$ for $n\in \N$ of diameter not bigger than
 $(s+\varepsilon)$.   So there is $n\in \N$ such that  $N=M_n\cap N'$ is uncountable.
 So $s-\varepsilon< d(y, y')\leq s+\varepsilon$  for all $y, y'\in N$, as required.
\end{proof}

\begin{proposition}\label{sphere-no}  Suppose that $\X$ is a nonseparable Banach space.  The following are equivalent
\begin{enumerate}
\item $\X$ is hyperlateral.
\item For every $s>0$ there is $\varepsilon>0$ such that no uncountable $s$-separated
$\Y\subseteq \X$ contains an uncountable $\varepsilon$-approximately equilateral set.
\item $S_\X$ is hyperlateral.
\item For every $0<r\leq 2$ there is $\varepsilon>0$   such that there is no 
uncountable $\Y\subseteq S_\X$ such that 
$$r-\varepsilon<\|y-y'\|<r+\varepsilon$$
for every distinct $y, y'\in\Y$.
\item There is $\varepsilon>0$ such that there is no 
uncountable $\Y\subseteq S_\X$ such that 
$$1-\varepsilon<\|y-y'\|<1+\varepsilon$$
for every distinct $y, y'\in\Y$.

\end{enumerate}
\end{proposition}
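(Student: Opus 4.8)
I would prove the five statements pairwise equivalent through a cycle organised around $(5)$, which is the weakest of them: $\neg(5)$ asserts only that $S_\X$ contains, for every $\varepsilon>0$, \emph{some} uncountable $\varepsilon$-approximately $1$-equilateral set. Several implications are immediate. $(2)\Rightarrow(1)$: an uncountable separated $\Y\subseteq\X$ is $s$-separated for some $s>0$, and the $\varepsilon$ supplied by $(2)$ for that $s$ works for $\Y$. $(1)\Rightarrow(3)$: an uncountable separated subset of $S_\X$ is one of $\X$. $(4)\Rightarrow(5)$ is the case $r=1$. And $(2)\Rightarrow(5)$: a $\Y$ witnessing $\neg(5)$ for $\varepsilon\le\tfrac14$ is already $\tfrac12$-separated, so it witnesses $\neg(2)$ with $s=\tfrac12$.

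The tool for the rest is an approximate version of Terenzi's renormalization (cf.\ Proposition~\ref{terenzi1}): if $Z$ is an uncountable $\varepsilon$-approximately $r$-equilateral set contained in an $s$-separated set, then $r>s-\varepsilon$ (because $s\le d(z,z')<r+\varepsilon$), and for $\varepsilon<s/2$, fixing $z_0\in Z$, the set $Z^{*}:=\{(z-z_0)/\|z-z_0\|:z\in Z\setminus\{z_0\}\}\subseteq S_\X$ is uncountable and $(4\varepsilon/s)$-approximately $1$-equilateral. Uncountability holds because $z\mapsto(z-z_0)/\|z-z_0\|$ is injective once $2\varepsilon<s$: two points with a common image are collinear with $z_0$, hence less than $2\varepsilon$ apart, contradicting $s$-separation; the estimate is a one-line triangle-inequality computation using that $\|z-z_0\|$ and $\|z-z'\|$ both lie in a common interval of length $\le2\varepsilon$ with left endpoint $\ge s$. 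This yields $(5)\Rightarrow(4)$ (renormalize the $\varepsilon$-approximately $r_0$-equilateral witnesses of $\neg(4)$, where $r_0>0$ is fixed) and, contrapositively, $(5)\Rightarrow(1),(2),(3)$: a negation of $(1)$, $(2)$ or $(3)$ supplies, for every $\delta>0$, an uncountable $\delta$-approximately $r_\delta$-equilateral set contained in a fixed $s$-separated set (the bad set itself, or the relevant witnessing $\Y_\delta$), so $r_\delta>s-\delta$, and renormalizing these sets produces exactly the configurations $(5)$ forbids.

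For $(1)\Rightarrow(5)$ one has to go the other way and assemble a single irregular set from $\varepsilon$-by-$\varepsilon$ data; in $\X$ this is done by translation. Given uncountable $\Y_n\subseteq S_\X$ with all pairwise distances in $(1-\tfrac1n,1+\tfrac1n)$, pick a unit vector $v\in\X$ and put $P_n:=w_n+\Y_n$ with $w_n:=10n\,v$. Translation preserves the internal metric, so $P_n$ still has all pairwise distances in $(1-\tfrac1n,1+\tfrac1n)$, while for $n\ne m$ a point of $P_n$ and a point of $P_m$ are at distance $\ge\|w_n-w_m\|-2\ge10-2=8$ (the unit sphere has diameter at most $2$). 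Hence $N:=\bigcup_{n\ge2}P_n$ is an uncountable $\tfrac12$-separated subset of $\X$ that, for every $\delta>0$, contains the $\delta$-approximately $1$-equilateral set $P_n$ with $n>1/\delta$; so $N$ witnesses $\neg(1)$.

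The last implication, $(3)\Rightarrow(5)$ (equivalently $\neg(5)\Rightarrow\neg(3)$), is the step I expect to be trickiest, since the irregular set must now lie \emph{on} $S_\X$, where translation is unavailable. Here is the plan. From $\neg(5)$ take, for each $n\ge4$, an uncountable $\Y_n\subseteq S_\X$ with all pairwise distances in $(1-\tfrac1n,1+\tfrac1n)\subseteq(\tfrac34,\tfrac54)$, so each $\Y_n$ is $\tfrac34$-separated; the task is to thin the $\Y_n$'s simultaneously to a single separated subset of $S_\X$. The key observation is that the relation ``$\|x-y\|<\tfrac38$'' between points of the countably many $\tfrac34$-separated sets $\Y_n$ is locally countable: a point of $\Y_n$ has at most one such partner in each $\Y_m$ (two would be less than $\tfrac34$ apart), hence at most $\aleph_0$ partners in total. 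Thus the graph on $\bigsqcup_{n\ge4}\Y_n$ given by this relation has countable vertex degree, so by an $\omega_1$-recursion --- at stage $\xi$ choosing a new vertex in a prescribed $\Y_{n(\xi)}$ avoiding the at most $\aleph_0$ vertices forbidden by earlier choices --- it carries an independent set meeting each $\Y_n$ in an uncountable set $M_n$. Then $M:=\bigcup_{n\ge4}M_n$ is $\tfrac38$-separated (within $M_n\subseteq\Y_n$ distances exceed $\tfrac34$; between distinct $M_n$'s they are $\ge\tfrac38$ by independence) and, for every $\delta>0$, contains the $\delta$-approximately $1$-equilateral set $M_n$ with $n>1/\delta$; so $M$ witnesses $\neg(3)$, and the cycle is closed.
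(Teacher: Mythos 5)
Your proof is correct, and it rests on the same two devices as the paper's own argument: the normalization map $z\mapsto(z-z_0)/\|z-z_0\|$ carrying an $\varepsilon$-approximately equilateral subset of an $s$-separated set to an uncountable approximately $1$-equilateral subset of $S_\X$ (this is exactly the computation the paper performs for (5)$\Rightarrow$(1), after a preliminary rescaling by $s_\varepsilon$ that your version of the estimate makes unnecessary, since the denominators are already bounded below by $s$ by separation), and the $\omega_1$-recursion that assembles countably many uncountable separated approximately equilateral sets into a single uncountable separated set containing uncountable pieces of each (this is how the paper proves (1)$\Leftrightarrow$(2) and (3)$\Leftrightarrow$(4); you use the identical degree-$\aleph_0$ argument once, for (3)$\Rightarrow$(5)). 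The differences are organizational: you route everything through (5) as a hub, whereas the paper's cycle is (1)$\Leftrightarrow$(2), (2)$\Rightarrow$(3), (3)$\Leftrightarrow$(4), (4)$\Rightarrow$(5), (5)$\Rightarrow$(1). A small dividend of your routing is that you never have to extract a single limiting radius $r$ from the family $(r_\delta)_{\delta>0}$ (which the paper's $\neg(3)\Rightarrow\neg(4)$ implicitly requires, via compactness of $[s,2]$), because normalization forces $r=1$. Your translation trick for (1)$\Rightarrow$(5) does not appear in the paper and is a clean way to manufacture the bad separated set in $\X$ rather than on $S_\X$, but note it is redundant in your own scheme, since (1)$\Rightarrow$(3) is trivial and you prove (3)$\Rightarrow$(5) anyway. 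All the details check out: the injectivity of the normalization map for $2\varepsilon<s$, the $4\varepsilon/s$ estimate, and the fact that each previously chosen point excludes at most one point of each $\Y_m$ in the recursion.
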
 

\begin{proof} 

To prove that (1) implies (2) suppose that (2) fails and let us contradict
(1). If (2) fails, then  there is $s>0$ and  for every  natural $n>1$ there is an $s$-separated 
$\Y_{n}\subseteq \X$ of cardinality $\omega_1$ which is $(1/n)$-approximately equilateral.

 We will construct $\Z\subseteq \bigcup_{n\in \N}\Y_n$ of cardinality $\omega_1$ which
is $s/3$-separated and contains uncountable $(1/n)$-approximately equilateral set for arbitrary  $n\in\N$.
This will contradict (1). Let $(A_n: n\in \N)$ be a partition of $\omega_1$ into sets of cardinality $\omega_1$.
Construct by induction on $\xi<\omega_1$ the elements $x_\xi\in \Y_{n}$ for $\xi\in A_n$
such that $\|x_\eta-x_\xi\|>s/3$ for all $\eta<\xi$.  This can be done since if there
were two elements $x$ of $\Y_{n}$ such that $\|x_\eta-x\|\leq s/3$ for fixed $\eta<\xi$, we would contradict
the hypothesis that $\Y_{n}$ is $s$-separated.   So 
$\{x_\xi: \xi<\omega_1\}\subseteq \X$ is separated  and contains 
uncountably many terms of $\Y_{n}$  for all $n\in\N$ which is $(1/n)$-approximately equilateral.  
This contradicts (2) as required.
The reverse implication is clear, so (1) and (2) are equivalent.

The same argument implies that (3) and (4) are equivalent.
The implication (2)$\Rightarrow$(3) is clear.  
 The implication (4) $\Rightarrow$ (5) is clear. 

To prove that (5) implies (1)  we will suppose that there is  an uncountable  separated  $\Y\subseteq \X$
 which contains uncountable $\varepsilon$-approximately equilateral subsets for every $\varepsilon>0$ and 
 for every $\varepsilon>0$ we will construct
sets $\Z\subseteq S_\X$ of cardinality $\omega_1$ such that $1-\varepsilon<\|y-y'\|<1+\varepsilon$
for every distinct $y, y'\in\Z$.  

Let $r>0$  be such that $r\leq \|y-y'\|$
for every distinct $y, y'\in\Y$.  Using the failure of (1) for $\Y$ find an uncountable $\Y_\varepsilon\subseteq \Y$
and $s_\varepsilon\geq r$  such that  
$s_\varepsilon-\varepsilon<\|y-y'\|<s_\varepsilon+\varepsilon$ for every distinct $y, y'\in\Y_\varepsilon$.
So we have $1-\varepsilon/s_\varepsilon<\|y/s_\varepsilon-y'/s_\varepsilon\|<1+\varepsilon/s_\varepsilon$
and consequently $1-\varepsilon/r<\|z-z'\|<1+\varepsilon/r$
for all distinct $z, z'\in \Z_\varepsilon=\{y/s_\varepsilon: y\in \Y_\varepsilon\}$.

Note that then for distinct $z_\varepsilon, z\in \Z_\varepsilon$ we have 
$$\Big|{1\over{\|z_\varepsilon-z\|}}-{1}\Big|=\Big|{{1-\|z_\varepsilon-z\|}\over{\|z_\varepsilon-z\|}}
\Big|\leq {\varepsilon\over{r(1-\varepsilon/r)}}.\leqno(*)$$
Fix $z_\varepsilon\in \Z_\varepsilon$ and consider 
$$\mathcal U_\varepsilon=\Big\{{{z_\varepsilon-z}\over{\|z_\varepsilon-z\|}}: 
z\in \Z_\varepsilon\setminus \{z_\varepsilon\}\Big\}\subseteq S_\X.$$
Note that for $z, z'\in \Z_\varepsilon\setminus \{z_\varepsilon\}$ we have 
$$\Big\|  {z_\varepsilon-z\over{\|z_\varepsilon-z\|}} -  {z_\varepsilon-z'\over{\|z_\varepsilon-z'\|}   }  \Big\|
=\Big\|   {(z'-z) } +(z_\varepsilon-z)\Big({1\over{\|z_\varepsilon-z\|}}-{1}\Big)-(z_\varepsilon-z')
\Big({1\over{\|z_\varepsilon-z'\|}}-{1}\Big)        \Big\|.$$
So by (*) the distances between
any two distinct members of $\mathcal U_\varepsilon$ are
in between $1-\varepsilon/r-2{(1+\varepsilon/r)\varepsilon\over r(1-\varepsilon/r)}$ and 
$1+\varepsilon/r+2{(1+\varepsilon/r)\varepsilon\over r(1-\varepsilon/r)}$.
Since $r>0$ is fixed and $\varepsilon>0$ can be arbitrarily small this completes the proof.

\end{proof}

\begin{proposition}\label{dich>ramsey+} Suppose that $\X$ is a nonseparable Banach space
whose unit sphere $\X$ is almost dichotomous. Then $\X$ and $S_\X$ are not hyperlateral.
\end{proposition}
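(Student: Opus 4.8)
The statement to prove is Proposition~\ref{dich>ramsey+}: if $\X$ is a nonseparable Banach space whose unit sphere $S_\X$ is almost dichotomous, then neither $\X$ nor $S_\X$ is hyperlateral. The plan is to combine the two tools already available: Proposition~\ref{dich>ramsey}, which extracts uncountable approximately equilateral subsets from an almost dichotomous metric space, and Proposition~\ref{sphere-no}, which gives several equivalent reformulations of hyperlaterality in terms of the unit sphere.

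**Key steps.** First I would invoke Proposition~\ref{riesz}: since $\X$ is nonseparable, $S_\X$ is bounded with $\mathrm{diam}(S_\X)=2$ and ${\mathsf k}^+(S_\X)\geq 1>0$; moreover by Proposition~\ref{sets-constants}(3), being almost dichotomous means ${\mathsf k}^+(S_\X)=\sigma(S_\X)$, and call this common value $s$, so $s\geq 1>0$ and $s\leq 2<\infty$. Second, I would apply Proposition~\ref{dich>ramsey} with $M=S_\X$: for every $\varepsilon>0$ it produces an uncountable subset $N_\varepsilon\subseteq S_\X$ which is $\varepsilon$-approximately $s$-equilateral. Third, I would rescale: the set $\{ y/s : y\in N_\varepsilon\}$ is an uncountable $(\varepsilon/s)$-approximately $1$-equilateral subset of $S_\X$ (note $s\geq 1$, so $\varepsilon/s\leq\varepsilon$). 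Since $\varepsilon>0$ was arbitrary, this shows that for every $\delta>0$ there is an uncountable $\Y\subseteq S_\X$ with $1-\delta<\|y-y'\|<1+\delta$ for all distinct $y,y'\in\Y$. That is precisely the negation of condition~(5) in Proposition~\ref{sphere-no}. Finally, by the equivalence of (1), (3) and (5) in that proposition, neither $\X$ nor $S_\X$ is hyperlateral.

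**Main obstacle.** There is essentially no obstacle here beyond bookkeeping: the proposition is a short corollary of the two preceding results once one notices that almost dichotomous implies $\sigma(S_\X)={\mathsf k}^+(S_\X)=s\in[1,2]$, so that the approximately $s$-equilateral sets supplied by Proposition~\ref{dich>ramsey} rescale cleanly to approximately $1$-equilateral sets. The only point requiring a little care is confirming $s>0$ (so that Proposition~\ref{dich>ramsey} applies and the rescaling is legitimate), which is immediate from Proposition~\ref{riesz} since $\X$ is nonseparable. Thus the proof is just: reduce to Proposition~\ref{dich>ramsey}, rescale, and cite the relevant equivalence in Proposition~\ref{sphere-no}.

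\begin{proof}
Since $\X$ is nonseparable, Proposition~\ref{riesz} gives that $S_\X$ is bounded, uncountable, of diameter $2$, and ${\mathsf k}^+(S_\X)\geq 1$. As $S_\X$ is almost dichotomous, Proposition~\ref{sets-constants}(3) yields ${\mathsf k}^+(S_\X)=\sigma(S_\X)$; write $s$ for this common value, so $1\leq s\leq 2$. Let $\delta>0$ be arbitrary and put $\varepsilon=\delta$. By Proposition~\ref{dich>ramsey} applied to $M=S_\X$ there is an uncountable $N\subseteq S_\X$ which is $\varepsilon$-approximately $s$-equilateral, i.e. $s-\varepsilon<\|y-y'\|<s+\varepsilon$ for all distinct $y,y'\in N$. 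Then $\Y=\{y/s: y\in N\}\subseteq S_\X$ is uncountable and, dividing by $s$, satisfies
$$1-\frac{\varepsilon}{s}<\|z-z'\|<1+\frac{\varepsilon}{s}$$
for all distinct $z,z'\in\Y$; since $s\geq 1$ we have $\varepsilon/s\leq\varepsilon=\delta$, so $1-\delta<\|z-z'\|<1+\delta$ for all distinct $z,z'\in\Y$. As $\delta>0$ was arbitrary, condition~(5) of Proposition~\ref{sphere-no} fails for $\X$. By the equivalence of (1), (3) and (5) in Proposition~\ref{sphere-no}, neither $\X$ nor $S_\X$ is hyperlateral.
\end{proof}
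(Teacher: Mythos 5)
Your overall route is the same as the paper's (Proposition~\ref{riesz} and Proposition~\ref{sets-constants}(3) to get $s={\mathsf k}^+(S_\X)=\sigma(S_\X)\in[1,2]$, then Proposition~\ref{dich>ramsey} to produce, for each $\varepsilon>0$, an uncountable $\varepsilon$-approximately $s$-equilateral $N_\varepsilon\subseteq S_\X$, then an appeal to Proposition~\ref{sphere-no}), but the rescaling step is wrong. If $s>1$ — which does happen under the hypotheses, e.g.\ for $\ell_p(\kappa)$ where $s=\sqrt[p]{2}$, or $\ell_1(\kappa)$ where $s=2$ — then the vectors $y/s$ with $y\in N_\varepsilon$ have norm $1/s<1$, so $\{y/s:y\in N_\varepsilon\}$ is \emph{not} a subset of $S_\X$, and you cannot use it to negate condition~(5) of Proposition~\ref{sphere-no}, which quantifies only over uncountable subsets of the unit sphere. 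Renormalizing the points back to the sphere undoes the rescaling of the distances, so this shortcut cannot be repaired by scalar multiplication alone: passing from approximately $s$-equilateral subsets of $S_\X$ to approximately $1$-equilateral subsets of $S_\X$ is exactly the nontrivial content of the implication (5)$\Rightarrow$(1) in Proposition~\ref{sphere-no} (the construction using normalized differences $\frac{z_\varepsilon-z}{\|z_\varepsilon-z\|}$), not an elementary dilation.

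The fix is immediate and is what the paper does: the sets $N_\varepsilon\subseteq S_\X$ themselves already witness the failure of condition~(4) of Proposition~\ref{sphere-no} for the value $r=s\in(0,2]$, since for every $\varepsilon>0$ there is an uncountable $\Y\subseteq S_\X$ with $s-\varepsilon<\|y-y'\|<s+\varepsilon$ for all distinct $y,y'\in\Y$. By the equivalence of (4) with (3) and (1) in Proposition~\ref{sphere-no}, neither $S_\X$ nor $\X$ is hyperlateral. So your argument is correct up to the last reduction, and becomes the paper's proof once you cite condition~(4) at $r=s$ instead of rescaling into condition~(5).
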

\begin{proof} By Proposition \ref{riesz}  uncountable Kottman's $(+)$-constant  of $S_\X$ satisfies 
$s={\mathsf k}^+(S_\X)\geq1$.
So by Proposition \ref{dich>ramsey} for every $\varepsilon>0$ the sphere $S_\X$ contains uncountable
 $\varepsilon$-approximately $s$-equilateral sets which shows that
$S_\X$ is not hyperlateral by Proposition \ref{sphere-no} (4).
\end{proof}

\subsection{General colorings, metric spaces and unit spheres of Banach spaces}

The following lemma allows us to transfer colorings of pairs with two colors into metrics:

\begin{lemma}\label{color-dist} Suppose that $\kappa$ is a cardinal and  $c: [\kappa]^2\rightarrow[0,1]$.
Then $d_c: \kappa\times\kappa\rightarrow[0,2]$ defined by
\[
 d_c(\alpha, \beta) =
  \begin{cases}
    0 & \text{if $\alpha=\beta$,} \\
   1+c(\{\alpha, \beta\})  & \text{otherwise}.
  \end{cases}
\]
is a metric bounded by $2$. 
\end{lemma}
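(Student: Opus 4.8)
The plan is to verify the three metric-space axioms directly from the two-case definition of $d_c$. Symmetry and the condition $d_c(\alpha,\beta)=0\iff\alpha=\beta$ are immediate: $d_c(\alpha,\beta)=d_c(\beta,\alpha)$ because $c$ is defined on the \emph{unordered} pair $\{\alpha,\beta\}$, and for distinct $\alpha,\beta$ we have $d_c(\alpha,\beta)=1+c(\{\alpha,\beta\})\ge 1>0$, while $d_c(\alpha,\alpha)=0$ by fiat. So the only real content is the triangle inequality $d_c(\alpha,\gamma)\le d_c(\alpha,\beta)+d_c(\beta,\gamma)$.

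For the triangle inequality I would split into cases according to how many of $\alpha,\beta,\gamma$ coincide. If two of them are equal the inequality is trivial (one of the three distances is $0$ and the other two are equal). So assume $\alpha,\beta,\gamma$ are pairwise distinct. Then the right-hand side is $\bigl(1+c(\{\alpha,\beta\})\bigr)+\bigl(1+c(\{\beta,\gamma\})\bigr)\ge 1+1=2$, since $c$ takes values in $[0,1]\subseteq[0,\infty)$, whereas the left-hand side is $1+c(\{\alpha,\gamma\})\le 1+1=2$. Hence the left-hand side is at most $2$ and the right-hand side is at least $2$, giving the inequality. The same computation shows every value of $d_c$ lies in $[0,2]$: it is either $0$ or of the form $1+c(\{\alpha,\beta\})$ with $c(\{\alpha,\beta\})\in[0,1]$.

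There is essentially no obstacle here: the point of the definition is precisely that shifting the colors up by $1$ forces any two ``nontrivial'' distances to sum to at least $2$ while keeping each single distance at most $2$, which makes the triangle inequality automatic regardless of the coloring $c$. The only thing to be slightly careful about is bookkeeping the degenerate cases where some of the three points coincide, but each of those reduces instantly to a trivial inequality. One could also remark that $d_c$ restricted to the \emph{distinct} pairs is exactly the function $1+c$ on $[\kappa]^2$, so this lemma is just the observation that adding $1$ to any $[0,1]$-valued symmetric function on pairs produces a metric of diameter at most $2$.
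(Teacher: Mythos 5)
Your proposal is correct and follows essentially the same route as the paper: symmetry and positivity are immediate, and the triangle inequality is handled by the same case split, with the key point that for three pairwise distinct points the right-hand side is at least $2$ while the left-hand side is at most $2$. Nothing is missing.
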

\begin{proof} Let $d=d_c$. Consider $\alpha, \beta, \gamma$ in $\kappa$ and the triangle inequality
$d(\alpha, \beta)\leq d(\alpha, \gamma)+d(\gamma, \beta)$.  We may assume that $\alpha\not=\beta$.
If $\gamma\in \{\alpha, \beta\}$, then the right hand side also contains $d(\alpha, \beta)$ so the inequality is satisfied.
If $\gamma\not\in \{\alpha, \beta\}$, then both terms of the right hand side are at least one, so we are done as well
since the left hand side is bounded by $2$. 
\end{proof}

\begin{lemma}\label{kuratowski} Suppose that $(M, d)$ is a metric space 
of diameter not bigger than $2$. Then $M$ can be isometrically embedded into 
the unit sphere in a  Banach space of continuous functions on a compact Hausdorff space.
\end{lemma}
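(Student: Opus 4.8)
The plan is to use a shifted version of the classical Kuratowski isometric embedding of a metric space into a space of bounded functions, exploiting the hypothesis $\mathrm{diam}(M)\le 2$ to land exactly on the unit sphere rather than merely inside the unit ball.

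First I would recall the standard fact that $\ell_\infty(M)$, the Banach space of all bounded real-valued functions on the set $M$ with the supremum norm, is isometrically isomorphic to $C(K)$ for $K=\beta M_d$, the Stone--\v{C}ech compactification of $M$ equipped with the discrete topology; since $K$ is a compact Hausdorff space, $\ell_\infty(M)$ is a Banach space of continuous functions on a compact Hausdorff space, which is exactly the kind of target space we are allowed to use. Then I would define $\Phi\colon M\to\ell_\infty(M)$ by $\Phi(x)=d(x,\cdot)-1$, that is, $\Phi(x)(z)=d(x,z)-1$ for $z\in M$, and check three short points. (i) $\Phi(x)$ is bounded with $\|\Phi(x)\|_\infty\le 1$: indeed $0\le d(x,z)\le\mathrm{diam}(M)\le 2$ for every $z\in M$, so the values of $\Phi(x)$ lie in $[-1,1]$; this is the only place the diameter hypothesis enters. (ii) $\Phi(x)$ lies on the unit sphere: $\Phi(x)(x)=d(x,x)-1=-1$, so $\|\Phi(x)\|_\infty\ge 1$, hence equals $1$ by (i). (iii) $\Phi$ is an isometry: $\|\Phi(x)-\Phi(y)\|_\infty=\sup_{z\in M}|d(x,z)-d(y,z)|$, and the triangle inequality gives $|d(x,z)-d(y,z)|\le d(x,y)$ for every $z$, while the value $d(x,y)$ is attained at $z=y$, so the supremum is exactly $d(x,y)$.

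There is essentially no obstacle in this argument; the only subtlety is ensuring that the image meets the sphere at every point, and that is precisely what the constant shift by $-1$ accomplishes, being available exactly because $\mathrm{diam}(M)\le 2$ (for a general bounded metric space one would first have to rescale, which would distort the distances, so the hypothesis is genuinely used and not cosmetic). The degenerate case $|M|\le 1$ is covered by the same formula. This completes the embedding of $M$ into the unit sphere of $C(\beta M_d)$.
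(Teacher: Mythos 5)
Your proof is correct and is essentially the paper's argument in streamlined form: the paper adds an auxiliary point $x_0$ at distance $1$ from every point of $M$, applies the Kuratowski embedding $\Psi(y)=d(\cdot,y)$, and translates by $-\Psi(x_0)$, which on the $M$-coordinates is exactly your map $\Phi(x)=d(x,\cdot)-1$. Both arguments use the diameter bound in the same way (to keep the values in $[-1,1]$) and the same identification $\ell_\infty(M)\cong C(\beta M)$, so nothing further is needed.
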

\begin{proof}
Let $(M, d)$ be a metric space where the metric is bounded by $2$. Add one new point $x_0$ to
$M$ declaring the distance from $x_0$ to any point in $M$ to be $1$. We claim that 
$M\cup\{x_0\}$ is metric.
The triangle inequality is satisfied because
$d(x_0,y)=1\leq  1+d(y, y')= d(x_0, y')+d(y', y)$ and
$d(y, y')\leq 2=d(y, x_0)+d(x_0, y')$ for any $y, y'\in M$. So indeed
$M\cup\{x_0\}$ is metric and bounded by $2$. By
the Kuratowski embedding theorem $\Psi: M\rightarrow \ell_\infty(M)$ defined
by $\Psi(y)(x)=d(x, y)$  for any $x, y\in M$ is an isometric embedding.
 The space $\ell_\infty(M)$ is 
isometric to $C(K)$ for $K$ being $\beta M$.
 Then $x_0$ is sent to some $\Psi(x_0)\in C(K)$ and all
remaining points are sent to points distant by $1$ to $\Psi(x_0)$. So using (isometric) translation $\Phi$ we can
move the copy of $M\cup \{x_0\}$ so that $x_0$ is moved to $\Phi(\Psi(x_0))=0$. 
Then $\Phi[\Psi[M]]$ must be a subset of the unit sphere in $C(K)$ isometric to $M$.
\end{proof}

\subsection{Ramsey-type properties of the countable and the uncountable and some  consequences}

In this subsection we recall   the Ramsey property of the countable  and we discuss the Ramsey-type properties of the uncountable
with their
direct consequences relevant for this paper.

\begin{theorem}[Ramsey]\label{ramsey} 
Suppose that $X$ is an infinite  set and $c:[X]^2\rightarrow\{0,1\}$.
There is an infinite $Y\subseteq X$ such that $c[[Y]^2]$ is a singleton.
\end{theorem}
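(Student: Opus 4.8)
The final statement to prove is the classical Ramsey theorem for pairs: every $2$-colouring of $[X]^2$ on an infinite set $X$ admits an infinite monochromatic subset.

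The plan is to use the standard "Ramsey ultrafilter free" pigeonhole argument, building a nested sequence of infinite sets together with a distinguished sequence of points. First I would fix an arbitrary sequence of distinct elements of $X$, so without loss of generality assume $X$ is countably infinite and enumerate it as $x_0, x_1, x_2, \dots$; actually it is cleaner to work directly. I would construct recursively a sequence of points $a_0, a_1, a_2, \dots \in X$, infinite subsets $X = X_0 \supseteq X_1 \supseteq X_2 \supseteq \cdots$, and colours $i_0, i_1, \dots \in \{0,1\}$ as follows. Given an infinite set $X_n$, pick any $a_n \in X_n$. The map $y \mapsto c(\{a_n, y\})$ partitions the infinite set $X_n \setminus \{a_n\}$ into two pieces; by pigeonhole one piece is infinite, call it $X_{n+1}$, and let $i_n$ be the corresponding colour, so that $c(\{a_n, y\}) = i_n$ for every $y \in X_{n+1}$. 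Note $a_m \in X_{n+1}$ for all $m > n$, hence $c(\{a_n, a_m\}) = i_n$ whenever $m > n$.

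Next I would apply pigeonhole once more, now to the sequence of colours: since $i_n \in \{0,1\}$ for all $n \in \N$, there is some fixed $i \in \{0,1\}$ and an infinite set $S \subseteq \N$ with $i_n = i$ for all $n \in S$. Set $Y = \{a_n : n \in S\}$. This $Y$ is infinite because the $a_n$ are pairwise distinct (each $a_{n+1}$ lies in $X_{n+1}$, which excludes $a_0, \dots, a_n$). For any two distinct elements of $Y$, say $a_n$ and $a_m$ with $n < m$ and $n, m \in S$, we have $c(\{a_n, a_m\}) = i_n = i$. Hence $c[[Y]^2] = \{i\}$, a singleton, as required.

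The argument is entirely routine; there is no real obstacle. The only point requiring a word of care is ensuring the chosen points are genuinely distinct so that $Y$ is infinite, which is automatic from the construction since $a_{n+1} \in X_{n+1} \subseteq X_n \setminus \{a_n\}$, and more generally $a_m \notin \{a_0, \dots, a_{n}\}$ for $m > n$. One also uses the axiom of choice (or merely dependent choice) to perform the recursion, which is unproblematic in {\sf ZFC}.
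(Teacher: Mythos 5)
Your argument is the standard and correct proof of Ramsey's theorem for pairs: the recursive construction of points $a_n$ and nested infinite sets $X_n$ with the colour $i_n$ determined by $a_n$, followed by a second application of the pigeonhole principle to the sequence $(i_n)$, and the distinctness of the $a_n$ is justified exactly as needed. The paper itself states this theorem as a classical result without proof, so there is nothing to compare against; your write-up would serve as a complete proof of the statement as used there.
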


Now note an elementary impact of the Ramsey theorem on the existence of
 infinite $\varepsilon$-approximately equilateral sets:

\begin{proposition}\label{omega-ramsey} Suppose that  $M$ is infinite,  $d: [M]^2\rightarrow \R_+$
and $M$ is bounded and separated with respect to $d$.
Then for every $\varepsilon>0$ the set $M$ contains an infinite
$\varepsilon$-approximately equilateral set.
\end{proposition}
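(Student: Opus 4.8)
The statement is the uncountable-to-finite analogue of the classical fact that a bounded separated set contains arbitrarily good approximately equilateral finite sets; the proof is a direct application of the Ramsey theorem (Theorem~\ref{ramsey}) after discretizing the distance values. First I would fix $\varepsilon>0$ and use boundedness to get $R=\mathrm{diam}(M)<\infty$, together with a witness $r_0>0$ for separatedness, so that $d$ takes all its values in the compact interval $[r_0,R]$. Next I would partition $[r_0,R]$ into finitely many half-open intervals $I_1,\dots,I_k$ each of length less than $2\varepsilon$, and define $c:[M]^2\to\{1,\dots,k\}$ by letting $c(\{x,y\})=j$ iff $d(x,y)\in I_j$. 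This is an honest finite coloring of the pairs of an infinite set.

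\textbf{Main step.} Applying Ramsey's theorem iteratively (first split the $k$ colors as ``$1$'' versus ``the rest'', get an infinite homogeneous set, and recurse; or simply invoke the finite-color version of Theorem~\ref{ramsey}, which follows by the same pigeonhole induction), I obtain an infinite $N\subseteq M$ and an index $j$ with $c[[N]^2]=\{j\}$, i.e.\ $d(x,y)\in I_j$ for all distinct $x,y\in N$. Picking any point $r\in I_j$, every pairwise distance in $N$ lies within the length of $I_j$, hence within $2\varepsilon$, of $r$; in particular $r-\varepsilon<d(x,y)<r+\varepsilon$ once one centers $r$ suitably in $I_j$ (or simply shrinks the intervals to length $<\varepsilon$ from the start). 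Thus $N$ is $\varepsilon$-approximately $r$-equilateral, which is exactly what is required by Definition~1.1(6)--(7).

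\textbf{Expected obstacle.} There is essentially no obstacle here: the only point requiring a word of care is that Theorem~\ref{ramsey} is stated for $2$-colorings, so one must observe that it upgrades to any finite number of colors by the standard induction on the number of colors (merge all but one color, extract an infinite homogeneous set, iterate). The use of boundedness (to bound the distances above) and of separatedness (to bound them away from $0$) is precisely what makes the range of $d$ fall into finitely many intervals of small length; without either hypothesis the finite coloring would not be available. One could also streamline by choosing the intervals to have length exactly $\varepsilon/2$ to leave no doubt that the strict inequalities in the definition of $\varepsilon$-approximately equilateral hold with room to spare.
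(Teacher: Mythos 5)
Your proof is correct and follows essentially the same route as the paper: cover the range of the distance function by finitely many intervals of small length, color pairs by which interval contains their distance, and apply the (finite-color) Ramsey theorem to extract an infinite monochromatic, hence $\varepsilon$-approximately equilateral, set. The only cosmetic differences are that the paper simply covers $[0,R]$ by intervals of diameter $\varepsilon$ (so separatedness is not actually used) and invokes Ramsey's theorem for finitely many colors without spelling out the standard induction you mention.
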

\begin{proof}
Let $R$ be the diameter of $M$.  Let $I_1, \dots I_n$ for some $n\in \N$ be intervals
of diameters $\varepsilon$ which cover $[0,R]$. Consider $c:[M]^2\rightarrow\{1, \dots, n\}$
defined by $c(x, x')=k$ if $k$ is the smallest such that $d(x, x')\in I_k$.  By the Ramsey theorem
there is an infinite monochromatic set for $c$. It is $\varepsilon$-approximately equilateral set, as required.
\end{proof}

Proposition \ref{omega-ramsey} can be traced back to Matousek (\cite{matousek}, cf. 
\cite{geschke-dist}).  
In \cite{kottman} Kottman has used Ramsey theorem to show that the unit sphere in every
infinite dimensional Banach space admits an infinite $(1+$)-separated set.
  Terenzi constructed an infinite dimensional Banach space
with no infinite equilateral subset (\cite{terenzi}). However, every infinite dimensional Banach space admits arbitrarily large finite
equilateral set by  results of Brass and Dekster \cite{brass, dekster}.
So the Ramsey property gives the positive answer to
infinite (in place of uncountable) version of Question $(S)$ and  the negative answer to
infinite (in place of uncountable) version of Question $(E')$ from the Introduction, while 
Terenzi's result shows that an infinite version of $(E)$ has the negative answer.

Concerning the versions of Question $(S')$ it  is clear that the unit spheres of separable Banach spaces 
can be covered by countably many sets of
arbitrarily small diameters so in this sense they are trivially dichotomous. But if we ask 
about dichotomies with finite collections of small diameter sets we have 
a theorem of
Furi and Vignoli (\cite{furi}) based on applying Lusternik-Schnierelman theorem and Dvoretzky theorem which
says 
that   $S_\X$ is not the union of finitely many sets of diameters less than $2$ for any infinite dimensional Banach space $\X$.

Now we pass to the Ramsey properties of the uncountable above the continuum.  We have the following classical: 

\begin{theorem}[Erd\"os-Rado]\label{erdos-rado} 
Suppose that $\kappa, \lambda$ are infinite cardinals satisfying $2^\kappa<\lambda$.
Let $X$ be a set of cardinality $\lambda$ and $c:[X]^2\rightarrow\kappa$.
There is an $Y\subseteq X$ of cardinality $\kappa^+$ such that $c[[Y]^2]$ is a singleton.
\end{theorem}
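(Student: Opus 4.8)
The plan is to reduce to the case $\lambda=(2^\kappa)^+$ and then run the standard ``free point'' recursion. Since $2^\kappa$ and $\lambda$ are cardinals with $2^\kappa<\lambda$, we have $(2^\kappa)^+\le\lambda$, so by passing to a subset of $X$ of cardinality $(2^\kappa)^+$ and restricting $c$ we may assume $|X|=(2^\kappa)^+$; put $\mu=2^\kappa$ and identify $X$ with the ordinal $\mu^+$. The core of the argument is to produce distinct $a_\alpha\in X$ for $\alpha<\kappa^+$ together with $g:\kappa^+\to\kappa$ such that $c(\{a_\beta,a_\alpha\})=g(\beta)$ whenever $\beta<\alpha<\kappa^+$ (an \emph{end-homogeneous} sequence). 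Granting this, the range of $g$ has size $\le\kappa<\kappa^+$ and $\kappa^+$ is regular, so some $I\subseteq\kappa^+$ with $|I|=\kappa^+$ has $g$ constant on $I$, say equal to $i_0$; then $Y=\{a_\alpha:\alpha\in I\}$ has cardinality $\kappa^+$ and $c[[Y]^2]=\{i_0\}$, as required. This last step is a trivial pigeonhole, so all the work is in the construction of the end-homogeneous sequence.

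To build it I would fix a sufficiently large regular $\theta$ and an elementary submodel $N\prec H(\theta)$ with $c,X\in N$, $\mu+1\subseteq N$, $|N|=\mu$, and $N$ closed under $\le\kappa$-sequences of its elements. Such $N$ exists because $\mu^\kappa=(2^\kappa)^\kappa=2^{\kappa\cdot\kappa}=2^\kappa=\mu$: one takes $N=\bigcup_{\xi<\kappa^+}M_\xi$ for a continuous elementary chain $\langle M_\xi:\xi<\kappa^+\rangle$ of elementary submodels of $H(\theta)$ of size $\mu$, with $(\mu+1)\cup\{c,X\}\subseteq M_0$ and with $M_{\xi+1}$ containing every $\le\kappa$-sequence of elements of $M_\xi$; since $\mathrm{cf}(\kappa^+)=\kappa^+>\kappa$ the union is $\le\kappa$-closed, and its size is $\mu\cdot\kappa^+=\mu$ because $\kappa^+\le 2^\kappa=\mu$. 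As $|N|=\mu<\mu^+=|X|$, fix a ``free point'' $x^*\in X\setminus N$. Now define $a_\alpha$ by recursion on $\alpha<\kappa^+$: given distinct $a_\beta\in N$ for $\beta<\alpha$ with $c(\{a_\beta,a_\gamma\})=c(\{a_\beta,x^*\})$ for all $\beta<\gamma<\alpha$, put $h_\alpha=\langle c(\{a_\beta,x^*\}):\beta<\alpha\rangle$ and
\[
Z_\alpha=\{z\in X:\ z\notin\{a_\beta:\beta<\alpha\}\ \text{ and }\ c(\{a_\beta,z\})=h_\alpha(\beta)\ \text{ for all }\ \beta<\alpha\}.
\]
Because $h_\alpha$ is a $\le\kappa$-sequence of ordinals $<\kappa$, all of which lie in $N$, closure of $N$ gives $h_\alpha\in N$ (even though $x^*$ was used to name it); similarly $\langle a_\beta:\beta<\alpha\rangle\in N$. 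Hence $Z_\alpha$ is definable from parameters in $N$, so $Z_\alpha\in N$; and $x^*\in Z_\alpha$ shows $Z_\alpha\ne\emptyset$, so by elementarity $Z_\alpha\cap N\ne\emptyset$ and we may choose $a_\alpha\in Z_\alpha\cap N$. By construction $a_\alpha\notin\{a_\beta:\beta<\alpha\}$ and $c(\{a_\beta,a_\alpha\})=c(\{a_\beta,x^*\})$ for every $\beta<\alpha$, so setting $g(\beta)=c(\{a_\beta,x^*\})$ completes the recursion; limit stages require no new choice, since the inductive hypotheses pass through unions and $\langle a_\beta:\beta<\delta\rangle\in N$ again by $\le\kappa$-closure.

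The step I expect to be the main obstacle is precisely the viability of this recursion at limit stages: one must guarantee that for \emph{every} $\alpha<\kappa^+$ there remains a legitimate candidate for $a_\alpha$ inside $N$. The naive alternative of carrying along a decreasing chain of ``large'' candidate sets fails here, since a decreasing $\kappa$-sequence of subsets of $\mu^+$ each of size $\mu^+$ can have empty intersection; the role of the external point $x^*$ is exactly to keep every candidate set $Z_\alpha$ nonempty by fiat, after which elementarity reflects a witness back into $N$. The only genuinely cardinal-arithmetic inputs are $\kappa^{\le\kappa}=2^\kappa$ (to obtain a $\le\kappa$-closed $N$ of size $2^\kappa$) and the regularity of $\kappa^+$ (used both to form $N$ as a length-$\kappa^+$ union and in the closing pigeonhole); everything else is routine manipulation of elementary submodels, which could if desired be replaced by an explicit Skolem-closure argument.
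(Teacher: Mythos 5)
Your proof is correct. Note that the paper itself does not prove Theorem \ref{erdos-rado}: it is quoted as the classical Erd\H{o}s--Rado theorem with a pointer to the standard references (Jech, Todorcevic), so there is no in-paper argument to compare against. What you give is the standard elementary-submodel rendering of the classical ramification proof of $(2^\kappa)^+\to(\kappa^+)^2_\kappa$: the reduction to $\lambda=(2^\kappa)^+$, the $\le\kappa$-closed $N\prec H(\theta)$ of size $2^\kappa$ (using $(2^\kappa)^\kappa=2^\kappa$ and $\mathrm{cf}(\kappa^+)>\kappa$), the external point $x^*$ producing an end-homogeneous sequence of length $\kappa^+$ inside $N$, and the final pigeonhole on $g$ using regularity of $\kappa^+$ --- all of this is sound, and your closing pigeonhole correctly yields a monochromatic set of size $\kappa^+$. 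One small point worth tightening: for $\kappa<\alpha<\kappa^+$ the objects $h_\alpha$ and $\langle a_\beta:\beta<\alpha\rangle$ are sequences of ordinal length $\alpha>\kappa$ (though of cardinality $\le\kappa$), so they are not literally ``$\le\kappa$-sequences''; they lie in $N$ by the usual coding (compose with a bijection $e:|\alpha|\to\alpha$, note $e$ and $h_\alpha\circ e$ are genuine $\le\kappa$-sequences of elements of $N$, then recover $h_\alpha$ inside $N$ by elementarity), or equivalently by demanding that $N$ be closed under subsets of size $\le\kappa$. With that routine adjustment the argument is complete.
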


By an argument similar to the proof of Proposition \ref{omega-ramsey}  using the 
Erd\"os-Rado Theorem \ref{erdos-rado} one obtains the following:

\begin{proposition}\label{erdos-rado-app} Suppose  that $M$ is infinite, $d: [M]^2\rightarrow \R_+$.
\begin{enumerate}
\item For every $\varepsilon>0$ every subset of $M$ of cardinality bigger than
$2^\omega$ contains an uncountable $\varepsilon$-approximately equilateral subset.
\item For every $r>0$ every subset of $M$ of cardinality bigger than
$2^\omega$ contains an uncountable $(r+)$-separated subset or
an uncountable set of diameter not bigger than $r$.
\item Every subset of $M$ of cardinality bigger than
$2^{2^\omega}$ contains an equilateral subset of cardinality bigger than $2^\omega$.
\end{enumerate}
\end{proposition}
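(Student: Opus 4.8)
The plan is to deduce all three items from a single application of the Erd\"os--Rado Theorem \ref{erdos-rado} to a coloring of pairs of points by their distance, in direct analogy with the proof of Proposition \ref{omega-ramsey}; the only point that needs attention is that in each case the coloring must use no more colors than the cardinal whose exponential is dominated by the cardinality of the set at hand. So I would first record the two instances of Theorem \ref{erdos-rado} to be used. If $|X|>2^\omega$, then every $c:[X]^2\to\omega$ admits a monochromatic $Y\subseteq X$ with $|Y|=\omega_1$ (the case $\kappa=\omega$); and if $|X|>2^{2^\omega}$, then every $c:[X]^2\to 2^\omega$ admits a monochromatic $Y\subseteq X$ with $|Y|=(2^\omega)^+>2^\omega$ (the case $\kappa=2^\omega$).

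For (1): given $\varepsilon>0$ and $N\subseteq M$ with $|N|>2^\omega$, partition $\R_+$ into the countably many half-open intervals $I_k=[k\varepsilon,(k+1)\varepsilon)$, $k\in\omega$, and color $\{x,y\}\in[N]^2$ by the unique $k$ with $d(x,y)\in I_k$. Theorem \ref{erdos-rado} yields an uncountable $Y\subseteq N$ on which this color is constantly equal to some $k$; then, taking $r$ to be the midpoint of $I_k$, every pair of distinct points of $Y$ is at distance strictly between $r-\varepsilon$ and $r+\varepsilon$, so $Y$ is $\varepsilon$-approximately $r$-equilateral. For (2): given $r>0$ and $N\subseteq M$ with $|N|>2^\omega$, color $\{x,y\}$ by $0$ if $d(x,y)>r$ and by $1$ otherwise; regarding this as a coloring into $\omega$, Theorem \ref{erdos-rado} produces an uncountable monochromatic $Y\subseteq N$, which is $(r+)$-separated if the color is $0$ and of diameter at most $r$ if the color is $1$. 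For (3): given $N\subseteq M$ with $|N|>2^{2^\omega}$, fix a bijection $\varphi:\R_+\to 2^\omega$ and color $\{x,y\}$ by $\varphi(d(x,y))\in 2^\omega$; Theorem \ref{erdos-rado} gives $Y\subseteq N$ of cardinality $(2^\omega)^+>2^\omega$ on which this is constant, and since $\varphi$ is injective, $d$ is constant on $[Y]^2$, i.e.\ $Y$ is equilateral (this last item is the observation of Terenzi recalled in the Introduction).

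Since the whole argument is a routine instantiation of Theorem \ref{erdos-rado}, there is no genuine obstacle. The only places asking for (minor) care are the bookkeeping with the interval partition in (1), chosen half-open of length $\varepsilon$ so that the conclusion really delivers the strict inequalities in the definition of an $\varepsilon$-approximately equilateral set, and, in each item, the verification that the coloring takes at most $\omega$ (respectively $2^\omega$) values, so that the hypothesis $2^\kappa<\lambda$ of Theorem \ref{erdos-rado} is met with $\lambda=|N|$. Note also that, in contrast with Proposition \ref{omega-ramsey}, no boundedness of $M$ is needed here, because $\R_+$ is itself a countable union of intervals of length $\varepsilon$.
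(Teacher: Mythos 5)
Your proposal is correct and is exactly the argument the paper intends: the paper only sketches it, saying the proposition follows ``by an argument similar to the proof of Proposition \ref{omega-ramsey} using the Erd\"os--Rado Theorem \ref{erdos-rado}'' (with item (3) attributed to Terenzi), and your colorings by $\varepsilon$-length intervals, by the two classes $d>r$ / $d\leq r$, and by the exact distance value are precisely that instantiation, with the cardinal arithmetic $2^\omega<|N|$ resp.\ $2^{2^\omega}<|N|$ checked correctly.
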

Item (3) above is noted in \cite{terenzi}.  A  deep analysis of the impact of
the Erd\"os-Rado Theorem \ref{erdos-rado} on the existence of uncountable $(1+)$-separated sets
in the unit spheres of Banach spaces of densities bigger than $2^\omega$ is presented in \cite{hkr}.

The only known so far examples of Banach spaces
whose unit spheres do not admit uncountable ($1+$)-separated sets or uncountable equilateral sets
have densities up to $2^\omega$
(\cite {pk-kottman, pk-hugh}).
It is not known at the moment if there are Banach spaces of density in the interval
$(2^\omega, 2^{2^\omega}]$ with no uncountable (infinite) equilateral sets.

However, in this paper we focus on nonseparable Banach spaces of densities not bigger than $2^\omega$
and so we are interested in Ramsey-type properties of such uncountable cardinals. 
In general,  natural versions of the Ramsey theorem already fail for the first uncountable cardinal
(see Theorem \ref{sierpinski1}) which has many consequences in our geometric context as indicated
in the following subsection. However  more delicate Ramsey-type results may hold for small
uncountable cardinals especially when we are accepting a  mere consistency as opposed to
provable theorems.
This has been investigated in depth by Todorcevic and many others.  One of
the fundamental results we will use in Section 3 is the following:

\begin{theorem}[Todorcevic]\label{todorcevic} 
The Open Coloring Axiom {\rm(}{\sf OCA}{\rm)} is consistent, where {\sf OCA} stands for
the following sentence: If $M$ is a second countable regular space
and $c:[M]^2\rightarrow\{0,1\}$ is such that $\{(x, y)\in M\times M: c(\{x, y\})=0\}$
is open in $M\times M$, then either 
$M=\bigcup_{n\in \N}M_n$ with $c[[M_n]^2]=\{1\}$ for each $n\in \N$ or 
else there is an uncountable $N\subseteq M$ such that $c[[N]^2]=\{0\}$.
\end{theorem}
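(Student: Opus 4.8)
The plan is to obtain a model of {\sf OCA} by a countable support iteration of proper forcings of length $\omega_2$ over a ground model of {\sf CH} (or {\sf GCH}). The single iterands are the natural posets that attach an uncountable $0$-homogeneous set to a given open coloring: for a second countable regular space $M$ and $c:[M]^2\to\{0,1\}$ with $\{(x,y):c(\{x,y\})=0\}$ open, assume it is forced that $M$ is \emph{not} the union of countably many $1$-homogeneous sets, and let $P_c$ consist of finite $0$-homogeneous subsets of $M$ ordered by reverse inclusion, enriched (following Todorcevic's method of models as side conditions) with a finite $\in$-increasing chain of countable elementary submodels of a large $H_\theta$ as an auxiliary coordinate. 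First I would verify the two density facts that make $P_c$ do its job: that for each countable subset of $M$ already enumerated by the conditions below the generic filter one can find a further point forming a $0$-homogeneous extension (this is exactly where non-decomposability into $1$-homogeneous pieces is used, so that the generic union is uncountable), and that this union is $0$-homogeneous, which is immediate from the definition of the conditions.

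The core of the argument, and the step I expect to be the main obstacle, is to show that $P_c$ is proper. Fix a countable $N\prec H_\theta$ with $c,M,P_c\in N$ and a condition $p\in P_c\cap N$; one must produce an $(N,P_c)$-generic extension $q\le p$. Adjoining $N\cap H_\theta$ (suitably coded) to the side condition of $p$, the genericity requirement reduces to an amalgamation statement: any finite $0$-homogeneous set whose side-condition models lie ``below $N$'' can be extended, inside $N$, so as to meet any prescribed dense subset of $N$, while keeping freshly added points $0$-related to everything outside $N$. The openness hypothesis on $c$ is essential here: a point $x\in M$ that is $0$-related to the finitely many points of a condition is, by openness, $0$-related to every point of a whole basic neighborhood of $x$, and non-decomposability forces such points to occur densely; a fusion/tree argument along the $\in$-chain of side-condition models then threads a single point past countably many dense sets simultaneously. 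This is precisely where the combinatorial equivalence ``$M$ is not a countable union of $1$-homogeneous sets $\iff$ the tree of finite $0$-homogeneous approximations is ill-founded in the relevant strong sense'' must be invoked, together with careful bookkeeping of the side-condition models.

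Granting properness of the iterands, properness of the full countable support iteration follows from Shelah's preservation theorem, so $\omega_1$ is preserved. Each $P_c$ has size $\le 2^{\aleph_0}=\aleph_1$ in the intermediate model where it is applied, and $2^{\aleph_1}=\aleph_2$ is maintained along the iteration, so a standard $\aleph_2$-bookkeeping enumerates names for all pairs $(M,c)$ with $M$ second countable regular and $c$ open; since such a pair is coded by a real (a second countable regular space embeds in $\mathbb{R}^\omega$ by Urysohn, and the open part of $c$ is a countable union of basic boxes), every code appears at some stage $<\omega_2$, and one arranges that each relevant instance is addressed cofinally often — the cofinal repetition being the point that makes the argument robust against a space gaining new points at later stages. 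In the final model $2^{\aleph_0}=\aleph_2$, and for any open coloring $(M,c)$: either it was forced, at some stage it was considered, to be decomposable into countably many $1$-homogeneous sets and the conclusion needs no witness there, or else at some stage it was non-decomposable and an uncountable $0$-homogeneous set was explicitly added, which remains uncountable and $0$-homogeneous upward. Hence {\sf OCA} holds. Since only the consistency is needed for the present paper, this is Todorcevic's theorem, with a self-contained treatment in his monograph on partition problems in topology.
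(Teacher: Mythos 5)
The paper does not prove this statement at all: it quotes Todorcevic's theorem as a black box and points to the literature (Jech, \emph{Partition problems in topology}, Farah) for the consistency of {\sf OCA}, so your attempt has to be measured against the known proof rather than anything in the text. Measured that way, your outline reproduces the standard strategy (an $\omega_2$-length countable support iteration of proper posets over a model of {\sf CH}, with bookkeeping, each iterand adding an uncountable $0$-homogeneous set to a coloring that is not $\sigma$-$1$-homogeneous), but the two places where the actual mathematical content of the theorem lives are asserted rather than proved. First, properness of the iterand. You take the naive poset of finite $0$-homogeneous sets decorated with elementary submodels as side conditions and say that an ``amalgamation'' plus a ``fusion/tree argument'' plus an unstated ``combinatorial equivalence'' will give $(N,P_c)$-genericity. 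That is precisely the heart of the theorem, and no argument is given; moreover it is not how the known proof goes. Todorcevic's argument does not establish properness of this raw finite-condition poset: it first forces with the $\sigma$-closed collapse of $2^\omega$ to $\omega_1$, proves the nontrivial preservation lemma that ``$M$ is not a countable union of $1$-homogeneous sets'' survives this collapse (via a tree/kernel reformulation of non-$\sigma$-$1$-homogeneity), and only then, \emph{under} {\sf CH} and after pruning $M$ to a kernel in which no relatively open piece is $\sigma$-$1$-homogeneous, shows that the finite-condition poset is ccc; the proper forcing one iterates (or to which {\sf PFA} is applied) is the two-step composition. Your sketch never mentions the collapse, the preservation lemma, or the role of {\sf CH} in the ccc/properness argument, and without them the density facts you list (in particular that the generic $0$-homogeneous set is uncountable) are unsupported. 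Note also that {\sf CH} is not automatically available at the intermediate stages of your iteration, so the collapses have to be interleaved explicitly.

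Second, the final verification. In the resulting model $2^{\aleph_0}=\aleph_2$, so a second countable space $M$ (equivalently a set of reals) witnessing a potential failure of {\sf OCA} can have size $\aleph_2$ and is not an object of any intermediate model; saying that each instance is ``addressed cofinally often'' does not meet this difficulty, because no single stage ever sees $M$, let alone sees that it is not $\sigma$-$1$-homogeneous. The genuine proof needs a reflection step here: either a reduction of {\sf OCA} to spaces of size $\aleph_1$, or an $\aleph_2$-c.c.\ catching argument showing that some name of size $\aleph_1$ handled by the bookkeeping already fails to be $\sigma$-$1$-homogeneous at its stage (and that the uncountable $0$-homogeneous set added there survives upward, which is the easy half). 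None of this is in your proposal. So the architecture is right, but the proof of properness/ccc of the iterand, the preservation of non-$\sigma$-$1$-homogeneity under the preparatory collapse, and the reflection to objects of size $\aleph_1$ are exactly the content of Todorcevic's theorem, and they are missing.
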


For more information on the above three theorems see e.g. \cite{jech} or \cite{stevo-problems}.
Often OCA is stated for subsets of the reals. It is well known
that it is equivalent to its version for separable metrizable spaces and so
for second countable regular spaces by metrization theorems (see e.g. \cite{farah}).

It turns out that the above Ramsey-type property  holds in ZFC if we  assume 
a relatively low descriptive complexity of the domain of the coloring, namely that it is analytic.
Recall that a set is analytic if it is a continuous image of a Borel subset of 
a Polish space. For us the main context will be Borel and analytic subsets
of $\ell_\infty$ considered with the Polish topology inherited from $\R^\N$. For
more on these notions see \cite{kechris}.
We note that the unit sphere $S_{\ell_\infty}$ is a $G_\delta$ set with this topology
as this is the intersection of all the sets $V_{n, m}=\{x\in \R^\N: -1-1/n<x(m)<1+1/n\}$
for $n, m\in \N$ with $U=\bigcap_{n\in \N}\bigcup_{m\in \N}\{x\in \R^\N: |x(m)|>1-1/n\}$.

\begin{theorem}[Feng \cite{feng}]\label{feng} 
 If $M$ is an analytic set 
and $c:[M]^2\rightarrow\{0,1\}$ is such that $\{(x, y)\in M\times M: c(\{x, y\})=0\}$
is open in $M\times M$, then either 
$M=\bigcup_{n\in \N}M_n$ with $c[[M_n]^2]=\{1\}$ for all $n\in \N$ or 
else there is a perfect $N\subseteq M$ such that $c[[N]^2]=\{0\}$.
\end{theorem}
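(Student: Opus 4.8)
The plan is to adapt Todorcevic's proof of {\sf OCA} to the analytic setting, using the fact that an analytic set has a tree representation (a Souslin scheme) together with the classical structure theory of analytic sets: every uncountable analytic set contains a perfect subset, and more importantly the perfect-set property can be run "uniformly" along a fusion-type construction. I would first fix a complete separable metric $d$ on a Polish space $P$ and a continuous surjection $f:A\to M$ where $A\subseteq P$ is Borel (in fact closed after re-coordinatizing, since we may replace $P$ by $\N^\N$ and take $A$ closed). Pulling back the coloring, set $\tilde c(\{x,y\})=c(\{f(x),f(y)\})$ for $x,y\in A$ with $f(x)\ne f(y)$; the set where $\tilde c=0$ is relatively open in $A\times A$ away from the diagonal fibers, because $f$ is continuous and the $c=0$ set is open in $M\times M$. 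So it suffices to prove the dichotomy for $A$ and $\tilde c$, then push a perfect $0$-monochromatic set forward along $f$ (a continuous injective image of a perfect set, after thinning, is perfect).

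Next I would carry out the dichotomy on $A$. Consider the statement "$A=\bigcup_{n}A_n$ with $\tilde c[[A_n]^2]=\{1\}$." Assume it fails. Following Todorcevic's argument, one builds by transfinite recursion an $\omega_1$-sequence of points, but for the analytic/Borel case the cleaner route is a direct fusion: I would construct a Cantor scheme $\{U_s: s\in 2^{<\omega}\}$ of nonempty relatively open subsets of $A$, with $U_{s^\frown i}\subseteq U_s$, $\mathrm{diam}(U_s)\to 0$ along each branch, the closures $\overline{U_{s^\frown 0}}$, $\overline{U_{s^\frown 1}}$ disjoint, and — crucially — $\tilde c(\{x,y\})=0$ whenever $x\in U_{s^\frown 0}$ and $y\in U_{s^\frown 1}$. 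The branches then yield a perfect set $N\subseteq A$ with $\tilde c[[N]^2]=\{0\}$: any two distinct branches split at some node $s$, and openness of the $0$-set lets us shrink so that the whole boxes, not just the limit points, are $0$-colored. The engine driving the construction is a lemma asserting that if no countable $1$-monochromatic cover of (a relatively open piece of) $A$ exists, then inside that piece one can find two disjoint relatively open sets $V_0,V_1$ with all cross pairs colored $0$; this is where analyticity enters, since one needs enough points and enough $0$-colored pairs, and the failure of the cover hypothesis is inherited by small pieces by a Baire-category / Souslin-scheme counting argument (an uncountable analytic set is "somewhere $1$-uncoverable" relative to the scheme).

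The main obstacle — and the technical heart — is precisely this splitting lemma and verifying that the non-coverability hypothesis propagates down the scheme for analytic (not Polish) domains. In Todorcevic's original proof one uses a well-ordering / elementary submodel argument that produces an uncountable $0$-clique directly; to upgrade "uncountable" to "perfect" and to handle merely analytic $A$, I would instead keep track of a Souslin representation $A=\bigcup_{\alpha\in\N^\N}\bigcap_n F_{\alpha\restriction n}$ and, at each node of the Cantor scheme, maintain not just an open set but a "large" subtree of the Souslin scheme certifying that the piece is still uncountable and still $1$-uncoverable; the fusion then respects these certificates. The rest — disjointness of closures, diameters shrinking, openness giving box-monochromaticity, and transferring back through $f$ — is routine. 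I expect the write-up to lean on Feng's original paper for the splitting lemma and to present the perfect-set extraction as the new piece assembled from standard descriptive-set-theoretic tools.
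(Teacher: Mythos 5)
The paper does not prove this statement at all: it is quoted from Feng's paper \cite{feng} as a black box, so there is no internal proof to compare against and your sketch has to be judged against the known argument. Its architecture (reduce to a closed subset of $\N^\N$ via a continuous surjection, then fuse a Cantor scheme of $0$-linked open boxes) is the right one, but there are two genuine gaps. First and most seriously, your ``engine'' --- the splitting lemma saying that a relatively open piece admitting no countable $1$-monochromatic cover contains two relatively open subpieces all of whose cross pairs are $0$-colored \emph{and which are again $1$-uncoverable} --- is precisely the nontrivial content of the theorem, and you do not prove it; you explicitly defer it to Feng's paper, which in this context is circular. The naive attempt fails because non-coverability of a piece only yields one $0$-colored pair, and that pair (hence the open boxes around it) may sit inside coverable parts, so the hypothesis need not propagate down your scheme, and your proposed ``largeness certificates'' along a Souslin representation are never actually defined or shown to be preserved. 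The standard way to discharge this is a localization step you never perform: pass to the derived set $M^*$ of all points no open neighborhood of which meets the domain in a countable union of $1$-homogeneous sets. By second countability (Lindel\"of), if the first alternative fails then $M^*\neq\emptyset$ and its complement is a countable union of $1$-homogeneous sets, whence \emph{every} nonempty relatively open subset of $M^*$ contains a $0$-colored pair of points of $M^*$; the Cantor-scheme fusion then iterates without any extra bookkeeping, and completeness of the closed domain in $\N^\N$ keeps the branch limits inside it.

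Second, your reduction to $A\subseteq\N^\N$ with the pulled-back \emph{partial} coloring does not suffice as stated: a perfect $N\subseteq A$ that is $0$-homogeneous for $\tilde c$ could be a set on which $f$ is constant (all pairs uncolored, hence vacuously homogeneous), and then no amount of thinning produces even an uncountable $0$-homogeneous subset of $M$; in particular $f$ need not be injective on $N$, contrary to your parenthetical. The repair is to totalize the coloring on $\N^\N$: color $\{x,y\}$ by $0$ iff $f(x)\neq f(y)$ and $c(\{f(x),f(y)\})=0$, and by $1$ otherwise. The $0$-set is still open, $1$-homogeneous sets push forward to $1$-homogeneous subsets of $M$, and $0$-homogeneity now forces $f$ to be injective, so a compact perfect subset of $N$ maps homeomorphically onto a perfect $0$-homogeneous subset of $M$. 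With these two repairs your outline becomes the standard proof of Feng's theorem; as written, it leaves the decisive step unproved.
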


Another Ramsey-type property of the uncountable less than $2^\omega$ 
holds under Martin's axiom. Recall that a finite subset  $F$ of
a partial order  $\PP$ is   compatible if  there is $q\in \PP$ such that $q\leq p$ for
every $p\in F$; A subset of $\PP$ is called centered if its every finite subset is compatible.

\begin{theorem}\label{ma} Assume {\sf MA}. Suppose that $\PP$ is a partial order of cardinality
less than $2^\omega$. Either $\PP$ admits an uncountable subset 
of pairwise incompatible elements or $\PP=\bigcup_{n\in \N}\PP_n$, where
each $\PP_n$ is centered.
\end{theorem}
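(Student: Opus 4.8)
The plan is to deduce this from standard consequences of Martin's axiom, specifically from {\sf MA}$_{\aleph_1}$ applied to a ccc partial order built from $\PP$. First I would dispose of the trivial direction: if $\PP$ has an uncountable antichain there is nothing to prove, so assume every antichain in $\PP$ is countable, i.e. $\PP$ has precaliber $\omega_1$ in the weak sense that it satisfies the countable chain condition (indeed, more: no uncountable antichain). The goal then becomes to write $\PP=\bigcup_{n\in\N}\PP_n$ with each $\PP_n$ centered. The natural device is to consider the poset $\mathbb{Q}$ whose conditions are finite centered subsets of $\PP$ ordered by reverse inclusion; a generic filter for $\mathbb{Q}$, if it met enough dense sets, would produce a decomposition of $\PP$ into countably many centered pieces. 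So the main task is to check that $\mathbb{Q}$ is ccc (so that {\sf MA} applies, using $|\PP|<2^\omega$ to bound the number of dense sets needed) and to identify the right family of dense sets.

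The key steps, in order, would be: (1) Observe that it suffices to find, for the given $\PP$ with no uncountable antichain, a function $f:\PP\to\N$ such that $f^{-1}(n)$ is centered for each $n$; equivalently a countable partition into centered sets. (2) Set up the forcing $\mathbb{Q}$ of finite partial functions $p:\PP\rightharpoonup\N$ with finite domain such that $p^{-1}(n)$ is centered in $\PP$ for every $n$, ordered by extension. (3) Prove $\mathbb{Q}$ is ccc: given uncountably many conditions, thin out by a $\Delta$-system argument on the domains and by pigeonhole on the values on the root, and then use the assumption that $\PP$ has no uncountable antichain — two conditions $p,q$ agreeing on the root with the ``tails'' assigned colors that are mutually compatible (in the sense that, color by color, the union stays centered) can be amalgamated; the obstruction to amalgamating must come from some color class whose union fails to be centered, which forces an incompatibility in $\PP$, and iterating/counting this yields an uncountable antichain in $\PP$, a contradiction. (4) For each $p\in\PP$ let $D_p=\{q\in\mathbb{Q}:p\in\dom(q)\}$; each $D_p$ is dense in $\mathbb{Q}$ (any condition can be extended to include $p$, assigning it a fresh color $n$ not used so far, so that $q^{-1}(n)=\{p\}$ is trivially centered). (5) Since $|\PP|<2^\omega$, the family $\{D_p:p\in\PP\}$ has size $<2^\omega$, so by {\sf MA} there is a filter $G$ meeting every $D_p$; then $g=\bigcup G$ is a total function $\PP\to\N$, and for each $n$, $g^{-1}(n)=\bigcup\{p^{-1}(n):p\in G\}$ is a directed union of centered sets hence centered (any finite subset lies in a single $p^{-1}(n)$ since $G$ is a filter). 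Setting $\PP_n=g^{-1}(n)$ finishes the proof.

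I would present step (3), the ccc-ness of $\mathbb{Q}$, as the main obstacle, since it is where the hypothesis ``no uncountable antichain in $\PP$'' is actually consumed and where the combinatorics is nontrivial. The subtlety is that two conditions $p$ and $q$ with the same domain-root and the same colors there need not be compatible in $\mathbb{Q}$: merging them could create a color class $p^{-1}(n)\cup q^{-1}(n)$ that is not centered even though each half is. The argument must show that one can always re-shuffle the colors on the non-root parts (there are only finitely many elements involved) to avoid this, unless doing so is genuinely impossible, in which case one extracts an incompatible pair in $\PP$; pushing this through an uncountable family via a $\Delta$-system and a counting argument then contradicts the hypothesis. An alternative, perhaps cleaner, route for this step is to invoke the known equivalence (a theorem of essentially this flavor is standard under {\sf MA}) that a ccc poset of size $<2^\omega$ is $\sigma$-centered, but since the statement to be proved is precisely that equivalence in contrapositive form, it is more honest to carry out the forcing argument directly as above. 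Either way, the remaining steps (2), (4), (5) are routine bookkeeping with filters and dense sets.
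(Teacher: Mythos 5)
Your overall architecture — the poset $\mathbb{Q}$ of finite partial functions $q:\PP\rightharpoonup\N$ with each colour class centered, the dense sets $D_p$, and the extraction of the decomposition from a filter meeting $|\PP|<2^\omega$ many of them — is the standard proof of this classical fact (which the paper does not prove but quotes, with a pointer to the Todorcevic--Velickovic literature). Steps (1), (2), (4), (5) are fine. The genuine gap is step (3): the ccc-ness of $\mathbb{Q}$ is \emph{not} provable in {\sf ZFC} from the hypothesis that $\PP$ has no uncountable antichain, so no $\Delta$-system plus counting argument of the kind you sketch can succeed. After the $\Delta$-system thinning, compatibility of two conditions in $\mathbb{Q}$ amounts to compatibility of finite tuples in finite powers of $\PP$, and ccc is not finitely productive in {\sf ZFC}: if $T$ is ccc with $T\times T$ not ccc (a Suslin tree, or Galvin's example under {\sf CH}), take an uncountable antichain $\{(a_\alpha,b_\alpha):\alpha<\omega_1\}$ of $T\times T$ and let $q_\alpha$ have domain $\{a_\alpha,b_\alpha\}$ with $q_\alpha(a_\alpha)=0$, $q_\alpha(b_\alpha)=1$; any common extension of $q_\alpha,q_\beta$ would have a centered colour class containing $\{a_\alpha,a_\beta\}$ and another containing $\{b_\alpha,b_\beta\}$, contradicting incompatibility in the square, so the $q_\alpha$ form an uncountable antichain in $\mathbb{Q}$ although $T$ has none. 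The precise false move in your sketch is ``the obstruction forces an incompatibility in $\PP$, and iterating/counting this yields an uncountable antichain in $\PP$'': an uncountable family of pairwise incompatible \emph{pairs} gives no uncountable antichain in $\PP$ itself, because one would have to select one coordinate from each pair coherently, which is exactly what fails in these examples. (A smaller inaccuracy: when the union of two centered finite sets fails to be centered, the witness need not be a single incompatible pair.)

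The repair is to use {\sf MA} a second time, which the hypothesis of the theorem entitles you to do. {\sf MA}$_{\omega_1}$ implies that every poset with no uncountable antichain has precaliber $\omega_1$ (apply {\sf MA} to $\PP$ itself: either some condition forces the generic filter to meet uncountably many of the given elements, and a filter produced by {\sf MA} yields an uncountable centered subset, or a counting argument produces an uncountable antichain); consequently all finite powers $\PP^k$ are ccc, indeed of precaliber $\omega_1$. With this in hand your step (3) does go through: thin the given uncountable family of conditions to a $\Delta$-system with a common finite colour set $\{0,\dots,k-1\}$ and identical values on the root, choose for each condition $q_\alpha$ and each colour $n$ a common lower bound $y^n_\alpha$ of $q_\alpha^{-1}(n)$, and apply precaliber of $\PP^k$ to the tuples $(y^0_\alpha,\dots,y^{k-1}_\alpha)$; for two indices in the resulting uncountable set the union $q_\alpha\cup q_\beta$ is a condition below both, so $\mathbb{Q}$ is ccc, and the rest of your argument is correct as written. (Equivalently, one can bypass $\mathbb{Q}$ and argue in the Stone space of the completion of $\PP$, using productivity of ccc -- again a consequence of {\sf MA} -- for a countable power.) The point to internalize is that this theorem cannot be obtained by a single application of {\sf MA} to a poset whose ccc-ness is verified in {\sf ZFC} from that of $\PP$; the second, hidden use of {\sf MA} is exactly where the strength of the statement lies, which is consistent with its being equivalent to {\sf MA}, as the paper remarks.
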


The above is  a  type of dichotomy as in Definition \ref{def-dichotomous}, and  it will be used this
way in Section 5 (cf. the last section of \cite{equi}). In fact the above dichotomy
for partial orders is equivalent to {\sf MA} as proved in \cite{stevo-boban} (cf. section 7 of \cite{stevo-problems}).
We also note in passing that  it is possible to express {\sf MA} itself more directly in the language of 
partitions of finite sets (see \cite{stevo-boban}, \cite{stevo-problems}).

\subsection{The failure of the Ramsey property and some of its consequences.}

In this subsection we recall how the Ramsey property fails for small uncountable cardinals
and see some direct consequences of this in our geometric context (see the previous subsection
for more explanations on the Ramsey-type properties of the uncountable).

\begin{theorem}[Sierpi\'nski]\label{sierpinski1} There is $c: [2^\omega]^2\rightarrow \{0,1\}$
 without uncountable monochromatic set.
\end{theorem}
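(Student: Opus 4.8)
The plan is to produce Sierpi\'nski's classical coloring using a well-order of the reals together with the standard linear order. First I would fix a bijection (equivalently an enumeration) $\langle r_\alpha:\alpha<2^\omega\rangle$ of $\R$, or more conveniently work directly on the ordinal $2^\omega$ which carries both its ordinal well-order $<$ and, via this enumeration, a second linear order $\prec$ obtained by pulling back the usual order of $\R$. Given a pair $\{\alpha,\beta\}$ with $\alpha<\beta$ (ordinal order), I would set $c(\{\alpha,\beta\})=0$ if $\alpha\prec\beta$ and $c(\{\alpha,\beta\})=1$ if $\beta\prec\alpha$; in other words $c$ records whether the two orders agree on the pair.

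The key step is then to verify that no uncountable set is monochromatic. Suppose $A\subseteq 2^\omega$ is uncountable and $0$-monochromatic: then for all $\alpha<\beta$ in $A$ we have $\alpha\prec\beta$, so the ordinal order and the order $\prec$ (i.e. the order inherited from $\R$) coincide on $A$. Thus $A$, with the order type of an uncountable ordinal, embeds into $(\R,<)$ in an order-preserving way; but an order-preserving image of $\omega_1$ in $\R$ would give an uncountable well-ordered subset of $\R$, which is impossible since $\R$ (being separable, or just ccc in its order topology) contains no uncountable well-ordered subset. The $1$-monochromatic case is symmetric: there the ordinal order is the reverse of the $\prec$ order on $A$, so $A$ would be reverse-well-ordered in $\R$, equally impossible. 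Hence $c$ has no uncountable monochromatic set.

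I expect the only real content — and it is genuinely elementary — to be the lemma that $\R$ contains no uncountable subset that is well-ordered (or reverse-well-ordered) by its usual order. This follows because between consecutive elements of such a subset one can choose a rational, producing an injection from an uncountable set into $\Q$, a contradiction. Everything else is bookkeeping about transferring between the two orders on $2^\omega$, and one should remark that the same construction witnesses the failure at $\omega_1$ rather than $2^\omega$ simply by restricting the enumeration to an $\omega_1$-sized subset of $\R$ (which is the form stated elsewhere as needed); no additional set-theoretic hypothesis is used, so this is a theorem of {\sf ZFC}.
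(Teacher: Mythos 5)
Your proposal is correct and is exactly the argument the paper has in mind: the paper's proof is just the one-line description of the Sierpi\'nski coloring comparing a well-order of $\R$ with the usual order, and your verification (no uncountable subset of $\R$ is well-ordered or reverse-well-ordered in the usual order, via choosing rationals between consecutive elements) is the standard filling-in of the details left implicit there.
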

\begin{proof} This is Sierpi\'nski coloring where $c(\{r, s\})=1$ if and only if the usual order agrees on $\{s, t\}$
with $\prec$, where $\prec$ is a well-ordering of the reals.
\end{proof}

\begin{theorem}[Sierpi\'nski]\label{sierpinski2} There is $c: [2^{2^\omega}]^2\rightarrow [0,1]$ 
without  monochromatic three element set.
\end{theorem}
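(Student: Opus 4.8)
The plan is to produce a coloring $c:[2^{2^\omega}]^2\to[0,1]$ so that no three-element subset is monochromatic, by exploiting the fact that $2^{2^\omega}$ is too large to embed order-preservingly into too few linear orders of size $2^\omega$. First I would recall the standard setup behind Theorem \ref{sierpinski1}: there the point was that on a set of size $2^\omega=|\R|$ one can split pairs according to whether the natural order on $\R$ agrees with a fixed well-order. To kill \emph{all} monochromatic triples (not merely uncountable monochromatic sets) at the larger cardinal $2^{2^\omega}$, the natural move is to use the classical Sierpiński/Erdős--Rado style argument: a set of size $(2^\omega)^+$ — and $2^{2^\omega}\ge(2^\omega)^+$ — cannot be linearly ordered so that it embeds into $\R$ in an order-preserving \emph{or} order-reversing way on any $3$-subset simultaneously, once one has enough ``coordinates''. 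Concretely, I would first fix a set $X$ with $|X|=2^{2^\omega}$ and, since $|X|\le 2^{2^\omega}=|\{0,1\}^{2^\omega}|=|\R^{2^\omega}|$, fix an injection $x\mapsto(f_\xi(x))_{\xi<2^\omega}$ of $X$ into $\R^{2^\omega}$, i.e.\ a family $\{f_\xi:\xi<2^\omega\}$ of functions $X\to\R$ that separates points of $X$.

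The key step is then to define the coloring using, for each pair $\{x,y\}$ with $x\ne y$, the least coordinate $\xi(x,y)<2^\omega$ on which $f_{\xi(x,y)}(x)\ne f_{\xi(x,y)}(y)$ (this exists by point-separation), together with the truth value of $f_{\xi(x,y)}(x)<f_{\xi(x,y)}(y)$ versus $>$. One sets, say, $c(\{x,y\})$ to encode whether the $\prec$-order (a fixed well-order of $X$) agrees or disagrees with the order inherited from $f_{\xi(x,y)}$. Given three distinct points $x\prec y\prec z$ of $X$, I would argue as in the proof of Theorem \ref{sierpinski1}: let $\xi_1=\xi(x,y)$, $\xi_2=\xi(x,z)$, $\xi_3=\xi(y,z)$. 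A short combinatorial case analysis on whether these three coordinates coincide — and on the relative $\R$-order of the relevant values $f_{\xi_i}$ at $x,y,z$ — shows that the three values $c(\{x,y\}),c(\{x,z\}),c(\{y,z\})$ cannot all be equal. The cleanest way to see this is the ``no three points can be both $<$-ordered and $>$-ordered consistently'' phenomenon on $\R$: if all three coordinates are equal to one $\xi$, then on that coordinate $f_\xi(x),f_\xi(y),f_\xi(z)$ are three reals with a linear order, which can agree with $\prec$ on at most two of the three pairs (the ``middle'' pair behaves oppositely); and if the coordinates differ, one uses minimality of $\xi(\cdot,\cdot)$ to force agreements among the $f_{\xi_i}$ values that again preclude a monochromatic triple.

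The main obstacle I expect is \emph{bookkeeping the case distinctions cleanly} so that the ``at most two of three pairs agree'' conclusion is genuinely forced in every configuration of $(\xi_1,\xi_2,\xi_3)$ and of the induced real orders — in particular making sure the minimality of the chosen coordinates interacts correctly when two of the $\xi_i$ agree but the third does not. A secondary point to handle carefully is merely cosmetic: the statement asks for a $[0,1]$-valued coloring rather than a $2$-valued one, but any $2$-valued coloring with no monochromatic triple trivially gives a $[0,1]$-valued one (compose with $\{0,1\}\hookrightarrow[0,1]$), so I would present $c$ as two-valued and remark that this suffices; alternatively one can genuinely use more colors to make the triple-avoidance transparent, but that is not needed. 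Once the case analysis is organized, the verification that no $\{x,y,z\}$ is monochromatic is a finite check, completing the proof.
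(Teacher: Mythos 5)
Your proposal has a fatal gap, and it is not a matter of bookkeeping. You reduce the problem to producing a \emph{two-valued} coloring of $[X]^2$ with no monochromatic triple, remarking that composing with an injection $\{0,1\}\hookrightarrow[0,1]$ then suffices. But no such two-valued coloring exists on any set with at least $6$ points: by the finite Ramsey theorem ($R(3,3)=6$), every $2$-coloring of the pairs of a $6$-element set already contains a monochromatic triangle. The whole content of the theorem is that one must use genuinely many colors (here, continuum many, which is why the range is $[0,1]$), and that even so the avoidance of monochromatic triples is only possible up to $2^{2^\omega}$ (Erd\H{o}s--Rado gives monochromatic triples, indeed large monochromatic sets, above that). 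Your key verification step is also false as stated: if $x\prec y\prec z$ and the three pairs all first differ at the same coordinate $\xi$ with $f_\xi(x)<f_\xi(y)<f_\xi(z)$, then the real order agrees with $\prec$ on \emph{all three} pairs, not on at most two; the ``middle pair behaves oppositely'' phenomenon simply does not occur for a monotone triple, so Sierpi\'nski-type agree/disagree colorings do have monochromatic triples.

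The paper's proof avoids this by making the \emph{coordinate itself} the color, and by using binary rather than real coordinates: identify the underlying set with $\{0,1\}^{2^\omega}$, fix an enumeration $[0,1]=\{r_\xi:\xi<2^\omega\}$, and set $c(\{x,y\})=r_\xi$ where $\xi=\min\{\alpha<2^\omega: x(\alpha)\neq y(\alpha)\}$. If three distinct points gave the same color, all three pairs would first differ at the same coordinate $\xi$, forcing $x(\xi),y(\xi),z(\xi)$ to be pairwise distinct elements of $\{0,1\}$ --- impossible. Note that your setup with an injection into $\R^{2^\omega}$ would not rescue this even if you colored by the least differing coordinate: with real-valued coordinates three points can pairwise first differ at the same $\xi$ (taking three distinct real values there), so the pigeonhole at a two-element coordinate set is essential. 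To repair your argument you should replace $\R^{2^\omega}$ by $\{0,1\}^{2^\omega}$ and drop the order-agreement bit entirely.
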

\begin{proof} Let $[0,1]=\{r_\xi: \xi<2^\omega\}$. Look at $2^{2^\omega}$
as at
$\{0,1\}^{2^\omega}$ and define $c(x, y)=r_\xi$ if $\xi=\min\{\alpha<2^\omega: x(\alpha)\not=y(\alpha)\}$.
\end{proof}

\begin{theorem}[Todorcevic \cite{stevo-acta}]\label{stevo} There is $c: [\omega_1]^2\rightarrow \omega_1$ 
such that $c[[A]^2]=\omega_1$ for every uncountable $A\subseteq\omega_1$.
\end{theorem}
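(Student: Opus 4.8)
The plan is to obtain $c$ by Todorcevic's method of \emph{minimal walks on ordinals}; this is a deep {\sf ZFC} theorem (the negative square-bracket relation $\omega_1\not\to[\omega_1]^2_{\omega_1}$), and the known proofs all run through this machinery — the Sierpi\'nski-type colorings of Theorem \ref{sierpinski1} merely kill monochromatic sets with two colors, which is far weaker than forcing \emph{every} one of the $\aleph_1$ colors onto \emph{every} uncountable square.

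First I would set up the scaffolding. Fix a \emph{$C$-sequence} $\langle C_\alpha:\alpha<\omega_1\rangle$: let $C_0=\emptyset$, $C_{\alpha+1}=\{\alpha\}$, and for limit $\alpha$ let $C_\alpha\subseteq\alpha$ be cofinal of order type $\omega$. For $\alpha\le\beta$ define the walk from $\beta$ to $\alpha$ as the finite strictly decreasing chain $\beta=\beta_0>\beta_1>\cdots>\beta_k=\alpha$ with $\beta_{i+1}=\min(C_{\beta_i}\setminus\alpha)$ (finiteness is immediate from well-foundedness). Attach to it the \emph{trace} $\mathrm{Tr}(\alpha,\beta)=\{\beta_0,\dots,\beta_k\}$, the weight sequence $\langle|C_{\beta_i}\cap\alpha|:i<k\rangle$, and the standard characteristics, in particular the maximal weight $\rho_1(\alpha,\beta)=\max_i|C_{\beta_i}\cap\alpha|$ and the lower trace $L(\alpha,\beta)$. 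I would then record the two structural facts driving everything: (a) a \emph{coherence} property — for $\alpha<\gamma<\beta$ with $\gamma$ chosen above the finitely many ordinals where the walk from $\beta$ to $\alpha$ ``changes gear'', the walk from $\beta$ to $\alpha$ factors as the walk from $\beta$ to $\gamma$ followed by the walk from $\gamma$ to $\alpha$, with traces and weights splicing accordingly; and (b) a \emph{thinness} property of columns — for fixed $\beta,n$ the sub-level set $\{\xi<\beta:\rho_1(\xi,\beta)\le n\}$ is thin (of order type below $\omega^\omega$), so that by moving a target ordinal far down one can force the weights along a walk into that target to be arbitrarily large.

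Second I would define the coloring. Using a fixed bijection $\pi$ of $\omega_1$ with the set of finite ``walk data'' (a pivot ordinal together with a finite weight pattern), set $c(\alpha,\beta)=\pi(\text{the relevant segment of the walk from }\beta\text{ to }\alpha)$, arranged exactly as in Todorcevic's construction so that the value depends only on a controllable terminal part of the walk. The verification is then a $\Delta$-system-plus-pressing-down argument: given uncountable $A\subseteq\omega_1$ and a target $\nu<\omega_1$, thin $A$ to an uncountable $A'$ and fix a countable $\delta$ and a finite root configuration so that every $\xi\in A'$ lies above $\delta$, the walks from points of $A'$ down to $\delta$ share the same trace-tail, and the weights involved exceed any preassigned bound; then pick $\alpha\in A$ at the appropriate level below $\delta$ and $\beta\in A'$ high enough that the walk from $\beta$ to $\alpha$ is forced through the controlled region around $\delta$, and use (a) and (b) to steer the encoded data to the value $\pi$ sends to $\nu$. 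This yields $\alpha<\beta$ in $A$ with $c(\alpha,\beta)=\nu$, i.e.\ $c[[A]^2]=\omega_1$.

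\textbf{The hard part} will be the last step, and specifically the passage from countably many colors to $\aleph_1$ colors: it is comparatively soft to make the walk \emph{unbounded} on $A'\times A'$, but genuinely delicate to make it realize \emph{every} prescribed countable ordinal inside $[A]^2$. This rests on the fine interplay of the upper and lower traces of minimal walks and is the content of Todorcevic's \cite{stevo-acta}; I would import that argument essentially verbatim rather than attempt a shortcut.
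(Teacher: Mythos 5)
The paper does not prove this statement at all: it is quoted as a known theorem with a citation to \cite{stevo-acta}, which is exactly the source your outline ultimately defers to for the hard step. Your sketch of the minimal-walks machinery (the $C$-sequence, traces, weights, and the finite-to-one/coherence properties used to realize every color on every uncountable set) is a broadly accurate description of Todorcevic's argument, so your treatment matches the paper's: both import the result from \cite{stevo-acta} rather than reprove it.
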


\begin{theorem}[Hewitt, Marczewski, Pondiczery]\label{hmp}
Suppose that $X$ is a separable topological space containing at least two points and $\kappa$ is an infinite cardinal.
$X^\kappa$ is separable if and only if $\kappa\leq 2^\omega$.
\end{theorem}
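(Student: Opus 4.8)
The plan is to prove the two implications separately; the substantive one is ``$\kappa\le 2^\omega\Rightarrow X^\kappa$ is separable'' (this is the Hewitt--Marczewski--Pondiczery theorem proper), while the converse is a cardinality count. For the first implication I would first reduce to $\kappa=2^\omega$: since $|\{0,1\}^\N|=2^\omega$, if $\kappa\le 2^\omega$ we may re-index the product and regard $X^\kappa$ as $X^A$ for some $A\subseteq\{0,1\}^\N$; the coordinate projection $X^{\{0,1\}^\N}\to X^A$ is then a continuous surjection, and a continuous image of a separable space is separable, so it suffices to prove that $X^{\{0,1\}^\N}$ is separable.

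To do that I would exhibit an explicit countable dense set. Fix a countable dense $D\subseteq X$ and let $\mathcal S$ be the collection of all $f:\{0,1\}^\N\to X$ which are constant on each block of some finite clopen partition of $\{0,1\}^\N$ and take all their values in $D$. There are countably many finite clopen partitions of the Cantor cube and, for each, countably many $D$-valued labelings of its finitely many blocks, so $\mathcal S$ is countable. For density, consider a nonempty basic open set determined by finitely many distinct coordinates $c_1,\dots,c_k\in\{0,1\}^\N$ and nonempty open $U_1,\dots,U_k\subseteq X$; using that the Cantor cube is Hausdorff and zero-dimensional, choose a finite clopen partition in which $c_1,\dots,c_k$ lie in pairwise distinct blocks $B_1,\dots,B_k$, pick $d_i\in D\cap U_i$, and let $f\in\mathcal S$ take the value $d_i$ on $B_i$ (and any fixed value from $D$ on the other blocks). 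Then $f(c_i)\in U_i$ for each $i$, so $f$ belongs to the prescribed basic open set, and density follows.

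For the converse, suppose $X^\kappa$ is separable with countable dense set $D=\{d_n:n\in\N\}$ and assume toward a contradiction that $\kappa>2^\omega$. Pick disjoint nonempty open $U,V\subseteq X$; such exist whenever $X$ is Hausdorff (recall $X$ has at least two points), which covers all instances where the theorem is used in this paper, and this is the only place a separation property of $X$ beyond ``two points'' is needed. For each $\alpha<\kappa$ define $g_\alpha:\N\to\{0,1\}$ by $g_\alpha(n)=1$ iff $d_n(\alpha)\in U$. Since there are only $2^\omega$ maps $\N\to\{0,1\}$ and $\kappa>2^\omega$, there are distinct $\alpha,\beta$ with $g_\alpha=g_\beta$. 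But $\pi_\alpha^{-1}[U]\cap\pi_\beta^{-1}[V]$ is a nonempty open subset of $X^\kappa$, so it contains some $d_n$; then $d_n(\alpha)\in U$ yields $g_\alpha(n)=1$ while $d_n(\beta)\in V$ together with $U\cap V=\emptyset$ yields $g_\beta(n)=0$, a contradiction. (The case of finite $\kappa$ is trivial, a finite product of separable spaces being separable.) I expect no real obstacle; the point needing the most care is the density verification in the first implication, and — as noted — the converse requires $X$ to have two disjoint nonempty open sets, which is automatic in the Hausdorff/metrizable cases at hand.
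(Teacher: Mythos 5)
The paper states this classical theorem without proof, so there is no in-paper argument to compare yours against; what you give is the standard Hewitt--Marczewski--Pondiczery proof, and it is correct. The reduction of $\kappa\le 2^\omega$ to the single case $X^{\{0,1\}^\N}$ via a coordinate projection, the countable dense family of $D$-valued functions constant on the blocks of finite clopen partitions of the Cantor cube (countable because the clopen algebra of $\{0,1\}^\N$ is countable), and the counting argument with the maps $g_\alpha$ for the converse are all sound. One point you raise deserves emphasis rather than a parenthesis: as literally stated (``separable space containing at least two points'') the ``only if'' direction is false --- for a two-point indiscrete $X$ the product $X^\kappa$ is indiscrete, hence separable, for every $\kappa$ --- so your added hypothesis that $X$ admits two disjoint nonempty open sets is a genuine correction of the statement, not a convenience; it is of course satisfied by the discrete spaces $\{0,1\}$, $\Q$, $\N$, $D$ to which the paper applies the theorem (Lemma \ref{hmp-lemma} and Propositions \ref{lp-union}, \ref{Lp}, \ref{c0-dich}), and the ``if'' direction, which is what is used for the density constructions, needs no separation axiom at all.
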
 

\begin{lemma}\label{hmp-lemma} 
For every uncountable cardinal $\kappa$ there is $c:[\kappa]^2\rightarrow \{0,1\}$
such that there is no uncountable $1$-monochromatic set and $\kappa$ is not
the union of countably many $0$-monochromatic sets.
\end{lemma}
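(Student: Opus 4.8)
The plan is to read the two requirements graph-theoretically. Put a graph $G$ on the vertex set $\kappa$ by letting $\{\alpha,\beta\}$ be an edge iff $c(\{\alpha,\beta\})=1$. Then a $1$-monochromatic set is exactly a clique of $G$ and a $0$-monochromatic set is exactly an independent set of $G$, so what I must produce is a graph on $\kappa$ with no uncountable clique and with uncountable chromatic number (covering $\kappa$ by countably many $0$-monochromatic sets is the same as properly colouring $G$ with countably many colours). I would split according to whether $\kappa\le 2^\omega$ or $\kappa>2^\omega$. By Erd\"os--Rado (Theorem \ref{erdos-rado}) every colouring of a set of size $>2^\omega$ has uncountable monochromatic sets of some colour, so the two regimes are genuinely different and no Sierpi\'nski-style colouring (with small monochromatic sets of \emph{both} colours) can work above $2^\omega$.

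For $\omega_1\le\kappa\le 2^\omega$ I would simply transport Sierpi\'nski's colouring (Theorem \ref{sierpinski1}). Fix $A\subseteq 2^\omega$ with $|A|=\kappa$, restrict the colouring of Theorem \ref{sierpinski1} to $[A]^2$, and identify $A$ with $\kappa$. Since that colouring has no uncountable monochromatic set of either colour, the same holds on $A$: in particular there is no uncountable $1$-monochromatic set, and every $0$-monochromatic subset of $A$ is countable, so a countable union of such sets is countable and cannot exhaust the uncountable $\kappa$. Both requirements follow at once.

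For $\kappa>2^\omega$ I would take for $G$ the shift graph. Since $|[\kappa]^2|=\kappa$ I may regard the vertex set as $\kappa$ after fixing a bijection $[\kappa]^2\cong\kappa$, declaring the vertices $\{\alpha,\beta\}$ and $\{\beta,\gamma\}$ adjacent whenever $\alpha<\beta<\gamma$. This graph is triangle-free, hence has no clique with more than two vertices, so there is no uncountable $1$-monochromatic set and the first requirement is free. The second requirement is $\chi(G)>\omega$, and here the hypothesis $\kappa>2^\omega$ (equivalently, via Theorem \ref{hmp}, the failure of separability at $\kappa$) enters. I would argue the contrapositive: from a proper colouring $\phi\colon[\kappa]^2\to\omega$ set $L_\beta=\{\phi(\{\alpha,\beta\}):\alpha<\beta\}$ and $R_\beta=\{\phi(\{\beta,\gamma\}):\gamma>\beta\}$. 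Properness gives $L_\beta\cap R_\beta=\varnothing$, and for $\alpha<\beta$ the colour $\phi(\{\alpha,\beta\})$ lies in $R_\alpha$ and in $L_\beta$, hence in $L_\beta\setminus L_\alpha$. Thus $\beta\mapsto L_\beta$ is an injection of $\kappa$ into $\mathcal P(\omega)$, forcing $\kappa\le 2^\omega$, a contradiction. Therefore no countable proper colouring exists and $\chi(G)>\omega$.

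The main obstacle is precisely this chromatic-number lower bound in the case $\kappa>2^\omega$: the first requirement costs nothing beyond triangle-freeness, and the whole weight of the argument sits in showing that the $0$-monochromatic (independent) sets cannot be assembled countably. The $L_\beta$-injection is the decisive step, and it is the combinatorial shadow of the Hewitt--Marczewski--Pondiczery threshold of Theorem \ref{hmp}: a countable proper colouring would witness separability of the relevant power, which is unavailable above $2^\omega$. Beyond this, I would only verify the routine facts that $|[\kappa]^2|=\kappa$ for infinite $\kappa$ and that restriction preserves Sierpi\'nski's property.
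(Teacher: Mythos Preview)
Your proof is correct. Both the case split and the two constructions work; the $L_\beta$-injection argument for the shift graph is the classical Specker observation and is valid exactly as you wrote it.

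The paper takes a different route in the case $\kappa>2^\omega$. Instead of the shift graph, it colours the set of finite partial functions $p\colon\kappa\rightharpoonup\{0,1\}$ by declaring $c(\{p,q\})=1$ iff $p\cup q$ is not a function. No uncountable $1$-monochromatic set follows from the c.c.c.\ of $\{0,1\}^\kappa$, and a countable cover by $0$-monochromatic sets would yield, by taking unions, a countable dense subset of $\{0,1\}^\kappa$, contradicting Hewitt--Marczewski--Pondiczery directly. So the paper's argument literally invokes Theorem~\ref{hmp}, whereas yours reproves the relevant threshold combinatorially via the $L_\beta$ map. Your construction buys a stronger local property (triangle-freeness, not merely no uncountable clique); the paper's construction makes the dependence on the separability threshold of $\{0,1\}^\kappa$ transparent and avoids the bijection $[\kappa]^2\cong\kappa$.
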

\begin{proof} 
For $\kappa\leq 2^\omega$ use Theorem \ref{sierpinski1}. Now suppose that $\kappa>2^\omega$.
Instead of $\kappa$ consider the family  of finite partial functions from
$\kappa$ into $\{0,1\}$. For two such functions $p, q$ define $c(\{p,q\})=1$
 if and only if $p\cup q$ is not a function.
Since $\{0,1\}^\kappa$ satisfies the c.c.c., there is no uncountable $1$-monochromatic set.
On the other hand if all the family were the countable union of $0$-monochromatic sets, their unions
would define a dense countable subset of $\{0,1\}^\kappa$ which contradicts the hypothesis on $\kappa$
by the Marczewski, Hewitt, Pondiczery theorem \ref{hmp}.
\end{proof}

\begin{proposition}\label{antisubbanach} Suppose that $\kappa$ is an 
infinite cardinal and $A\subseteq [1,2]$
is of cardinality $\omega_1$.
There are Banach spaces whose unit spheres contain
\begin{enumerate}
\item $1$-separated set $\Y$ of cardinality $2^\omega$, 
 where distances between points are $1$ or $2$ such that for every  uncountable subset $\Z\subseteq \Y$
there are  two points in $\Z$ distant by $1$ and two points in $\Z$  distant by $2$.
So
${\mathsf K}^+(\Y)=[0,1)$,  ${\Sigma}(\Y)=\{2\}$ and $\Y$ is hyperlateral.
\item $1$-separated set $\Y$ of cardinality $\omega_1$, such that for every  
uncountable subset $\Z\subseteq \Y$ for every $a\in A$
there are  two points in $\Z$ distant by $a$.
\item  $1$-separated set $\Y$ of cardinality $2^{2^\omega}$ with no equilateral triangle.
\item $1$-separated set $\Y$ of cardinality $\kappa$ with no uncountable
$(1+)$-separated subset such that $\Y$ is not the countable union of sets of diameters less than $2$.
\end{enumerate}
\end{proposition}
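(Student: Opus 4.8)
The plan is to build all four Banach spaces by the uniform device of Lemmas~\ref{color-dist} and \ref{kuratowski}: take an appropriate coloring $c:[\kappa']^2\to[0,1]$ (or into a subset of $[0,1]$), pass to the metric $d_c$ on $\kappa'$ bounded by $2$ via Lemma~\ref{color-dist}, and then isometrically embed $(\kappa', d_c)$ into the unit sphere of a $C(K)$-space via Lemma~\ref{kuratowski}. Since $d_c(\alpha,\beta)=1+c(\{\alpha,\beta\})\in[1,2]$ for distinct $\alpha,\beta$, the image $\Y$ is automatically a $1$-separated set, and a monochromatic set for $c$ with value $t$ becomes a $(1+t)$-equilateral set; more generally the distances realized in a subset $\Z\subseteq\Y$ are exactly $1+$(the colors realized by $c$ on the corresponding index set). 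So the whole proposition reduces to choosing the right coloring in each case.

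For (1), I would use the Sierpi\'nski coloring of Theorem~\ref{sierpinski1}, but pushed to the two values $\{0,1\}$ so that $d_c$ takes only values $1$ and $2$: with $c:[2^\omega]^2\to\{0,1\}$ having no uncountable monochromatic set, every uncountable $\Z\subseteq\Y$ realizes both colors, hence contains two points at distance $1$ and two at distance $2$. Since $\Y$ is $1$-separated, $[0,1)\subseteq{\mathsf K}^+(\Y)$; since it has no uncountable $(1+)$-separated subset (the $(r+)$ relation for $r\in[1,2)$ forces a $1$-monochromatic set of color $1$) we get ${\mathsf k}^+(\Y)=1$ and in fact ${\mathsf K}^+(\Y)=[0,1)$; and $\Y=\{x:d(x,\cdot)\le 2\}$ is a single set of diameter $2$ together with the fact that no proper countable subfamily of smaller-diameter pieces can contain an uncountable piece (a piece of diameter $<2$ is $1$-monochromatic of color $0$, hence countable) gives $\Sigma(\Y)=\{2\}$. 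Finally $\Y$ is hyperlateral: any uncountable separated subset is uncountable, hence realizes both distances $1$ and $2$, so for $\varepsilon<1/2$ it has no uncountable $\varepsilon$-approximately equilateral subset. For (2), enumerate $A=\{a_n:n\in\omega\}\setminus$ ... actually simpler: take a coloring $c:[\omega_1]^2\to A-1:=\{a-1:a\in A\}\subseteq[0,1]$ refining Todorcevic's coloring of Theorem~\ref{stevo}; precisely, compose the $\omega_1$-valued coloring $c_0:[\omega_1]^2\to\omega_1$ satisfying $c_0[[B]^2]=\omega_1$ for every uncountable $B$ with a surjection $\omega_1\to A$ each of whose fibers is cofinal, so that $c_0[[B]^2]=\omega_1$ implies the composed coloring hits every element of $A$ on $[B]^2$. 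Then $d_c$ realizes on every uncountable subset exactly the set $1+(A-1)=A$ of distances. For (3), use the Sierpi\'nski coloring of Theorem~\ref{sierpinski2}, $c:[2^{2^\omega}]^2\to[0,1]$ with no monochromatic three-element set; composing with the affine bijection $[0,1]\to[1,2]$ inside $d_c$ is harmless, and a three-element equilateral subset of $\Y$ would be a three-element monochromatic set for $c$, which does not exist. For (4), use the coloring $c:[\kappa]^2\to\{0,1\}$ of Lemma~\ref{hmp-lemma}: no uncountable $1$-monochromatic set means no uncountable $(1+)$-separated subset of $\Y$ (such a set is $1$-monochromatic of color $1$), and $\kappa$ not being the countable union of $0$-monochromatic sets means $\Y$ is not the countable union of sets of diameter $<2$ (a set of diameter $<2$ pulls back to a $0$-monochromatic set).

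The routine verifications are the translations between colors and distances, which are immediate from $d_c(\alpha,\beta)=1+c(\{\alpha,\beta\})$ and the observation that for distinct points in $\Y$ the diameter of a subset is $<2$ iff the corresponding color set avoids the top value, and that $(r+)$-separation for $r\in[1,2)$ forces color $1$ throughout. The one point that needs genuine care—and which I expect to be the main obstacle—is (2): I must produce a coloring onto the prescribed uncountable $A\subseteq[1,2]$ (not merely onto $\omega_1$ or onto a two-point set) that still hits \emph{every} value of $A$ on every uncountable square. Todorcevic's Theorem~\ref{stevo} gives a coloring onto $\omega_1$ that is onto-on-every-uncountable-square, so it suffices to fix a surjection $g:\omega_1\to A$ with $|g^{-1}(a)|=\omega_1$ for each $a\in A$ (possible since $|A|=\omega_1$) and set $c'=g\circ c_0$; then for uncountable $B$, $c_0[[B]^2]=\omega_1\supseteq g^{-1}(a)\ne\emptyset$ for each $a\in A$, so $a\in c'[[B]^2]$. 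One must also check that $c'$ takes values in $[0,1]$ after the shift—replace $A$ by $A-1\subseteq[0,1]$ as above—and that the resulting $d_{c'}$ indeed realizes precisely the distances in $A$ (it realizes at most those since $\mathrm{range}(c')=A-1$, and at least those by the argument just given), so every uncountable $\Z\subseteq\Y$ realizes every $a\in A$ as a distance, as claimed.
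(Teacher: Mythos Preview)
Your proposal is correct and follows exactly the approach of the paper: apply Lemmas~\ref{color-dist} and~\ref{kuratowski} to the colorings of Theorems~\ref{sierpinski1}, \ref{stevo}, \ref{sierpinski2} and Lemma~\ref{hmp-lemma} for items (1)--(4) respectively. You have simply filled in the routine translations between colors and distances that the paper leaves implicit, and in item~(2) you spell out the composition of Todorcevic's $\omega_1$-valued coloring with a surjection onto $A-1$, which is the intended reading of the paper's one-line reference to Theorem~\ref{stevo}.
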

\begin{proof}
The items  of the proposition follow from Theorems \ref{sierpinski1}, \ref{stevo}, \ref{sierpinski2} and 
Lemma \ref{hmp-lemma} respectively
when we apply Lemmas \ref{color-dist} and \ref{kuratowski}.
\end{proof}

\section{The spheres of Banach spaces with weakly$^*$ separable dual balls.}

In the first part of this section we show that
under the hypothesis of  {\sf OCA} or in {\sf ZFC} under a descriptive complexity
hypothesis about the Banach space $X\subseteq\ell_\infty$ the sphere $S_\X$ is dichotomous.
So, for example, for such spaces the only reason why the sphere may not admit an uncountable
$(1+)$-separated set is that it is the union of countably many sets of diameters not bigger than $1$
(Proposition \ref{oca>dich} and \ref{feng>dich}).
In fact the {\sf OCA} result implies that any uncountable subset of $\ell_\infty$ is dichotomous.
This could be compared with the fact that the unit sphere of $c_0(\omega_1)$
has a  non-dichotomous subset while it is dichotomous (Proposition \ref{c0-notherdich}).
In the second part of this section we show that the hypotheses of the above results cannot be weakened
and the conclusions cannot be strengthened in general: Consistently when {\sf OCA} fails,
the spaces of \cite{ad-kottman} which
are isometric to subspaces of $\ell_\infty$ are hyperlateral (Proposition \ref{ad-hyper})
and in {\sf ZFC}  the spaces providing the negative answers to questions (S) and (E) from
the first section  are Borel in $\R^\N$ (Proposition \ref{examples-analytic})
so being dichotomous cannot be strengthened in Proposition \ref{feng>dich} to
the existence of uncountable $(1+)$-separated sets or infinite equilateral sets.

\begin{lemma}[\cite{dancer}]\label{ball} Suppose  that $\X$ is a Banach space. 
The dual unit ball $B_{\X^*}$ of $\X^*$
is separable in the weak$^*$ topology if and only if $\X$ is isometric to a subspace of $\ell_\infty$.
\end{lemma}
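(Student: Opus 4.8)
\textbf{Proof proposal for Lemma \ref{ball}.}

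The plan is to prove both directions by exhibiting the relevant isometries explicitly. For the easy direction, suppose $\X$ is isometric to a (closed) subspace $Y$ of $\ell_\infty$. Then $B_{Y^*}$ is a weak$^*$-quotient of $B_{\ell_\infty^*}$ (restriction of functionals is weak$^*$-continuous and maps $B_{\ell_\infty^*}$ onto $B_{Y^*}$ by Hahn--Banach), so it suffices to show $B_{\ell_\infty^*}$ is weak$^*$-separable. For this I would take the countable set $D=\{\pm e_n^* : n\in\N\}$ of coordinate evaluation functionals on $\ell_\infty$ together with their rational convex combinations, or more simply note that for $x\in\ell_\infty$ the value $\|x\|_\infty = \sup_n |x(n)|$ is already determined by the countably many functionals $e_n^*$; hence the countable rational linear span of $\{e_n^*\}$ is weak$^*$-dense in $B_{\ell_\infty^*}$ after intersecting with the ball and taking a dense countable subset. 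Composing the quotient map with the isometry $\X\cong Y$ transfers weak$^*$-separability of $B_{\ell_\infty^*}$ to $B_{\X^*}$.

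For the main direction, suppose $B_{\X^*}$ is weak$^*$-separable, and let $\{x_n^*:n\in\N\}\subseteq B_{\X^*}$ be a weak$^*$-dense sequence. Define $T:\X\to\ell_\infty$ by $T(x)=(x_n^*(x))_{n\in\N}$. This is clearly linear and bounded with $\|T(x)\|_\infty=\sup_n|x_n^*(x)|\le\|x\|$. The point is to show it is in fact an isometry, i.e.\ $\sup_n|x_n^*(x)|=\|x\|$ for every $x\in\X$. Fix $x\in\X$; by Hahn--Banach there is $x^*\in B_{\X^*}$ with $x^*(x)=\|x\|$. Since $\{x_n^*\}$ is weak$^*$-dense in $B_{\X^*}$, for every $\varepsilon>0$ there is $n$ with $|x_n^*(x)-x^*(x)|<\varepsilon$, hence $|x_n^*(x)|>\|x\|-\varepsilon$; letting $\varepsilon\to 0$ gives $\sup_n|x_n^*(x)|\ge\|x\|$, and combined with the reverse inequality above this shows $\|T(x)\|_\infty=\|x\|$. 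Thus $T$ is an isometric embedding of $\X$ into $\ell_\infty$.

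The only genuinely delicate point is the first direction: one must be slightly careful that weak$^*$-separability really does pass from $B_{\ell_\infty^*}$ to the ball of a subspace, since weak$^*$-separability of a dual ball is not automatically inherited by arbitrary subspaces in general functional-analytic settings --- but here the restriction map $B_{\ell_\infty^*}\to B_{Y^*}$ is weak$^*$-to-weak$^*$ continuous and surjective, and continuous images of separable spaces are separable, so the argument goes through. Everything else is a routine application of Hahn--Banach and the definition of the weak$^*$ topology, so no further obstacle is expected. Alternatively, for the easy direction one can simply observe directly that $\{e_n^*:n\in\N\}$ restricted to $Y$ already separates points of $Y$ and that the map $T$ built from them in the second paragraph is the inverse isometry, making the two directions formally symmetric.
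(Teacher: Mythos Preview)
Your proof is correct and follows essentially the same route as the paper. The ``main direction'' is identical to the paper's argument (define $T(x)=(x_n^*(x))_n$ and invoke Hahn--Banach), and your ``easy direction'' via the weak$^*$-continuous surjective restriction map $B_{\ell_\infty^*}\to B_{Y^*}$ is exactly what the paper's terse ``use $T^*$'' means, just with the details (including the weak$^*$-separability of $B_{\ell_\infty^*}$, which the paper leaves implicit) spelled out.
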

\begin{proof} Let $D=\{x_n^*: n\in \N\}$ be a weakly$^*$ dense subset of the dual ball of $\X^*$. 
Define $F: \X\rightarrow \R^\N$ by $F(x)=(x^*_n(x))_{n\in \N}$.  By the Hahn-Banach theorem
it is an isometric linear embedding of $\X$ into $\ell_\infty$.
If $T:\X\rightarrow \ell_\infty$ is an isometry, use $T^*$.
\end{proof}

\begin{definition} Let $X$ be a topological space. A function $f: X\rightarrow \R$
is called lower semicontinuous (lsc) if $\{x\in X: f(x)>a\}$ is open for every $a\in \R$.
\end{definition}

\begin{definition}  Suppose that $\X$ is a Banach space. We say that its 
norm is second countable lsc on the unit sphere
if there is a second countable, regular topology $\tau$ on $S_\X$ such that 
$d: S_\X\times S_\X\rightarrow \R$ defined by $d(x, y)=\|x-y\|$ is lsc with respect to the 
product topology $\tau\times\tau$.
\end{definition}

\begin{lemma}\label{separable-lsc} Suppose that a Banach space $\X$ has separable dual ball in the weak$^*$ topology.
Then  its norm is second countable lsc on the unit sphere $S_\X$.
\end{lemma}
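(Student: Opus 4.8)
If a Banach space $\X$ has separable dual ball $B_{\X^*}$ in the weak$^*$ topology, then its norm is second countable lsc on the unit sphere $S_\X$.

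The plan is to use the weak$^*$-dense countable set in $B_{\X^*}$ to manufacture both the topology and the lower semicontinuity simultaneously. By Lemma \ref{ball}, separability of $B_{\X^*}$ in the weak$^*$ topology gives an isometric embedding $F\colon\X\to\ell_\infty$, $F(x)=(x_n^*(x))_{n\in\N}$, where $D=\{x_n^*:n\in\N\}$ is weak$^*$-dense in $B_{\X^*}$. The topology $\tau$ I would put on $S_\X$ is the one inherited from this embedding, i.e. the topology generated by the maps $x\mapsto x_n^*(x)$ (equivalently, the subspace topology from the product/Polish topology on $\R^\N$ restricted to $F[S_\X]$). This $\tau$ is second countable and regular (metrizable, even), being a subspace of a second countable metrizable space. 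So the only real content is lower semicontinuity of $d(x,y)=\|x-y\|$ with respect to $\tau\times\tau$.

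First I would record the Hahn–Banach fact that $\|z\|=\sup_{n\in\N}|x_n^*(z)|$ for every $z\in\X$: since $D$ is weak$^*$-dense in $B_{\X^*}$ and every $z^*\in B_{\X^*}$ is a weak$^*$-limit of elements of $D$, evaluation at $z$ shows $\sup_n|x_n^*(z)|=\sup_{z^*\in B_{\X^*}}|z^*(z)|=\|z\|$. Applying this with $z=x-y$ gives
\[
\|x-y\|=\sup_{n\in\N}|x_n^*(x)-x_n^*(y)|=\sup_{n\in\N}|x_n^*(x-y)|.
\]
Then for any $a\in\R$, the set $\{(x,y)\in S_\X\times S_\X:\|x-y\|>a\}=\bigcup_{n\in\N}\{(x,y):|x_n^*(x)-x_n^*(y)|>a\}$. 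Each set in this union is $\tau\times\tau$-open, because $(x,y)\mapsto x_n^*(x)-x_n^*(y)$ is $\tau\times\tau$-continuous (it is a difference of coordinate functions, which are $\tau$-continuous by construction of $\tau$), so its preimage of the open set $\{t:|t|>a\}$ is open. A union of open sets is open, hence $d$ is lsc for $\tau\times\tau$, as required.

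There is essentially no serious obstacle here; the one point deserving a moment's care is making sure $\tau$ is genuinely second countable and regular and not merely "second countable". The clean way is to observe that $\tau$ is metrizable: it is the subspace topology on $F[S_\X]\subseteq\R^\N$, and $\R^\N$ is a Polish space, so $F[S_\X]$ is separable metrizable, in particular second countable and regular. (Equivalently one can write down an explicit metric $\rho(x,y)=\sum_n 2^{-n}\min(1,|x_n^*(x)-x_n^*(y)|)$ and check it induces $\tau$.) One might also pause over whether $\tau$ is Hausdorff, i.e. whether $F$ restricted to $S_\X$ is injective — but this is immediate from $\|x-y\|=\sup_n|x_n^*(x-y)|$, which forces $x=y$ whenever all coordinates agree; and injectivity plus metrizability of the ambient space yields everything needed. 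So the whole argument reduces to the Hahn–Banach supremum identity plus a routine observation about subspace topologies of $\R^\N$.
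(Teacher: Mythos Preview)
Your proof is correct and essentially identical to the paper's: both embed $\X$ isometrically into $\ell_\infty$ via Lemma~\ref{ball}, pull back the product topology from $\R^\N$, and observe that $\|x-y\|_\infty=\sup_n|x(n)-y(n)|$ makes the distance a supremum of $\tau\times\tau$-continuous functions, hence lsc. You are a bit more explicit about verifying regularity and second countability and about the Hahn--Banach supremum identity, but there is no substantive difference in approach.
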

\begin{proof} 
By lemma \ref{ball} the space $\X$ can be identified with a subspace of $\ell_\infty$.
Consider $\ell_\infty$ as a subspace of $\R^\N$ with the product topology. It is second countable and regular.
If $x, x'\in S_\X$ and $\|x-x'\|_\infty>a$, then there is $n\in \N$ such that
$r=\|x(n)-x'(n)\|_\infty>a$ and so $\|y(n)-y'(n)\|_\infty>a$ for any 
$$(y, y')\in \{t\in \X: |t(n)-x(n)|<(r-a)/3\}\times \{t\in \X: |t(n)-x'(n)|<(r-a)/3\}.$$
So $\{(x, x')\in \X\times\X: \|x-x'\|>a\}$ is open in $\X\times\X$ in the
subspace topology inherited from $\R^\N\times\R^\N$ as required.
\end{proof}

We do not know if there are  Banach spaces $\X$ which have second countable lsc norm on its sphere but
the dual ball $B_{\X^*}$ is not weakly$^*$ separable. This may be related to adding a Polish topology
which makes a given coloring continuous by forcing as studied in \cite{geschke-potent} by Geshke.

\begin{proposition}\label{oca>dich} Assume {\sf OCA}. Suppose that the norm of a  Banach space $\X$ 
is second countable lsc on the unit sphere $S_\X$. Then any subset of $S_\X$ is dichotomous.
In particular all subsets of the unit spheres of Banach spaces with weakly$^*$ separable dual balls
are dichotomous.
\end{proposition}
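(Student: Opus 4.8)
The plan is to read the metric dichotomy off a single instance of {\sf OCA} (Theorem \ref{todorcevic}). First I would fix an arbitrary subset $M\subseteq S_\X$ and an arbitrary real $0<r\le diam(M)$, and equip $M$ with the subspace topology inherited from the second countable regular topology $\tau$ on $S_\X$ witnessing that the norm is second countable lsc; since second countability and regularity are hereditary, $M$ is again second countable and regular. Then I would define $c:[M]^2\to\{0,1\}$ by declaring $c(\{x,y\})=0$ exactly when $\|x-y\|>r$. The set
$$\{(x,y)\in M\times M: c(\{x,y\})=0\}=\{(x,y)\in M\times M:\|x-y\|>r\}$$
is the restriction to $M\times M$ of $\{(x,y)\in S_\X\times S_\X:\|x-y\|>r\}$, which is open in $\tau\times\tau$ by lower semicontinuity of $(x,y)\mapsto\|x-y\|$; hence it is open in the product topology on $M\times M$, and $(M,c)$ is a legal input to {\sf OCA}.

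Applying {\sf OCA} to $(M,c)$ then yields the two alternatives verbatim. In the first, $M=\bigcup_{n\in\N}M_n$ with $c[[M_n]^2]=\{1\}$ for every $n$; unwinding the definition of $c$, this says that any two distinct points of $M_n$ lie at distance at most $r$, i.e. $diam(M_n)\le r$, so $M$ is the union of countably many sets of diameter at most $r$. In the second, there is an uncountable $N\subseteq M$ with $c[[N]^2]=\{0\}$, i.e. $\|x-y\|>r$ for all distinct $x,y\in N$, so $N$ is an uncountable $(r+)$-separated subset of $M$. Since $r\in(0,diam(M)]$ was arbitrary, this is exactly the assertion that $M$ is dichotomous in the sense of Definition \ref{def-dichotomous}. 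For the final sentence, Lemma \ref{separable-lsc} shows that any Banach space with weakly$^*$ separable dual ball has second countable lsc norm on its unit sphere, so the first part applies to $S_\X$ and to all of its subsets.

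I do not expect a real obstacle: the whole point is that the hypothesis ``second countable lsc on the sphere'' is precisely what makes the far-apart half of the distance partition open, and hence a legitimate colouring for {\sf OCA}; everything else is bookkeeping. The only points needing a word of care are that passing to the subspace $M$ preserves both the topological hypotheses and the openness of the $0$-coloured pairs (both immediate, since the relevant properties are hereditary and the open set is just intersected with $M\times M$), and the observation that one only needs openness on one side of the partition --- the close-together side of a genuine metric partition is in general not open, which is exactly why the natural conclusion here is a dichotomy coming out of {\sf OCA} rather than a bare uncountable monochromatic set.
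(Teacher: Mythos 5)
Your proof is correct and is essentially the same as the paper's: both fix $r$, colour a pair $0$ exactly when the distance exceeds $r$, use the lsc hypothesis to see that the $0$-side is open, and apply {\sf OCA} to the subset with its second countable regular subspace topology, finishing the weak$^*$-separable case via Lemma \ref{separable-lsc}. Your extra remarks on hereditariness of the topological hypotheses are just a more explicit version of what the paper leaves implicit.
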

\begin{proof} 
Consider a subset $\Y$ of $S_\X$  with the second countable regular topology which
exists by the hypothesis.  Fix $r\in [0,2]$. Since the distance is lsc with respect to this topology
we are in the position of applying the {\sf OCA} (Theorem \ref{todorcevic})
for the coloring $c:S_\X\times S_\X\rightarrow \{0,1\}$ given
by $c(\{x, x'\})=0$ if and only if $\|x-x'\|>r$.

So the {\sf OCA} implies that $\Y$ either admits an uncountable
$0$-monochromatic set, which yields an uncountable $(r+)$-separated subset
of $\Y$  or
$\Y$ is the union of countably many $1$-monochromatic sets, which yields a partition of 
$\Y$ into countably many sets of diameter not bigger than $r$.

The last part of the proposition follows from Lemma \ref{separable-lsc}.
\end{proof}

\begin{proposition}\label{oca-auerbach} Assume {\sf OCA}. Let $\X$ be a Banach 
subspace of $\ell_\infty$
\begin{enumerate} 
\item If $\X$ admits an uncountable Auerbach system, then $S_\X$ admits an uncountable
$(1+)$-separated set.  
\item For every $\varepsilon>0$ every uncountable separated subset of $\X$ admits its uncountable subset
which is $\varepsilon$-approximately equilateral.
\end{enumerate}
 \end{proposition}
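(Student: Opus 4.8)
The plan is to derive both parts of Proposition~\ref{oca-auerbach} from Proposition~\ref{oca>dich}, which (via Lemma~\ref{separable-lsc}) tells us that under {\sf OCA} every subset of $S_\X$ is dichotomous whenever $\X\subseteq\ell_\infty$. For part (1), I would simply invoke Proposition~\ref{dich>auerbach} with $\Y=S_\X$: since $S_\X$ is dichotomous by Proposition~\ref{oca>dich}, a Banach space admitting an uncountable Auerbach system $\{(x_i,x_i^*):i\in I\}$ with $\{x_i:i\in I\}\subseteq S_\X$ (which holds automatically for any Auerbach system) must have an uncountable $(1+)$-separated subset of $S_\X$. So part (1) is immediate.

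For part (2), fix $\varepsilon>0$ and an uncountable separated $\Y\subseteq\X$, say $r$-separated with $r>0$ and bounded by some $R$ (boundedness is not guaranteed a priori, so I first pass to an uncountable subset of bounded diameter — e.g.\ intersect $\Y$ with a ball $\{x:\|x\|\le n\}$ for suitable $n$, which must catch uncountably many points). The aim is to produce an uncountable $\varepsilon$-approximately equilateral subset. The natural route is to apply the dichotomy of Proposition~\ref{oca>dich} not directly to $\Y$ (which need not be a subset of $S_\X$) but to a rescaled copy: normalize by considering $\{y/R : y\in\Y\}$ or, more directly, run the argument of Proposition~\ref{dich>ramsey} inside $\Y$. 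Concretely, I would show that $\Y$ (with the metric inherited from $\X$, which is lsc in the second countable topology inherited from $\R^\N$) is itself dichotomous under {\sf OCA}: the coloring $c(\{y,y'\})=0\iff\|y-y'\|>t$ has open $0$-part in the product of the (second countable, regular) relative topology, so {\sf OCA} applies and gives, for each $t$, either an uncountable $(t+)$-separated subset of $\Y$ or a covering of $\Y$ by countably many sets of diameter $\le t$.

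Then I would mimic Proposition~\ref{dich>ramsey}: let $s=\mathsf{k}^+(\Y)$ computed within $\Y$; since $\Y$ is $r$-separated, $s\ge r>0$, and since $\Y$ is bounded, $s<\infty$; dichotomy forces $s=\sigma(\Y)$. Pick an uncountable $(s-\varepsilon'+)$-separated $N'\subseteq\Y$ and a covering of $\Y$ by countably many sets of diameter $\le s+\varepsilon'$; one piece meets $N'$ in an uncountable set $N$, which is then $\varepsilon'$-approximately $s$-equilateral. Choosing $\varepsilon'\le\varepsilon$ finishes it. I do not expect a serious obstacle here; the only genuine care-points are (a) checking that the relative topology on $\Y\subseteq\X\subseteq\R^\N$ is second countable and regular and that the distance remains lsc (this is exactly the content of Lemma~\ref{separable-lsc}, which localizes to subsets), and (b) the preliminary reduction to a bounded uncountable subset so that $\sigma$ and $\mathsf{k}^+$ are finite and Proposition~\ref{dich>ramsey} (or its proof) literally applies. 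Everything else is bookkeeping with the constants $\Sigma(\Y)$, $\mathsf{K}^+(\Y)$ from Proposition~\ref{sets-constants}.
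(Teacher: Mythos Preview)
Your proposal is correct and follows essentially the same route as the paper, which simply cites Proposition~\ref{oca>dich} together with Propositions~\ref{dich>auerbach} and~\ref{dich>ramsey}. You are in fact more careful than the paper on two points the paper leaves implicit: that the {\sf OCA} argument of Proposition~\ref{oca>dich} applies verbatim to an arbitrary subset of $\X$ (not only of $S_\X$), and that one must first pass to a bounded uncountable subset of $\Y$ before invoking Proposition~\ref{dich>ramsey}.
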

\begin{proof}
Use Proposition
\ref{oca>dich} and Propositions \ref{dich>auerbach} and \ref{dich>ramsey}.
\end{proof}

\begin{proposition}\label{feng>dich} Suppose that $\X\subseteq \ell_\infty$ is a Banach space. Suppose that
$\Y\subseteq \X$ is analytic 
with respect to the topology
on $\R^\N$.  Then $\Y$ is dichotomous. In particular if $\X$ is analytic, then $S_{\X}$
is dichotomous.
\end{proposition}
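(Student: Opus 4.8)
The statement is the Feng-style counterpart of Proposition~\ref{oca>dich}: where {\sf OCA} was applied there to get the dichotomy from an lsc coloring on a second countable space, here we replace the {\sf OCA} input by Feng's Theorem~\ref{feng}, which needs no set-theoretic hypothesis beyond {\sf ZFC} but demands that the domain of the coloring be \emph{analytic}. So the proof should run almost verbatim parallel to the proof of Proposition~\ref{oca>dich}, with two points of care: (i) checking that the open-coloring hypothesis of Theorem~\ref{feng} is met, and (ii) converting the conclusion ``perfect monochromatic set'' into ``uncountable $(r+)$-separated set,'' which is immediate since a perfect set is uncountable.

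First I would fix $r\in[0,2]=[0,\mathrm{diam}(S_\X)]$ (recall $\mathrm{diam}(S_\X)=2$ by Proposition~\ref{riesz}; and for general bounded $\Y$ one only needs $0<r\le\mathrm{diam}(\Y)\le 2$, the cases $r\ge 2$ being trivial via $2\in\Sigma$). Define the coloring $c:[\Y]^2\rightarrow\{0,1\}$ by $c(\{x,x'\})=0$ if and only if $\|x-x'\|_\infty>r$. The key geometric fact, exactly as in Lemma~\ref{separable-lsc}, is that $\{(x,x')\in\ell_\infty\times\ell_\infty:\|x-x'\|_\infty>r\}$ is open in $\R^\N\times\R^\N$: if $\|x-x'\|_\infty>r$, pick a coordinate $n$ with $|x(n)-x'(n)|>r$ and a small rational $\delta$, and the product of the two basic open coordinate-boxes around $x$ and $x'$ of width $\delta$ witnesses openness. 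Restricting to $\Y\times\Y$, the set $\{(x,x')\in\Y\times\Y:c(\{x,x'\})=0\}$ is relatively open, so the hypothesis of Theorem~\ref{feng} is satisfied \emph{provided $\Y$ is analytic} as a subset of $\R^\N$ (equivalently, in any admissible Polish topology, which $\R^\N$ carries). Being a subspace of $\ell_\infty$ endows $\Y$ with the separable metrizable topology inherited from $\R^\N$, so analyticity is exactly the extra hypothesis we are given.

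Now apply Theorem~\ref{feng}. Either $\Y=\bigcup_{n\in\N}\Y_n$ with $c[[\Y_n]^2]=\{1\}$ for all $n$ --- that is, every $\Y_n$ has $\|x-x'\|_\infty\le r$ for all of its pairs, i.e.\ $\mathrm{diam}(\Y_n)\le r$, so $\Y$ is the countable union of sets of diameter $\le r$ --- or else there is a perfect $N\subseteq\Y$ with $c[[N]^2]=\{0\}$, i.e.\ $\|x-x'\|_\infty>r$ for all distinct $x,x'\in N$; a perfect subset of a metric space is uncountable, so $N$ is an uncountable $(r+)$-separated subset of $\Y$. Since $r$ was arbitrary in $(0,\mathrm{diam}(\Y)]$, and the two alternatives are precisely the two clauses of Definition~\ref{def-dichotomous}, $\Y$ is dichotomous. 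The ``in particular'' is the case $\Y=S_\X$: if $\X\subseteq\ell_\infty$ is analytic, then $S_\X=\X\cap S_{\ell_\infty}$; $S_{\ell_\infty}$ is $G_\delta$ (hence Borel) in $\R^\N$ as noted in the excerpt, and the intersection of an analytic set with a Borel set is analytic, so $S_\X$ is analytic and the first part applies.

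\textbf{Main obstacle.} There is no deep obstacle; the only thing to watch is the passage from $\X$ analytic to $S_\X$ analytic (closure of analytic sets under intersection with Borel sets, using the explicit $G_\delta$ description of $S_{\ell_\infty}$), and making sure the coloring is defined on $[\Y]^2$ rather than on ordered pairs so that the ``open'' hypothesis of Feng's theorem --- which is about $\{(x,y):c(\{x,y\})=0\}$ being open in the square --- is literally matched; this is the same bookkeeping already done in Lemma~\ref{separable-lsc} and Proposition~\ref{oca>dich}, so it is routine.
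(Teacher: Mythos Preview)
Your proposal is correct and follows essentially the same approach as the paper's proof: invoke the openness of $\{(y,y'):\|y-y'\|_\infty>r\}$ (as in Lemma~\ref{separable-lsc}), apply Feng's Theorem~\ref{feng} in place of {\sf OCA} exactly as in Proposition~\ref{oca>dich}, and for the ``in particular'' use that $S_{\ell_\infty}$ is $G_\delta$ so $S_\X$ is analytic when $\X$ is. The paper's proof is merely a terser version of what you wrote, referring back to those earlier arguments rather than spelling them out.
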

\begin{proof}
As in the proof of Lemma \ref{separable-lsc} we note that for every $r>0$ the set
$\{(y, y')\in \Y\times \Y:  \|y-y'\|>r\}$ is open in $\Y\times \Y$ with the subspace 
topology inherited from the product topology on $\R^\N\times \R^\N$
and so Feng's Theorem \ref{feng} can be applied as in the proof of Proposition \ref{oca>dich}.

To obtain that $S_\X$ is analytic if $\X$ is analytic we note that 
$S_\X$ is a Borel subset of $\X$ as noted before Theorem \ref{feng}.

\end{proof} 

\begin{proposition}\label{feng-auerbach} Suppose that $\X\subseteq\ell_\infty$ is a Banach
space.
\begin{enumerate}
\item If $\X$ admits an uncountable Auerbach system $(x_\alpha, x^*_\alpha)_{\alpha<\omega_1}$
such that $\{x_\alpha: \alpha<\omega_1\}\subseteq \Y\subseteq S_\X$, where $\Y$
is an analytic subspace of $\R^\N$, 
then $S_\X$ admits an uncountable $(1+)$-separated set.
\item For every $\varepsilon>0$ every uncountable separated $\Y\subseteq \X$ which is  analytic in $\R^\N$
admits an uncountable $\varepsilon$-approximately equilateral subset.
\end{enumerate}
\end{proposition}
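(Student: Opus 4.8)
The plan is to treat this proposition as the analytic counterpart of Proposition \ref{oca-auerbach}: both items follow by combining Proposition \ref{feng>dich}, which says that analytic subsets of a Banach subspace of $\ell_\infty$ are dichotomous, with the general consequences of dichotomy recorded in Propositions \ref{dich>auerbach} and \ref{dich>ramsey}. Thus no genuinely new argument is required beyond Feng's Theorem \ref{feng}, which already underlies Proposition \ref{feng>dich}.

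For (1) I would argue directly: since $\Y$ is analytic in $\R^\N$ and $\Y\subseteq\X\subseteq\ell_\infty$, Proposition \ref{feng>dich} gives that $\Y$ is dichotomous. As moreover $\Y\subseteq S_\X$ and $\{x_\alpha:\alpha<\omega_1\}\subseteq\Y$, the hypotheses of Proposition \ref{dich>auerbach} are satisfied with this choice of $\Y$, and that proposition yields an uncountable $(1+)$-separated subset of $S_\X$.

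For (2) the only extra point is to first cut $\Y$ down to a bounded set, so that Proposition \ref{dich>ramsey} can be applied. Write $\Y=\bigcup_{n\in\N}\Y_n$ with $\Y_n=\{y\in\Y:\|y\|\leq n\}$; each closed ball of $\X$ is the intersection of $\X$ with the closed (hence Borel) subset $\{x\in\R^\N:\sup_{m}|x(m)|\leq n\}$ of $\R^\N$, so each $\Y_n$ is analytic, and since $\Y$ is uncountable some $\Y_{n_0}$ is uncountable. Put $\Y'=\Y_{n_0}$. Then $\Y'$ is bounded, uncountable, analytic, and $r$-separated, where $r>0$ witnesses that $\Y$ is separated. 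By Proposition \ref{feng>dich} the set $\Y'$ is dichotomous, so by Proposition \ref{sets-constants} we have ${\mathsf k}^+(\Y')=\sigma(\Y')$; and since $\Y'$ is uncountable and $r$-separated it cannot be covered by countably many sets of diameter less than $r$ (each such set contains at most one point of $\Y'$), whence $\sigma(\Y')\geq r>0$, while $\sigma(\Y')\leq diam(\Y')<\infty$ because $\Y'$ is bounded. Therefore $\Y'$ is bounded, uncountable and almost dichotomous with ${\mathsf k}^+(\Y')=\sigma(\Y')=s>0$, and Proposition \ref{dich>ramsey} produces, for every $\varepsilon>0$, an uncountable $\varepsilon$-approximately $s$-equilateral subset of $\Y'$, which is in particular an uncountable $\varepsilon$-approximately equilateral subset of $\Y$. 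I do not expect a real obstacle here: the substance is inherited from Proposition \ref{feng>dich}, and what remains — that restricting an analytic set to a norm-ball keeps it analytic, and that an uncountable separated set has positive $\sigma$ — is routine.
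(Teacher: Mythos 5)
Your proposal is correct and follows essentially the same route as the paper, whose proof simply cites Propositions \ref{feng>dich}, \ref{dich>auerbach} and \ref{dich>ramsey}. The only addition is your explicit reduction of $\Y$ to an uncountable bounded analytic piece with $\sigma>0$ before invoking Proposition \ref{dich>ramsey} in item (2) — a routine verification the paper leaves implicit, and you carry it out correctly.
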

\begin{proof}
Use Proposition \ref{feng>dich} and Propositions   \ref{dich>auerbach} and \ref{dich>ramsey}.
\end{proof}

In the following part of this section we give examples of Banach spaces
which show that in some sense the hypotheses of Propositions \ref{oca>dich} and \ref{feng>dich}
cannot be weakened and their conclusions cannot be made stronger.
These are examples of \cite{pk-kottman}, \cite{ad-kottman} and \cite{pk-hugh}.
To show that they do the job, we need a couple of lemmas.

\begin{lemma}\label{span-Borel} Suppose that  $\Y\subseteq \ell_\infty$ is $\sigma$-compact as a subspace of $\R^\N$.
Then the norm closure of the span of $\Y$ in $\ell_\infty$ is the
countable intersection of $\sigma$-compact subsets of $\R^\N$ and hence it is Borel.
\end{lemma}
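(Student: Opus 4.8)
The plan is to treat the linear span and the norm closure in two separate steps, the crucial point being that the closed unit ball of $\ell_\infty$, namely $[-1,1]^\N$, is compact in the product topology of $\R^\N$ by Tychonoff's theorem.

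First I would check that $V:=\mathrm{span}(\Y)$ is already $\sigma$-compact as a subspace of $\R^\N$. Writing $\Y=\bigcup_{k\in\N}K_k$ with each $K_k$ compact and (without loss of generality) $K_k\subseteq K_{k+1}$, one has
\[
V=\bigcup_{n,m,k\in\N}\Big\{\,{\textstyle\sum_{i=1}^{n}}a_iy_i:\ |a_i|\le m,\ y_i\in K_k\,\Big\},
\]
and each member of this countable union is the image of the compact set $[-m,m]^n\times K_k^{\,n}$ under the map $(a_1,\dots,a_n,y_1,\dots,y_n)\mapsto\sum_{i=1}^{n}a_iy_i$, which is continuous from $\R^n\times(\R^\N)^n$ into $\R^\N$ since in each coordinate it is a polynomial. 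Hence $V$ is a countable union of compact subsets of $\R^\N$, and in particular $V\subseteq\ell_\infty$.

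Next I would describe the norm closure $\overline V$ of $V$ inside $\ell_\infty$. For $x\in\ell_\infty$ one has $x\in\overline V$ if and only if for every $j\in\N$ there is $v\in V$ with $\|x-v\|_\infty\le 1/j$; since $\|x-v\|_\infty\le 1/j$ means precisely $x-v\in[-1/j,1/j]^\N$, this is exactly the statement that $x\in\bigcap_{j\in\N}\bigl(V+[-1/j,1/j]^\N\bigr)$, and no separate intersection with $\ell_\infty$ is required because $V$ and each $[-1/j,1/j]^\N$ already lie in the linear subspace $\ell_\infty$. Finally, writing $V=\bigcup_k C_k$ with $C_k$ compact by the first step, each set $V+[-1/j,1/j]^\N=\bigcup_k\bigl(C_k+[-1/j,1/j]^\N\bigr)$ is $\sigma$-compact, as $C_k+[-1/j,1/j]^\N$ is the image of the compact set $C_k\times[-1/j,1/j]^\N$ under the continuous addition map $\R^\N\times\R^\N\to\R^\N$. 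Thus $\overline V$ is a countable intersection of $\sigma$-compact subsets of $\R^\N$; since $\sigma$-compact subsets of the Polish space $\R^\N$ are $F_\sigma$, hence Borel, and the Borel sets form a $\sigma$-algebra, the lemma follows.

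There is no essential obstacle; the only points that deserve attention are recalling that $B_{\ell_\infty}=[-1,1]^\N$ is compact in the product topology (so its scaled copies $[-1/j,1/j]^\N$ are compact) and verifying that the norm closure is captured by the product-topology expression $\bigcap_{j}\bigl(V+[-1/j,1/j]^\N\bigr)$ without any need to intersect with $\ell_\infty$ by hand.
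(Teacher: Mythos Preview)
Your proof is correct and follows essentially the same approach as the paper. Both first reduce to the case where $\Y$ is a linear subspace by showing $\mathrm{span}(\Y)$ is $\sigma$-compact via the continuous maps $(a_1,\dots,a_n,y_1,\dots,y_n)\mapsto\sum a_iy_i$, and then express the norm closure as $\bigcap_j E_j$ with each $E_j=\{x:\exists v\in V,\ \|x-v\|_\infty\le 1/j\}$ shown to be $\sigma$-compact; the only cosmetic difference is that you write $E_j=V+[-1/j,1/j]^\N$ and invoke Tychonoff compactness of the cube directly, whereas the paper writes $E_j$ as the projection of the relatively closed set $\{(x,v):\|x-v\|_\infty\le 1/j\}\cap(\ell_\infty\times V)$, which amounts to the same thing.
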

\begin{proof} All topological concepts without mentioning the norm refer to the 
product topology of $\R^\N$.
We will use the fact that the coordinatewise algebraic operations are continuous in $\R^\N$
and that $\ell_\infty$ is $\sigma$-compact as $\ell_\infty=\bigcup_{m\in \N}[-m, m]^\N$.

For $n\in \N$ let $F_n\subseteq \Y$ be compact  
such that $F_n\subseteq F_{n+1}$ and $\bigcup_{n\in \N}F_n=\Y$. 
Functions $s_n: \R^n\times (\R^\N)^n\rightarrow \R^\N$ defined
as 
$$s_n(a_1,\dots, a_n, x_1, \dots, x_n)=\sum_{1\leq k\leq n} a_kx_k$$ are continuous 
and 
$$span(\Y)=\bigcup_{n\in \N} s_n[[-n, n]^n\times (F_n)^n].$$
So $span(\Y)$ is $\sigma$-compact as well.  Hence,  we may already assume that
$\Y$ is a linear subspace of $\ell_\infty$. 
 The norm closure of $\Y$ is
$\bigcap_{n\in \N}E_n$, where
$$E_n=\{x\in \ell_\infty: \exists y\in \Y\  \    \|x-y\|_\infty\leq 1/n\}.$$
Defining 
$$D_n=\{(x, y)\in \ell_\infty\times\ell_\infty: \|x-y\|_\infty\leq 1/n\}$$ 
we obtain that 
$$E_n=\pi[D_n\cap (\ell_\infty\times \Y)],$$
where $\pi:\R^\N\times \R^\N\rightarrow\R^\N$ is the projection on the first coordinate.
Note that as $D_n$ is a relatively closed set of a $\sigma$-compact
$\ell_\infty\times\ell_\infty\subseteq \R^\N\times \R^\N$
and $\pi$ is continuous, it follows that $E_n$ is $\sigma$-compact as well and so the lemma follows.
\end{proof}

The following Lemma concerns certain equivalent renormings of 
subspaces of $\ell_\infty$.  More information on these renormings can be found
in the papers \cite{pk-kottman} and \cite{pk-hugh}.

\begin{lemma}\label{XT} Suppose that $\X$ is a Banach subspace 
 of $\ell_\infty$. 
Let $T: \ell_\infty\rightarrow \ell_2$ be  given by
$$T(x)=\Big({x(n)\over{2^{n\over 2}}}\Big)_{n\in \N}$$
for $x\in \ell_\infty$ 
and let $\X_T$ be the space $\X$ with
the equivalent norm $\|\  \|_{\infty, 2}$ given by
$$\|x\|_{\infty, 2}=\|x\|_\infty +\|T(x)\|_2.$$
Then $\X_T$ is isometric to a subspace $\Y_T$ of $\ell_\infty$.
If $\X$ is Borel as a subspace of $\R^\N$, then $\Y_T$
 is Borel as a subspace of $\R^\N$ as well.
\end{lemma}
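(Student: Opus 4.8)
The plan is to write down a concrete linear isometry $\Phi$ of $\X_T$ into $\ell_\infty$ and then to read off the Borel complexity of the image $\Y_T=\Phi[\X]$ from the Lusin--Souslin theorem.

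First I would factor the embedding through the $\ell_1$-sum $\ell_\infty\oplus_1\ell_2$, i.e.\ the space of pairs $(u,v)\in\ell_\infty\times\ell_2$ with norm $\|u\|_\infty+\|v\|_2$: the map $x\mapsto(x,T(x))$ is a linear isometry of $\X_T$ onto a closed subspace of $\ell_\infty\oplus_1\ell_2$ (closed because $\X_T$ is complete, being $\X$ under an equivalent norm), so it suffices to embed $\ell_\infty\oplus_1\ell_2$ isometrically into $\ell_\infty$. For this I would use two routine moves. Since $\ell_2$ is separable, fix a countable $\{c^{(k)}:k\in\N\}\subseteq\ell_2$ that is dense in the unit ball $B_{\ell_2}$ and closed under $c\mapsto-c$; then $v\mapsto(\langle v,c^{(k)}\rangle)_{k\in\N}$ is an isometric embedding of $\ell_2$ into $\ell_\infty$, because $\|v\|_2=\sup_k\langle v,c^{(k)}\rangle=\sup_k|\langle v,c^{(k)}\rangle|$. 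Next, for any bounded sequences $u,w$ one has $\|u\|_\infty+\|w\|_\infty=\sup_{n,k}(|u(n)|+|w(k)|)=\sup\{\varepsilon u(n)+\delta w(k):n,k\in\N,\ \varepsilon,\delta\in\{-1,1\}\}$, so $(u,w)\mapsto(\varepsilon u(n)+\delta w(k))_{n,k,\varepsilon,\delta}$ isometrically embeds $\ell_\infty\oplus_1\ell_\infty$ into $\ell_\infty$. Composing everything yields a linear isometry $\Phi\colon\X_T\to\ell_\infty$ given, after re-indexing, by
$$\Phi(x)=\Big(\varepsilon\,x(n)+\delta\sum_{m\in\N}c^{(k)}_m\,\frac{x(m)}{2^{m/2}}\Big)_{n,k\in\N,\ \varepsilon,\delta\in\{-1,1\}},$$
and $\Y_T:=\Phi[\X]$ is then a closed subspace of $\ell_\infty$ isometric to $\X_T$; this settles the first assertion. (Alternatively one could verify directly that the dual ball of $\X_T$ is weak$^*$ separable and quote Lemma \ref{ball}, but the explicit map is needed for the second part anyway.)

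For the Borel statement I would observe that the displayed formula makes sense for every $x\in\ell_\infty$, and in fact defines a map $\Phi\colon\R^\N\to\R^\N$ once each inner functional $g_k\colon x\mapsto\sum_m c^{(k)}_m x(m)2^{-m/2}$ is extended by $0$ off $\ell_\infty$. Each coordinate of $\Phi$ is Borel measurable: the projections $x\mapsto x(n)$ are continuous on $\R^\N$, and on $\ell_\infty$ the functional $g_k$ is the pointwise limit of its continuous finite partial sums (the tails are dominated by $\|x\|_\infty\sum_m|c^{(k)}_m|2^{-m/2}<\infty$), hence Borel on $\ell_\infty$ and therefore on $\R^\N$. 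Thus $\Phi$ is a Borel map. Since $\X$ is by hypothesis a Borel subset of $\R^\N$ and $\Phi$ is injective (a linear map isometric for a norm has trivial kernel), the Lusin--Souslin theorem --- a Borel injective image of a Borel subset of a Polish space is Borel, see \cite{kechris} --- gives that $\Y_T=\Phi[\X]$ is Borel in $\R^\N$.

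The isometric half is essentially bookkeeping, so the main obstacle lies in the descriptive part: the functionals $g_k$ coming from the $\ell_2$-summand are genuine infinite sums and are \emph{not} continuous in the product topology, so their Borel measurability has to be extracted from the pointwise-limit-of-continuous-functions description rather than from continuity; and the passage from ``Borel image'' to ``Borel set'' relies on both the injectivity of $\Phi$ and the appeal to Lusin--Souslin. Those are the points I would write out with care.
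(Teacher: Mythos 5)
Your proof is correct, but it takes a genuinely different route from the paper's on both halves. For the isometric embedding the paper works with explicit coordinate blocks: it fixes a partition $(A_n)_{n\in\N\cup\tilde\N}$ of $\N$, defines $F$ (signed copies of $x$ on the blocks) and $G$ (spreading a sequence over all blocks), takes a Banach--Mazur isometric embedding $S\colon\ell_2\to\ell_\infty$ through $C([0,1])$, and sets $\Y_T=\{F(x)+G(S\circ T(x)):x\in\X\}$; verifying $\|F(x)+G(S\circ T(x))\|_\infty=\|x\|_\infty+\|T(x)\|_2$ then requires the sign trick to find a coordinate where both suprema are nearly attained with matching signs. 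You instead factor through $\ell_\infty\oplus_1\ell_2$, norm $\ell_2$ by a countable dense subset of its unit ball, and embed the $\ell_1$-sum of two copies of $\ell_\infty$ by the independent-index-and-sign formula, which makes the isometry identity immediate; in effect you re-derive concretely the weak$^*$-separable-dual-ball embedding of Lemma \ref{ball}. For the Borel statement the paper shows its embedding is continuous in the product topology when restricted to each $\X\cap[-m,m]^\N$ (using summability of the weights $2^{-m/2}$) and applies the continuous-injective Lusin--Souslin theorem blockwise, the image being a countable union of Borel sets; you instead make the whole map Borel on $\R^\N$ (each coordinate functional being a pointwise limit of continuous partial sums on the $\sigma$-compact set $\ell_\infty$, extended by $0$ off it) and invoke the Borel-injective form of Lusin--Souslin, which is an equally standard formulation. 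Your injectivity remark only literally addresses the restriction to $\X$, but that suffices for the version of the theorem requiring injectivity on the Borel set whose image is taken, and in fact your extended map is globally injective since $x(n)$ is recovered from the coordinates indexed by $(n,k,1,1)$ and $(n,k,1,-1)$. The paper's route buys a purely continuous application of Lusin--Souslin at the cost of the blockwise restriction; yours buys a cleaner isometry verification and a one-shot descriptive argument at the cost of quoting the Borel strengthening --- and since your functionals are uniformly approximated by their partial sums on each $[-m,m]^\N$, you could equally have run the paper's blockwise continuous argument with your embedding.
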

\begin{proof}
For $n\in \N$ let $\tilde{n}$s be distinct elements so that $\tilde{\N}=\{\tilde{n}: n\in \N\}$ does
not intersect with $\N$.
Let $(A_n)_{n\in \N\cup\tilde{\N}}$ be a partition of $\N$ into infinite sets and
let $F: \R^\N\rightarrow \R^\N$ satisfy $F(x)(k)=x(n)$ if $k\in A_n$ and $F(x)(k)=-x(n)$
 if $k\in A_{\tilde{n}}$ for $n\in \N$.
It is clear that $F$ is a homeomorphism onto its closed image and moreover it is
a linear isometric embedding   when restricted to $\ell_\infty$.

Consider $G:\R^\N\rightarrow \R^\N$ which is linear isometric embedding when restricted to $\ell_\infty$ which sends
$x\in \R^\N$ to $G(x)$ such that for each $n\in \N$   the restriction
$G(x)|A_n$ is a copy of $x$ through a fixed bijection between $A_n$ and $\N$ and 
for each $n\in \N$  and  the restriction
$G(x)|A_{\tilde{n}}$ is a copy of $x$ through a fixed bijection between $A_{\tilde{n}}$ and $\N$. 
 It is clear that $G$ is a homeomorphism
from $\R^\N$ onto its image which is closed in $\R^\N$. 
Hence $(F, G):(\R^\N)^2\rightarrow (\R^\N)^2$ is a homeomorphism onto its closed image.
Let $S:\ell_2\rightarrow \ell_\infty$ be a linear isometric embedding
which exists by the Banach-Mazur theorem (as we note that $C([0,1])$ 
isometrically embeds into $\ell_\infty$). We define
$$\Y_T=\{F(x)+G(S\circ T(x)): x\in \X\}.$$
Now we prove that $(\Y_T, \|\ \|_\infty)$ is an isometric copy of $\X_T$, i.e., of 
 $(\X, \|\ \|_{\infty, 2})$.
 We consider
the embedding $\iota$ which sends $x\in \X$ to $F(x)+G(S\circ T(x))$. The linearity 
of $\iota$ follows since  all involved
functions are linear.   We need to see
that 
$$\|F(x)+G(S\circ T(x))\|_\infty=\|x\|_{\infty, 2}=\|x\|_\infty+\|T(x)\|_2.$$
 Clearly
$\|F(x)\|_\infty=\|x\|_\infty$ and $\|G(S\circ T(x))\|_\infty=\|T(x)\|_2$.
So we have $\|F(x)+G(S\circ T(x))\|_\infty\leq \|x\|_\infty+\|T(x)\|_2$ by the triangle inequality.
The other inequality follows from the fact
that if $|x(n)|$ for $n\in \N$ is close to $\|x\|_\infty$ and 
$|S\circ T(x)(m)|$ for $m\in \N$ is close to $\|T(x)\|_2$, we will find 
$k\in A_n$ or $k\in A_{\tilde{n}}$ such that $|F(x)(k)|=|x(n)|$ and $G(S\circ T(x))(k)=S\circ T(x)(m)$
and they have the same signs, so they approximate $\|x\|_\infty+\|T(x)\|_2$. 

It remains to prove the second part of the lemma, i.e., that $\iota[\X]\subseteq\R^\N$
is Borel if $\X\subseteq\R^\N$ is Borel. It is enough to prove
that $\iota$ 
restricted to $\X\cap [-m, m]^\N$ is continuous for each $m\in \N$.
Then  one can use the Lusin-Suslin Theorem (15.2 in \cite{kechris}) which says that
 images of  Borel sets under continuous injective functions between Polish spaces are Borel,
 and clearly $\iota$ is injective on the considered sets as we have proved that it is an isometry
 when restricted to $\X$.
 
 It is enough to see that $S\circ T$ is continuous with respect to
 the product topology in $\R^\N$ when restricted to $[-m, m]^\N$ as $F, G$ and $+$ are continuous.
 If $x_n, x\in [-m, m]^\N$ and $x_n$ converges to $x$ in the product
 topology, then  for sufficiently large $n$ we have first finitely many  
 coordinates of $x_n-x$ arbitrarily close to zero. Since other coordinates are
 bounded by $2m$, using the formula for $T$ we can get $\|T(x_n)-T(x)\|_2$ as close to zero  as we wish
 by taking $n$ sufficiently large. Then $\|S\circ T(x_n)-S\circ T(x)\|_\infty$
 is as close to zero as we wish, since $S$ is a linear isometry. This implies that $S\circ T(x_n)$s
 fall into a prescribed neighborhood of $S\circ T(x)$ with respect to the product topology.
 This completes the proof of the continuity of $\iota$ when restricted
 to the sets $[-m, m]^\N$ and the proof of the lemma.
\end{proof}

The following example shows that we cannot drop
the hypothesis of {\sf OCA} in Proposition \ref{oca>dich} or
the hypothesis of the analyticity in Proposition \ref{feng>dich}.

\begin{proposition}[\cite{ad-kottman}]\label{ad-hyper} Assume {\sf CH}. 
For every $\rho\in(0, 1)$ there is a nonseparable Banach space 
$\X_\rho$  whose dual ball is separable in the weak$^*$ topology  
such that for every $\delta>0$ there is  $\varepsilon>0$ such that every uncountable 
$(1-\varepsilon)$-separated subset of $S_\X$ contains two points distant by less than $1$ and
contains two points distant by more than $2-\rho-\delta$. In particular, $S_{\X_\rho}$ is hyperlateral
and ,
${\mathsf K}(S_{\X_\rho})=[0,1)$,  $\Sigma(S_{\X_\rho})\subseteq [2-\rho, 2]$.
\end{proposition}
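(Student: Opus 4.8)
The plan is to recall the construction of the spaces $\X_\rho$ from \cite{ad-kottman} and then verify the sharpened quantitative statement directly from the properties of the underlying coloring. The spaces of \cite{ad-kottman} are built (under {\sf CH}) from a coloring $c:[\omega_1]^2\to\{0,1\}$ with strong anti-Ramsey behaviour, realized as a subspace of $\ell_\infty$ spanned by a family $\{e_\alpha:\alpha<\omega_1\}\subseteq S_{\X_\rho}$ such that $\|e_\alpha-e_\beta\|$ takes a value close to $1$ when $c(\{\alpha,\beta\})=0$ and a value close to $2-\rho$ when $c(\{\alpha,\beta\})=1$ (the parameter $\rho$ governs how large the ``far'' distance is made; the construction is a Johnson--Lindenstrauss-type space with a $\rho$-controlled norm). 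The weak$^*$ separability of the dual ball is immediate since the space sits isometrically inside $\ell_\infty$ (Lemma \ref{ball}). So the only real content is the dichotomy statement about uncountable $(1-\varepsilon)$-separated sets.

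First I would fix $\delta>0$ and, using the {\sf CH}-driven construction, extract the precise $\varepsilon=\varepsilon(\delta,\rho)>0$ for which the pairwise-distance estimates on the generators hold up to error $\delta$ (i.e. $c(\{\alpha,\beta\})=0\Rightarrow \|e_\alpha-e_\beta\|<1$ and $c(\{\alpha,\beta\})=1\Rightarrow \|e_\alpha-e_\beta\|>2-\rho-\delta$, while any point of $S_{\X_\rho}$ that is $(1-\varepsilon)$-far from all generators in an uncountable set can be ``rounded'' to a generator by a standard $\Delta$-system / elementary-submodel argument, since the space is generated by the $e_\alpha$'s and uncountably many points must cluster near the generating family in the relevant finite coordinates). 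Then, given an uncountable $(1-\varepsilon)$-separated $\Z\subseteq S_{\X_\rho}$, I would pass to an uncountable $\Z'\subseteq\Z$ each of whose points is suitably close to some $e_{\alpha}$, with the map $z\mapsto\alpha(z)$ injective on $\Z'$; this reduces the problem to an uncountable subset $A\subseteq\omega_1$.

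Next, apply the anti-Ramsey property of $c$: for the uncountable $A$, the coloring $c$ takes \emph{both} values on $[A]^2$ (in fact $c$ is chosen so that no uncountable set is monochromatic in colour $1$, and one checks that no uncountable subset can be $(1-\varepsilon)$-separated while being monochromatic in colour $0$, because a colour-$0$ monochromatic uncountable set forces infinitely many mutually $<1$-close generators, contradicting $(1-\varepsilon)$-separation for an appropriate perturbation of points near them). Hence there are $\alpha,\beta\in A$ with $c(\{\alpha,\beta\})=0$, giving two points of $\Z'$ at distance $<1$ (after controlling the rounding error), and there are $\alpha',\beta'\in A$ with $c(\{\alpha',\beta'\})=1$, giving two points of $\Z'$ at distance $>2-\rho-\delta$. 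This establishes the displayed dichotomy; the conclusions ${\mathsf K}^+(S_{\X_\rho})=[0,1)$ and $\Sigma(S_{\X_\rho})\subseteq[2-\rho,2]$ then follow by combining this with Proposition \ref{riesz} (which gives $[0,1)\subseteq{\mathsf K}^+$ and $2\in\Sigma$), and hyperlaterality follows via Proposition \ref{sphere-no}(4) since no uncountable subset of $S_{\X_\rho}$ can be $\varepsilon$-approximately $r$-equilateral for $r$ near $1$.

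The main obstacle I expect is the rounding/perturbation step: one must show that an uncountable $(1-\varepsilon)$-separated family in the \emph{whole} sphere, not just among the generators, can be reduced to an uncountable family of (near-)generators without losing the separation or corrupting the distance estimates by more than $\delta$. This is exactly where the specific coordinate structure of the {\sf CH}-construction in \cite{ad-kottman} (finitely supported ``new'' coordinates, a countable ``old'' part) is used: a $\Delta$-system lemma on supports plus the fact that on the old (separable) part an uncountable set must cluster, lets one replace a typical point by the generator governing its dominant new coordinate. Once that reduction is in place, everything else is a direct reading-off of the coloring's properties together with the already-proved general Propositions \ref{riesz} and \ref{sphere-no}.
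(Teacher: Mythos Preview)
Your proposal misreads what is being asked. The proposition carries the citation \cite{ad-kottman} in its header because the quantitative statement about $(1-\varepsilon)$-separated sets \emph{is} Theorem~3 of that paper; the present paper does not reprove it. Accordingly the paper's proof consists only of three short observations: (i) the space of \cite{ad-kottman} is of the form $\X_T$ from Lemma~\ref{XT}, hence isometric to a subspace of $\ell_\infty$, hence has weak$^*$ separable dual ball by Lemma~\ref{ball}; (ii) the values ${\mathsf K}^+(S_{\X_\rho})=[0,1)$ and $\Sigma(S_{\X_\rho})\subseteq[2-\rho,2]$ are read off directly from the quoted property of $(1-\varepsilon)$-separated sets; (iii) hyperlaterality follows from Proposition~\ref{sphere-no}(5). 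Your derivation of (ii) and (iii) is essentially correct (you invoke \ref{sphere-no}(4) rather than (5), which is harmless), but everything before that is superfluous here.

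More seriously, the sketch you give of the \cite{ad-kottman} construction appears to be inaccurate. That paper builds its space from an almost disjoint family of subsets of $\N$ (under {\sf CH}, a Luzin-type family) and then applies the $\|\cdot\|_{\infty,2}$ renorming of Lemma~\ref{XT}; it is \emph{not} generated by a coloring $c:[\omega_1]^2\to\{0,1\}$ with prescribed pairwise distances on generators. You seem to be conflating it with the Shelah--Wark spaces $\X_{\A_c}$ treated in Section~4.4 of the present paper. Consequently the ``rounding to generators plus anti-Ramsey coloring'' strategy you outline does not match the actual argument, and the obstacle you flag (the perturbation step) is not how the proof in \cite{ad-kottman} proceeds. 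If you wanted to reproduce that proof you would need the specific combinatorics of Luzin almost disjoint families together with the $\Delta$-system analysis for the $\|\cdot\|_{\infty,2}$ norm, which is substantially different from what you wrote.
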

\begin{proof}
This is the space of Theorem 3 of \cite{ad-kottman}.  
It is of the form $\X_T$ as described in Lemma \ref{XT},
so $\X_T$ is isometric to a subspace of $\ell_\infty$ by 
the first part of Lemma \ref{XT} and so its dual ball is weakly$^*$ separable by Lemma \ref{ball}.
The values 
${\mathsf K}(S_{\X_\rho})=[0,1)$ and  $\Sigma(S_{\X_\rho})\subseteq [2-\rho, 2]$
follow from the stated property of $(1-\varepsilon)$-separated sets
in the unit sphere which is the statement of Theorem 3 of \cite{ad-kottman}.
The sphere is hyperlateral by Proposition \ref{sphere-no} (5) and the above
properties of the sphere.
\end{proof}

Another consistent example of Banach space isometric to a subspace of $\ell_\infty$
 whose unit sphere is hyperlateral  (but admits an
 uncountable $(1+)$-separated sets) and  is far from dichotomous
is presented in Proposition \ref{hyper-ck}.

The following examples (from \cite{pk-kottman} and \cite{pk-hugh}) show that in Proposition
\ref{feng>dich}
we cannot strengthen the conclusion asserting that
the unit spheres of the spaces admit uncountable $(1+)$-separated sets 
or uncountable Auerbach systems or even infinite equilateral sets  even 
in the case of Borel subsets of $\R^\N$.

\begin{proposition}[\cite{pk-kottman, pk-hugh}]\label{examples-analytic}
There are Banach subspaces $\X$ and $\Y$ of $\ell_\infty$ which are
Borel subsets of $\R^\N$ such that
\begin{enumerate}
\item $S_\X$ does not admit an uncountable Auerbach system nor an uncountable
$(1+)$-separated set nor  an uncountable equilateral set.
\item $\Y$ does not admit an infinite equilateral set.
\end{enumerate}
\end{proposition}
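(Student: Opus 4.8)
The plan is to go back to the constructions of \cite{pk-kottman} and \cite{pk-hugh} and check, directly from their definitions, that they produce Borel subsets of $\R^\N$; the failure properties listed in (1) and (2) are purely metric, hence isometric invariants, so once Borelness is established there is nothing else to do. In both papers the space in question is obtained in two steps: one first forms the norm closure in $\ell_\infty$ of the linear span of an explicitly described family $\D\subseteq\ell_\infty$ of vectors, and then one passes to an equivalent norm of the type $\|\cdot\|_{\infty,2}$ treated in Lemma \ref{XT}. So the whole matter reduces to verifying that, in each case, $\D$ is $\sigma$-compact in the product topology of $\R^\N$. Granting this, Lemma \ref{span-Borel} shows that the norm closure $\overline{\mathrm{span}}(\D)$ is a Borel subset of $\R^\N$, and Lemma \ref{XT} then hands us an isometric copy of the renormed space sitting inside $\ell_\infty$ which is again Borel in $\R^\N$. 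Since the properties of having no uncountable Auerbach system, no uncountable $(1+)$-separated set, no uncountable equilateral set, and no infinite equilateral set are all isometric invariants, this copy has exactly the properties claimed for $\X$ (respectively $\Y$), because the original space of \cite{pk-kottman} (respectively \cite{pk-hugh}) does.

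So everything comes down to the $\sigma$-compactness of the two generating families. In each construction the vectors of $\D$ have a single fixed combinatorial shape: each has coordinates in $\{0,1\}$ (or in $[-1,1]$) and a support pattern determined in a coordinatewise-continuous way by the index running over the relevant uncountable combinatorial object, possibly together with a countable set of finitely supported correction vectors. I would check that the set of vectors of the fixed shape is a continuous image of a compact subset of $\R^\N$ --- hence compact --- or, equivalently, that it is closed in the compact cube $\{0,1\}^\N$; adjoining a countable set keeps the family $\sigma$-compact. Concretely I would write out the index-to-vector map explicitly from the definitions in \cite{pk-kottman} and \cite{pk-hugh} and verify its continuity coordinate by coordinate, which is routine once that map is on the table.

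The one genuinely non-mechanical point is precisely this $\sigma$-compactness verification: it requires unwinding the combinatorial definition of the vectors in \cite{pk-kottman} and \cite{pk-hugh} and seeing that the assignment of a vector to its index is continuous into $\R^\N$ (so that the family is a continuous image of a compact set), or at least that the family together with $0$ is closed in a cube. This is the observation attributed to W.\ Marciszewski in the introduction, and it is where the actual work sits; with it in hand, Lemmas \ref{span-Borel} and \ref{XT} finish the proof automatically, and the geometric conclusions are then quoted verbatim from \cite{pk-kottman} and \cite{pk-hugh}, since they are invariant under isometry.
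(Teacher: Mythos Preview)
Your outline is correct and follows exactly the paper's approach: reduce via Lemmas \ref{span-Borel} and \ref{XT} to showing that the generating family $\D$ of each underlying space is $\sigma$-compact in $\R^\N$, then quote the metric conclusions from \cite{pk-kottman} and \cite{pk-hugh}. The paper carries out the step you flag as the real work by naming the specific families and checking closedness in the cube: for (1) the characteristic functions of the branches of the Cantor tree (an $\R$-embeddable almost disjoint family whose $1_A$'s form a closed set in $\{0,1\}^\N$) together with the generators of $c_0$, and for (2) an isometric copy of $\ell_1([0,1])$ inside $\ell_\infty$ obtained from the Fichtenholtz--Kantorovich independent family, whose vectors $1_A-1_{\N\setminus A}$ form a closed set in $\{-1,1\}^\N$.
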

\begin{proof}
Both of the  spaces of \cite{pk-kottman} and \cite{pk-hugh} are
of the form $\X_T$ as described in Lemma \ref{XT}. The properties stated in
the above proposition are the main results of \cite{pk-kottman} and \cite{pk-hugh}.
So it remains to prove that the isometric spaces of the form $\Y_T$
as defined  in Lemma \ref{XT} are Borel subsets of $\R^\N$.
For this we use the second part of Lemma \ref{XT}, so it remains to prove
that the original subspaces of $\ell_\infty$ (i.e., 
before the renorming $\|\ \|_{\infty,2}$) are Borel.
This in turn, will follow from Lemma \ref{span-Borel} if we can prove
that some generating subsets of the original spaces are compact subsets of
 $\{0,1\}^\N\subseteq\R^\N$.

In the case of item (1), that is the space of \cite{pk-kottman}, 
the original space is the Johnson-Lindenstrauss space $\X_\A$
generated by $c_0\cup\{1_A: A\in \A\}$, where
$\A$ is an uncountable almost disjoint family which is $\R$-embeddable.
Characteristic functions of such a  family indeed can form a  closed subset of
$\{0,1\}^\N$. This is, for example, proved in Lemma 30 of \cite{supermrowka}.
It should be clear that the almost disjoint family considered there (i.e,
the family of all branches of the Cantor tree)  is $\R$-embeddable. This is because
fixing a bijection $\phi: \N\rightarrow \bigcup_{n\in \N}\{0,1\}^n$
we can consider embedding  of $\N$ into $\Q$ which sends
an integer $k=\phi^{-1}(s)$ for $s\in \{0,1\}^n$ to 
$\sum_{i<3n}{y_s(i)\over3^{i+1}}$, where $y_s(i)$ is equal $s(i/3)$
if $i$ is divisible by $3$ and $y_s(i)=j$ if the remainder
in the division of $i$ by $3$ is $j+1$ for $j\in \{0,1\}$.
Also $c_0$ is generated by a closed convergent sequence with its limit in $\{0,1\}^\N$
of characteristic functions of singletons together with the zero function.

In the case of item (2) the original space is $\ell_1([0,1])$.
So it is enough to prove that it is isometric to a 
subspace of $\ell_\infty$ which is generated by a closed  subset of $\{-1,1\}^\N\subseteq\R^\N$.
Recall that a family $\F$ of subsets of $\N$ is called
independent if 
$$\bigcap\{A: A\in \F'\}\cap\bigcap\{\N\setminus A: A\in \F''\}\not=\emptyset$$
for any finite disjoint $\F', \F''\subseteq \F$.
It is also well known that 
$\{1_A-1_{\N\setminus A}: A\in \F\}$ generate in $\ell_\infty$
a subspace isometric to $\ell_1(\F)$ if $\F$ is an independent family.
Namely we define an isometric embedding $\iota:\ell_1(\F)\rightarrow\ell_\infty$ by
the linearity and sending 
 $\ell_1(\F)\ni1_{\{A\}}: \F\rightarrow\{0,1\}$ to
 $1_A-1_{\N\setminus A}$. Then 
 $$\|\iota(\sum_{i<n} a_i1_{\{A_i\}})\|_\infty=
 \|\sum_{i<n} a_i(1_{A_i}-1_{\N\setminus {A_i}})\|_\infty\leq \sum_{i<n}|a_i|=
 \|\sum_{i<n} a_i1_{\{A_i\}}\|_1$$
 by the triangle inequality. Picking 
 $$k\in \bigcap\{A_i: a_i\leq0\}\cap\bigcap\{\N\setminus A_i: a_i>0\}$$
 we conclude that 
 $$\|\iota(\sum_{i<n} a_i1_{\{A_i\}})\|_\infty\geq
 \|\sum_{i<n} a_i(1_{A_i}-1_{\N\setminus {A_i}})(k)\|_\infty=\sum_{i<n}|a_i|=\|\sum_{i<n} a_i1_{\{A_i\}}\|_1.$$
So we conclude that $\iota$ is an isometry as required.
So it remains to know that there is an independent family of subsets of $\N$
of cardinality $2^\omega$ whose characteristic functions form a closed subset of $\{0,1\}^\N\subseteq\R^\N$. 
The standard
independent family due to Fichtenholtz and Kantorovich of \cite{fk}  actually does the job.
It is a family of  the form $\{A_x: x\in 2^\N\}$ of subsets 
of the countable set $\mathcal O=\bigcup_{n\in \N}\{0,1\}^{2^n}$ (identified with $\N$
to get the family of subsets of $\N$), where the union is considered disjoint.
We define $A_x=\bigcup_{n\in \N}U_{x|n}$, where $U_{x|n}=\{s\in \{0,1\}^{2^n}: s(x|n)=1\}$.
One checks that if $v\in \R^{\mathcal O}$
is not any of
$1_{A_x}\in\{0,1\}^{\mathcal O}\subseteq\R^{\mathcal O}$, 
then it is verified by looking at finitely many coordinates in $\mathcal O$,
so the complement of $\{1_{A_x}: x\in 2^\N\}$ is open.

Again the application of Lemmas \ref{span-Borel} and\ref{XT}
proves that the space of the second  item is Borel in $\R^\N$.
\end{proof}

We note that a particular case of Banach spaces which embed
into $\ell_\infty$ as analytic subsets of $\R^\N$ are of the form
$C(K)$ for $K$ separable Rosenthal compacta (\cite{godefroy}).
Kania and Kochanek proved (Proposition 4.7 of \cite{tt}) that the unit spheres of nonseparable Banach spaces
from this class do admit uncountable $(1+)$-separated sets. 
So for this subclass the conclusion of  Proposition \ref{feng>dich} can be more specific
i.e., mention just the existence of an uncountable $(1+)$-separated subset of the unit sphere.

Recall that
Godefroy and Talagrand showed in \cite{godefroy-tal} that a nonseparable Banach space
isomorphic to a Banach subspace of $\ell_\infty$ which is analytic in $\R^\N$ admits
an uncountable biorthogonal system.  By Proposition \ref{examples-analytic}
 we see that the isometric counterpart (i.e., for Auerbach system in place of
 biorthogonal system) of Godefroy and Talagrand result does not hold.
 
 One could be tempted
to consider the impact of  the game determinacy on results like Proposition \ref{feng>dich} for more complex
subspaces of $\ell_\infty$ as, for example it was done by Godefroy and Louveau in \cite{determinacy}
in the case of biorthogonal systems. However  this is superseded by Proposition \ref{oca>dich} 
which concerns all subsets of $\ell_\infty$.

As noted above, in the proof of Proposition \ref{examples-analytic},
the spaces $\ell_1(\kappa)$ for $\kappa\leq 2^\omega$ can be isometrically embedded in $\ell_\infty$ and so
we should consider them in this section.
However, their unit spheres  trivially admit a $2$-separated sets of cardinality $\kappa$, so we obtain:

\begin{proposition}\label{l1} Suppose that  $\kappa\leq 2^\omega$ is an uncountable cardinal.
Then ${\mathsf K}^+(S_{\ell_1(\kappa)})=[0, 2)$ and
${\Sigma}(S_{\ell_1(\kappa)})=\{2\}$, hence $S_{\ell_1(\kappa)}$ is
dichotomous.
\end{proposition}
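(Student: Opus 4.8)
The plan is to exhibit an explicit uncountable $2$-separated set in $S_{\ell_1(\kappa)}$ and then to read off both equalities from the interval structure of $\mathsf K^+$ and $\Sigma$ supplied by Proposition \ref{sets-constants}. The point is that in $\ell_1(\kappa)$ the canonical unit vectors are as far apart as the diameter of the sphere allows.

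First I would consider the vectors $e_\alpha\in\ell_1(\kappa)$ for $\alpha<\kappa$, where $e_\alpha(\beta)=1$ if $\beta=\alpha$ and $e_\alpha(\beta)=0$ otherwise. Each $e_\alpha$ lies on $S_{\ell_1(\kappa)}$ since $\|e_\alpha\|_1=1$, and for distinct $\alpha,\beta$ one computes $\|e_\alpha-e_\beta\|_1=1+1=2$. Hence $\{e_\alpha:\alpha<\kappa\}$ is an $(r+)$-separated subset of $S_{\ell_1(\kappa)}$ for every $r<2$, and it is uncountable because $\kappa$ is uncountable (here $\kappa\leq 2^\omega$ plays no role beyond being the standing hypothesis placing $\ell_1(\kappa)$ among the nonseparable spaces considered). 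Therefore $[0,2)\subseteq\mathsf K^+(S_{\ell_1(\kappa)})$. On the other hand $diam(S_{\ell_1(\kappa)})=2$ by Proposition \ref{riesz}, so there is no pair of points of $S_{\ell_1(\kappa)}$ at distance strictly greater than $2$, whence $2\notin\mathsf K^+(S_{\ell_1(\kappa)})$ and thus $\mathsf K^+(S_{\ell_1(\kappa)})=[0,2)$.

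Next I would invoke Proposition \ref{sets-constants} (1): $\mathsf K^+(S_{\ell_1(\kappa)})$ and $\Sigma(S_{\ell_1(\kappa)})$ are disjoint subintervals of $[0,2]$. Since $S_{\ell_1(\kappa)}$ is itself a single set of diameter $2$ (equivalently, by Proposition \ref{riesz} (3)), we have $2\in\Sigma(S_{\ell_1(\kappa)})$; being a subinterval of $[0,2]$ that contains $2$ and is disjoint from $[0,2)$, the set $\Sigma(S_{\ell_1(\kappa)})$ must equal $\{2\}$. Finally $\mathsf K^+(S_{\ell_1(\kappa)})\cup\Sigma(S_{\ell_1(\kappa)})=[0,2)\cup\{2\}=[0,diam(S_{\ell_1(\kappa)})]$, so $S_{\ell_1(\kappa)}$ is dichotomous by Proposition \ref{sets-constants} (2). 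There is essentially no obstacle: the entire content is that the $\ell_1$-norm separates the canonical basis maximally, and the remainder is the bookkeeping of Proposition \ref{sets-constants}.
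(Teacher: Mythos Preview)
Your proof is correct and follows exactly the paper's approach: the paper simply notes that the unit sphere of $\ell_1(\kappa)$ ``trivially admit[s] a $2$-separated set of cardinality $\kappa$'' (namely the canonical unit vectors you exhibit), from which the stated values of $\mathsf K^+$ and $\Sigma$ follow by Proposition \ref{sets-constants} and Proposition \ref{riesz}. Your write-up merely unpacks this one-line observation in full detail.
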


\section{The spheres of some  WCG Banach spaces}

In this section we prove that unit spheres of Banach spaces from
some classes of weakly compactly generated spaces are almost dichotomous.

\subsection{The spheres of $\ell_p(\lambda)$ for $1<p<\infty$}

Here we generalize result
of  Erd\"os and Preiss of (\cite{erdos-p}) which says  that
the unit sphere of nonseparable Hilbert space $\ell_2(\kappa)$ for $\omega_1\leq \kappa\leq 2^\omega$ 
is the union
of countably many sets of diameters not bigger than $\sqrt2+\varepsilon$ for any $\varepsilon>0$ and
the result of Preiss and R\"odl which says that it
is not the countable union of sets of diameters not bigger than $\sqrt2$ obtaining
analogous results for $\ell_p(\kappa)$ with
$\sqrt[p]2$ in place of $\sqrt2$.
We additionally observe that such spaces do not admit
uncountable $(\sqrt[p]{2}+)$-separated sets (but of course, admit an
uncountable $\sqrt[p]{2}$-equilateral set).
This is related to a classical result of Kottman (\cite{kottman}) which says
that $\ell_p$-spaces do not admit an infinite $(\sqrt[p]{2}+\varepsilon)$-separated
set for any $\varepsilon>0$. The conclusions are stated in Proposition
\ref{lp-dich}.

\begin{lemma}\label{pytha} Let $1<p<\infty$. 
Suppose that $x\in {\ell_p}$  and $supp(x)=a\cup b$ with $a\cap b=\emptyset$.
Then $\|x\|_p^p=\|x|a\|_p^p+\|x|b\|^p_p$.
\end{lemma}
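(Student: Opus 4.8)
The statement is the familiar additivity of the $p$-th power of the $\ell_p$-norm over disjoint supports, so the plan is essentially to unwind the definition of $\|\cdot\|_p$ given in the preliminaries. First I would recall that for $y\in\ell_p$ with finite support, $\|y\|_p^p=\sum_{\alpha\in\supp(y)}|y(\alpha)|^p$, and that in general $\|y\|_p^p=\sup\{\sum_{\alpha\in F}|y(\alpha)|^p:F\subseteq\kappa\ \text{finite}\}$ is the supremum of these finite sums. So the whole content is the equality $\sup_F\sum_{\alpha\in F}|x(\alpha)|^p=\sup_{F}\sum_{\alpha\in F\cap a}|x(\alpha)|^p+\sup_{F}\sum_{\alpha\in F\cap b}|x(\alpha)|^p$, where all terms are over finite $F$.

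The key steps, in order: (1) Fix a finite $F\subseteq\supp(x)$. Since $\supp(x)=a\cup b$ with $a\cap b=\emptyset$, we have $F=(F\cap a)\sqcup(F\cap b)$, so $\sum_{\alpha\in F}|x(\alpha)|^p=\sum_{\alpha\in F\cap a}|x(\alpha)|^p+\sum_{\alpha\in F\cap b}|x(\alpha)|^p\le \|x|a\|_p^p+\|x|b\|_p^p$; taking the supremum over $F$ gives $\|x\|_p^p\le\|x|a\|_p^p+\|x|b\|_p^p$. (2) Conversely, given finite $F_a\subseteq a$ and $F_b\subseteq b$, the set $F=F_a\cup F_b$ is finite and disjointly decomposed, so $\sum_{\alpha\in F_a}|x(\alpha)|^p+\sum_{\alpha\in F_b}|x(\alpha)|^p=\sum_{\alpha\in F}|x(\alpha)|^p\le\|x\|_p^p$; taking suprema over $F_a$ and then over $F_b$ yields $\|x|a\|_p^p+\|x|b\|_p^p\le\|x\|_p^p$. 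Combining (1) and (2) gives the claimed equality. One should also note at the outset that $x|a$ and $x|b$ lie in $\ell_p$ (their finite partial sums are dominated by those of $x$), so the expressions $\|x|a\|_p$, $\|x|b\|_p$ are well-defined and finite.

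There is essentially no obstacle here: the only mild subtlety is the interchange of the two suprema in step (2), which is legitimate precisely because the two families of indices ($a$ and $b$) are disjoint, so a finite subset of $a$ and a finite subset of $b$ can be chosen independently and combined. I would phrase this carefully but briefly — it is the one place where the disjointness hypothesis $a\cap b=\emptyset$ is actually used. (Of course the statement is just finite additivity in disguise once one knows $\ell_p=\ell_p$ is complete, but working directly from the supremum definition keeps the proof self-contained and avoids invoking monotone convergence.)
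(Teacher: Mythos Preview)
Your proof is correct and follows the same approach as the paper: both simply unwind the definition of $\|\cdot\|_p$ and split the sum over $\supp(x)=a\cup b$. The paper compresses this into a single line $\|x\|_p^p=\sum_{i\in\supp(x)}|x(i)|^p=\sum_{i\in a}|x(i)|^p+\sum_{i\in b}|x(i)|^p=\|x|a\|_p^p+\|x|b\|_p^p$, whereas you spell out the two inequalities via the supremum formulation, which is more explicit but amounts to the same argument.
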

\begin{proof}
$\|x\|_p^p=\Sigma_{i\in supp(x)}|x(i)|^p=\Sigma_{i\in a}|x(i)|^p+
\Sigma_{i\in b}|x(i)|^p=\|x|a\|_p^p+\|x|b\|^p_p$.
\end{proof}

\begin{lemma}\label{ellp-delta} Let $1<p<\infty$. 
Suppose that $x, y\in S_{\ell_p}$ satisfy $x|a=y|a$,  where $a=supp(x)\cap supp(y)$
with $\delta=\|x|a\|_p=\|y|a\|_p$.
Then $$\|x-y\|_p=\sqrt[p]{2(1-\delta^p)}\leq \sqrt[p]{2}.$$
\end{lemma}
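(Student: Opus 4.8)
The statement to prove is Lemma \ref{ellp-delta}: if $x, y \in S_{\ell_p}$ with $x|a = y|a$ where $a = \supp(x)\cap\supp(y)$ and $\delta = \|x|a\|_p = \|y|a\|_p$, then $\|x-y\|_p = \sqrt[p]{2(1-\delta^p)} \leq \sqrt[p]{2}$.

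The plan is to decompose the supports and apply the additivity of the $p$-th power of the norm over disjoint supports, i.e. Lemma \ref{pytha}. First I would write $\supp(x) = a \cup b$ and $\supp(y) = a \cup c$, where $b = \supp(x)\setminus a$ and $c = \supp(y)\setminus a$; note that $a, b, c$ are pairwise disjoint, and that $a$ is exactly where $x$ and $y$ overlap, so $x|c = 0$ and $y|b = 0$. Next, since $x, y$ are unit vectors, Lemma \ref{pytha} applied to $x$ on $a \cup b$ gives $1 = \|x\|_p^p = \|x|a\|_p^p + \|x|b\|_p^p = \delta^p + \|x|b\|_p^p$, hence $\|x|b\|_p^p = 1 - \delta^p$; symmetrically $\|y|c\|_p^p = 1 - \delta^p$.

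Then I would compute $\|x - y\|_p^p$ by splitting the support of $x - y$, which is contained in $a \cup b \cup c$. On $a$ we have $x - y = 0$ since $x|a = y|a$. On $b$ we have $x - y = x|b$ (as $y|b = 0$), and on $c$ we have $x - y = -y|c$ (as $x|c = 0$). Applying Lemma \ref{pytha} (extended to three disjoint pieces, which follows by iterating it twice) yields
$$\|x-y\|_p^p = \|x|b\|_p^p + \|y|c\|_p^p = (1-\delta^p) + (1-\delta^p) = 2(1-\delta^p),$$
so $\|x-y\|_p = \sqrt[p]{2(1-\delta^p)}$. Finally, since $\delta \geq 0$ we have $\delta^p \geq 0$, hence $2(1-\delta^p) \leq 2$ and $\|x-y\|_p \leq \sqrt[p]{2}$.

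There is no real obstacle here; the only mild care needed is to verify the disjointness bookkeeping (that $a$, $b$, $c$ are pairwise disjoint and that $x$ vanishes off $a\cup b$ while $y$ vanishes off $a\cup c$, so that the difference $x-y$ genuinely decomposes as claimed on each of the three pieces) and to note that Lemma \ref{pytha}, stated for a two-part partition, extends to finitely many disjoint parts by a trivial induction. The identity $\|x|a\|_p = \delta$ on the common part drops out of the hypothesis $x|a = y|a$, which is what makes the overlap contribution to $x-y$ vanish.
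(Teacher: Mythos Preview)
Your proposal is correct and follows essentially the same approach as the paper's proof: define $b=\supp(x)\setminus a$ and $c=\supp(y)\setminus a$, use Lemma~\ref{pytha} to get $\|x|b\|_p^p=\|y|c\|_p^p=1-\delta^p$, and apply Lemma~\ref{pytha} again to $x-y$ to obtain $\|x-y\|_p^p=2(1-\delta^p)$. The only difference is that you spell out more of the disjointness bookkeeping and the final inequality, which the paper leaves implicit.
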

\begin{proof}
Let $b=supp(x)\setminus a$ and $c=supp(y)\setminus a$. 
By Lemma \ref{pytha} we have $\|x|b\|^p_p=\|y|c\|^p_p=1-\delta^p$. Again by
Lemma \ref{pytha}  we get $\|x-y\|^p_p=2(1-\delta^p)$.
\end{proof}

\begin{proposition}\label{lp-union} Let $1< p<\infty$. For
every $\varepsilon>0$ the unit sphere
$S_{\ell_p(2^\omega)}$ is the union of countably many sets of diameters
not bigger than $\sqrt[p]{2}+\varepsilon$.
\end{proposition}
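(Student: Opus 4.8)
The plan is to exploit Lemma \ref{ellp-delta}: two points $x,y\in S_{\ell_p}$ whose common part $a=\supp(x)\cap\supp(y)$ carries a large $p$-norm $\delta$ are automatically close, with $\|x-y\|_p=\sqrt[p]{2(1-\delta^p)}$, which tends to $0$ as $\delta\to 1$. So to cover the sphere with countably many small-diameter pieces it suffices to partition it according to a finite approximating "core" that pins down $\delta$ to be close to $1$. First I would fix $\varepsilon>0$ and choose $\delta_0<1$ close enough to $1$ that $\sqrt[p]{2(1-\delta_0^p)}\le \varepsilon/2$ (say), so that any two points sharing a common core of $p$-norm $\ge\delta_0$ are within $\varepsilon$ of each other once one also controls the approximation error off the core.

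The key combinatorial device is that $2^\omega=|\R^\N|$, so I can index a countable dense set of rational "finite patterns." Concretely: for each $x\in S_{\ell_p(2^\omega)}$, the support $\supp(x)$ is countable and $\sum_{\alpha}|x(\alpha)|^p=1$, so there is a \emph{finite} set $F_x\subseteq\supp(x)$ with $\|x|F_x\|_p\ge\delta_0$. Enumerate $F_x=\{\alpha_1<\dots<\alpha_n\}$ and approximate the values $x(\alpha_i)$ by rationals $q_i$ with $\|x|F_x - (q_i)_i\|_p$ small. The "type" of $x$ is then the data $(n,(q_1,\dots,q_n))$ together with — and this is the point where $\kappa\le 2^\omega$ enters — an encoding of \emph{which} coordinates $\alpha_1,\dots,\alpha_n$ are used, relative to some fixed injection of $2^\omega$ into $\R^\N$ or $\{0,1\}^\N$; two points get the same type iff they use "the same" finite set of coordinates in the sense of this encoding. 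Since there are only countably many finite rational tuples and the coordinate-encoding contributes only countably many equivalence classes per tuple (because a finite subset of $2^\omega$ is determined by finitely many bits of the $\{0,1\}^\N$-coding, of which there are countably many), the set of types is countable.

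Grouping the sphere by type gives a countable partition $S_{\ell_p(2^\omega)}=\bigcup_n M_n$. Within one piece $M_n$, any two points $x,y$ share a common finite set of coordinates $F$ on which both have $p$-norm $\ge\delta_0$ and on which their values agree up to a small perturbation; so by Lemma \ref{pytha} and Lemma \ref{ellp-delta} (applied after splitting off the small perturbation on $F$ and using the triangle inequality for $\|\ \|_p$) one gets $\|x-y\|_p\le\sqrt[p]{2(1-\delta_0^p)}+(\text{perturbation terms})\le\varepsilon$, hence $\operatorname{diam}(M_n)\le\sqrt[p]{2}+\varepsilon$; in fact one even gets the cleaner bound $\sqrt[p]2+\varepsilon$ directly since $\sqrt[p]{2(1-\delta_0^p)}\le\sqrt[p]2$ already and the perturbations absorb the $\varepsilon$.

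The main obstacle I expect is the bookkeeping in the "type" definition: one must make the common core $F$ genuinely \emph{common} to all points of a given piece, i.e. the coordinates themselves (not just their number and approximate values) must match, and this is exactly what forces $\kappa\le 2^\omega$ and what requires fixing an injection of the index set into a Polish space so that finite subsets are coded by finitely much information from a countable pool. Once that device is set up correctly, the metric estimate is a routine application of Lemmas \ref{pytha} and \ref{ellp-delta} plus the triangle inequality, and one should be slightly careful to choose the rational approximation error and $\delta_0$ in the right order (first $\varepsilon$, then $\delta_0$, then the approximation tolerance depending on both) so the final diameter bound comes out as $\sqrt[p]2+\varepsilon$.
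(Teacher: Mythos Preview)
Your argument has a genuine gap at exactly the point you flag as the ``main obstacle.'' You assert that ``a finite subset of $2^\omega$ is determined by finitely many bits of the $\{0,1\}^\N$-coding, of which there are countably many,'' and hence that your set of types is countable. This is false: a single element of $\{0,1\}^\N$ is determined by \emph{all} of its bits, not by finitely many, and there are $2^\omega$ finite subsets of $2^\omega$. So if the type of $x$ records the actual finite set $F_x\subseteq 2^\omega$, you get $2^\omega$ types, not countably many. Dropping the coordinates from the type does not help either: two points with the same rational tuple but disjoint (or conflicting) cores can have $\|x-y\|_p$ as large as $2$, since you have no control over $x|(\supp x\cap\supp y)$ versus $y|(\supp x\cap\supp y)$.

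The paper resolves this with a different combinatorial device: the Hewitt--Marczewski--Pondiczery theorem (Theorem~\ref{hmp}). Fix a countable $D\subseteq\R$ closed under the relevant operations; the set $\Y$ of finitely supported unit vectors with values in $D$ is dense in the sphere, so it suffices to cover $\Y$ by countably many sets of diameter $\le\sqrt[p]{2}$. Since $|D|=\omega$ and $|2^\omega|\le 2^\omega$, the product $D^{2^\omega}$ (with $D$ discrete) is separable; let $\{e_n:n\in\N\}$ be dense. For each $y\in\Y$ there is $n_y$ with $e_{n_y}|\supp(y)=y|\supp(y)$, and one groups by $n_y$. The point is that you do \emph{not} need the finite supports of two points in the same piece to coincide: if $y,y'\in\Y_n$ then both agree with $e_n$ on their own supports, hence $y$ and $y'$ agree with each other on $\supp(y)\cap\supp(y')$, and Lemma~\ref{ellp-delta} gives $\|y-y'\|_p\le\sqrt[p]{2}$ directly. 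This is the idea that makes $\kappa\le 2^\omega$ genuinely enter, and it is what your encoding-by-finitely-many-bits attempt was groping toward but did not reach.
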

\begin{proof} Fix $\varepsilon>0$ and $1< p<\infty$. 
Let $D\subseteq\R$ be a countable set containing all rationals and closed under
addition, multiplication, division, taking powers with exponent $p$ and taking $p$-th roots.
It should be clear that  the set
$\Y$ of all elements of the unit sphere of $\ell_p(2^\omega)$ with finite supports and assuming
values in $D$ is dense in the unit sphere.
It is enough to show that
the set $\Y$ 
is the union of countably many sets of diameter not bigger than $\sqrt[p]{2}$.
Let $E=\{e_n: n\in \N\}\subseteq D^{(2^\omega)}$ be a dense subset of  $D^{(2^\omega)}$ 
where $D$ is considered with discrete topology,
which exists by the Hewitt-Marczewski-Pondiczery Theorem \ref{hmp}.
It follows that for each element $y\in \Y$ there is $n_y\in \N$ such that $e_{n_y}|supp(y)=y|supp(y)$.
We claim that $\Y_n=\{y\in \Y: n_y=n\}$ has diameter not bigger than $\sqrt[p]{2}$.
This follows from Lemma \ref{ellp-delta}.
\end{proof}

\begin{proposition}\label{lp-nonunion} Let $1<p<\infty$.  The sphere
$S_{\ell_p(\omega_1)}$ is not the union of countably many sets of
diameters not bigger than $\sqrt[p]{2}$.
\end{proposition}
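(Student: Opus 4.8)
This is the Preiss–Rödl type result: we must show $S_{\ell_p(\omega_1)}$ is not a countable union of sets of diameter $\le \sqrt[p]{2}$. Suppose for contradiction that $S_{\ell_p(\omega_1)} = \bigcup_{n\in\N} M_n$ with $\mathrm{diam}(M_n)\le \sqrt[p]{2}$ for each $n$. The strategy is a standard ``free set'' / $\Delta$-system argument: for each $\alpha<\omega_1$ choose a point $x_\alpha\in S_{\ell_p(\omega_1)}$ with finite support, where the supports and coordinate values are chosen carefully enough that, whenever $x_\alpha$ and $x_\beta$ land in the \emph{same} piece $M_n$, a contradiction with $\|x_\alpha-x_\beta\|_p\le\sqrt[p]{2}$ can be forced. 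By pigeonhole some $M_n$ contains uncountably many of the $x_\alpha$, so it suffices to arrange the configuration so that \emph{every} pair among an uncountable family has distance strictly greater than $\sqrt[p]{2}$ — equivalently (via Lemma~\ref{ellp-delta}, reading the equality $\|x-y\|_p=\sqrt[p]{2(1-\delta^p)}$ in reverse) to make the ``overlap mass'' $\delta$ effectively negative, which is impossible unless the overlap on which $x_\alpha$ and $x_\beta$ \emph{disagree} is exploited. So the real device has to be: make the $x_\alpha$ pairwise distant by strictly more than $\sqrt[p]{2}$, which by Lemma~\ref{ellp-delta} cannot happen if they agree on their common support; hence we instead force the common support of any two of them to carry \emph{opposite signs} in some coordinate, pushing the distance up.

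\textbf{Key steps in order.} First, fix an auxiliary parameter: a sequence of points that are ``almost'' an $\ell_p$-basis but perturbed so that any two overlap in a single coordinate with opposite sign. Concretely, for each $\alpha<\omega_1$ let $x_\alpha = a\, e_{c} + b\, e_{\alpha}$ where $c$ is a single fixed coordinate, $e_\alpha$ are distinct standard unit vectors $(\alpha\neq c)$, and $a,b>0$ with $a^p+b^p=1$; but alternate the sign of the $c$-coordinate: $x_\alpha = (-1)^{?}a\,e_c + b\,e_\alpha$. For two indices whose $c$-coordinates have opposite signs, $\|x_\alpha-x_\beta\|_p^p = (2a)^p + 2b^p = 2^p a^p + 2(1-a^p) = 2 + (2^p-2)a^p > 2$, so the distance exceeds $\sqrt[p]{2}$ as soon as $a>0$. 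The obstruction is that only finitely many indices can be made pairwise ``opposite-signed'' this way — a two-valued sign gives only a bipartite split, not an uncountable clique. So the correct construction must use a longer coordinate ``stem'': index the stem by finite binary strings, so that $x_\alpha$ has support a stem-path plus a private tail coordinate, and any two paths \emph{branch}, producing a coordinate where one is positive and the other is zero or negative. This is precisely the Cantor-tree / independent-family trick already used in the excerpt (cf. Proposition~\ref{examples-analytic}); one arranges that along the branching coordinate the two vectors have values $+t$ and $-t$ for a fixed $t>0$, giving extra distance $(2t)^p$ in that one coordinate versus $2t^p$ if they agreed, i.e.\ a surplus of $(2^p-2)t^p>0$. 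Choosing the stem weights so that this surplus exceeds the deficit from any shared prefix yields $\|x_\alpha-x_\beta\|_p > \sqrt[p]{2}$ for all $\alpha\neq\beta$ in an uncountable family — contradicting $\mathrm{diam}(M_n)\le\sqrt[p]{2}$.

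\textbf{The main obstacle.} The delicate point is the bookkeeping that makes the branching surplus uniformly dominate: after the first coordinate where two branches split they may continue to overlap on a long prefix, and on that prefix they \emph{agree}, so Lemma~\ref{ellp-delta}-style cancellation is working against us — agreement on a prefix of mass $\delta^p$ shrinks the distance by a factor $(1-\delta^p)^{1/p}$. One must make the prefix weights decay fast enough (geometrically in the tree level) that the total agreeing mass on any shared prefix is bounded by a small constant, while the branching coordinate still carries a fixed minimum weight $t$; then $\|x_\alpha-x_\beta\|_p^p \ge 2(1-\delta_{\text{pref}}^p) + (2^p-2)t^p > 2$ provided $(2^p-2)t^p > 2\delta_{\text{pref}}^p$. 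Balancing these — choosing $t$ and the decay rate so the inequality holds simultaneously for \emph{all} pairs at \emph{all} branching levels — is the one genuine calculation; everything else (pigeonhole to get uncountably many $x_\alpha$ in one $M_n$, density reductions, the elementary norm identities from Lemmas~\ref{pytha} and \ref{ellp-delta}) is routine. A cleaner alternative packaging of the same idea: use an uncountable independent family $\{A_x : x\in 2^\N\}$ on $\omega_1$ coordinates as in Fichtenholtz–Kantorovich, put $x_\alpha$ proportional to $1_{A_{x_\alpha}} - 1_{\omega_1\setminus A_{x_\alpha}}$ restricted to a suitable finite set and normalized, and observe that independence guarantees a coordinate where $x_\alpha$ is $+$ and $x_\beta$ is $-$; this sidesteps the tree bookkeeping at the cost of controlling the normalization, and I expect the author uses some version of whichever of these is technically smoothest.
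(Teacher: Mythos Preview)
Your strategy has a fatal flaw: you are trying to construct an uncountable family $\{x_\alpha:\alpha<\omega_1\}\subseteq S_{\ell_p(\omega_1)}$ in which \emph{every} pair satisfies $\|x_\alpha-x_\beta\|_p>\sqrt[p]{2}$, and then pigeonhole them into one piece. Such a family cannot exist. This is exactly the content of Lemma~\ref{lp-nosep} in the paper: there is no uncountable $(\sqrt[p]{2}+)$-separated subset of $S_{\ell_p(\omega_1)}$. Your own calculations already hint at this: the two-coordinate example only produces a bipartite split, and in the tree construction the agreement on the common prefix (of mass $\delta_{\mathrm{pref}}^p$) pulls the distance strictly below $\sqrt[p]{2}$, while the ``branching surplus'' you want can only come from a coordinate in the \emph{intersection of supports} where the two vectors carry opposite signs. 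In a genuine tree construction the branching coordinate is precisely where the two supports become \emph{disjoint}, so there is no such surplus, and the inequality $(2^p-2)t^p>2\delta_{\mathrm{pref}}^p$ you need cannot be arranged uniformly for all pairs at all branching levels. The independent-family variant has the same defect: independence also guarantees coordinates where the signs agree, and the net effect cannot exceed $2$ in $p$-th power.

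The paper's argument is not an uncountable-clique construction at all; it is a diagonalization against the given decomposition. One first isolates the set $X$ of indices $n$ for which $\X_n$ contains elements with support above every countable ordinal. For each $\alpha<\omega_1$ one picks, for every $n\in X$, a witness $x_n^\alpha\in\X_n$ with support above $\alpha$, making these witnesses pairwise disjointly supported. One then assembles a single point $x_\alpha\in S_{\ell_p(\omega_1)}$ whose restriction to $\mathrm{supp}(x_n^\alpha)$ is a \emph{negative} multiple $\delta_n^\alpha x_n^\alpha$ of the witness. The key computation is $\|x_n^\alpha-x_\alpha\|_p^p=(1+|\delta_n^\alpha|)^p+(1-|\delta_n^\alpha|^p)>2$, so $x_\alpha$ is more than $\sqrt[p]{2}$ away from each pre-chosen witness. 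Finally, since each $x_\alpha$ has support above $\alpha$, uncountably many of them must land in a single $\X_n$ with $n\in X$; for that $n$ one obtains the pair $x_\alpha,x_n^\alpha\in\X_n$ at distance strictly greater than $\sqrt[p]{2}$. The point is that one only ever needs \emph{one} bad pair per piece, not an uncountable clique.
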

\begin{proof} Fix $1<p<\infty$. Suppose that $\bigcup \X_n=S_{\ell_p(\omega_1)}$.
Let $X\subseteq \N$ be the set of all such $n\in\N$  that for every $\alpha<\omega_1$ there is $x\in \X_n$
with $\alpha\cap supp(x)=\emptyset$. Clearly $X\not=\emptyset$.  Using the fact that 
the supports of elements of $\ell_p(\omega_1)$ are countable for every $\alpha<\omega_1$ one can
find $x_n^\alpha\in \X_n$  satisfying $supp(x^\alpha_n)\cap\alpha=\emptyset$ for $n\in X$
 such that $supp(x_n^\alpha)\cap supp(x_m^\alpha)=\emptyset$
for every distinct $n, m\in X$.
For $\alpha<\omega_1$ 
construct $x_\alpha\in S_{\ell_p(\omega_1)}$ such that $supp(x_\alpha)=\bigcup_{n\in X}supp(x_n^\alpha)$
(in particular $supp(x_\alpha)\cap\alpha=\emptyset$)
and $x_\alpha|supp(x_n^\alpha)=\delta_n^\alpha x_n^\alpha|supp(x_n^\alpha)$ for some $\delta_n^\alpha<0$. 
Note that by Lemma \ref{pytha} we have
$$1=\|x_\alpha\|^p=|\delta_n^\alpha|^p+
\Big\|x_\alpha|\big(supp(x_\alpha)\setminus(supp( x_n^\alpha)\big)\Big\|^p_p.$$
It follows from Lemma \ref{pytha} that
 $\|x_n^\alpha-x_\alpha\|^p=(1+|\delta_n^\alpha|)^p+(1-|\delta_n^\alpha|^p)>2$
 as $(1+x)^p>1+x^p$ for $x>0$, since the derivative of the difference is
 positive for $x>0$ as $(1+x)^{p-1}>x^{p-1}$ for $x>0$ since $p>1$.
 As $x_\alpha$s have supports above $\alpha<\omega_1$ uncountably
many of them must belong to some $\X_n$ with $n\in X$. This shows that $\X_n$
has diameter strictly bigger than $\sqrt[p]{2}$.
\end{proof}

Preiss and R\"odl proved the above result for $\ell_2(\kappa)$ using another
 proof based on the Baire category theorem which also goes through for all $1<p<\infty$.

\begin{lemma}\label{lp-nosep} Let $1< p<\infty$. There is no uncountable 
$(\sqrt[p]{2}+)$-separated set in $S_{\ell_p(\omega_1)}$.
\end{lemma}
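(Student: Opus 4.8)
The plan is to argue by contradiction: suppose $\{x_\alpha:\alpha<\omega_1\}\subseteq S_{\ell_p(\omega_1)}$ is $(\sqrt[p]{2}+)$-separated, so that $\|x_\alpha-x_\beta\|_p>\sqrt[p]{2}$ for all distinct $\alpha,\beta$. Since each $x_\alpha$ has countable support, by a standard $\Delta$-system style argument — passing to an uncountable subset — I would first arrange that the supports $\{\supp(x_\alpha):\alpha<\omega_1\}$ form a $\Delta$-system with root $\Delta$, i.e. $\supp(x_\alpha)\cap\supp(x_\beta)=\Delta$ for all distinct $\alpha,\beta$ (this uses that $\omega_1$ is regular and the supports are countable; it is the pressing-down / $\Delta$-system lemma for countable sets under {\sf CH}-free hypotheses, which holds in {\sf ZFC} for $\omega_1$ many countable sets).

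Next, I would refine further so that the restrictions $x_\alpha|\Delta$ are (approximately) the same. Concretely, since $\{x_\alpha|\Delta:\alpha<\omega_1\}$ is a family of $\omega_1$ vectors in the separable space $\ell_p(\Delta)$, uncountably many of them lie within distance $\varepsilon$ of a single point; passing to such an uncountable subset and relabelling, I get $\|x_\alpha|\Delta - x_\beta|\Delta\|_p<\varepsilon$ for all $\alpha,\beta$, with the common norm $\|x_\alpha|\Delta\|_p$ within $\varepsilon$ of some fixed $\delta\ge 0$. Now apply Lemma \ref{pytha}: writing $b_\alpha=\supp(x_\alpha)\setminus\Delta$, which are pairwise disjoint, we have $\|x_\alpha|b_\alpha\|_p^p = 1-\|x_\alpha|\Delta\|_p^p$, which is within roughly $\varepsilon$ of $1-\delta^p$. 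Then for distinct $\alpha,\beta$, since $b_\alpha$ and $b_\beta$ are disjoint and disjoint from $\Delta$,
$$\|x_\alpha-x_\beta\|_p^p = \|x_\alpha|\Delta - x_\beta|\Delta\|_p^p + \|x_\alpha|b_\alpha\|_p^p + \|x_\beta|b_\beta\|_p^p \le \varepsilon^p + 2(1-\delta^p) + O(\varepsilon) \le 2 + O(\varepsilon),$$
where the $O(\varepsilon)$ error is uniform. Choosing $\varepsilon$ small enough at the outset forces $\|x_\alpha-x_\beta\|_p \le \sqrt[p]{2} + (\text{something} < \text{the assumed gap})$, contradicting $(\sqrt[p]{2}+)$-separation.

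The main obstacle — really the only delicate point — is making the "$(1+)$-separated means strictly greater" gap work against the error terms: a $(\sqrt[p]{2}+)$-separated set only guarantees strict inequality $\|x_\alpha-x_\beta\|_p>\sqrt[p]{2}$, not $\|x_\alpha-x_\beta\|_p\ge\sqrt[p]{2}+\eta$ for a fixed $\eta$, so I cannot simply pick $\varepsilon<\eta$. The fix is to run the refinement with the parameter $\varepsilon$ left free and observe that the computation above actually yields $\|x_\alpha-x_\beta\|_p^p\le 2 + C\varepsilon$ for a constant $C$ depending only on $p$ and (a bound on) $\delta$; but in fact one can do better — since the contribution from $\Delta$ and from $b_\alpha,b_\beta$ sum to something $\le 2$ up to the $\Delta$-mismatch term $\|x_\alpha|\Delta-x_\beta|\Delta\|_p^p\le\varepsilon^p$ alone (the $b$-terms contribute exactly $2-2\|x_\alpha|\Delta\|_p^p\le 2$ when the $\Delta$-norms agree exactly, and agree up to $\varepsilon$), a little care shows $\|x_\alpha-x_\beta\|_p^p\le 2+\varepsilon^p + (\text{small})$. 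Picking any fixed small $\varepsilon$, say $\varepsilon=1/10$, already gives $\|x_\alpha-x_\beta\|_p < \sqrt[p]{2}\cdot(1+\text{tiny})$, but to get a clean contradiction with the strict bound $>\sqrt[p]{2}$ I would instead argue: for each fixed $\varepsilon>0$ the above produces uncountably many pairs with $\|x_\alpha-x_\beta\|_p^p\le 2+c(\varepsilon)$ where $c(\varepsilon)\to 0$; but the set is $(\sqrt[p]{2}+)$-separated, so in particular is $\sqrt[p]2$-separated, and one can then re-examine equality — the key extra input is that if $\|x_\alpha|\Delta-x_\beta|\Delta\|_p$ is genuinely small then $\|x_\alpha-x_\beta\|_p^p$ is genuinely at most $2-(\text{something positive if }\delta>0)$, forcing $\delta=0$, i.e. the vectors are essentially disjointly supported, whence $\|x_\alpha-x_\beta\|_p\to\sqrt[p]2$ from below along the refined family, the desired contradiction.
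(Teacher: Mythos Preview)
There is a genuine gap: the $\Delta$-system lemma for $\omega_1$ many \emph{countable} sets is \emph{not} a theorem of {\sf ZFC}. A standard counterexample is the family $\{\alpha:\alpha<\omega_1\}$ of countable initial segments of $\omega_1$: for $\alpha<\beta<\gamma$ one has $\alpha\cap\beta=\alpha$ but $\beta\cap\gamma=\beta$, so no three of them form a $\Delta$-system. Pressing-down does not rescue this; what one can arrange in {\sf ZFC} is only a much weaker order-theoretic separation of the supports along $\omega_1$, not a genuine $\Delta$-system.

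Even granting a $\Delta$-system with root $\Delta$, your endgame in the $\delta=0$ case does not close. Writing $u_\alpha=x_\alpha|\Delta$, Lemma~\ref{pytha} gives the \emph{exact} identity
\[
\|x_\alpha-x_\beta\|_p^p \;=\; 2 \;+\; \|u_\alpha-u_\beta\|_p^p \;-\; \|u_\alpha\|_p^p \;-\; \|u_\beta\|_p^p,
\]
so you need $\|u_\alpha-u_\beta\|_p^p\le\|u_\alpha\|_p^p+\|u_\beta\|_p^p$ for \emph{some} pair. If the only condensation point of $\{u_\alpha\}$ in the separable space $\ell_p(\Delta)$ is $0$, this can fail for every pair your refinement produces: take $u_\alpha,u_\beta$ both close to $0$ with $u_\alpha\approx-u_\beta$, so that $\|u_\alpha-u_\beta\|_p^p\approx 2^p\|u_\alpha\|_p^p>2\|u_\alpha\|_p^p$ since $p>1$. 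Your claim that $\|x_\alpha-x_\beta\|_p\to\sqrt[p]{2}$ ``from below'' is therefore unjustified; the approach is from above, and $(\sqrt[p]{2}+)$-separation gives no uniform gap to beat.

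The paper's proof circumvents both issues by working directly with the ordinal structure rather than a $\Delta$-system. It first reduces to the case where all $x_\xi$ carry mass $>\delta$ on a fixed countable initial segment $[0,\alpha_0)$ (otherwise one extracts disjointly supported elements, giving distance exactly $\sqrt[p]{2}$). It then locates an ordinal $\alpha>\alpha_0$ such that for every $\beta>\alpha$ and every $\varepsilon>0$ uncountably many $x_\xi$ have mass $<\varepsilon$ on $[\alpha,\beta)$, and finally uses separability of $\ell_p(\alpha)$ together with a choice of $\varepsilon$ tied to the fixed $\delta>0$ to find two elements at distance at most $\sqrt[p]{2}$. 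The positive $\delta$ secured in the first step is precisely what absorbs the error terms that your argument cannot control.
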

\begin{proof}
Let $\{x_\xi: \xi<\omega_1\}\subseteq S_{\ell_p(\omega_1)}$. We will show 
that there are $\xi<\eta<\omega_1$ such that
$\|x_\xi-x_\eta\|\leq\sqrt[p]{2}$. First note
that by passing to an uncountable set we may assume that there is $\alpha_0<\omega_1$ and $\delta>0$ such that
$\|x_\xi|\alpha_0\|^p>\delta$ for every $\xi<\omega_1$. Indeed, otherwise
we can choose an uncountable subset of $\{x_\xi: \xi<\omega_1\}$ whose elements have
pairwise disjoint supports. For such elements we have  $\|x_\xi-x_\eta\|^p=\|x_\xi\|^p+\|x_\eta\|^p=2$
as required by Lemma \ref{pytha}.

 We claim that there is
$\alpha_0<\alpha<\omega_1$ such that for every $\varepsilon>0$ and every $\alpha<\beta<\omega_1$
there are uncountably many $\xi<\omega_1$ with $\|x_\xi|[\alpha, \beta)\|_p<\varepsilon$.
To prove the claim note that otherwise one can construct  by transfinite induction
$(\alpha_\gamma:\gamma<\omega_1)$ and $(\varepsilon_\gamma: \gamma<\omega_1)$  such that
$\|x_\xi|[\alpha_\gamma, \alpha_{\gamma+1})\|_p\geq\varepsilon_\gamma$ for
all but countably many $\xi<\omega_1$. As $\varepsilon_\gamma$ must be separated from
zero for uncountably many $\gamma<\omega_1$ this allows to find $\xi<\omega_1$
and pairwise disjoint intervals $I_i$ for $i<k$ such that $\|x_\xi|I_i\|\geq\varepsilon$
for some $\varepsilon$ and $k>\varepsilon^{-p}$. As $\|x_\xi\|^p\geq \Sigma_{i<k}\|x_\xi|I_i\|^p$
this would contradict the hypothesis that $\|x_\xi\|=1$. 

Using the claim and the fact that $\delta>0$
and by passing to an uncountable subset of $\{x_\xi: \xi<\omega_1\}$
we may assume that 
$$\|x_\eta|(\beta_\xi\setminus\alpha)\|\leq \varepsilon=\sqrt[p]{1+\delta/2}-1$$
for every $\xi<\eta<\omega_1$, where $\beta_\xi=\sup(supp(x_\xi))+1$. Now use the separability of
$\ell_p(\alpha)$ to pick $\xi<\eta<\omega_1$ such that 
$$\|x_\xi|\alpha-x_\eta|\alpha\|_p^p\leq \delta/2.$$
Finally let us estimate $\|x_\xi-x_\eta\|$ using its projections
onto $[0,\alpha)$, $[\alpha, \beta_\xi)$, $[\beta_\xi, \omega_1)$ and
the fact that $\|x\|_p^p=\|x|A\|_p^p+\|x|B\|_p^p+\|x|C\|_p^p$
for all $x\in \ell_p(\omega_1)$ and any partition of $\omega_1$ into sets $A, B, C$.
$$\|x_\xi-x_\eta\|^p_p\leq\delta/2 +(1+\varepsilon)^p+(1-\delta)= 2$$
as required.
\end{proof}

\begin{proposition}\label{lp-dich} Suppose that $1<p<\infty$ and $\kappa\leq 2^\omega$ is uncountable.
Then 
\begin{itemize}
\item ${\mathsf K}^+(S_{\ell_p(\kappa)})=[0, \sqrt[p]{2})$,
\item ${\Sigma}(S_{\ell_p(\kappa)})=(\sqrt[p]{2}, 2]$.
\end{itemize}
Hence $S_{\ell_p(\kappa)}$ is almost
dichotomous and not dichotomous. 
\end{proposition}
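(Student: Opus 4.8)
\emph{Proof proposal.} The plan is to assemble the statement from Lemma \ref{pytha}, Lemma \ref{lp-nosep} and Propositions \ref{lp-union}--\ref{lp-nonunion}, combined with the interval structure of Proposition \ref{sets-constants} and the basic bounds of Proposition \ref{riesz}, after transferring the auxiliary results from $\omega_1$ and $2^\omega$ to an arbitrary uncountable $\kappa\le 2^\omega$.

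First I would record two elementary transfer observations. Padding coordinates with zeros gives isometric inclusions of subsets $S_{\ell_p(\omega_1)}\subseteq S_{\ell_p(\kappa)}\subseteq S_{\ell_p(2^\omega)}$; and since every support of an element of $\ell_p(\kappa)$ is countable, each $\omega_1$-sized subset of $S_{\ell_p(\kappa)}$ lies, after restricting to the union of the supports (a set of size $\le\omega_1$), in an isometric copy of $S_{\ell_p(\omega_1)}$. From these I get: (a) by Lemma \ref{lp-nosep}, $S_{\ell_p(\kappa)}$ admits no uncountable $(\sqrt[p]{2}+)$-separated set, hence no $r\ge\sqrt[p]{2}$ belongs to ${\mathsf K}^+(S_{\ell_p(\kappa)})$; (b) by Proposition \ref{lp-union} applied in $\ell_p(2^\omega)$ and intersecting the countable cover with $S_{\ell_p(\kappa)}$ (equivalently, rerunning its proof, which is valid for any $\kappa\le 2^\omega$), $\sqrt[p]{2}+\varepsilon\in\Sigma(S_{\ell_p(\kappa)})$ for every $\varepsilon>0$; (c) by Proposition \ref{lp-nonunion} together with $S_{\ell_p(\omega_1)}\subseteq S_{\ell_p(\kappa)}$, the sphere $S_{\ell_p(\kappa)}$ is not a countable union of sets of diameter $\le\sqrt[p]{2}$, so $\sqrt[p]{2}\notin\Sigma(S_{\ell_p(\kappa)})$.

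Next I would add the one remaining positive ingredient, $[0,\sqrt[p]{2})\subseteq{\mathsf K}^+(S_{\ell_p(\kappa)})$: the unit vectors $\{e_\alpha:\alpha<\omega_1\}\subseteq S_{\ell_p(\kappa)}$ satisfy $\|e_\alpha-e_\beta\|_p=\sqrt[p]{2}$ by Lemma \ref{pytha}, so they form an uncountable $\sqrt[p]{2}$-equilateral set, which is $(r+)$-separated for every $r<\sqrt[p]{2}$. Since ${\mathsf K}^+(S_{\ell_p(\kappa)})$ is an interval by Proposition \ref{sets-constants} (1), this and (a) give ${\mathsf K}^+(S_{\ell_p(\kappa)})=[0,\sqrt[p]{2})$, so ${\mathsf k}^+(S_{\ell_p(\kappa)})=\sqrt[p]{2}$. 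For $\Sigma$: it is a subinterval of $[0,2]$ with $\inf\Sigma(S_{\ell_p(\kappa)})\ge{\mathsf k}^+(S_{\ell_p(\kappa)})=\sqrt[p]{2}$ (Proposition \ref{sets-constants} (1)), it contains $2$ (Proposition \ref{riesz} (3)), by (b) its infimum is $\le\sqrt[p]{2}$, and by (c) it omits $\sqrt[p]{2}$; hence $\Sigma(S_{\ell_p(\kappa)})=(\sqrt[p]{2},2]$ and $\sigma(S_{\ell_p(\kappa)})=\sqrt[p]{2}$. Therefore $\sigma(S_{\ell_p(\kappa)})=\sqrt[p]{2}={\mathsf k}^+(S_{\ell_p(\kappa)})$, so $S_{\ell_p(\kappa)}$ is almost dichotomous by Proposition \ref{sets-constants} (3), while ${\mathsf K}^+(S_{\ell_p(\kappa)})\cup\Sigma(S_{\ell_p(\kappa)})=[0,2]\setminus\{\sqrt[p]{2}\}$ is not all of $[0,diam(S_{\ell_p(\kappa)})]$, so $S_{\ell_p(\kappa)}$ is not dichotomous by Proposition \ref{sets-constants} (2).

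I do not expect a genuine obstacle here: all the substance is already contained in the auxiliary results. The only place requiring a little care is the reduction from $\omega_1$ and $2^\omega$ to a general uncountable $\kappa\le 2^\omega$, which is the routine support-counting and zero-padding argument sketched above.
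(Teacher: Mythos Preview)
Your proposal is correct and follows the same approach as the paper, which simply cites Propositions \ref{lp-union}, \ref{lp-nonunion}, Lemma \ref{lp-nosep}, and the fact that the characteristic functions of singletons form an uncountable $\sqrt[p]{2}$-equilateral set. You are more explicit than the paper about the transfer between $\omega_1$, $\kappa$, and $2^\omega$ (the auxiliary results being stated for the endpoints only) and about invoking Proposition \ref{sets-constants} to assemble the interval structure, but these are exactly the routine details the paper leaves to the reader.
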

\begin{proof} Use Propositions \ref{lp-union}, \ref{lp-nonunion} and Lemma \ref{lp-nosep} and
the fact that the characteristic functions of singletons form
an uncountable $\sqrt[p]{2}$-equilateral set.
\end{proof}

\subsection{The spheres of $L_1(\mu)$ spaces}

\begin{lemma}\label{Lp-separated} Suppose that  $\kappa$ is an uncountable cardinal.
Then the unit sphere of $L_1(\mu)$, where $\mu$ is the product
measure on $\{0,1\}^\kappa$, admits
a $(2-\varepsilon)$-separated set of cardinality $\kappa$. Therefore 
${\mathsf K}^+(S_{L_1(\{0,1\}^\kappa)})=[0, 2)$,
 ${\Sigma}(S_{L_1(\{0,1\}^\kappa)})=\{2\}$ and the unit sphere
of $L_1(\mu)$ is dichotomous.
\end{lemma}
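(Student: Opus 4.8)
The plan is to construct, for every $\varepsilon>0$, a family $\{f_\alpha:\alpha<\kappa\}\subseteq S_{L_1(\{0,1\}^\kappa)}$ with $\|f_\alpha-f_\beta\|_1\geq 2-\varepsilon$ for distinct $\alpha,\beta$; the remaining assertions then follow immediately from Proposition \ref{sets-constants}, Proposition \ref{riesz}, and the observation that $\mathrm{diam}(S_{L_1})=2$ while a $(2-\varepsilon)$-separated set of size $\kappa\le 2^\omega$ witnesses $[0,2)\subseteq{\mathsf K}^+(S_{L_1(\{0,1\}^\kappa)})$ and hence, combined with $2\in\Sigma$, that the sphere is dichotomous.

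First I would recall the standard fact that on the product measure space $\{0,1\}^\kappa$ the coordinate Rademacher-type functions are ``almost'' far apart in $L_1$: for the $\alpha$-th coordinate projection $\pi_\alpha$, put $r_\alpha = 1_{\{x:\pi_\alpha(x)=1\}}-1_{\{x:\pi_\alpha(x)=0\}}$, so $\|r_\alpha\|_\infty=1$, $\int r_\alpha\,d\mu=0$, and $\|r_\alpha\|_1=1$. For distinct $\alpha,\beta$ the pair $(\pi_\alpha,\pi_\beta)$ has the uniform distribution on $\{0,1\}^2$, so $r_\alpha$ and $r_\beta$ are independent symmetric $\pm1$ variables; a direct computation gives $\|r_\alpha-r_\beta\|_1 = \mathbb{E}|r_\alpha-r_\beta| = \tfrac12\cdot 2 + \tfrac12\cdot 0 = 1$. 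That is not yet $2-\varepsilon$, so the Rademacher functions themselves are not separated enough; instead I would pass to a tensor/block construction. Fix $n$ large enough that some quantity below is $<\varepsilon$, partition $\kappa$ into $\kappa$ many pairwise disjoint $n$-element blocks $\{\alpha^\xi_1,\dots,\alpha^\xi_n\}$ for $\xi<\kappa$ (possible since $\kappa$ is infinite), and set
$$
f_\xi \;=\; \frac{1}{\|\,r_{\alpha^\xi_1}\cdots r_{\alpha^\xi_n}\,\|_1}\; r_{\alpha^\xi_1}\cdot r_{\alpha^\xi_2}\cdots r_{\alpha^\xi_n}.
$$
Since each $r_{\alpha^\xi_i}$ takes values $\pm1$, the product takes values $\pm1$, so $\|r_{\alpha^\xi_1}\cdots r_{\alpha^\xi_n}\|_1 = 1$ and $f_\xi = r_{\alpha^\xi_1}\cdots r_{\alpha^\xi_n}\in S_{L_1}$. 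For $\xi\neq\eta$ the blocks are disjoint, so $f_\xi$ and $f_\eta$ are products over disjoint coordinate sets; because the joint distribution of all coordinates involved is the uniform product distribution on $\{0,1\}^{2n}$, the random variables $f_\xi$ and $f_\eta$ are independent, each uniform on $\{-1,1\}$, whence $f_\xi f_\eta$ is again uniform on $\{-1,1\}$ and $\int f_\xi f_\eta\,d\mu = 0$. Then
$$
\|f_\xi - f_\eta\|_1 \;=\; \int |f_\xi-f_\eta|\,d\mu \;\geq\; \Big|\int (f_\xi-f_\eta)\,\mathrm{sgn}(f_\xi-f_\eta)\,d\mu\Big|,
$$
and more directly, since $f_\xi-f_\eta$ takes only the values $0$ (with probability $\tfrac12$) and $\pm2$ (with probability $\tfrac12$), we get $\|f_\xi-f_\eta\|_1 = \tfrac12\cdot 0 + \tfrac12\cdot 2 = 1$ — which is the same obstruction as before. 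So the naive block product does not help either, and the genuine content is to break the symmetry.

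The fix I would actually carry out is to use \emph{asymmetric} weights. For each coordinate $\alpha$ let $g^t_\alpha = 1_{\{\pi_\alpha=1\}} - t\,1_{\{\pi_\alpha=0\}}$ where $t\in(0,1)$ is chosen close to $1$; then $\|g^t_\alpha\|_1 = \tfrac12(1+t)$ and $\int g^t_\alpha\,d\mu = \tfrac12(1-t)$. Normalizing $h_\alpha = g^t_\alpha/\|g^t_\alpha\|_1 \in S_{L_1}$, one computes for distinct $\alpha,\beta$, using that $(\pi_\alpha,\pi_\beta)$ is uniform on $\{0,1\}^2$, that $\|h_\alpha - h_\beta\|_1$ is a fixed continuous function of $t$ that tends to $2$ as $t\to 1^-$ (in the limit $h_\alpha$ approaches $2\cdot 1_{\{\pi_\alpha=1\}}$, and two such functions with independent defining events have $\|{\cdot}\|_1$-distance $2\cdot\mu(\{\pi_\alpha=1\}\triangle\{\pi_\beta=1\}) = 2\cdot\tfrac12\cdot 2 = 2$, so the limiting distance is $2$; I would verify the exact formula and the limit by the four-case computation over $\{0,1\}^2$). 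Choosing $t$ with $\|h_\alpha-h_\beta\|_1 > 2-\varepsilon$ and taking $f_\alpha = h_\alpha$ for $\alpha<\kappa$ gives the desired $(2-\varepsilon)$-separated set of cardinality $\kappa$. The main obstacle, as the discussion shows, is precisely this point: symmetric building blocks are stuck at distance $1$, and one must exploit the lack of symmetry in $L_1$ (there is no inner product, unlike the $\ell_p$ situation in Proposition \ref{lp-dich}) to push the separation all the way up to $2$; once the correct asymmetric family is in hand, everything else is a routine application of Propositions \ref{sets-constants} and \ref{riesz}, giving ${\mathsf K}^+(S_{L_1(\{0,1\}^\kappa)})=[0,2)$, ${\Sigma}(S_{L_1(\{0,1\}^\kappa)})=\{2\}$, and dichotomy of the sphere.
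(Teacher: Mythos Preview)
Your proposal contains a genuine gap: the single-coordinate asymmetric family $h_\alpha=g^t_\alpha/\|g^t_\alpha\|_1$ is still stuck at distance $1$, not $2-\varepsilon$. Writing $c=\tfrac{2}{1+t}$, one has $h_\alpha=c\cdot 1_{\{\pi_\alpha=1\}}-ct\cdot 1_{\{\pi_\alpha=0\}}$, and the four-case computation over $\{0,1\}^2$ gives
\[
|h_\alpha-h_\beta|=\begin{cases}0&\text{on }\{\pi_\alpha=\pi_\beta\}\ (\text{measure }1/2),\\ c+ct=2&\text{on }\{\pi_\alpha\neq\pi_\beta\}\ (\text{measure }1/2),\end{cases}
\]
so $\|h_\alpha-h_\beta\|_1=1$ for \emph{every} $t\in(0,1)$. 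Your claimed limit ``$2\cdot\mu(\{\pi_\alpha=1\}\triangle\{\pi_\beta=1\})=2\cdot\tfrac12\cdot 2=2$'' contains a spurious extra factor of $2$: the symmetric difference has measure $1/2$, so the expression equals $1$. (Also the direction of the limit is confused: $h_\alpha\to 2\cdot 1_{\{\pi_\alpha=1\}}$ as $t\to 0^+$, not $t\to 1^-$; but this does not matter since the distance is identically $1$.)

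The missing idea is that separation close to $2$ requires the functions to be concentrated on sets of \emph{small} measure, whereas every single-coordinate construction lives on sets of measure $1/2$. The paper takes, for each $\xi<\kappa$, a block $A_\xi\in[\kappa]^n$ (the blocks pairwise disjoint), puts $X_\xi=\{x:\forall\alpha\in A_\xi,\ x(\alpha)=1\}$ so that $\mu(X_\xi)=2^{-n}$, and sets $f_\xi=2^n\chi_{X_\xi}\in S_{L_1}$. For $\xi\neq\eta$ the cylinder sets $X_\xi,X_\eta$ are independent, hence $\mu(X_\xi\cap X_\eta)=2^{-2n}$ and
\[
\|f_\xi-f_\eta\|_1=2\cdot 2^n\cdot\bigl(2^{-n}-2^{-2n}\bigr)=2-2^{1-n},
\]
which exceeds $2-\varepsilon$ once $n$ is large. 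Your block construction was on the right track structurally (disjoint finite coordinate sets), but products of Rademachers are again $\pm1$-valued symmetric variables and give distance $1$; replacing the product by the normalized indicator of the intersection is what makes the argument work.
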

\begin{proof}
First note that if $A, B\subseteq \kappa$ are pairwise disjoint of cardinality $n\in \N$
and 
$f=2^{n}\chi_X$,
$g=2^{n}\chi_Y$, 
where $X=\{x\in \{0,1\}^\kappa: \forall\alpha\in A\ x(\alpha)=1\}$, 
and $Y=\{x\in \{0,1\}^\kappa: \forall\alpha\in B\ x(\alpha)=1\}$.
 Then 
$$\|f\|_{L_1}, \|g\|_{L_1}={\int |f|d\mu=(2^{n})(1/2^n)}=1.$$
On the other hand 
$$\|f-g\|_{L_1}=2\cdot{(2^{n})(1/2^n)(1-{1\over2^n})}={2-(1/2^{n-1})}.$$
So by choosing pairwise disjoint families of finite subsets
 of $\kappa$ we obtain $({2}-\varepsilon)$-separated
sets of cardinality $\kappa$. 
\end{proof}

\subsection{The spheres of $L_p(\mu)$ spaces for $1<p<\infty$}

\begin{proposition}\label{Lp} Suppose that  $\kappa\leq2^\omega$ is an uncountable cardinal
and $1<p<\infty$.
 The unit sphere of $L_p(\mu)$, where $\mu$ is the product
measure on $\{0,1\}^\kappa$, is almost dichotomous.
\end{proposition}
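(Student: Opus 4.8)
The plan is to evaluate the two coefficients of Definition \ref{def-sets-constants} for $M=S_{L_p(\mu)}$ and show they agree: I will prove that ${\mathsf k}^+(S_{L_p(\mu)})=\sigma(S_{L_p(\mu)})=s_p$, where $s_p:=\max(2^{1/p},2^{1-1/p})$ (so $s_p^p=\max(2,2^{p-1})$). By Proposition \ref{sets-constants} (3) this is exactly what it means for $S_{L_p(\mu)}$ to be almost dichotomous, and since ${\mathsf k}^+\le\sigma$ always (Proposition \ref{sets-constants} (1)), it suffices to prove ${\mathsf k}^+\ge s_p$ and $\sigma\le s_p$.

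For ${\mathsf k}^+\ge s_p$ I would exhibit, for each $r<s_p$, an uncountable $(r+)$-separated subset of $S_{L_p(\mu)}$, treating $2\le p<\infty$ and $1<p\le 2$ separately (recall $\kappa$ is uncountable). For $2\le p<\infty$ take the Rademacher functions $\varepsilon_\alpha=2\cdot 1_{[x_\alpha=1]}-1$ for $\alpha<\omega_1$; they lie on $S_{L_p(\mu)}$ and, by independence, $\|\varepsilon_\alpha-\varepsilon_\beta\|_p^p=\int|\varepsilon_\alpha-\varepsilon_\beta|^p\,d\mu=2^{p-1}$, so this is an uncountable $2^{1-1/p}$-equilateral set and $2^{1-1/p}=s_p$. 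For $1<p\le 2$ fix $N\in\N$, choose pairwise disjoint $N$-element sets $C_\alpha\subseteq\kappa$ ($\alpha<\omega_1$), and put $f_\alpha=2^{N/p}\,1_{[x|C_\alpha\equiv 1]}$; then $\|f_\alpha\|_p=1$ and, using disjointness of the $C_\alpha$, $\|f_\alpha-f_\beta\|_p^p=2(1-2^{-N})$, which exceeds $r^p$ for every $r<s_p=2^{1/p}$ once $N$ is large. In both cases $[0,s_p)\subseteq{\mathsf K}^+(S_{L_p(\mu)})$.

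The substantial part is $\sigma\le s_p$, generalizing the covering argument of Erd\H os and Preiss (Proposition \ref{lp-union}); the obstacle is that for $p\neq 2$ there is no analogue of the Pythagorean identity of Lemma \ref{pytha}, so "agreement on the common support" must be replaced by "agreement of the conditional expectations onto the common coordinates". Fix a countable subfield $D\subseteq\R$ with $\Q\subseteq D$, closed under the field operations, under $t\mapsto t^p$ and under $t\mapsto t^{1/p}$, and let $\Y\subseteq S_{L_p(\mu)}$ be the countable dense set of normalized $D$-valued functions depending on finitely many coordinates. For $y\in\Y$ with finite coordinate support $F_y$, the data $\mathbb{E}[y\,|\,\Sigma_b](t)\in D$ for finite $b\subseteq F_y$ and $t\in\{0,1\}^b$ (where $\Sigma_b$ is the $\sigma$-algebra generated by the coordinates in $b$) is finite. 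Since the set of all pairs $(b,t)$ with $b$ a finite subset of $\kappa$ has cardinality $\kappa\le 2^\omega$, the product of copies of the discrete space $D$ over this index set is separable by Theorem \ref{hmp}; fixing a countable dense set $\{e_n:n\in\N\}$ in it, assign to $y$ the least $n_y$ with $e_{n_y}$ agreeing with the data of $y$ on the relevant finitely many pairs, and put $\Y_n=\{y\in\Y:n_y=n\}$. Then for $y,y'\in\Y_n$ with $a=F_y\cap F_{y'}$ one gets $\mathbb{E}[y\,|\,\Sigma_a]=\mathbb{E}[y'\,|\,\Sigma_a]$; conditioning on the finitely many atoms $[x|a=t]$ of $\Sigma_a$ turns $y,y'$ into functions of disjoint coordinate sets that are independent and of equal conditional mean given that atom, so that
$$\|y-y'\|_p^p=\sum_{t\in\{0,1\}^a}2^{-|a|}\iint|\xi-\eta|^p\,d\mathcal{L}(y_t)(\xi)\,d\mathcal{L}(y'_t)(\eta),$$
where $\mathcal{L}(y_t),\mathcal{L}(y'_t)$ are the conditional laws, which have equal means.

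Everything then reduces to the scalar inequality $(\star)$: if $\nu_1,\nu_2$ are probability measures on $\R$ with $\int\xi\,d\nu_1=\int\eta\,d\nu_2$, then
$$\iint|\xi-\eta|^p\,d\nu_1(\xi)\,d\nu_2(\eta)\ \le\ \max(2,2^{p-1})\cdot\tfrac12\Big(\int|\xi|^p\,d\nu_1+\int|\eta|^p\,d\nu_2\Big),$$
the Rademacher and the spike examples above showing that both the constant and the equal-means hypothesis are necessary. Granting $(\star)$, summing over $t$ and using $\sum_t 2^{-|a|}\int|\xi|^p\,d\mathcal{L}(y_t)=\|y\|_p^p=1$ (and likewise for $y'$) yields $\|y-y'\|_p^p\le\max(2,2^{p-1})$, i.e.\ $diam(\Y_n)\le s_p$; covering the dense set $\Y$ by countably many sets of diameter $\le s_p$ then gives $\sigma\le s_p$ by the same $\varepsilon$-thickening of cells used in Proposition \ref{lp-union}, and combining with the previous paragraph finishes the proof via Proposition \ref{sets-constants} (3). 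I expect $(\star)$ to be the crux: I would prove it by observing that for fixed $\nu_2$ the difference of the two sides is affine in $\nu_1$ over the convex set of probability measures of a prescribed mean, whose extreme points are supported on at most two points, thereby reducing $(\star)$ to a finite-dimensional two-point inequality, which is then handled by the convexity of $t\mapsto|t|^p$ together with the Clarkson inequalities $|a+b|^p+|a-b|^p\le 2^{p-1}(|a|^p+|b|^p)$ for $p\ge 2$ and the reverse for $p\le 2$, the case split matching the two values of $\max(2,2^{p-1})$.
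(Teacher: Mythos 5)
Your skeleton is sound and close in spirit to the paper's covering argument: the dense set of finitely determined $D$-valued functions, the Hewitt--Marczewski--Pondiczery indexing of cells, the decomposition of $\|y-y'\|_p^p$ over the atoms of $\Sigma_a$ using conditional independence, and the Rademacher/spike lower bounds giving $[0,\max(2^{1/p},2^{1-1/p}))\subseteq{\mathsf K}^+$ are all correct. But the whole proof stands or falls with the inequality $(\star)$, which you state without proof, and the sketch you offer for it does not carry the weight. The reduction to measures supported on at most two points is acceptable in principle (every mean-$m$ law is a barycenter of mean-$m$ laws on at most two points, and both sides are affine in each $\nu_i$ separately, though you should say this and deal with integrability and non-compactness rather than invoke extreme points of a non-compact set). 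The real gap is the final two-point $\times$ two-point inequality with an arbitrary common mean $m$: it is \emph{not} a consequence of ``convexity plus Clarkson''. Clarkson's inequality settles exactly the symmetric and mirror-image configurations, where $(\star)$ is tight, but $(\star)$ is not shift-invariant, so you cannot normalize $m=0$, and the asymmetric equal-mean case is a genuinely different statement. For instance, for $p=4$ and independent $X,Y$ with common mean $m$, $(\star)$ is equivalent to $6\,\mathbb{E}X^2\,\mathbb{E}Y^2\le 3\,\mathbb{E}X^4+3\,\mathbb{E}Y^4+4m\bigl(\mathbb{E}X^3+\mathbb{E}Y^3\bigr)$, and the last term can be negative (positive mean, negative skew); nothing in your sketch controls it. So there is a genuine gap at the crux, and if $(\star)$ should fail for some $p$, your claimed value of the common constant would be wrong.

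It is also worth seeing how the paper sidesteps this entirely, because that is the repair available to you. The paper never evaluates the optimal constant: it defines $c_n$ as the supremum of $\|T_A(F)-T_B(F)\|_{L_p}$ over norm-one $F\in L_p(\{0,1\}^n)$ and pairs $A,B\in[\kappa]^n$ whose order isomorphism fixes $A\cap B$ (a supremum realized by finitely many relative positions), obtains ${\mathsf k}^+\ge c=\sup_n c_n$ from uncountable families of $n$-sets in a fixed relative position, and designs the HMP cells so that any two members are literally two such copies $T_A(F_k),T_B(F_k)$ of the \emph{same} function --- whence the diameter bound $c_n$ holds by definition of $c_n$, and ${\mathsf k}^+=\sigma=c$ follows with no sharp probabilistic inequality at all. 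Your cells only force equality of conditional expectations, a much weaker agreement, which is exactly why you need $(\star)$ for pairs of \emph{different} laws with equal means. Note also that the paper explicitly remarks that the exact value of $c$ is unknown to the authors; your argument, if $(\star)$ were established, would prove the strictly stronger statement ${\mathsf k}^+(S_{L_p})=\sigma(S_{L_p})=\max(2^{1/p},2^{1-1/p})$, which should itself have warned you that $(\star)$ is not routine. Either prove $(\star)$ in full (including the nonzero-mean, asymmetric two-point case), or replace the explicit constant by the paper's abstract one and tighten your cells so that members share the same ``shape and position'' rather than merely the same conditional means.
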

\begin{proof}
For a finite $A\subseteq\kappa$ let $\F_A$  denote the class of all real functions on $\{0,1\}^\kappa$
which depend on $A$, i.e., such $f$ that there is
$F:\{0,1\}^A\rightarrow \R$ such that
$f(x)=F(x|A)$ for every $x\in\{0, 1\}^\kappa$. 
For an $A\subseteq\kappa$ of the cardinality $n\in \N$ let $\sigma_A:n\rightarrow A$
be the order preserving bijection.  
We can define an isometry $T_A$ from  $L_p(\{0,1\}^n)$ into $\F_A$ considered
with the norm from $L_p(\{0,1\}^\kappa)$ by $T_A(F)(x)= F(x\circ\sigma_A)$
for any $F\in L_p(\{0,1\}^n)$ and $x\in \{0,1\}^\kappa$. 
Let $$c_n=\sup\{\|T_A(F)-T_B(F)\|_{L_p}: F\in L_p(\{0,1\}^n), \|F\|_{L_p}=1,$$
$$ A, B\in [\kappa]^n,
(\sigma_B\circ\sigma_A^{-1})|(A\cap B)=Id_{A\cap B}\}.$$
It is clear that to obtain $c_n$ one needs to consider finitely many pairs $A, B$ 
of  subsets of $\kappa$ of cardinality $n$ which exhaust the positions
of their intersections. Put $c=\sup_{n\in \N}c_n$.
By considering  uncountable families of subsets of $\kappa$ of cardinality $n\in \N$
where any two $A, B$ satisfy $(\sigma_B\circ\sigma_A^{-1})|(A\cap B)=Id_{A\cap B}$
for every $\varepsilon>0$ we can obtain uncountable
$(c_n-\varepsilon)$-separated subsets of $S_{L_p}$, so
Kottman's $(+)$-constant of $S_{L_p}$ is not smaller than $c$.
To complete the proof that $S_{L_p}$ is dichotomous it is enough to
show that for every $\varepsilon>0$ the unit sphere $S_{L_p}$ is the countable union of 
sets of diameter $c+\varepsilon$.  The remainder of the proof is devoted to this end.

Let $D$ be a countable subset of the reals which contains the rationals
and is closed under taking sums, products, powers of exponent $p$ and $p$-th roots.
By Luzin's theorem and Tietze extension theorem continuous functions on $\{0,1\}^\kappa$ are dense in $L_p(\mu)$.
 By the Stone-Weierstrass theorem continuous functions which depend on finitely many coordinates
and have rational values are dense with respect to the supremum norm in the set of all continuous functions
and so are dense in $L_p(\mu)$. Moreover the set $\Y$ of all functions which depend on finitely many coordinates (all
of them are continuous on $\{0,1\}^\kappa$) which are in the unit sphere of $L_p(\{0,1\}^\kappa)$
and have  values in $D$ are dense with respect to the supremum norm in the set of all continuous functions
and the unit sphere and so are dense in $S_{L_p(\mu)}$. It is enough to
prove that $\Y$ is the countable union of sets of diameters not bigger than $c$.

For $n\in \N$ let 
$$\F_n=\{f\in\F_A: A\in [\kappa]^{n}, \|f\|_{L_p}=1, f[\{0,1\}^\kappa]\subseteq D\}.$$
So it is enough to prove for each $n\in \N$
that $\F_n=\bigcup_{k,l\in \N} \F_{n,k, l}$ and the $L_p$-diameter of each $\F_{n,k, l}$ for $k, l\in \N$
is not bigger than $c_n$.

Fix $n\in \N$ and enumeration $(F_k)_{k\in \N}$ of all functions from $\{0,1\}^n$ into $D$.
Let $E=\{e_l: l\in \N\}$ be a countable dense set in $\N^\kappa$
where $\N$ is considered with discrete topology. It exists by Theorem \ref{hmp}. 

Given $f\in \F_A$ in the unit sphere with all  values in $D$ for a finite $A\in[\kappa]^n$ find $l\in \N$
such that the function $\sigma_A^{-1}$ enumerating elements of $A$ agrees with $e_l$.
Now find $k\in \N$ such that $T_A(F_k)=f$.
Put such $f$ into $\F_{n, k, l}$.  This completes the definition of $\F_{n, k, l}$.

Now suppose that $f, f'\in \F_{n, k, l}$ for some $n, k, l\in \N$. Let
$A, B\in [\kappa]^n$ be such that $f\in \F_A$ and $f'\in \F_B$.  By the construction
we have $T_A(F_k)=f$ and $T_B(F_k)=f'$ and $(\sigma_B\circ\sigma_A^{-1})|(A\cap B)=Id_{A\cap B}$
as the functions enumerating $A$ and $B$ in the increasing order agree on $A\cap B$ 
as they agree with $e_l$. So $\|f-f'\|_{L_p}$ is not bigger than $c_n$ by the definition
of $c_n$, as required.
\end{proof}

We do not know the exact value of $c_n$ or of $c$ in the above proof or if $S_{L_p}$
is dichotomous. Note that ${\mathsf k}^+(S_{L_p})$ is not $\sqrt[p]{2}$ in general
because we can construct in $L_p(\{0,1\}^{\omega_1})$ uncountable
sets of independent variables assuming values $1$ and $-1$ with equal probability $1/2$.
They have norm $1$ but their differences have the norm equal to $2^{1-1/p}$ which is strictly bigger
than $\sqrt[p]{2}$ if $p$ is sufficiently large.

\subsection{The spheres of Shelah-Wark spaces}

In \cite{shelah} Shelah considered a nonseparable version of a Schreier space whose
norm was induced by the anti-Ramsey coloring first obtained by Todorcevic in \cite{stevo-acta}.
Wark modified the norm in \cite{hugh} introducing $\ell_2$ norm and obtaining Hilbert generated
examples (further modifications which we do not consider here were even
 reflexive, or in \cite{hugh-sm}  even with stronger properties).  Such spaces apparently have high potential
 for carrying strong anti-Ramsey features of the metric on the unit sphere. This was investigated in \cite{pk-kamil}. 
 In fact they are generated as Banach spaces by a set $\{x_\alpha: \alpha<\omega_1\}$
 (characteristic functions of singletons)
 which is hyperlateral, in fact 
 $\|x_\alpha-x_\beta\|$ is $\sqrt{2}$ or $1$ depending on the value of the coloring
 on the pair $\{\alpha,\beta\}$. Propositions
 \ref{zfc-sw}, \ref{hilbert-gen-ch}, \ref{hilbert-gen-ma} summarize
 the properties of the spheres of these spaces in {\sf ZFC}, under {\sf CH}
 and under {\sf MA} and the negation of {\sf CH} respectively.
 It turns out that under the latter hypothesis the unit spheres are
 almost dichotomous despite being generated by a hyperlateral set.

 Recall that for a family $\A$ of finite subsets of $\omega_1$ which is closed under taking subsets and
 contains all singletons we can define a norm on the space $c_{00}(\omega_1)$ of finitely supported elements
 of $\R^{\omega_1}$ 
by
$$\|x\|_\A=\sup_{A\in \A}\sqrt{\sum_{\alpha\in A}x(\alpha)^2}.$$
The completion of $c_{00}(\omega_1)$ with respect to this norm
is denoted $(\X_\A, \|\ \|_\A)$.  See \cite{pk-kamil} for more details. Such spaces are always Hilbert generated
 (see Proposition 5 of \cite{pk-kamil}).
If $c:[\omega_1]^2\rightarrow\{0,1\}$, then we
can consider the family $\A_c$ of all finite $0$-monochromatic sets together with the singletons. 

\begin{lemma}\label{squares-sw} Let $\A$ be
a family of finite subsets of $\omega_1$ which is closed
under taking subsets and  contains all singletons. Then for every $x, x'\in \X_\A$ with disjoint supports we have 
$$\|x-x'\|^2_\A\leq \|x\|^2_\A+\|x'\|^2_\A.$$
\end{lemma}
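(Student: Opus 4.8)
The plan is to unwind the definition of the $\X_\A$ norm and reduce the inequality to a purely pointwise statement about the single set $A\in\A$ realizing the supremum for $x-x'$. First I would fix $x,x'\in\X_\A$ with $\supp(x)\cap\supp(x')=\emptyset$; by density of $c_{00}(\omega_1)$ and continuity of the norm it suffices to treat the case $x,x'\in c_{00}(\omega_1)$, so the supremum defining $\|x-x'\|_\A$ is attained at some $A\in\A$. Since $\A$ is closed under subsets, I may shrink $A$ to $A\cap(\supp(x)\cup\supp(x'))$ without decreasing $\sum_{\alpha\in A}(x-x')(\alpha)^2$, so WLOG $A\subseteq\supp(x)\cup\supp(x')$.

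Next I would split $A$ into $A_0=A\cap\supp(x)$ and $A_1=A\cap\supp(x')$, which is a genuine partition because the supports are disjoint. On $A_0$ we have $(x-x')(\alpha)=x(\alpha)$ and on $A_1$ we have $(x-x')(\alpha)=-x'(\alpha)$, hence
$$\|x-x'\|_\A^2=\sum_{\alpha\in A_0}x(\alpha)^2+\sum_{\alpha\in A_1}x'(\alpha)^2.$$
Now $A_0$ and $A_1$ are themselves members of $\A$ (again by closure under subsets), so $\sum_{\alpha\in A_0}x(\alpha)^2\le\|x\|_\A^2$ and $\sum_{\alpha\in A_1}x'(\alpha)^2\le\|x'\|_\A^2$. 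Adding these two bounds gives $\|x-x'\|_\A^2\le\|x\|_\A^2+\|x'\|_\A^2$, as required.

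Honestly there is no serious obstacle here; the only point requiring a word of care is the passage from finitely supported vectors to arbitrary elements of the completion, and the observation that the supremum in the norm of a finitely supported vector is actually attained (because only finitely many $A\in\A$ meet a fixed finite support). Everything else is the elementary remark that $\ell_2$-type quantities are additive over a partition of the index set and that restricting to a subset can only decrease the relevant sum. I would phrase the write-up in exactly this order: reduce to $c_{00}(\omega_1)$, pick the witnessing $A$, intersect it with the union of supports, partition, evaluate $x-x'$ on each piece, and bound each piece by the corresponding norm.
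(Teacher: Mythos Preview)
Your proof is correct and follows essentially the same idea as the paper's: split the witnessing $A\in\A$ along the two disjoint supports and bound each piece by the corresponding norm. The only cosmetic difference is that the paper skips the reduction to $c_{00}(\omega_1)$ and the choice of a maximizing $A$; instead it fixes an arbitrary $A\in\A$, observes
\[
\sum_{\alpha\in A}(x-x')(\alpha)^2=\sum_{\alpha\in A\cap\supp(x)}x(\alpha)^2+\sum_{\alpha\in A\cap\supp(x')}x'(\alpha)^2\le\|x\|_\A^2+\|x'\|_\A^2,
\]
and then takes the supremum over $A$.
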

\begin{proof}
For each $A, B\in \A$ we have ${\sum\{x(\alpha)^2}: {\alpha\in A\cap(supp(x))}\}\leq\|x\|_\A^2$
and ${\sum\{x'(\alpha)^2}: {\alpha\in B\cap(supp(x'))}\}\leq\|x'\|_\A^2$ and so
$${\sum\{\big((x-x')(\alpha)\big)^2}: {\alpha\in A}\}\leq$$
$$\leq{\sum\{x(\alpha)^2}: {\alpha\in A\cap supp(x)}\}+{\sum\{x'(\alpha)^2}: {\alpha\in A\cap supp(x')}\}\leq$$
$$\leq\|x\|_\A^2+\|x'\|_\A^2.$$
As $A\in \A$ was arbitrary we conclude the lemma.
\end{proof}

\begin{proposition}\label{sw-unions} Let $\A$ be
a family of finite subsets of $\omega_1$ which is closed
under taking subsets and  contains all singletons. 
Let $\X$ be equal to $(\X_{\A}, \|\ \|_{\A})$. For every $\varepsilon>0$
 the unit sphere $S_\X$
is  the union of countably many sets
of diameters not bigger than $\sqrt2+\varepsilon$. So $(\sqrt2, 2]\subseteq\Sigma(S_\X)$.
\end{proposition}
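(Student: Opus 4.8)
The plan is to mimic the proof of Proposition \ref{lp-union}, with Lemma \ref{squares-sw} playing the role that the Pythagorean estimate (Lemma \ref{ellp-delta}) played there. First I would record the elementary remark that restricting coordinates never increases the $\A$-norm: for $S\subseteq\omega_1$ and $x\in\X_\A$ one has $\|x|S\|_\A\le\|x\|_\A$, since for every $A\in\A$ also $A\cap S\in\A$ and $\sum_{\alpha\in A}(x|S)(\alpha)^2=\sum_{\alpha\in A\cap S}x(\alpha)^2\le\|x\|_\A^2$. From this together with Lemma \ref{squares-sw} I would deduce the key estimate: if $x,x'\in S_\X$ agree on $a=supp(x)\cap supp(x')$, then, writing $u=x|(supp(x)\setminus a)$ and $v=x'|(supp(x')\setminus a)$, the supports of $u$ and $v$ are disjoint, $x-x'=u-v$, and hence $\|x-x'\|_\A^2=\|u-v\|_\A^2\le\|u\|_\A^2+\|v\|_\A^2\le\|x\|_\A^2+\|x'\|_\A^2=2$, that is $\|x-x'\|_\A\le\sqrt2$.

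Next I would construct the countable decomposition on a dense set. Fix a countable $D\subseteq\R$ containing $\Q$ and closed under addition, multiplication, division and square roots. For finitely supported rational $z\ne0$ the norm $\|z\|_\A$ equals the maximum of the finitely many numbers $\sqrt{\sum_{\alpha\in A}z(\alpha)^2}$ taken over those $A\in\A$ with $A\subseteq supp(z)$, so $\|z\|_\A\in D$; consequently $\Y=\{z/\|z\|_\A:z\in c_{00}(\omega_1)\setminus\{0\}\text{ rational}\}$ is a countable dense subset of $S_\X$ whose members are finitely supported with values in $D$. Applying the Hewitt--Marczewski--Pondiczery Theorem \ref{hmp} (available since $\omega_1\le2^\omega$), fix a sequence $E=\{e_n:n\in\N\}$ dense in $D^{\omega_1}$ with $D$ discrete; for each $y\in\Y$ the set of $f\in D^{\omega_1}$ with $f|supp(y)=y|supp(y)$ is a nonempty basic open set, so some $e_{n_y}$ lies in it. Put $\Y_n=\{y\in\Y:n_y=n\}$. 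Then $\Y=\bigcup_n\Y_n$, and if $y,y'\in\Y_n$ they agree with $e_n$, hence with each other, on $supp(y)\cap supp(y')$, so $\|y-y'\|_\A\le\sqrt2$ by the key estimate; thus $diam(\Y_n)\le\sqrt2$.

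Finally, to pass from $\Y$ to the whole sphere, fix $\varepsilon>0$ and set $C_n=\{x\in S_\X:\exists y\in\Y_n\ \|x-y\|_\A<\varepsilon/2\}$. Density of $\Y$ gives $S_\X=\bigcup_n C_n$, and the triangle inequality gives $diam(C_n)\le diam(\Y_n)+\varepsilon\le\sqrt2+\varepsilon$, which is the first assertion. Since $\X_\A$ is infinite dimensional, $diam(S_\X)=2$ by Proposition \ref{riesz}, so taking $\varepsilon=r-\sqrt2$ for $r\in(\sqrt2,2]$ yields $r\in\Sigma(S_\X)$, which is the second assertion.

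I do not anticipate a genuine obstacle: the argument is essentially a transcription of the proof of Proposition \ref{lp-union}, with Lemma \ref{squares-sw} furnishing exactly the inequality that makes it run. The two points needing a little care are the choice of the countable value set $D$ — it must be closed under the few operations that occur when one normalises a rational vector, so that the normalised vectors in $\Y$ still take values in $D$ and the Hewitt--Marczewski--Pondiczery trick applies — and the routine passage from the dense set $\Y$ to all of $S_\X$, which is precisely what the $\varepsilon$-slack in the statement is there to absorb.
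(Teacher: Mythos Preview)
Your proof is correct and follows exactly the route the paper indicates (the paper's proof simply reads ``Because of Lemma \ref{squares-sw} the proof of this proposition is very similar to the proof of Proposition \ref{lp-union}, so we leave it to the reader''). One harmless slip: you call $\Y$ a \emph{countable} dense subset of $S_\X$, but of course $\Y$ is uncountable (it contains all $1_{\{\alpha\}}$ for $\alpha<\omega_1$); your argument nowhere uses countability of $\Y$, only the countable decomposition $\Y=\bigcup_n\Y_n$ furnished by Hewitt--Marczewski--Pondiczery, so just delete the word ``countable'' there.
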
 
\begin{proof}
Because of Lemma \ref{squares-sw} the proof of this proposition is  very similar to  the proof of
Proposition \ref{lp-union}, so we leave it to the reader.
 \end{proof}

\begin{proposition}[\cite{pk-kamil}]\label{zfc-sw}  Suppose that
 $c:[\omega_1]^2\rightarrow\{0,1\}$ does not admit  uncountable monochromatic sets.
The Banach space $\X$ equal to $(\X_{\A_c}, \|\ \|_{\A_c})$ has the
following properties:
\begin{enumerate}
\item {\rm (\cite{pk-kamil})} $S_\X$ does admit an uncountable $(1+)$-separated set.
\item There is an uncountable $\Y\subseteq S_\X$ where no uncountable
 $\Z\subseteq \Y$  has diameter less than $\sqrt2$.
\end{enumerate}
Consequently 
\begin{itemize}
 \item $[0,1]\subseteq{\mathsf K}^+(S_{\X_{\A_c}})\subseteq[0,\sqrt2]$, 
 \item $(\sqrt2, 2]\subseteq \Sigma(S_{\X_{\A_c}})\subseteq [\sqrt2, 2]$.
\end{itemize}
\end{proposition}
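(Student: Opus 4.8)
The plan is to handle the two items of Proposition~\ref{zfc-sw} separately and then read the two displayed ranges of constants off facts already available. Item (1) is established in \cite{pk-kamil}, and I would simply invoke it: although $\X_{\A_c}$ is generated, as a Banach space, by the set $\{1_{\{\alpha\}}:\alpha<\omega_1\}$ (whose mutual distances are $\sqrt2$ or $1$ according as $c(\{\alpha,\beta\})$ is $0$ or $1$, so that this set is hyperlateral since $c$ has no uncountable monochromatic set), the sphere $S_{\X_{\A_c}}$ nevertheless carries an uncountable $(1+)$-separated family. For item (2) I would take $\Y=\{1_{\{\alpha\}}:\alpha<\omega_1\}\subseteq S_\X$ itself. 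Indeed $\|1_{\{\alpha\}}\|_{\A_c}=1$, because $\{\alpha\}\in\A_c$ and no larger member of $\A_c$ can raise the supremum on a one-point support, so $\Y\subseteq S_\X$; and for $\alpha\neq\beta$ the vector $1_{\{\alpha\}}-1_{\{\beta\}}$ is supported on $\{\alpha,\beta\}$, whence the supremum defining $\|\ \|_{\A_c}$ is attained on $\{\alpha,\beta\}$ itself when $c(\{\alpha,\beta\})=0$ (value $\sqrt2$) and on a singleton otherwise (value $1$); the upper bound $\sqrt2$ is also immediate from Lemma~\ref{squares-sw}. Now if $\Z\subseteq\Y$ is uncountable, then $\Z=\{1_{\{\alpha\}}:\alpha\in A\}$ for an uncountable $A\subseteq\omega_1$, which by hypothesis is not $1$-monochromatic, so some pair of $A$ gets colour $0$ and hence $diam(\Z)\geq\sqrt2$. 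That is item (2).

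For the ``consequently'' clause I would combine four inputs. Item (1) gives $1\in{\mathsf K}^+(S_\X)$, while Proposition~\ref{riesz} gives $[0,1)\subseteq{\mathsf K}^+(S_\X)$ since $\X$ is nonseparable, so $[0,1]\subseteq{\mathsf K}^+(S_\X)$. Proposition~\ref{sw-unions} gives at once $(\sqrt2,2]\subseteq\Sigma(S_\X)$. For the upper bound ${\mathsf K}^+(S_\X)\subseteq[0,\sqrt2]$: if some $r>\sqrt2$ were in ${\mathsf K}^+(S_\X)$, applying Proposition~\ref{sw-unions} with $\varepsilon=r-\sqrt2$ writes $S_\X=\bigcup_n M_n$ with $diam(M_n)\leq r$, and an uncountable $(r+)$-separated subset of $S_\X$ would meet some $M_n$ in two points $x\neq y$ with $\|x-y\|>r$ yet $\|x-y\|\leq diam(M_n)\leq r$, a contradiction; with ${\mathsf K}^+(S_\X)\subseteq[0,2]$ from Proposition~\ref{sets-constants} this is the claim. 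Symmetrically, for $\Sigma(S_\X)\subseteq[\sqrt2,2]$: if some $r<\sqrt2$ were in $\Sigma(S_\X)$, write $S_\X=\bigcup_n M_n$ with $diam(M_n)\leq r<\sqrt2$; then the uncountable set $\Y$ of (2) meets some $M_n$ in an uncountable subset of diameter $<\sqrt2$, contradicting (2). (One could instead run these two steps through the interval structure of ${\mathsf K}^+(S_\X)$ and $\Sigma(S_\X)$ recorded in Proposition~\ref{sets-constants}, but the direct contradictions are shorter.)

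The only step that is not routine is item (1). One cannot extract the $(1+)$-separated family from the generators $1_{\{\alpha\}}$, since an uncountable $(1+)$-separated subset of $\{1_{\{\alpha\}}:\alpha<\omega_1\}$ would be the image of an uncountable $0$-monochromatic set for $c$, which does not exist; and Proposition~\ref{dich>auerbach} is unavailable as well, because for many admissible colorings $c$ the set of generators is hyperlateral and far from dichotomous, so there is no dichotomous $\Y$ to which that proposition could be applied. This is the heart of the matter, and I would simply cite \cite{pk-kamil}, where it is obtained by a more delicate perturbation of the generators. Everything else---the one-line norm computation, the use of the failure of the Ramsey property for $c$, and the two covering contradictions---is elementary, the only point to keep straight being that it is the $0$-monochromatic pairs that lie in $\A_c$ and therefore realize the larger distance $\sqrt2$.
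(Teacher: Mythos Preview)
Your proof is correct and follows exactly the same route as the paper's own argument: cite \cite{pk-kamil} for (1), take $\Y=\{1_{\{\alpha\}}:\alpha<\omega_1\}$ and use the non-$1$-monochromaticity of any uncountable $A\subseteq\omega_1$ for (2), and combine (1), (2) and Proposition~\ref{sw-unions} for the displayed inclusions. Your write-up is simply a fleshed-out version of the paper's terse proof; one could even shorten it slightly by noting that $1\in{\mathsf K}^+(S_\X)$ alone already forces $[0,1]\subseteq{\mathsf K}^+(S_\X)$ via the interval structure in Proposition~\ref{sets-constants}, making the appeal to Proposition~\ref{riesz} unnecessary.
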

\begin{proof}
(1) is  Proposition 19 of \cite{pk-kamil}.
To obtain (2) consider characteristic functions of singletons in $\omega_1$ and use
the hypothesis that the coloring has no uncountable monochromatic sets.
The second  part of the proposition follows from items (1) and (2) (as (2) implies
that the unit sphere can not be the union of countably many sets of diameters less than $\sqrt2$)
and from
Proposition \ref{sw-unions}.
\end{proof}

If $c$ is a coloring with special anti-Ramsey properties whose existence is implied
by the existence of a nonmeager subset of the reals of cardinality $\omega_1$,
then the spaces $(\X_{\A_c}, \|\ \|_{\A_c})$ are examples  very relevant to the subject matter of
this paper:

\begin{proposition}[\cite{pk-kamil}]\label{hilbert-gen-ch} Assume that  there is a nonmeager subset
of $\R$ of cardinality $\omega_1$ {\rm (}e.g. assume {\sf CH}{\rm )}. Then there
is a coloring $c:[\omega_1]^2\rightarrow\{0,1\}$ with no uncountable monochromatic sets such that
the unit sphere of the Banach space $(\X_{\A_c}, \|\ \|_{\A_c})$ is not almost
dichotomous and is  hyperlateral: 
for every $\delta>0$ there is $\varepsilon>0$
such that for every uncountable $(1-\varepsilon)$-separated set $\{x_\alpha: \alpha<\omega_1\}\subseteq S_{\X_{\A_c}}$
 there are $\alpha<\beta<\omega_1$ such that $\|x_\alpha-x_\beta\|_{\A_c}>\sqrt2-\delta$ and there are
 $\alpha<\beta<\omega_1$ such that $\|x_\alpha-x_\beta\|_{\A_c}<1+\delta$.
 We have 
 \begin{itemize}
 \item ${\mathsf K}^+(S_{\X_{\A_c}})=[0,1]$, 
 \item $(\sqrt2, 2]\subseteq \Sigma(S_{\X_{\A_c}})\subseteq [\sqrt2, 2].$
\end{itemize}
 \end{proposition}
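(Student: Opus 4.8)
The plan is to recall the construction and analysis of the Shelah--Wark space $\X_{\A_c}$ carried out in \cite{pk-kamil} and to observe that the only role played there by {\sf CH} is in producing the coloring $c$, which can equally be obtained from a nonmeager subset of $\R$ of cardinality $\omega_1$. Concretely, I would first isolate as a lemma the existence, under that weaker hypothesis, of a coloring $c:[\omega_1]^2\to\{0,1\}$ which (i) admits no uncountable monochromatic set and (ii) is strongly anti-Ramsey on rectangles spanned by disjoint finite sets: for every uncountable family $\{F_\xi:\xi<\omega_1\}$ of pairwise disjoint nonempty finite subsets of $\omega_1$ and every $i\in\{0,1\}$ there are $\xi<\eta$ with $c$ constantly $i$ on $F_\xi\times F_\eta$, together with a quantitative strengthening controlling how much of $F_\xi\cup F_\eta$ can be $0$-monochromatic. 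Such $c$ is manufactured from a nonmeager set by the usual oscillation machinery, and then everything takes place in $\X=(\X_{\A_c},\|\ \|_{\A_c})$, which is Hilbert generated by Proposition~5 of \cite{pk-kamil}; I would use the sub-additivity of $\|\ \|^2_{\A_c}$ over disjoint supports (Lemma~\ref{squares-sw}) repeatedly.

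For the metric claim, fix $\delta>0$ and let $\{x_\alpha:\alpha<\omega_1\}\subseteq S_\X$ be uncountable and $(1-\varepsilon)$-separated for $\varepsilon=\varepsilon(\delta)>0$ small. Since finitely supported rational vectors are dense in $S_\X$, I would perturb each $x_\alpha$ to a nearby such vector and then apply the $\Delta$-system lemma together with pigeonhole on the rational coordinates, on the position pattern of the supports, and on the $0$-monochromatic subset of the support that attains the norm. This yields an uncountable $\Gamma\subseteq\omega_1$ along which $x_\alpha=y+z_\alpha$, with a common root vector $y$ supported on a fixed finite $r$, pairwise disjoint supports $s_\alpha=\supp(z_\alpha)$ disjoint from $r$, and a uniform ``norm shape''; a short computation with Lemma~\ref{squares-sw} and $(1-2\varepsilon)$-separation gives $\sqrt{1-\tau^2}\le\|z_\alpha\|_{\A_c}\le1$ for $\alpha\in\Gamma$, where $\tau$ is the common amount of norm carried by the root in the optimal set.

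The existence of a pair at distance $<1+\delta$ is the easy half: apply (ii) with $i=1$ to $\{s_\alpha:\alpha\in\Gamma\}$ to get $\alpha<\beta$ with $c\equiv1$ on $s_\alpha\times s_\beta$, so no $0$-monochromatic $A\in\A_c$ meets both $s_\alpha$ and $s_\beta$; since $x_\alpha-x_\beta=z_\alpha-z_\beta$ is supported on $s_\alpha\cup s_\beta$, this forces $\|x_\alpha-x_\beta\|_{\A_c}\le\max(\|z_\alpha\|_{\A_c},\|z_\beta\|_{\A_c})\le1<1+\delta$ for $\varepsilon$ small. The hard half is the existence of a pair at distance $>\sqrt2-\delta$, and this is where I expect the main obstacle. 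Because $\{x_\alpha\}$ is $(1-\varepsilon)$-separated, every pair carries a cross-witness: disjoint $0$-monochromatic $P\subseteq s_\alpha$, $Q\subseteq s_\beta$ with $P\times Q$ monochromatically $0$ and $\|z_\alpha|P\|^2+\|z_\beta|Q\|^2>(1-\varepsilon)^2$; one must then invoke the quantitative form of (ii) to upgrade this, for some pair, to a witness capturing essentially all of $\|z_\alpha\|^2+\|z_\beta\|^2$, equivalently to force $\tau$ and the ``wasted'' part of the norm to be negligible, so that $\|x_\alpha-x_\beta\|_{\A_c}\ge\sqrt{2(1-\tau^2)-o(1)}>\sqrt2-\delta$. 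This propagation step --- transferring the anti-Ramsey behaviour of $c$ from the canonical generators to an arbitrary uncountable separated subset of the sphere --- is the real content of \cite{pk-kamil}; it is precisely the difficulty alluded to after Question~$(E')$ in the introduction, and it succeeds here only because $\X_{\A_c}$ is built directly from $c$.

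Finally I would read off the constants. Proposition~\ref{sw-unions} applied to $\A_c$ gives $(\sqrt2,2]\subseteq\Sigma(S_{\X_{\A_c}})$, and $2\in\Sigma$ always, while Proposition~\ref{zfc-sw}(2) exhibits an uncountable $\Y\subseteq S_{\X_{\A_c}}$ no uncountable subset of which has diameter $<\sqrt2$, so $S_{\X_{\A_c}}$ is not a countable union of sets of diameters $<\sqrt2$ and $\Sigma(S_{\X_{\A_c}})\subseteq[\sqrt2,2]$. By Proposition~\ref{riesz} we have $[0,1)\subseteq{\mathsf K}^+(S_{\X_{\A_c}})$, and by Proposition~\ref{zfc-sw}(1) there is an uncountable $(1+)$-separated set, so $1\in{\mathsf K}^+$; conversely, for $r>1$ the part just proved, applied with $\delta=r-1$, produces in any uncountable $(r+)$-separated set a pair at distance $<r$, a contradiction, so ${\mathsf K}^+(S_{\X_{\A_c}})=[0,1]$. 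Hence ${\mathsf k}^+(S_{\X_{\A_c}})=1<\sqrt2\le\sigma(S_{\X_{\A_c}})$, so $S_{\X_{\A_c}}$ is not almost dichotomous by Proposition~\ref{sets-constants}(3); and taking $\delta_0=\frac12(\sqrt2-1)$ with the corresponding $\varepsilon_0$ from the main claim, no uncountable subset of $S_{\X_{\A_c}}$ can have all pairwise distances in $(1-\varepsilon_0,1+\varepsilon_0)$, so the sphere is hyperlateral by Proposition~\ref{sphere-no}(5).
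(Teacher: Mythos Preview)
Your proposal is essentially correct and follows the same route as the paper: the metric claim about $(1-\varepsilon)$-separated sets is exactly Proposition~23 of \cite{pk-kamil} (you sketch its proof, the paper simply cites it), and the remaining assertions are read off from Propositions~\ref{zfc-sw}, \ref{sw-unions} and \ref{riesz} just as you do.

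One small point: in your derivation of hyperlaterality via Proposition~\ref{sphere-no}(5) you take $\delta_0=\tfrac12(\sqrt2-1)$ and the corresponding $\varepsilon_0$, and then claim that no uncountable subset of $S_{\X_{\A_c}}$ has all pairwise distances in $(1-\varepsilon_0,1+\varepsilon_0)$. The contradiction you intend is that such a set is $(1-\varepsilon_0)$-separated, hence contains a pair at distance $>\sqrt2-\delta_0=1+\delta_0$; but this contradicts ``all distances $<1+\varepsilon_0$'' only if $\varepsilon_0\le\delta_0$, which is not guaranteed by the statement. The fix is trivial: replace $\varepsilon_0$ by $\min(\varepsilon_0,\delta_0)$ before applying Proposition~\ref{sphere-no}(5).
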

 \begin{proof} The property stated 
 in the proposition (responsible for $S_{\X_{\A_c}}$ being hyperlateral above) is 
 stated in Proposition 23 of \cite{pk-kamil}.  The remaining parts follow from
 Propositions \ref{zfc-sw} and  \ref{riesz}.
 \end{proof}

Even strong {\sf ZFC} anti-Ramsey properties of colorings of pairs of $\omega_1$ due to Todorcevic
and exploited in \cite{shelah, ss, hugh, hugh-sm} are not sufficient to maintain in {\sf ZFC}
the properties of the spaces from Theorem \ref{hilbert-gen-ch} as we have the following:

\begin{proposition}[\cite{pk-kamil}]\label{hilbert-gen-ma} Assume {\sf MA} and the negation of {\sf CH}.
Let $c:[\omega_1]^2\rightarrow\{0,1\}$ be any coloring without uncountable monochromatic sets. 
Then the unit sphere of every space of the form $(X_{\A_c}, \|\ \|_{\A_c})$ is almost dichotomous,
admits an uncountable equilateral set and
\begin{itemize}
\item  $[0,\sqrt2)\subseteq{\mathsf K}^+(S_{\X_{\A_c}})\subseteq[0,\sqrt2]$, 
\item  $(\sqrt2, 2]\subseteq \Sigma(S_{\X_{\A_c}})\subseteq [\sqrt2, 2]$.
\end{itemize}
 \end{proposition}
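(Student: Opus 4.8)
The strategy is to split off the one inclusion that genuinely requires {\sf MA}$+\neg${\sf CH} and to derive everything else from {\sf ZFC} results already proved. Proposition~\ref{zfc-sw} applies to $c$ and gives, in {\sf ZFC}, that $[0,1]\subseteq{\mathsf K}^+(S_{\X_{\A_c}})\subseteq[0,\sqrt2]$ and $(\sqrt2,2]\subseteq\Sigma(S_{\X_{\A_c}})\subseteq[\sqrt2,2]$; in particular $\sigma(S_{\X_{\A_c}})=\sqrt2$, while $diam(S_{\X_{\A_c}})=2$ by Proposition~\ref{riesz}. Hence it suffices to prove $[1,\sqrt2)\subseteq{\mathsf K}^+(S_{\X_{\A_c}})$: then ${\mathsf k}^+(S_{\X_{\A_c}})=\sigma(S_{\X_{\A_c}})=\sqrt2>0$, so by Proposition~\ref{sets-constants}(3) the sphere is almost dichotomous with the single exceptional value $r=\sqrt2$, and the two displayed intervals follow. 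A hypothesis beyond {\sf ZFC} is unavoidable here, since by Proposition~\ref{hilbert-gen-ch} it is consistent that a space of the form $(\X_{\A_c},\|\cdot\|_{\A_c})$ has ${\mathsf K}^+(S_{\X_{\A_c}})=[0,1]$.

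Fix therefore $r\in(1,\sqrt2)$ and aim at an uncountable $(r+)$-separated subset of $S_{\X_{\A_c}}$. The candidate vectors I would use are the characteristic functions $x_\alpha=1_{\{\alpha\}}$ — for which $\|x_\alpha-x_\beta\|_{\A_c}$ is $\sqrt2$ if $c(\{\alpha,\beta\})=0$ and $1$ otherwise — and, more flexibly, the normalized indicators $|F|^{-1/2}1_F$ of finite $0$-monochromatic sets $F$, which lie in $S_{\X_{\A_c}}$ and whose mutual distances are governed by Lemma~\ref{squares-sw}: for disjoint $F,F'$ of equal size the distance is $\sqrt{\ell/|F|}\in[1,\sqrt2]$, where $\ell$ is the size of a largest $0$-monochromatic subset of $F\cup F'$. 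Since $c$ has no uncountable $1$-monochromatic set, the graph $c^{-1}(0)$ has an uncountable matching (every maximal matching of it is uncountable, as its set of uncovered vertices is $1$-monochromatic), and I would take $V\subseteq S_{\X_{\A_c}}$ to be an $\omega_1$-sized family of such normalized indicators along a pairwise disjoint system of $0$-monochromatic sets. I would then apply Theorem~\ref{ma} to the partial order $\PP$ whose conditions are the finite $(r+)$-separated subsets of $V$, ordered by reverse inclusion, with $p,q$ compatible exactly when $p\cup q$ is again $(r+)$-separated. Since $(r+)$-separation is decided pairwise, a subset of $\PP$ is centred iff its union is $(r+)$-separated, and $|\PP|=\omega_1<2^\omega$. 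If $\PP=\bigcup_n\PP_n$ with each $\PP_n$ centred, then $V=\bigcup_n\big(\bigcup\PP_n\big)$ displays $V$ as a countable union of $(r+)$-separated sets, and, $V$ being uncountable, one of them is the uncountable $(r+)$-separated set sought. So everything reduces to choosing $V$ so that $\PP$ admits no uncountable antichain, i.e.\ is c.c.c.; after a $\Delta$-system reduction this becomes the purely combinatorial assertion that among any uncountably many pairwise disjoint finite subsets of $V$ of a fixed size, two have $(r+)$-separated union — a statement to be extracted from the structure of $c$ and, once more, from its lack of uncountable monochromatic sets.

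The uncountable equilateral set I would obtain by the same scheme, replacing $\PP$ by the partial order of finite subsets of $V$ on which the (finitely many) possible mutual distances take a single fixed value; ruling out the uncountable-antichain alternative as before, the centred alternative splits $V$ into countably many equilateral pieces, one of which is uncountable, and Proposition~\ref{terenzi1} then yields an uncountable equilateral set of $\X_{\A_c}$. Alternatively, Proposition~\ref{dich>ramsey} already produces, from almost dichotomy, uncountable $\varepsilon$-approximately $\sqrt2$-equilateral sets for every $\varepsilon>0$, and since the distances among the candidate vectors form a finite set one may pass from approximate to exact.

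The step I expect to be the main obstacle is exactly the construction of $V$ together with the verification that $\PP$ is c.c.c., carried out uniformly for an arbitrary $c$. The awkward case is a $c$ with no $0$-monochromatic triple at all (say, $c^{-1}(0)$ triangle-free yet with no uncountable independent set): the natural normalized indicators then have supports of size at most $2$, their distances collapse to $\{1,\sqrt{3/2},\sqrt2\}$ or even to $\{1\}$, and one must produce a genuinely different uncountable family $V$ along which $(r+)$-separation survives pairwise unions, while checking that a violation of this would force an uncountable monochromatic set for $c$. Once $V$ and the c.c.c.\ of $\PP$ are in place, the role of {\sf MA} is routine: it is precisely Theorem~\ref{ma}, turning a c.c.c.\ partial order of cardinality $<2^\omega$ into a $\sigma$-centred one.
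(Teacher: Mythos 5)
There is a genuine gap. Your reduction is sound and matches the paper's skeleton: Proposition~\ref{zfc-sw} already gives $[0,1]\subseteq{\mathsf K}^+(S_{\X_{\A_c}})\subseteq[0,\sqrt2]$ and $(\sqrt2,2]\subseteq\Sigma(S_{\X_{\A_c}})\subseteq[\sqrt2,2]$ in {\sf ZFC}, so everything comes down to producing, under {\sf MA}$+\neg${\sf CH}, uncountable $(r+)$-separated sets for all $r<\sqrt2$ (equivalently, pushing ${\mathsf k}^+$ up to $\sigma=\sqrt2$) together with an uncountable equilateral set. But that is precisely the substantive content of the proposition, and your proposal does not actually prove it: you set up a poset $\PP$ of finite $(r+)$-separated subsets of a family $V$ of normalized indicators of disjoint $0$-monochromatic sets, and then you explicitly defer both the choice of $V$ and the verification that $\PP$ is c.c.c.\ (``a statement to be extracted from the structure of $c$''). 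Without that verification the appeal to Theorem~\ref{ma} yields nothing, and the awkward case you yourself flag (e.g.\ $c^{-1}(0)$ with only small monochromatic cliques, so that your candidate vectors have mutual distances bounded away from $\sqrt2$) shows that the naive $V$ cannot work and that a genuinely different construction of the vectors is needed. In the paper this core step is exactly what is imported: Proposition~19 of \cite{pk-kamil} states that under {\sf MA}$+\neg${\sf CH} the sphere $S_{\X_{\A_c}}$ admits an uncountable $\sqrt2$-equilateral set, which at once gives the uncountable equilateral set and $[0,\sqrt2)\subseteq{\mathsf K}^+(S_{\X_{\A_c}})$; the remaining inclusions and almost dichotomy then follow from Proposition~\ref{zfc-sw} as you say.

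A secondary problem is your fallback for the equilateral set: Proposition~\ref{dich>ramsey} only yields $\varepsilon$-approximately $\sqrt2$-equilateral subsets of the sphere, and these need not lie inside any family where the distances take finitely many values, so ``passing from approximate to exact'' is not justified. Note also that an exact $\sqrt2$-equilateral set cannot be found among the characteristic vectors $1_{\{\alpha\}}$ (that would be an uncountable $0$-monochromatic set), so the exact-distance construction under {\sf MA} genuinely requires the more elaborate vectors and c.c.c.\ argument of \cite{pk-kamil}; this is the missing idea, not a routine detail.
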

\begin{proof} It is proved in Proposition 19 of \cite{pk-kamil} that under
the above hypothesis the sphere $S_{\X_{\A_c}}$ 
admits an uncountable $\sqrt2$-equilateral
set, so $[0,\sqrt2)\subseteq{\mathsf K}^+(S_{\X_{\A_c}})$. 
The remaining parts are consequences of Proposition \ref{zfc-sw}.
\end{proof}

One should add that refinements of the norms $\|\ \|_\A$ were introduced in
\cite{ss} and were successfully exploited in \cite{hugh, hugh-sm, chang}. 
Only some   partial results concerning this class of spaces in the present context can be found in \cite{pk-kamil}.

\section{Banach spaces of the form $C(K)$ and $C_0(K)$}

In this section we look at Banach spaces of continuous functions on
compact Hausdorff spaces and at Banach spaces of continuous functions vanishing at infinity on locally compact Hausdorff
spaces both with the supremum norm $\|\ \|_\infty$. The situation in the first case has been 
well investigated in the literature and we summarize it and express it in our
language in Theorem \ref{thm-merc} and \ref{hyper-ck} with some
refinements of results of \cite{equi} in Theorem \ref{hyper-ck}. In particular, under {\sf MA} and the negation of {\sf CH}
the spheres of $C(K)$ spaces are trivially dichotomous because they (often nontrivially) admit uncountable $2$-equilateral sets. 
But a strong hyperlateral $C(K)$ space does exist consistently (Theorem \ref{hyper-ck}).
Then we look at the simplest example of locally compact noncompact nonmetrizable space $K$ that is, the uncountable discrete
space whose $C_0(K)$ is just $c_0(\kappa)$ for an uncountable cardinal $\kappa$.
It is well known that this Banach space already displays different behavior than
$C(K)$ spaces, among others it does not admit an uncountable $(1+\varepsilon)$-separated set
in {\sf ZFC}. We prove that the unit  sphere of $c_0(\kappa)$ is dichotomous (Proposition \ref{c0-dich})
but it admits an uncountable subset which is not dichotomous (Proposition \ref{c0-notherdich}).
Then we analyse a general case of locally compact noncompact nonmetrizable $K$. First,
we present three general results in {\sf ZFC}, first two  on the existence
of uncountable equilateral sets ({Propositions \ref{C0K-total} and \ref{C0K-equi}) and 
in the third one we show that a nonseparable $C_0(K)$ always admits an uncountable
 $(1+)$-separated set (Proposition \ref{lc-1+}). Then we provide two {\sf ZFC} examples,
the first of \cite{ad-kottman} shows that the behavior of separated sets can
be similar to that in $c_0(2^\omega)$ already in separable locally compact spaces 
(Proposition \ref{C0K-strange}),
and the second example is of a Banach space of density $2^\omega$ with a
non-dichotomous unit sphere (Proposition \ref{C0K-counter}). 
The last result (Proposition \ref{C0K-small}) is a dichotomy 
for $C_0(K)$s for locally compact $K$s of the weight less than $2^\omega$ under {\sf MA} and the negation
of {\sf CH}: It turns out that  under this hypothesis the unit spheres of $C_0(K)$s
behave either as the unit spheres of $C(K)$s for compact $K$s, or as the unit sphere 
of $c_0(\kappa)$.

\begin{theorem}\label{thm-merc} Suppose that $K$ is compact and Hausdorff.
\begin {enumerate}
\item (Mercourakis,  Vassiliadis  \cite{mer-ck}) If $K$ is  nonmetrizable and has one of the following properties:
\begin{itemize}
\item $K$ is totally disconnected, 
\item $K$ is not hereditarily
separable, 
\item $K$ is not hereditarily Lindel\"of, 
\item $K$ carries  a Radon measure of uncountable Maharam type,
\end{itemize}
then  the unit sphere of 
$C(K)$ admits an uncountable $2$-equilateral set.
In particular, such unit spheres are dichotomous with ${\mathsf K}^+(S_{C(K)})=[0, 2)$ and
${\Sigma}(S_{C(K)})=\{2\}$.
\item (Kania, Kochanek \cite{tt}) If $K$ is nonmetrizable space, then
the unit sphere $S_{C(K)}$ admits an uncountable $(1+)$-separated set, 
i.e., $[0,1]\subseteq {\mathsf K}^+(S_{C(K)})$.
\item {\sf MA} and the negation of {\sf CH} imply that the unit sphere of 
every nonseparable Banach space of the form $C(K)$ admits an uncountable $2$-equilateral set.
In particular, such unit spheres are dichotomous with ${\mathsf K}^+(S_{C(K)})=[0, 2)$ and
${\Sigma}(S_{C(K)})=\{2\}$.
\end{enumerate}
\end{theorem}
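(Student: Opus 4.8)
The three parts have different provenance, so the plan is to treat them in turn and then read off the numerical constants uniformly. Parts (1) and (2) are quoted verbatim from \cite{mer-ck} and \cite{tt}, so for these only the ``in particular'' clauses need an argument. For (1): if $N\subseteq S_{C(K)}$ is an uncountable $2$-equilateral set then $d(x,y)=2>r$ for every $0<r<2$, so $N$ is $(r+)$-separated and $[0,2)\subseteq{\mathsf K}^+(S_{C(K)})$; by Proposition \ref{riesz}(3) we have $2\in\Sigma(S_{C(K)})$ and $\sigma(S_{C(K)})\le2$, and $diam(S_{C(K)})=2$ by Proposition \ref{riesz}. Since Proposition \ref{sets-constants}(1) says that ${\mathsf K}^+(S_{C(K)})$ and $\Sigma(S_{C(K)})$ are disjoint subintervals of $[0,2]$ with ${\mathsf k}^+(S_{C(K)})\le\sigma(S_{C(K)})$, and since ${\mathsf k}^+(S_{C(K)})\ge2$, we get ${\mathsf k}^+(S_{C(K)})=\sigma(S_{C(K)})=2$, whence ${\mathsf K}^+(S_{C(K)})=[0,2)$ and $\Sigma(S_{C(K)})=\{2\}$, so ${\mathsf K}^+\cup\Sigma=[0,2]=[0,diam(S_{C(K)})]$ and $S_{C(K)}$ is dichotomous by Proposition \ref{sets-constants}(2). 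For (2): an uncountable $(1+)$-separated subset of $S_{C(K)}$ witnesses $1\in{\mathsf K}^+(S_{C(K)})$, and combined with Proposition \ref{riesz}(1) and the interval property of Proposition \ref{sets-constants}(1) this gives $[0,1]\subseteq{\mathsf K}^+(S_{C(K)})$.

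For (3) I would first reduce: by Theorem~2.6 of \cite{mer-ck}, for compact $K$ the existence of an uncountable $2$-equilateral set in $S_{C(K)}$ is equivalent to the existence, for some $\varepsilon>0$, of an uncountable $(1+\varepsilon)$-separated subset of $S_{C(K)}$, and once such a set is produced the numerical conclusions follow exactly as in the previous paragraph. The existence of such a set under {\sf MA}$+\neg${\sf CH} is (contained in the proof of) Theorem~5.1 of \cite{equi}, and I would recall its mechanism, which runs through the order-theoretic dichotomy of Theorem \ref{ma}. Since $C(K)$ is nonseparable, $K$ is nonmetrizable, so its weight is at least $\omega_1$; from this one extracts an uncountable ``skeleton'' consisting of points $x_\alpha\in K$ and continuous $g_\alpha\colon K\to[-1,1]$ attaining both values $\pm1$ and localized enough to stay uncountable and well separated in $C(K)$ (any of the structural features listed in part (1) makes this immediate, and in their absence one still builds such a skeleton from a large base of $K$). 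One then forms a poset $\PP$ whose conditions are finite partial approximations to the desired family together with finitely many points that are to witness the separation, ordered by extension; then $|\PP|=\omega_1<2^\omega$ under $\neg${\sf CH}, so Theorem \ref{ma} applies. If $\PP$ were the countable union of centered pieces, those pieces would amalgamate to a countable family of continuous functions detecting all the $g_\alpha$, which would force $K$ to be metrizable, a contradiction; hence $\PP$ carries an uncountable antichain, a $\Delta$-system refinement of which yields an uncountable subfamily of the skeleton pairwise incompatible in the strong sense required, from which a normalization of the type used in the proof of Proposition \ref{dich>auerbach} produces an uncountable $(1+\varepsilon)$-separated subset of $S_{C(K)}$.

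The main obstacle is the correct design of $\PP$ so that the ``centered'' alternative of Theorem \ref{ma} is genuinely incompatible with the nonseparability of $C(K)$, equivalently the nonmetrizability of $K$: the witnessing coordinates of the conditions must be arranged so that a centered set really does amalgamate to an honest continuous function on the \emph{compact} space $K$, and this is exactly where compactness and a careful choice of skeleton are used. A secondary subtlety is the final conversion of an abstract antichain into an actual $(1+\varepsilon)$-separation, and then, via Theorem~2.6 of \cite{mer-ck} together with the lattice structure of $C(K)$, into an uncountable $2$-equilateral set.
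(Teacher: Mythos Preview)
The paper gives no proof of this theorem: it is stated as a compilation of results from \cite{mer-ck}, \cite{tt} and \cite{equi} (the last for part (3), as explained in the introduction). Your derivations of the ``in particular'' clauses for (1) and (2) from Propositions \ref{sets-constants} and \ref{riesz} are correct and are exactly the trivial observations the paper is tacitly invoking.

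Your sketch for (3), however, does not match the mechanism of \cite{equi} (whose locally compact analogue the paper spells out in Proposition \ref{C0K-small}) and contains a gap. The actual poset is not one of ``finite approximations with witnessing points'': it is the order $\PP=\{(U,V): U,V\in\B,\ \overline U\cap\overline V=\emptyset\}$ for a suitable basis $\B$, with $(U',V')\leq(U,V)$ iff $U'\supseteq U$ and $V'\supseteq V$. Incompatibility means $\overline U\cap\overline{V'}\neq\emptyset$ or $\overline{U'}\cap\overline V\neq\emptyset$, and an uncountable antichain yields an uncountable $2$-equilateral set \emph{directly} (take $f_p\in S_{C(K)}$ with $f_p|\overline U=-1$, $f_p|\overline V=1$); no passage through $(1+\varepsilon)$-separation or Auerbach-type normalization is needed. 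In the compact case the $\sigma$-centered alternative is ruled out not by ``amalgamating to continuous functions that detect the $g_\alpha$'' --- that step is where your argument is unjustified --- but by a structural reduction: one first uses part (1) to assume $K$ is hereditarily Lindel\"of (hence perfectly normal), and then a $\sigma$-centered decomposition of $\PP$ produces a countable network for $K$, forcing metrizability. Your proposal names the right dichotomy (Theorem \ref{ma}) but the design of $\PP$ and the contradiction from the centered alternative are both off; consult the proof of Proposition \ref{C0K-small} for the correct template.
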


Recall that a strongly Luzin set (see Section 6 of \cite{stevo-problems})
is  an uncountable subset $L$ of $\R$ such that for every $n\in \N$ any
uncountable subset of $L^n$ consisting of disjoint $n$-tuples (i.e., whose ranges are 
pairwise disjoint and all coordinates different) is dense in some open subset of $\R^n$.

\begin{proposition}\label{hyper-ck} Suppose that there 
exists a strongly Luzin subset of $[0,1]$ of cardinality $\omega_1$ (e.g. assume {\sf CH}).
Then there is a separable compact Hausdorff space
$K$ (so $C(K)$ is isometric to a subspace of $\ell_\infty$) such that $C(K)$ does not admit an uncountable 
equilateral set,  such that $S_{C(K)}$ is hyperlateral (so
is not almost dichotomous): for every $\varepsilon>0$
 all uncountable sets of $S_{C(K)}$ which are $(1-\varepsilon)$-separated
contain $x, x', y, y'$ such that $\|x-x'\|\leq1+3\varepsilon$ and $\|y-y'\|\geq2-8\varepsilon$.
In particular, ${\mathsf K}^+(S_{C(K)})=[0, 1]$ and ${\Sigma}(S_{C(K)})=\{2\}$.
\end{proposition}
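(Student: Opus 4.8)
The plan is to take for $K$ the separable compact Hausdorff space underlying the construction from \cite{equi} of a $C(K)$ space admitting no uncountable equilateral set, but to replace the forcing used there by a direct construction from a strongly Luzin set $L=\{r_\alpha:\alpha<\omega_1\}\subseteq[0,1]$; the only feature of $L$ one needs is the defining one (see Section~6 of \cite{stevo-problems}), that every uncountable family of pairwise disjoint $n$-tuples of points of $L$ is dense in some nonempty open subset of $\R^n$. Since $K$ is separable, the Dirac measures at a countable dense subset form a weak$^*$-dense subset of $B_{C(K)^*}$, so $C(K)$ is isometric to a subspace of $\ell_\infty$ by Lemma~\ref{ball}. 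The construction delivers a linearly dense family $\{g_\alpha:\alpha<\omega_1\}\subseteq S_{C(K)}$ and a countable dense $D\subseteq K$ such that: (i) each $g_\alpha$ depends on only finitely many coordinates of the ambient product, coded by a finite tuple of points of $L$ together with finitely much rational data; and (ii) $\|g_\alpha-g_\beta\|_\infty$ is governed by the mutual position of these real parameters --- when the relevant tuples lie in a prescribed ``small'' configuration the norm is within a controlled error of $1$, and when they lie in ``generic'' position it is within a controlled error of $2$.

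The core of the argument is the uniform statement that \emph{for every $\varepsilon>0$, every uncountable $(1-\varepsilon)$-separated family $\{f_\alpha:\alpha<\omega_1\}\subseteq S_{C(K)}$ contains $x,x'$ with $\|x-x'\|_\infty\le 1+3\varepsilon$ and $y,y'$ with $\|y-y'\|_\infty\ge 2-8\varepsilon$}, which is to be proved along the lines of \cite{equi}. First I would approximate each $f_\alpha$, to within $\varepsilon/10$ say, by a finite rational linear combination of the $g_\beta$ (legitimate because the $g_\beta$ are linearly dense), thereby coding $f_\alpha$ by a finite set $s_\alpha$ of points of $L$ and a rational vector. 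Passing to an uncountable subfamily and applying standard $\Delta$-system and pigeon-hole refinements, I may assume that all the $s_\alpha$ have the same size $n$, that $\{s_\alpha:\alpha<\omega_1\}$ forms a $\Delta$-system with root $R$, and that all the $f_\alpha$ carry identical rational data and identical combinatorial type relative to $R$; then the only varying datum is an injective tuple enumerating $s_\alpha\setminus R$, and these tuples are pairwise disjoint. By the strongly Luzin property this uncountable family of pairwise disjoint tuples is dense in a nonempty open subset $U$ of $\R^{n-|R|}$, so it contains two tuples lying arbitrarily close to one another and also two tuples in arbitrarily generic position inside $U$; reattaching the fixed root $R$ and invoking (ii) together with the $\varepsilon/10$-approximation, the first pair gives $\|f_\alpha-f_\beta\|_\infty\le 1+3\varepsilon$ and the second gives $\|f_\alpha-f_\beta\|_\infty\ge 2-8\varepsilon$.

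Everything asserted then follows by elementary bookkeeping. Since $C(K)$ is nonseparable, $K$ is nonmetrizable, so $[0,1]\subseteq{\mathsf K}^+(S_{C(K)})$ by Theorem~\ref{thm-merc}(2), and $2\in\Sigma(S_{C(K)})$ by Proposition~\ref{riesz}(3). If some uncountable $\Y\subseteq S_{C(K)}$ were $(r+)$-separated with $r>1$, pick $\varepsilon$ with $1+3\varepsilon<r$: the uniform statement yields $x,x'\in\Y$ with $\|x-x'\|_\infty\le 1+3\varepsilon<r$, impossible; hence ${\mathsf K}^+(S_{C(K)})=[0,1]$ by Proposition~\ref{sets-constants}. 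If $r<2$ and $S_{C(K)}=\bigcup_nM_n$ with $\mathrm{diam}(M_n)\le r$, pick $\varepsilon$ with $2-8\varepsilon>r$ and use Proposition~\ref{riesz} to get an uncountable $(1-\varepsilon)$-separated set, uncountably many members of which lie in one $M_n$; the uniform statement produces two of them at distance $\ge 2-8\varepsilon>r$, impossible; hence $\Sigma(S_{C(K)})=\{2\}$. Taking $\varepsilon=1/9$, no uncountable $\Y\subseteq S_{C(K)}$ can satisfy $1-\varepsilon<\|y-y'\|_\infty<1+\varepsilon$ for all distinct $y,y'\in\Y$, since such a $\Y$ is $(1-\varepsilon)$-separated and the uniform statement would give a pair at distance $\ge 2-8\varepsilon=1+\varepsilon$; so $S_{C(K)}$ is hyperlateral by Proposition~\ref{sphere-no}(5), and not almost dichotomous since ${\mathsf k}^+(S_{C(K)})=1<2=\sigma(S_{C(K)})$. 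Finally, by Proposition~\ref{terenzi1} an uncountable equilateral set in $C(K)$ would produce an uncountable $1$-equilateral subset of $S_{C(K)}$, which is $(1-\varepsilon)$-separated for every $\varepsilon>0$; for small $\varepsilon$ the uniform statement gives a pair at distance $\ge 2-8\varepsilon>1$, a contradiction.

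I expect the main obstacle to lie in the two technical points underlying (i) and (ii): making the reduction to the canonical family $\{g_\alpha\}$ fully uniform, and verifying the sharp ``$1$ versus $2$'' alternative, i.e.\ that Luzin-closeness of the parameter tuples really collapses $\|g_\alpha-g_\beta\|_\infty$ to essentially $1$ while generic position forces it essentially to $2$. This is exactly where the fine geometry of $K$ --- the supports of the $g_\beta$, the pattern of their overlaps, and the values the rational combinations assume on the countable dense set $D$ --- must be controlled, and it is the only place where the hypothesis on $L$ is genuinely used; the passage from the statement about the $g_\beta$ to the statement about arbitrary $(1-\varepsilon)$-separated families is then routine $\Delta$-system and counting bookkeeping.
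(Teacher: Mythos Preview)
Your reduction of the conclusions (${\mathsf K}^+=[0,1]$, $\Sigma=\{2\}$, hyperlateral, no uncountable equilateral set) to the ``uniform statement'' is correct and matches the paper. Where you diverge is in the mechanism you propose for the uniform statement itself, and this is a real gap.

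The claim (ii) --- that for pairs $\alpha,\beta$ with parameter tuples in ``close'' configuration one gets $\|g_\alpha-g_\beta\|$ close to $1$ while ``generic'' position forces it close to $2$ --- is not how the space of \cite{equi} behaves, and there is no reason it should. Generic position of two disjoint tuples in $\R^n$ says nothing by itself about the sup norm of a difference of two functions in $C(K)$; in the space of \cite{equi} most such differences have norm far from $2$ (indeed the whole point is that large equilateral sets are absent). What the paper actually uses is Proposition~3.2 of \cite{equi}: from any uncountable family of normalized $g_\alpha$'s of the canonical form one extracts \emph{four} indices $\alpha,\beta,\alpha',\beta'$ with the exact relation
\[
\|g_{\alpha'}-g_{\beta'}\|=2\,\|g_\alpha-g_\beta\|.
\]
This single doubling identity, combined with the $(1-\varepsilon)$-separation on one side and the diameter bound $2$ on the other, is what forces one pair below $1+3\varepsilon$ and the other above $2-8\varepsilon$. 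The strongly Luzin set is not used to place tuples ``close'' or ``generic'' but to verify the anti-Ramsey property of Definition~3.1 of \cite{equi} (one can prescribe the values $q_i,q_i'$ of the approximants on certain intervals), and this is what feeds into Proposition~3.2. The paper's main technical work is exactly the construction of the functions $f_\xi:[0,1]\setminus\{r_\xi\}\to[0,1]$ satisfying that anti-Ramsey definition from a strongly Luzin set, via a Cantor net of intervals and recursively enumerated ``nice'' partial functions; your sketch does not indicate this step at all and replaces it by an incorrect heuristic.
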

\begin{proof} 
This is the space of \cite{equi} which is constructed there by forcing rather
than from an additional set-theoretic hypothesis.
The first part of the proposition concerning equilateral sets is one of the main results of \cite{equi},
namely Theorem 3.3.
So what we need to verify is ${\mathsf K}^+(S_{C(K)})=[0, 1]$ and
${\Sigma}(S_{C(K)})=\{2\}$.
Looking at the proof of Theorem 3.3 in \cite{equi} we see that we start with
an arbitrary set $\{e_\alpha:\alpha<\omega_1\}$ in the unit ball of the $C(K)$, for
each $\alpha<\omega_1$
we approximate  $e_\alpha$ with some  $g_\alpha'$ and then we produce
$g_\alpha$s which satisfy $g_\alpha-g_{\beta}=g_\alpha'-g_{\beta}'$ for any 
$\alpha, \beta<\omega_1$. Moreover $g_\alpha$s are of the form
for which we can apply Proposition 3.2. of \cite{equi} 
obtaining distinct  $\alpha, \beta, \alpha', \beta'<\omega_1$ (by this Proposition 3.2. of \cite{equi})
 such that 
$$\|g_{\alpha'}-g_{\beta'}\|=2\|g_\alpha-g_\beta\|.\leqno (1)$$
We repeat the same argument (by the same reasoning $\delta$ in that proof can be assumed to 
be bigger than $0$)
but additionally assume 
that $\{e_\alpha:\alpha<\omega_1\}$ is $(1-\varepsilon)$-separated subset of the unit
sphere and we additionally require $\|g_\alpha'-e_\alpha\|<\varepsilon$.
By the above for every $\xi,\eta<\omega_1$ we have

$$\|g_\xi-g_\eta\|-2\varepsilon\leq\|e_\xi-e_\eta\|\leq \|g_\xi-g_\eta\|+2\varepsilon.\leqno(2)$$
Considering 
$\alpha, \beta, \alpha', \beta'<\omega_1$ as in (1),
from (2) we get that $\|g_\alpha-g_\beta\|\geq 1-3\varepsilon$ since
 $\{e_\alpha:\alpha<\omega_1\}$ is $(1-\varepsilon)$-separated 
 and so  by (1)
 $$\|g_{\alpha'}-g_{\beta'}\|\geq 2-6\varepsilon.\leqno (3)$$ 
 By (2) $\|g_{\alpha'}-g_{\beta'}\|-2\varepsilon\leq 2$ so
 by (1)  
 $$\|g_{\alpha}-g_{\beta}\|\leq 1+\varepsilon.\leqno (4)$$
 Applying (2) to the items (3) and (4) we obtain
  $\|e_{\alpha'}-e_{\beta'}\|\geq 2-8\varepsilon$
  and $\|e_{\alpha}-e_{\beta}\|\leq 1+3\varepsilon$, as required in the proposition.
 
 It remains to show that the space of \cite{equi} exists under the assumption of
 the existence of a strongly Luzin set of cardinality $\omega_1$. We need to show that
 under this hypothesis there is  $\{r_\xi:\xi<\omega_1\}\subseteq[0,1]$
 and continuous $f_\xi: [0,1]\setminus\{r_\xi\}\rightarrow [0,1]$ for $\xi<\omega_1$
 which are anti-Ramsey in the sense of Definition 3.1 of \cite{equi}. 
 
 First note that  a strongly Luzin set $L$ of cardinality $\omega_1$
 has a formally stronger property that the one from its definition,
 namely for every $n\in \N$ any
uncountable subset $X$ of $L^n$ consisting of disjoint $n$-tuples (i.e., whose ranges are 
pairwise disjoint and all coordinates different) is $\omega_1$-dense in some open subset $U$ of $\R^n$
 i.e., such that $X\cap V$ is uncountable for each open nonempty $V\subseteq U$.
To prove this  consider the countable family 
 $\I$ of all  products $I_1\times\dots \times I_n$ of open intervals with rational end-points 
 satisfying $|(I_1\times\dots \times I_n)\cap X|\leq\omega$.
 Then $X\setminus \bigcup\I$ is an uncountable subset of $X$ consisting
  of disjoint $n$-tuples  and so somewhere dense since $L$ is  strongly Luzin.
 Hence the closure of $X\setminus \bigcup\I$ contains some product $J_1\times\dots \times J_n$ of
 open intervals.
 As $(J_1\times\dots \times J_n)\cap (I_1\times\dots \times I_n)=\emptyset$ for all $(I_1\times\dots \times I_n)\in \I$,
  we conclude that
 $X$ is $\omega_1$-dense in $J_1\times\dots \times J_n$, as required.
 
 As $\{r_\xi:\xi<\omega_1\}$ we take the irrational elements of the strongly Luzin set of cardinality $\omega_1$.
 It is clear that they form a strongly Luzin  set as well. 
 
 We choose open subintervals $K_s$ of $[0,1]$ with rational 
 endpoints for $s\in \N^{<\omega}=\bigcup_{n\in \N} \N^n$
 such that $K_\emptyset=(0,1)$, $K_s\subseteq K_t$ whenever $t\subseteq s$ for $s, t\in \N^{<\omega}$
 and such that every irrational element of $[0,1]$ is in exactly one interval $K_s$
 for $s\in \N^n$ for every $n\in \N$ (in particular,
 $\{K_s: s\in \N^n\}$ is pairwise disjoint for a fixed $n\in \N$), while
 every rational in $[0,1]$ appears just once as an endpoint of some interval $K_s$. We also
 require that $diam(K_s)<1/|s|$ for $s\not=\emptyset$.
 In particular for each irrational $r\in(0,1)$ there is a unique $x\in \{0,1\}^\N$ such that
 $r\in K_{x|n}$ for each $n\in \N$.

 Call a partial function from $[0,1]$ into itself a nice function
 if it continuous on its domain and is the finite sum of affine functions (possibly constant)
 defined on intervals with rational endpoints where the inclinations are rational as well.
 It follows that there are countably many nice functions.
 
 By recursion for $n\in \N$ we define enumerations $\phi_s$ with domains $\N$ for $s\in \N^n$ with infinitely  many
 repetitions of some 
 subsets of nice functions defined on $(0,1)\setminus K_s$. 
 For $n=0$ the enumeration $\phi_\emptyset: \N\rightarrow \{\emptyset\}$
 is constantly the empty function.  If $n>0$ the enumeration $\phi_s$ 
 enumerates with infinitely many repetitions all nice functions $(0,1)\setminus K_s$ which include (i.e., continuously extend)
 the nice function  $\phi_{s|(n-1)}(s(n-1))$.

 Finally we define   $f_\xi: [0,1]\setminus \{r_\xi\}\rightarrow [0,1]$
 as  $f_\xi=\bigcup_{n>0} \phi_{s_n}(s_{n+1}(n))$, where $(s_n)_{n\in \N}$
 is such that $s_n\in \N^n$ and $r_\xi\in K_{s_n}$. 
 Note that $f_\xi$ is continuous on  $[0,1]\setminus \{r_\xi\}$
 since $\phi_{s_n}(s_{n+1}(n))\subseteq \phi_{s_{n+1}}(s_{n+2}(n+1))$
 as all values of $\phi_{s_{n+1}}$ are continuous extensions of 
 $\phi_{s_{n+1}|n}(s_{n+1}(n))=\phi_{s_n}(s_{n+1}(n))$. It follows that
 for each $i\in \N$, $s\in \{0,1\}^{<\omega}$ and $\xi<\omega_1$ we have 
 $$r_\xi\in K_{s^\frown i}\ \ \Rightarrow\ \ f_\xi|((0,1)\setminus K_s)=\phi_{s}(i).\leqno (5)$$
 
 It remains to show that $f_\xi$s are anti-Ramsey. 
 Fix $m\in \N$, pairwise disjoint finite $(F_\alpha)_{\alpha<\omega_1}$ in $\omega_1$
 where $F_\alpha=\{r_{\xi^\alpha_1}, \dots, r_{\xi^\alpha_m}\}$, pairwise
 disjoint subintervals $I_1, \dots, I_m$ of $[0,1]$ with rational endpoints
 such that $r_{\xi^\alpha_i}\in I_i$ for each $\alpha<\omega_1$ and $1\leq i\leq m$
 and rationals $\{q_1, \dots, q_m\}$ and $\{q_1', \dots, q_m'\}$.

 By the strong Luzin property $\{(r_{\xi^\alpha_1}, \dots, r_{\xi^\alpha_m}): \alpha<\omega_1\}$
 is $\omega_1$-dense in  some  $I_{1}'\times\dots \times I_{m}'$ with $I_i'\subseteq I_i$ for $1\leq i\leq m$.
 By the construction of $K_s$s there is $n\in \N$ and
 $s_1, \dots s_m\in \N^n$ such that $K_{s_i}\subseteq I_i'$ for $1\leq i\leq m$, in particular
  $\{(r_{\xi^\alpha_1}, \dots, r_{\xi^\alpha_m}): \alpha<\omega_1\}$
 is $\omega_1$-dense in $K_{s_1}\times\dots \times K_{s_m}$.

Using the fact that $\phi_{s_i}$ are with infinitely many repetitions find $k_1, \dots, k_m$ and $l_1, \dots, l_m$ such that
$k_i\not=l_i$, $\phi_{s_i}(k_i)=\phi_{s_i}(l_i)$ for  $1\leq i\leq m$ and the intervals 
$K_{s_i^\frown k_i}$ and $K_{s_i^\frown l_i}$ have all distinct end-points.
By  (5) this guarantees that
$$f_{\xi_i}|((0,1)\setminus K_{s_i})=f_{\eta_i}|((0,1)\setminus K_{s_i})=\phi_{s_i}(k_i)=\phi_{s_i}(l_i)$$
whenever $r_{\xi_i}\in K_{s_i^\frown k_i}$ and $r_{\eta_i}\in K_{s_i^\frown l_i}$
and that there are continuous nice extensions $g_i:[0,1]\rightarrow[0,1]$ 
of $\phi_{s_i}(k_i)=\phi_{s_i}(l_i)$ such that
$$g_i|K_{s_i^\frown k_i}=q_i, \ g_i|K_{s_i^\frown l_i}=q_i'.\leqno (6)$$
 Let  $k_1', \dots, k_m'$ and $l_1', \dots, l_m'$ for $1\leq i\leq m$ be such that
 $$\phi_{s_i^\frown k_i}(k_i')= g_i|((0,1)\setminus K_{s_i^\frown k_i}), \ \ 
 \phi_{s_i^\frown l_i}(l_i')= g_i|((0,1)\setminus K_{s_i^\frown l_i}).\leqno (7)$$
 The existence of such integers follows from
 the fact that $\phi_s$'s enumerate all nice extensions of $\phi_{s'}$ where
 $s'$ is the initial fragment of $s$ shorter by one. 
 
Using the $\omega_1$-density of   $\{(r_{\xi^\alpha_1}, \dots, r_{\xi^\alpha_m}): \alpha<\omega_1\}$
in $K_{s_1}\times\dots \times K_{s_m}$ find $\alpha<\beta<\omega_1$
such that 
$$(r_{\xi^\alpha_1}, \dots, r_{\xi^\alpha_m})\in K_{s_1^\frown l_1^\frown l_1'}\times\dots\times
K_{s_m^\frown l_m^\frown l_m'},$$
 $$(r_{\xi^\beta_1}, \dots, r_{\xi^\beta_m})\in K_{s_1^\frown k_1^\frown k_1'}\times\dots\times
K_{s_m^\frown k_m^\frown k_m'}.$$
For $1\leq i\leq m$ put $J_i^\alpha= K_{s_i^\frown l_i}$ and $J_i^\beta= K_{s_i^\frown k_i}$.
We obtain
\begin{itemize}
\item $J_i^\alpha, J_i^\beta\subseteq I_i$,
\item  $r_{\xi^\alpha_i}\in J_i^\alpha$, $r_{\xi^\beta_i}\in J_i^\alpha$,
\item $f_{\xi^\alpha_i}([0,1]\setminus (J_i^\alpha\cup J_i^\beta))
= f_{\xi^\beta_i}([0,1]\setminus (J_i^\alpha\cup J_i^\beta))$ by (7),
\item $f_{\xi^\alpha_i}|J_i^\beta=q_i$ by (6) and (7),
\item $f_{\xi^\beta_i}|J_i^\alpha=q_i'$ by (6) and (7),
\end{itemize}
for all $1\leq i\leq m$ as required in Definition 3.1 of \cite{equi}
 which completes the proof that a strongly Luzin set of cardinality $\omega_1$ gives the space
 of \cite{equi}.
\end{proof}

Now we move to locally compact Hausdorff spaces $K$ and the unit spheres of
the Banach spaces $C_0(K)$.  A fundamental example
of a nonmetrizable  locally compact noncompact space is the uncountable  discrete space. Its $C_0(K)$ is $c_0(\kappa)$
for some uncountable cardinal $\kappa$.
Already in the paper \cite{elton-odell} of Elton and Odell it was noted that
$c_0(\omega_1)$ is an interesting example because it does not admit an uncountable
$(1+\varepsilon)$-equilateral set for any $\varepsilon>0$.  As the discrete space is totally disconnected,
this already shows a different behavior than in the compact case (cf.  the first item
of Theorem \ref{thm-merc} (1)). 
We prove below that
in fact for every $\varepsilon>0$ 
its unit sphere is the countable union of sets of diameter not bigger than $1+\varepsilon$ and
so the sphere is dichotomous (as the space admits an uncountable
$(1+)$-separated set, which is elementary but also follows from our
Proposition \ref{lc-1+}). The method follows the paper of Erd\"os and Preiss \cite{erdos-p}
and uses the Hewitt Marczewski Pondiczery Theorem \ref{hmp}.

\begin{lemma}\label{c0-delta} Let $\kappa$ be  cardinal. 
Suppose that $x, y\in S_{c_0(\kappa)}$ satisfy $x|a=y|a$,  where $a=supp(x)\cap supp(y)$.
Then $$\|x-y\|_\infty\leq1.$$
\end{lemma}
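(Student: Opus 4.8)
The statement is a pointwise estimate, so the natural approach is to bound $|x(\alpha)-y(\alpha)|$ by $1$ for each coordinate $\alpha<\kappa$ separately and then take the supremum. I would split into cases according to the position of $\alpha$ relative to the supports.

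First I would treat $\alpha\in a=\supp(x)\cap\supp(y)$: here the hypothesis $x|a=y|a$ gives $x(\alpha)=y(\alpha)$, hence $|x(\alpha)-y(\alpha)|=0\leq 1$. Next I would treat $\alpha\notin a$; then $\alpha$ fails to lie in at least one of $\supp(x)$, $\supp(y)$. If $\alpha\notin\supp(x)$ then $x(\alpha)=0$, so $|x(\alpha)-y(\alpha)|=|y(\alpha)|\leq\|y\|_\infty=1$ since $y\in S_{c_0(\kappa)}$; symmetrically, if $\alpha\notin\supp(y)$ then $|x(\alpha)-y(\alpha)|=|x(\alpha)|\leq\|x\|_\infty=1$. (If $\alpha$ lies in neither support, both values are $0$ and the difference is $0$.) Thus in every case $|x(\alpha)-y(\alpha)|\leq 1$, and taking the supremum over $\alpha<\kappa$ yields $\|x-y\|_\infty\leq 1$.

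There is essentially no obstacle here: the only thing being used is that off the common part $a$ of the supports, one of the two functions vanishes, so the difference there is controlled by the norm of the single surviving function, which is $1$ because both $x$ and $y$ lie on the unit sphere. (Note that the boundedness/vanishing-at-infinity of $c_0(\kappa)$ plays no role beyond giving meaning to $\|\cdot\|_\infty$; the argument is purely about the sup norm.) This lemma is the $c_0$-analogue of Lemma \ref{ellp-delta}, and the remark after it on $c_0(\kappa)$ having dichotomous sphere will presumably be deduced by covering the unit sphere, after a density reduction, with countably many sets indexed as in the proof of Proposition \ref{lp-union}, each of which has diameter at most $1$ by this lemma.
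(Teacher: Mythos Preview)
Your proof is correct; the pointwise case split on $\alpha\in a$ versus $\alpha\notin a$ is exactly the natural argument, and the paper in fact states this lemma without proof (regarding it as immediate), so there is nothing to compare.
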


\begin{proposition}\label{c0-dich} For
every $\varepsilon>0$ the unit sphere
$S_{c_0(2^\omega)}$ is the union of countably many sets of diameters
not bigger than $1+\varepsilon$.  Consequently $S_{c_0(\kappa)}$
is dichotomous for every $\kappa\leq2^\omega$.
\end{proposition}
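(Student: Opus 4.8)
The plan is to mimic the argument of Proposition \ref{lp-union}, replacing the $\ell_p$-estimate of Lemma \ref{ellp-delta} by the $\infty$-estimate of Lemma \ref{c0-delta}. First I would fix $\varepsilon>0$ and observe that it suffices to cover a \emph{dense} subset of $S_{c_0(2^\omega)}$ by countably many sets of diameter $\le 1$; indeed if $\Y=\bigcup_n\Y_n$ is such a cover with $diam(\Y_n)\le 1$ and $\Y$ is dense in $S_{c_0(2^\omega)}$, then the norm-closures $\overline{\Y_n}$ have diameter $\le 1$ as well, and $S_{c_0(2^\omega)}=\bigcup_n\overline{\Y_n}$ is a cover of the sphere by countably many sets of diameter $\le 1<1+\varepsilon$. (In fact in $c_0$ one can even get the exact bound $1$, so the $\varepsilon$ is only cosmetic here.)

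For the dense set, let $D\subseteq\R$ be a countable set containing all rationals (one does not even need closure under algebraic operations here, unlike in the $\ell_p$ case, since no norm identities are invoked); then the set $\Y$ of all $x\in S_{c_0(2^\omega)}$ with finite support and values in $D$ is dense in $S_{c_0(2^\omega)}$. Next, using the Hewitt--Marczewski--Pondiczery Theorem \ref{hmp}, fix a countable dense subset $E=\{e_n:n\in\N\}$ of $D^{(2^\omega)}$, where $D$ carries the discrete topology. For each $y\in\Y$ the basic open set $\{z\in D^{(2^\omega)}: z|supp(y)=y|supp(y)\}$ is nonempty (since $supp(y)$ is finite), so there is $n_y\in\N$ with $e_{n_y}|supp(y)=y|supp(y)$. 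Put $\Y_n=\{y\in\Y: n_y=n\}$, so $\Y=\bigcup_{n\in\N}\Y_n$.

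It remains to check $diam(\Y_n)\le 1$. Given $x,y\in\Y_n$, both agree with $e_n$ on their respective supports, so in particular they agree on $a:=supp(x)\cap supp(y)$; trivially $a=supp(x)\cap supp(y)$, so Lemma \ref{c0-delta} applies and gives $\|x-y\|_\infty\le 1$. This proves the first assertion. Finally, $S_{c_0(\kappa)}$ for uncountable $\kappa\le 2^\omega$ isometrically embeds (as a subset, e.g.\ via restriction of coordinates to any fixed set of size $\kappa$) as a subset of $S_{c_0(2^\omega)}$, so its sphere is also the countable union of sets of diameter $\le 1+\varepsilon$ for every $\varepsilon>0$; equivalently $2\notin\sigma(S_{c_0(\kappa)})$ is not what we need — rather, $\sigma(S_{c_0(\kappa)})\le 1$, so $[1,2]\subseteq\Sigma(S_{c_0(\kappa)})$. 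Combined with Proposition \ref{riesz}, which gives $[0,1)\subseteq{\mathsf K}^+(S_{c_0(\kappa)})$, we get ${\mathsf K}^+(S_{c_0(\kappa)})\cup\Sigma(S_{c_0(\kappa)})=[0,2]$, so $S_{c_0(\kappa)}$ is dichotomous by Proposition \ref{sets-constants} (2). There is no serious obstacle; the only point requiring a moment's care is the reduction to a dense subset and checking that taking closures preserves the diameter bound, which is immediate from continuity of the norm.
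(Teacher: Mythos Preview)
Your proof is correct and follows essentially the same route as the paper: reduce to a dense set of finitely supported sphere elements with rational values, apply Hewitt--Marczewski--Pondiczery to $\Q^{(2^\omega)}$ (with $\Q$ discrete), and use Lemma \ref{c0-delta} to bound the diameter of each piece by $1$. The only minor difference is in the concluding step: the paper invokes the next Proposition \ref{c0-notherdich} to obtain an uncountable $(1+)$-separated set (so $1\in{\mathsf K}^+$), whereas you combine $1\in\Sigma$ (from the exact diameter bound) with $[0,1)\subseteq{\mathsf K}^+$ from Proposition \ref{riesz}; both yield ${\mathsf K}^+\cup\Sigma=[0,2]$.
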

\begin{proof} Fix $\varepsilon>0$. It is enough to show that
the set $\Y$ of all elements of the unit sphere of $c_0(2^\omega)$ with finite supports and assuming
rational values
is the union of countably many sets of diameter not bigger than $1$.
Let $D=\{d_n: n\in \N\}\subseteq \Q^{(2^\omega)}$ be a dense subset of  $\Q^{(2^\omega)}$ 
where $\Q$ is considered with discrete topology
which exists by the Hewitt-Marczewski-Pondiczery theorem.
It follows that for each element $y\in \Y$ there is $n_y\in \N$ such that $d_{n_y}|supp(y)=y|supp(y)$.
We claim that $\Y_n=\{y\in \Y: n_y=n\}$ has diameter not bigger than $1$.
This follows from Lemma \ref{c0-delta}. The existence of an uncountable 
$(1+)$-separated set in $c_0(\omega_1)$ follows from the next Proposition \ref{c0-notherdich}.
\end{proof}

\begin{proposition}\label{c0-notherdich} In $S_{c_0(\omega_1)}$ there is an uncountable $\X$ which 
is $(1+)$-separated and there is an uncountable $\Y$ without uncountable $\Y'\subseteq \Y$
which is $(1+)$-separated  and without an uncountable $\Y'\subseteq \Y$ which has
the diameter not bigger than $1$.
\end{proposition}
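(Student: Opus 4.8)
The plan is to prove the two assertions by two explicit constructions inside $c_0(\omega_1)$, in both cases indexing the coordinates by $\omega_1$ and placing the value $1$ of the $\alpha$-th vector at the $\alpha$-th coordinate.

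For the $(1+)$-separated set I would fix, for each $\alpha<\omega_1$, an injective enumeration $\alpha=\{\gamma^\alpha_n:n\in\N\}$ of the countable ordinal $\alpha$, and define $x_\alpha\in c_0(\omega_1)$ by $x_\alpha(\alpha)=1$, $x_\alpha(\gamma^\alpha_n)=-1/(n+2)$ and $x_\alpha(\gamma)=0$ for all other $\gamma$. Then $x_\alpha$ has countable support, its coordinates away from $\alpha$ tend to $0$, and $\|x_\alpha\|_\infty=1$, attained at the coordinate $\alpha$; moreover $\alpha\mapsto x_\alpha$ is injective since $x_\beta(\alpha)\le 0\neq 1=x_\alpha(\alpha)$ for $\beta\neq\alpha$. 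If $\alpha<\beta$ and $\alpha=\gamma^\beta_n$, evaluating $x_\alpha-x_\beta$ at the coordinate $\alpha$ gives $\|x_\alpha-x_\beta\|_\infty\ge|1-(-1/(n+2))|=1+1/(n+2)>1$, so $\X=\{x_\alpha:\alpha<\omega_1\}$ is an uncountable $(1+)$-separated subset of $S_{c_0(\omega_1)}$.

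The second assertion is the substantive part, and the plan is to transplant Sierpi\'nski's coloring onto the sphere, using that initial segments of $\omega_1$ are countable. Identify a subset of $2^\omega$ of size $\omega_1$ with $\omega_1$, and let $c:[\omega_1]^2\to\{0,1\}$ be the induced coloring with no uncountable monochromatic set (Theorem \ref{sierpinski1}). For each $\alpha<\omega_1$ the set $P_\alpha=\{\gamma<\alpha:c(\{\gamma,\alpha\})=1\}$ is countable; enumerate it injectively as $\{p^\alpha_n:n\}$ and define $y_\alpha\in c_0(\omega_1)$ by $y_\alpha(\alpha)=1$, $y_\alpha(p^\alpha_n)=-1/(n+3)$ and $y_\alpha(\gamma)=0$ otherwise. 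As before $y_\alpha\in S_{c_0(\omega_1)}$ and $\alpha\mapsto y_\alpha$ is injective. The key step is the distance computation for $\alpha<\beta$: the coordinate $\beta$ contributes exactly $1$ to $\|y_\alpha-y_\beta\|_\infty$ (as $y_\alpha(\beta)=0$ and $y_\beta(\beta)=1$); every coordinate other than $\alpha$ and $\beta$ contributes at most $1/3$ (there both $y_\alpha$ and $y_\beta$ take values in $[-1/3,0]$); and the coordinate $\alpha$ contributes $1+1/(n+3)$, where $\alpha=p^\beta_n$, if $c(\{\alpha,\beta\})=1$, and exactly $1$ (since then $y_\beta(\alpha)=0$) if $c(\{\alpha,\beta\})=0$. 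Hence $\|y_\alpha-y_\beta\|_\infty>1$ precisely when $c(\{\alpha,\beta\})=1$, and $\|y_\alpha-y_\beta\|_\infty=1$ when $c(\{\alpha,\beta\})=0$.

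With this in hand the conclusion is immediate: for $\Y=\{y_\alpha:\alpha<\omega_1\}$, if $\{y_\alpha:\alpha\in A\}$ with $A$ uncountable were $(1+)$-separated then $A$ would be an uncountable $1$-monochromatic set for $c$, and if it had diameter at most $1$ then $A$ would be an uncountable $0$-monochromatic set; both contradict the choice of $c$. Since neither alternative of Definition \ref{def-dichotomous} holds for $\Y$ with $r=1$, $\Y$ is also an uncountable non-dichotomous subset of $S_{c_0(\omega_1)}$. The main obstacle is the balancing act in the distance computation of the previous paragraph: the perturbation values on $P_\alpha$ must tend to $0$ so that $y_\alpha\in c_0(\omega_1)$, yet remain bounded away from $0$ enough that on a colour-$1$ pair the coordinate $\alpha$ pushes the distance strictly above $1$, while at the same time being small enough (and confined to coordinates disjoint from the ``diagonal'' ones carrying the value $1$) that they never raise a colour-$0$ distance above $1$. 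It is worth noting that the naive ultrametric-style construction, putting the $y_\alpha$ on the branches of the binary tree, fails the second requirement: any uncountable family of branches has, for each $n$, an uncountable subfamily agreeing on the first $n$ coordinates, hence an uncountable subfamily of arbitrarily small diameter.
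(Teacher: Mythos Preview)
Your proof is correct and follows essentially the same approach as the paper's: both constructions place $1$ on the diagonal coordinate and small negative values on (a subset of) the earlier coordinates, with the Sierpi\'nski coloring governing which earlier coordinates are used in the second construction. Your version is simply more explicit than the paper's sketch, in particular in specifying the values $-1/(n+2)$ and $-1/(n+3)$ and in carrying out the coordinatewise distance computation that shows $\|y_\alpha-y_\beta\|_\infty>1$ if and only if $c(\{\alpha,\beta\})=1$.
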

\begin{proof} $\X=\{x_\alpha: \alpha<\omega_1\}\subseteq S_{c_0(\omega_1)}$, where
$x_\alpha(\alpha)=1$, $x_\alpha(\beta)<0$ for all $\beta<\alpha$ and 
 $x_\alpha(\beta)=0$ for all $\beta>\alpha$.
 
 Let $c: [\omega_1]^2\rightarrow \{0,1\}$ be the coloring of Theorem \ref{sierpinski1}.
 $\Y=\{y_\alpha: \alpha<\omega_1\}\subseteq S_{c_0(\omega_1)}$, where
$y_\alpha(\alpha)=1$, $y_\alpha(\beta)<0$ if  $\beta<\alpha$ and $c(\{\beta, \alpha\})=1$
and  $y_\alpha(\beta)=0$ for all $\beta>\alpha$ and $\beta<\alpha$ if $c(\{\beta, \alpha\})=0$
\end{proof}

The above proposition shows that already in such a reasonable space
as $c_0(\omega_1)$ with the unit sphere dichotomous we can have uncountable subsets of
the unit sphere which are not dichotomous in {\sf ZFC}.

To consider the general case of locally compact spaces we need the following:

\begin{lemma}\label{hL} Suppose that $K$ is locally compact Hausdorff nonmetrizable
space which does not admit a nonmetrizable compact subspace.
Then there are open $U_\alpha\subseteq K$ with compact closures and $x_\alpha\in K$ for $\alpha<\omega_1$
such that $x_\alpha\in U_\alpha\setminus\bigcup_{\beta<\alpha}U_\beta$.
\end{lemma}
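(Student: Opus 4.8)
The plan is to build the sets $U_\alpha$ and the points $x_\alpha$ by transfinite recursion on $\alpha<\omega_1$. Suppose that $U_\beta$ (open in $K$, with $\overline{U_\beta}$ compact) and $x_\beta$ have already been chosen for all $\beta<\alpha$. Put $W_\alpha=\bigcup_{\beta<\alpha}U_\beta$, an open subset of $K$. If one knows that $W_\alpha\ne K$, then one may pick any point $x_\alpha\in K\setminus W_\alpha$, and then, using local compactness of $K$, choose an open $U_\alpha$ with $x_\alpha\in U_\alpha$ and $\overline{U_\alpha}$ compact. Since $x_\alpha\notin W_\alpha$ this automatically gives $x_\alpha\in U_\alpha\setminus\bigcup_{\beta<\alpha}U_\beta$, which is precisely what is required, so the recursion closes and produces the desired families.

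The only real point is therefore the verification that $W_\alpha\ne K$ at every stage $\alpha<\omega_1$, and here I would use the hypothesis that $K$ admits no nonmetrizable compact subspace. For each $\beta<\alpha$ the closure $\overline{U_\beta}$ is a compact subspace of $K$, hence metrizable, hence second countable (a compact metric space is separable). Since $U_\beta$ is open in $K$ and $U_\beta\subseteq\overline{U_\beta}$, the set $U_\beta$ is open in the subspace $\overline{U_\beta}$, so $U_\beta$ is itself second countable. As $\alpha<\omega_1$, $W_\alpha$ is a union of countably many open, second countable subspaces of $K$; taking a countable base of each $U_\beta$ (whose members are open in $U_\beta$, hence open in $K$) and collecting them yields a countable base for the open subspace $W_\alpha$. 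If $W_\alpha$ were equal to $K$, then $K$ itself would be second countable; being also locally compact Hausdorff, hence regular, it would be metrizable by the Urysohn metrization theorem, contradicting the assumption that $K$ is nonmetrizable. Hence $W_\alpha\subsetneq K$, as needed.

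The base case $\alpha=0$ is trivial since $W_0=\emptyset\ne K$ ($K$ is nonmetrizable, in particular nonempty). I do not expect a genuine obstacle: the argument rests only on the standard facts that an open subspace of a second countable space is second countable, that a countable union of open second countable subspaces is second countable, and that a second countable locally compact Hausdorff space is metrizable. The one thing to keep clearly in mind is that the coverings involved are genuinely countable, which is exactly where $\alpha<\omega_1$ is used.
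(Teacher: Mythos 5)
Your proof is correct and follows essentially the same route as the paper's: both run the obvious transfinite recursion and reduce everything to showing that countably many open sets with compact (hence, by hypothesis, metrizable and second countable) closures cannot cover the nonmetrizable $K$. The only difference is that you spell out the Urysohn metrization step that the paper leaves implicit.
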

\begin{proof}
 Suppose that $K$ is locally compact Hausdorff nonmetrizable
space which does not admit a nonmetrizable compact subspace.
If we can cover $K$ by countably many open $U$s with $\overline U$ compact,
then as each such $U$ is metrizable with $\overline U$ compact, it has a countable
basis. So then $K$ has a countable basis, which contradicts the nonmetrizability of $K$.
Hence such cover can not exist. 

So construct $U_\alpha, x_\alpha$ by transfinite recursion demanding the closure of $U_\alpha$
to be compact and $x_\alpha\in U_\alpha$: having constructed
$U_\beta$ for $\beta<\alpha$, we know that $\bigcup_{\beta<\alpha}U_\beta$
does not cover $K$, so there is $x_\alpha\in K\setminus \bigcup_{\beta<\alpha}U_\beta$.
Now choose $U_\alpha$ by the local compactness at $x_\alpha$.
\end{proof}

The following result generalizes the theorem of
Kania and Kochanek of \cite{tt} (our Theorem \ref{thm-merc} (2)).

\begin{proposition}\label{lc-1+} Suppose that $K$ is a locally compact nonmetrizable  Hausdorff space.
Then $C_0(K)$ admits an uncountable $(1+)$-separated set.
\end{proposition}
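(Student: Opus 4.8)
The plan is to split on whether $K$ contains a nonmetrizable compact subspace.

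\textbf{If $K$ has a nonmetrizable compact subspace $L$.} Apply Theorem~\ref{thm-merc}(2) to $C(L)$ to obtain an uncountable $(1+)$-separated set $\{g_\alpha:\alpha<\omega_1\}\subseteq S_{C(L)}$, and lift it to $C_0(K)$. Since $L$ is a compact subset of the locally compact Hausdorff space $K$, I would fix an open $W\supseteq L$ with $\overline W$ compact, extend each $g_\alpha\colon L\to[-1,1]$ to $\widetilde g_\alpha\colon\overline W\to[-1,1]$ by Tietze's theorem, and pick a continuous $\rho\colon K\to[0,1]$ with $\rho|_L\equiv 1$ and compact support inside $W$; then $f_\alpha:=\rho\widetilde g_\alpha$ (extended by $0$) lies in $C_0(K)$, restricts to $g_\alpha$ on $L$, and has $\|f_\alpha\|_\infty=1$. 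Hence $\|f_\alpha-f_\beta\|_\infty\ge\|g_\alpha-g_\beta\|_\infty>1$, so $\{f_\alpha:\alpha<\omega_1\}$ is as required.

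\textbf{If $K$ has no nonmetrizable compact subspace.} Then every compact subset of $K$ is metrizable, and Lemma~\ref{hL} supplies open $U_\alpha$ with compact closures and $x_\alpha\in U_\alpha\setminus\bigcup_{\beta<\alpha}U_\beta$. First note $K$ is not $\sigma$-compact (else, every relatively compact open set being second countable, $K$ would be second countable, hence metrizable), and in particular not hereditarily Lindel\"of; this lets me refine the transfinite construction so that, in addition, $U_\alpha\cap\{x_\beta:\beta<\alpha\}=\emptyset$, so that each $U_\alpha$ is a neighbourhood of $x_\alpha$ missing every other $x_\beta$. Now pick continuous $\phi_\alpha\colon K\to[0,1]$ with compact support inside $U_\alpha$ and $\phi_\alpha(x_\alpha)=1$, so $\phi_\alpha(x_\beta)=0$ for all $\beta\ne\alpha$. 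Fix for each $\alpha$ an enumeration $(\beta^\alpha_n)_{n\in\N}$ of $\{\beta:\beta<\alpha\}$ and, with a small fixed $\delta>0$, set
\[
 f_\alpha \;=\; \phi_\alpha \;-\; \delta\sum_{n\in\N}2^{-n}\,\phi_{\beta^\alpha_n}.
\]
The series converges uniformly and its sum lies in $C_0(K)$ (for $\varepsilon>0$ the set where it exceeds $\varepsilon$ is contained in a finite union of the compact sets $\overline{U_{\beta^\alpha_n}}$), so $f_\alpha\in C_0(K)$; since $0\le\phi_\alpha\le1$ and $\delta$ is small we get $-1\le f_\alpha\le1$, while $f_\alpha(x_\alpha)=1-0=1$, so $\|f_\alpha\|_\infty=1$. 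Finally, for $\beta=\beta^\alpha_k<\alpha$ we have $f_\alpha(x_\beta)=0-\delta\sum_n2^{-n}\phi_{\beta^\alpha_n}(x_\beta)\le-\delta2^{-k}<0$ whereas $f_\beta(x_\beta)=1$ (as $x_\beta\notin U_\gamma$ for $\gamma<\beta$), so $\|f_\alpha-f_\beta\|_\infty\ge 1+\delta2^{-k}>1$. Thus $\{f_\alpha:\alpha<\omega_1\}$ is an uncountable $(1+)$-separated subset of $S_{C_0(K)}$.

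The hard part is the bookkeeping in the second case: one must secure, simultaneously at every stage of an $\omega_1$-recursion, the Lemma~\ref{hL} condition $x_\alpha\notin\bigcup_{\beta<\alpha}U_\beta$ (so the correction term vanishes at $x_\alpha$ and $\|f_\alpha\|_\infty=1$ \emph{exactly} --- a renormalisation would destroy the strict inequality) together with the dual condition $U_\alpha\cap\{x_\beta:\beta<\alpha\}=\emptyset$ (so the correction term is strictly negative at every earlier $x_\beta$). Reconciling these is exactly where the hypotheses on $K$ must be used: if $K$ is not ccc one gets an uncountable disjoint family of open sets and hence an isometric copy of $c_0(\omega_1)$, reducing to Proposition~\ref{c0-notherdich}; the genuine work lies in the ccc case (with $\Psi(\A)$ for an uncountable almost disjoint family $\A$ a representative example), where one has to extract a point-separating family of relatively compact neighbourhoods from a non-$\sigma$-compact, all-compact-subspaces-metrizable space, rather than from the bare conclusion of Lemma~\ref{hL}.
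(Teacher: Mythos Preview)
Your Case 1 (nonmetrizable compact subspace) is fine and matches the paper.

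Your Case 2 has a real gap: the ``dual condition'' $U_\alpha\cap\{x_\beta:\beta<\alpha\}=\emptyset$ is asserted, not proved. What you need at stage $\alpha$ is a point
\[
x_\alpha\ \in\ K\setminus\Big(\bigcup_{\beta<\alpha}U_\beta\ \cup\ \overline{\{x_\beta:\beta<\alpha\}}\Big),
\]
and ``$K$ is not hereditarily Lindel\"of'' does not give this. In the \emph{separable} subcase (which is exactly where $\Psi(\A)$ lives) the closure $\overline{\{x_\beta:\beta<\alpha\}}$ can swallow the complement of $\bigcup_{\beta<\alpha}U_\beta$, so the recursion may stall. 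Your final paragraph correctly identifies this as the crux but does not resolve it; the ccc/non-ccc split you sketch is a restatement of the difficulty, not a proof. (In the nonseparable subcase your recursion does go through, since the closure of a countable set is then proper.)

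The paper avoids the dual condition altogether by splitting differently. If $K$ is \emph{nonseparable}, one keeps only the dual condition (possible since countable sets have separable closure) and builds each corrector $f_\alpha$ directly via Urysohn so that $f_\alpha(x_\beta)>0$ for all $\beta<\alpha$ and $f_\alpha|\overline{U_\alpha}=0$; then $\{g_\alpha-f_\alpha\}$ is $(1+)$-separated. If $K$ is \emph{separable}, fix a countable dense $D=\{d_n\}$, use Lemma~\ref{hL} to get $x_\alpha\in W_\alpha\subseteq\overline{V_\alpha}\subseteq U_\alpha$ with $x_\alpha\notin\bigcup_{\beta<\alpha}U_\beta$, take $g_\alpha$ with $g_\alpha|W_\alpha=1$ and support in $V_\alpha$, and build $f_\alpha$ positive at every $d_n\notin\overline{V_\alpha}$ and zero on $V_\alpha$. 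For $\beta<\alpha$ there is $d_n\in W_\alpha\setminus\overline{V_\beta}$, and at this $d_n$ one gets $(g_\alpha-f_\alpha)-(g_\beta-f_\beta)=1+f_\beta(d_n)>1$. The point is that the dense set, not the earlier $x_\beta$'s, witnesses separation---so only the Lemma~\ref{hL} condition is needed, never its dual.
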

\begin{proof}

Note (*): given an open $U\subseteq K$ and 
a countable $X=\{x_n: n\in \N\}\subseteq K\setminus \overline U$ we can build
$f\in C_0(K)$ such that $f(x)>0$ for all $x\in X$, $\|f\|_\infty\leq 1$ and $f|U=0$. Just put $f=\sum_{n\in \N}f_n$
where $f_n|\overline U=0$, $f_n(x_n)>0$ and $0\leq f_n(x)\leq 1/2^{n+1}$ for each $x\in K$ and each $n\in \N$
  so that the sequence of partial sums is Cauchy.
  
  Consider the first case of $K$ nonseparable. Then we can build
  $\{x_\alpha: \alpha<\omega_1\}\subseteq K$ such that for each $\alpha<\omega_1$
  there is an open $U_\alpha$ with compact closure
  such that $x_\alpha\in U_\alpha$ and 
  $\{x_\beta: \beta<\alpha\}\cap \overline{U_\alpha}=\emptyset$ for each $\alpha<\omega_1$.
  So using (*) build $f_\alpha\in C_0(K)$ such that $f_\alpha(x_\beta)>0$ for each $\beta<\alpha$
  and $f_\alpha|\overline U_\alpha=0$. 
  Now consider $g_\alpha\in C_0(K)$ of norm one with support included in $U_\alpha$
  such that $g_\alpha(x_\alpha)=1$.
   We obtain that $\{g_\alpha-f_\alpha: \alpha<\omega_1\}\subseteq S_{C_0(K)}$
  is $(1+)$-separated as 
  $$((g_\alpha-f_\alpha)-(g_\beta-f_\beta))(x_\beta)=-f_\alpha(x_\beta)-g_\beta(x_\beta)=-f_\alpha(x_\beta)-1<-1$$
  for any $\beta<\alpha<\omega_1$.
  
Now consider the second case of $K$ separable with a countable dense $D=\{d_n: n\in \N\}$.
If $K$ admits an open $U$ with compact closure which is nonmetrizable, then 
$C(\overline U)$ admits an uncountable $(1+)$-separated set by the result of
Kania and Kochanek (Theorem \ref{thm-merc} (2)). So by the Tietze theorem for locally compact spaces
we may assume that
if $U$ is open with $\overline U$ compact, then $\overline U$ is metrizable.
Note that this implies that $K$ contains no compact nonmetrizable space as the local compactness implies
that every compact subspace of $K$ is included in an open set with compact closure. Hence we may assume that the hypothesis
of Lemma \ref{hL} is satisfied.

It follows from Lemma \ref{hL} that there are open $U_\alpha, V_\alpha$ for
$\alpha<\omega_1$ with $\overline U_\alpha$ compact and $x_\alpha\in V_\alpha\subseteq\overline V_\alpha
\subseteq U_\alpha$
such that $x_\alpha\not\in \bigcup_{\beta<\alpha}U_\beta$. 
Using (*) build $f_\alpha\in C_0(K)$ of norm one such that $f_\alpha(d_n)>0$
for each $d_n\not\in \overline V_\alpha$ and $f_\alpha|V_\alpha=0$.  Let
$g_\alpha\in C_0(K)$ be of norm one with support included in $V_\alpha$ such that $g_\alpha|W_\alpha=1$,
where $W_\alpha$ is an open neighborhood of $x_\alpha$ whose closure is  included in $V_\alpha$.
Note that if $\beta<\alpha<\omega_1$  we have that 
$$x_\alpha\in W_\alpha\setminus U_\beta
\subseteq W_\alpha\setminus \overline V_\beta, $$
 so for all $n\in \N$ such that $d_n$ is in the nonempty open $W_\alpha\setminus \overline V_\beta$
we have 
 $$g_\beta(d_n)=0\ \hbox{and}\ f_\beta(d_n)>0\ \hbox{and}\ g_\alpha(d_n)=1.$$
 So we obtain that $\{g_\alpha-f_\alpha: \alpha<\omega_1\}\subseteq S_{C_0(K)}$
  is $(1+)$-separated as 
  $$((g_\alpha-f_\alpha)-(g_\beta-f_\beta))(d_n)=(g_\alpha(d_n)-0)-(0-f_\beta(d_n))=1+f_\beta(d_n)>1$$
  for any $\beta<\alpha<\omega_1$ and $n\in \N$ such that $d_n\in W_\alpha\setminus \overline V_\beta$.
\end{proof}

We also have a result corresponding to the theorem of Mercourakis and Vassiliadis
(Theorem \ref{thm-merc} (1) the first item):

\begin{proposition}\label{C0K-total} If $K$ is locally compact, totally disconnected and nonmetrizable,
then $S_{C_0(K)}$ admits an uncountable equilateral set.
\end{proposition}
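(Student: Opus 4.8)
The plan is to exploit the abundance of compact clopen subsets of $K$ and to take their characteristic functions as the desired equilateral set. First I would recall the standard fact that a totally disconnected locally compact Hausdorff space has a basis consisting of compact clopen sets: given a point $x$ and a neighbourhood of it, local compactness and the Hausdorff property furnish an open $W\ni x$ with $\overline W$ compact and contained in that neighbourhood; the compact Hausdorff space $\overline W$ is totally disconnected, hence zero-dimensional, so it contains a set $C$ clopen in $\overline W$ with $x\in C\subseteq W$, and then $C=C\cap W$ is open in $K$ while $C$ is closed in $K$ and contained in the compact set $\overline W$, so $C$ is a compact clopen neighbourhood of $x$ inside the original one.

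Next I would show that $K$ has uncountably many distinct compact clopen subsets. If it had only countably many, these would form a countable basis, so $K$ would be second countable; being locally compact Hausdorff it is regular, hence metrizable by the Urysohn metrization theorem, contradicting the hypothesis. So fix an injective family $\{U_\alpha:\alpha<\omega_1\}$ of nonempty compact clopen subsets of $K$.

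Finally I would verify that $\{1_{U_\alpha}:\alpha<\omega_1\}$ is as required. Each $1_{U_\alpha}$ is continuous because $U_\alpha$ is clopen and has compact support because $U_\alpha$ is compact, so $1_{U_\alpha}\in C_0(K)$, and $\|1_{U_\alpha}\|_\infty=1$ since $U_\alpha\neq\emptyset$. For $\alpha\neq\beta$ the function $1_{U_\alpha}-1_{U_\beta}$ takes only the values $-1,0,1$, and it is nonzero exactly on the symmetric difference $U_\alpha\triangle U_\beta$, which is nonempty because $U_\alpha\neq U_\beta$; hence $\|1_{U_\alpha}-1_{U_\beta}\|_\infty=1$, so $\{1_{U_\alpha}:\alpha<\omega_1\}\subseteq S_{C_0(K)}$ is an uncountable $1$-equilateral set.

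There is no real obstacle here; the only points needing attention are the topological input that compact clopen sets form a basis (so that the characteristic functions land in $C_0(K)$) together with the metrization argument producing uncountably many of them. It is worth noting the contrast with the compact case of Theorem \ref{thm-merc} (1), where one uses $1_U-1_{K\setminus U}$ and obtains an uncountable $2$-equilateral set: when $K$ is noncompact $1_{K\setminus U}\notin C_0(K)$, so this route yields only a $1$-equilateral set, which is all that is claimed.
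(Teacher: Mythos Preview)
Your proof is correct. The approach differs from the paper's in one respect worth noting. The paper first splits into cases: if $K$ contains a nonmetrizable compact subspace, Theorem~\ref{thm-merc}~(1) directly yields an uncountable $2$-equilateral set; otherwise Lemma~\ref{hL} is invoked (with the $U_\alpha$'s chosen clopen, using total disconnectedness) to produce uncountably many distinct compact clopen sets, whose characteristic functions are then $1$-equilateral. Your argument bypasses this case distinction entirely: you observe directly that the compact clopen sets form a basis, so if there were only countably many of them $K$ would be second countable and hence metrizable. This is more self-contained, avoiding both the Mercourakis--Vassiliadis theorem and Lemma~\ref{hL}. The paper's route has the minor advantage of recording that one actually gets a $2$-equilateral set whenever $K$ contains a nonmetrizable compact subspace, but for the statement as written your argument is the cleaner one.
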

\begin{proof} By Theorem \ref{thm-merc} (1) we may assume that every compact subspace of $K$ is metrizable.
By Lemma \ref{hL} (since $U_\alpha$s can be assumed to be clopen)
there are uncountably many distinct open compact subsets
of $K$. Characteristic functions of such sets are continuous and the distance between them  is $1$, so
we obtain an uncountable $1$-equilateral subset of the unit sphere $S_{C_0(K)}$.
\end{proof}

Note that unlike in the compact case (Theorem \ref{thm-merc} (1) the first item), we cannot
count on uncountable $2$-equilateral set 
in the unit sphere in the locally compact case because it does not exist even
in the unit sphere of $c_0(\omega_1)$ by Proposition \ref{c0-dich}.
The question of the existence of uncountable equilateral sets in Banach spaces 
of the form $C_0(K)$ for $K$ locally compact, in fact, reduces to the analogous question for
compact spaces or is positively resolved:

\begin{proposition}\label{C0K-equi} Suppose that $K$ is locally compact Hausdorff nonmetrizable
space. Then either $K$ admits a nonmetrizable compact subspace or  ${C_0(K)}$ admits
an uncountable equilateral set.
\end{proposition}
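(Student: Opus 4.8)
The plan is to handle the first alternative for free and, under the assumption that $K$ has no nonmetrizable compact subspace, to produce an uncountable $1$-equilateral set directly from the data supplied by Lemma \ref{hL}. So suppose $K$ does not admit a nonmetrizable compact subspace; then the hypothesis of Lemma \ref{hL} is met, and we obtain open sets $U_\alpha\subseteq K$ with compact closures together with points $x_\alpha\in K$, $\alpha<\omega_1$, such that $x_\alpha\in U_\alpha\setminus\bigcup_{\beta<\alpha}U_\beta$.

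Next, for each $\alpha<\omega_1$ I would choose, exactly as in the proof of Proposition \ref{lc-1+} (by local compactness and Urysohn's lemma applied inside the compact Hausdorff space $\overline{U_\alpha}$), a function $g_\alpha\in C_0(K)$ with $0\leq g_\alpha(x)\leq 1$ for all $x\in K$, with $g_\alpha(x_\alpha)=1$, and with $\{x\in K: g_\alpha(x)\neq 0\}\subseteq U_\alpha$. Then $\|g_\alpha\|_\infty=1$, so $g_\alpha\in S_{C_0(K)}$. I claim $\{g_\alpha:\alpha<\omega_1\}$ is $1$-equilateral. For the lower bound on distances: if $\beta<\alpha$, then $x_\alpha\notin U_\beta$, hence $g_\beta(x_\alpha)=0$, so $(g_\alpha-g_\beta)(x_\alpha)=1$ and $\|g_\alpha-g_\beta\|_\infty\geq 1$. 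For the upper bound: since $0\leq g_\alpha,g_\beta\leq 1$ pointwise, we have $|g_\alpha(x)-g_\beta(x)|\leq 1$ for every $x\in K$, so $\|g_\alpha-g_\beta\|_\infty\leq 1$. Therefore $\|g_\alpha-g_\beta\|_\infty=1$ for all $\beta<\alpha<\omega_1$, which is an uncountable equilateral subset of $S_{C_0(K)}$.

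The only subtlety — and the place where this argument goes slightly beyond the one in Proposition \ref{lc-1+} — is that the bump functions $g_\alpha$ must be taken with range contained in $[0,1]$; this is precisely what upgrades the trivial lower bound $\|g_\alpha-g_\beta\|_\infty\geq 1$ to the exact equality needed for an equilateral (rather than merely $(1+)$- or $1$-separated) set, since $0\leq g_\alpha\leq 1$ is not automatic from the construction used there. Apart from that, the proof reuses the standard Urysohn-type construction already carried out in Proposition \ref{lc-1+} and requires no new ideas.
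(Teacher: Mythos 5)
Your proof is correct and is essentially the paper's own argument: apply Lemma \ref{hL} under the assumption that every compact subspace is metrizable, take $[0,1]$-valued bump functions $f_\alpha$ supported in $U_\alpha$ with $f_\alpha(x_\alpha)=1$, and observe that $\|f_\alpha-f_\beta\|_\infty=1$ since $f_\beta(x_\alpha)=0$ for $\beta<\alpha$ while all values lie in $[0,1]$. Nothing to change.
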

\begin{proof}
Suppose that $K$ does not admit a nonmetrizable compact subspace.
So by Lemma \ref{hL} for each $\alpha<\omega_1$ there is open $U_\alpha$ with compact closure and 
$x_\alpha\in U_\alpha$ such that $x_\alpha\not\in \bigcup_{\beta<\alpha}U_\beta$ for every $\alpha<\omega_1$.
Consider $f_\alpha\in C_0(K)$ with supports included in $U_\alpha$
such that $0\leq f_\alpha(x)\leq 1$ for each $x\in K$ and $f_\alpha(x_\alpha)=1$. 
We have $\|f_\alpha-f_\beta\|\leq 1$, but also $f_\alpha(x_\alpha)-f_\beta(x_\alpha)=1-0=1$
for every $\beta<\alpha<\omega_1$. So $\{f_\alpha:\alpha<\omega_1\}$
forms an uncountable $1$-equilateral set.
\end{proof}

\begin{proposition}[\cite{ad-kottman}]\label{C0K-strange} There is a  totally disconnected 
 locally compact separable Hausdorff space  $K$ of weight $2^\omega$ (and any uncountable
 smaller weight)
such that ${\mathsf K}^+(S_{C_0(K)})=[0, 1]$ and
${\Sigma}(S_{C_0(K))}=(1, 2]$.  In particular, the
unit sphere of such $C_0(K)$  admits uncountable $1$-equilateral sets but
does not admit uncountable $r$-equilateral sets for any $r>1$.
\end{proposition}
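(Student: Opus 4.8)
The plan is to take for $K$ the Mr\'owka--Isbell space $\Psi(\A)=\omega\cup\A$ of an uncountable $\R$-embeddable (tree-like) almost disjoint family $\A$ on $\omega$ of cardinality $\kappa$ with $\aleph_1\le\kappa\le 2^\omega$, so that $C_0(K)$ is isometrically the Johnson--Lindenstrauss space of \cite{ad-kottman} generated by $c_0\cup\{1_A:A\in\A\}$ (cf. the terminology in Proposition \ref{examples-analytic}). First I would dispatch the topological claims: $\Psi(\A)$ is locally compact (each $A\in\A$ has the compact clopen neighbourhood $\{A\}\cup A$), Hausdorff, totally disconnected (the clopen sets coming from the $\R$-embedding form a base), separable (the isolated points $\omega$ are dense) and of weight $|\A|=\kappa$; since a subfamily of an $\R$-embeddable family is again $\R$-embeddable, restricting $\A$ to any prescribed uncountable size $\le 2^\omega$ preserves everything, which handles the ``any uncountable smaller weight'' clause. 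Next, for $[0,1]\subseteq{\mathsf K}^+(S_{C_0(K)})$: as $\Psi(\A)$ is locally compact, Hausdorff and nonmetrizable, Proposition \ref{lc-1+} produces an uncountable $(1+)$-separated subset of $S_{C_0(K)}$, so $1\in{\mathsf K}^+(S_{C_0(K)})$, while $[0,1)\subseteq{\mathsf K}^+(S_{C_0(K)})$ by Proposition \ref{riesz}.

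The core of the argument is that $(1,2]\subseteq\Sigma(S_{C_0(K)})$, i.e. that for every $\varepsilon>0$ the sphere $S_{C_0(K)}$ is the union of countably many sets of diameter $\le 1+\varepsilon$; this is the covering property of \cite{ad-kottman} and I would reproduce it in the style of Propositions \ref{c0-dich} and \ref{lp-union}. It suffices to partition a norm-dense set $\Y\subseteq S_{C_0(K)}$ of ``nice'' functions --- those with compact support, rational values, and eventually constantly equal to $f(A)$ along each of the finitely many $A\in\A$ that meet the support --- into countably many pieces of diameter $\le 1$, since then every sphere element lies within $\varepsilon$ of some $f\in\Y$, hence in the $\varepsilon$-neighbourhood of the piece of $\Y$ containing $f$, a set of diameter $\le 1+2\varepsilon$. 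The piece of a nice $f$ records the finite ``tree data'' (the level-$\ell$ initial segments, $\ell$ the common splitting level, of the finitely many members of $\A$ meeting the support, together with the finitely many rational values off level $\ell$) and an index $n$ of an enumeration $\{e_n:n\in\N\}$ of a set dense in $\Q^{(\A)}$ with $\Q$ discrete --- such an enumeration exists by the Hewitt--Marczewski--Pondiczery Theorem \ref{hmp} because $|\A|\le 2^\omega$ --- chosen so that $e_n$ takes the value $f(A)$ at each $A\in\A$ in the support. Using $\R$-embeddability one then verifies pointwise on $\Psi(\A)$ that two nice functions with the same data agree wherever both are nonzero: below level $\ell$ they share the recorded rational values, above level $\ell$ each node lies on at most one recorded member of $f$ and of $g$ and, if on both, on the matched pair, and at a point $A\in\A$ common to the two supports the common index $e_n$ forces $f(A)=g(A)$; hence their supremum distance is $\le 1$.

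I expect this last verification to be the main obstacle: one must use the combinatorics of the $\R$-embeddable family to confine every overlap $A\cap A'$ of distinct recorded members to the finitely many coordinates already fixed by the tree data, so that the bound is exactly $1$ and not something larger --- this is precisely where a generic almost disjoint family would fail and where the OCA dichotomy of \cite{ad-kottman} would otherwise be needed. Granting it, Proposition \ref{sets-constants} finishes the quantitative part: the sets ${\mathsf K}^+(S_{C_0(K)})$ and $\Sigma(S_{C_0(K)})$ are disjoint subintervals of $[0,2]$ with $\sup{\mathsf K}^+(S_{C_0(K)})\le\inf\Sigma(S_{C_0(K)})$, and since they contain $[0,1]$ and $(1,2]$ respectively while $1\in{\mathsf K}^+(S_{C_0(K)})$, this forces ${\mathsf K}^+(S_{C_0(K)})=[0,1]$ and $\Sigma(S_{C_0(K)})=(1,2]$. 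For the ``in particular'' assertions: the characteristic functions of the compact clopen sets $\{A\}\cup A$, $A\in\A$, form an uncountable $1$-equilateral subset of $S_{C_0(K)}$ (cf. Proposition \ref{C0K-total}), and no uncountable $r$-equilateral set with $r>1$ can exist, since being uncountable it would have uncountably many points inside one of the countably many sets of diameter $\le 1+\varepsilon$, forcing $r\le 1+\varepsilon$ for every $\varepsilon>0$.
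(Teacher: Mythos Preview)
Your proposal is correct and identifies exactly the same space as the paper: the Mr\'owka--Isbell $\Psi$-space of an $\R$-embeddable almost disjoint family. The paper's own proof is simply a citation of Propositions 27 and 48 of \cite{ad-kottman}, so there is essentially nothing further to compare at the level of strategy; you are reconstructing what those cited propositions say rather than quoting them.

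A few remarks on your reconstruction. The topological claims and the inclusion $[0,1]\subseteq{\mathsf K}^+(S_{C_0(K)})$ via Propositions \ref{riesz} and \ref{lc-1+} are clean. Your outline of the covering argument $(1,2]\subseteq\Sigma(S_{C_0(K)})$ captures the right mechanism, but the ``tree data'' step is the place where rigor is needed and your description is still somewhat impressionistic. Concretely, for branches $A_i$ (from $f$) and $B_j$ (from $g$) that are \emph{not} matched by the common index, you must guarantee that $A_i\cap B_j$ lies entirely within the finitely many coordinates already recorded, so that the value of $f-g$ there is controlled by the shared finite data and not by the (possibly opposite-sign) branch values $a_i,b_j$. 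This is exactly what $\R$-embeddability (equivalently, realizing $\A$ as branches through a tree of height $\omega$) buys you: once the level-$\ell$ initial segments are fixed as part of the type, two branches with distinct recorded initial segments can only meet below level $\ell$. Spelling this out carefully---in particular, making precise what ``the matched pair'' means when $f$ and $g$ may have different numbers of branches, and why the HMP index for values on $\A$ interacts correctly with the tree type---is the substance of Proposition 48 in \cite{ad-kottman}, and your sketch would need one more pass to stand on its own. The closing use of Proposition \ref{sets-constants} and the ``in particular'' clauses are fine.
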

\begin{proof} One needs to apply Proposition 48 of \cite{ad-kottman}
together with Proposition 27 of \cite{ad-kottman}. So the $\Psi$-space
induced by any $\R$-embeddable almost disjoint family has the properties of the proposition.
\end{proof}

\begin{proposition}\label{C0K-counter} There is a locally compact, Hausdorff 
scattered space of Cantor-Bendixson height $2$ and weight $2^\omega$ such that
for every $\varepsilon>0$ the  unit sphere $S_{C_0(K)}$ of $C_0(K)$ does not contain
an uncountable $(1+\varepsilon)$-separated  set and is not the union of countably
many sets of diameters less than $2$. Hence ${\mathsf K}^+(S_{C_0(K)})=[0,1]$
and $\Sigma(S_{C_0(K)})=\{2\}$ and so $S_{C_0(K)}$ is not almost dichotomous.

The sphere $S_{C_0(K)}$ admits an uncountable $1$-equilateral set, so it
is not hyperlateral.
\end{proposition}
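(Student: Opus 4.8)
The plan is to realize $K$ as a Mr\'owka-type space over a \emph{countable} set $D$: a countable set of isolated points $D$ together with points $r_\alpha$ ($\alpha<2^\omega$), each $r_\alpha$ carrying the one-point-compactification neighbourhood base $\{\{r_\alpha\}\cup(A_\alpha\setminus G):G\subseteq D\ \text{finite}\}$ for an almost disjoint family $\{A_\alpha:\alpha<2^\omega\}$ of infinite subsets of $D$. Any such $K$ is automatically locally compact, Hausdorff, scattered of Cantor--Bendixson height $2$, and of weight $2^\omega$, the set $R=\{r_\alpha:\alpha<2^\omega\}$ of its non-isolated points being closed and discrete. Now fix Sierpi\'nski's colouring $c:[2^\omega]^2\to\{0,1\}$ from Theorem \ref{sierpinski1}; recall that both its $0$- and $1$-monochromatic subsets are countable, because with respect to the usual order of the reals a $1$-monochromatic set is well-ordered and a $0$-monochromatic set is reverse-well-ordered. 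By a transfinite recursion of length $2^\omega$ (exploiting that $c$ has no uncountable monochromatic set) one constructs $\{A_\alpha\}$ together with functions $f_\alpha=1_{P_\alpha}-1_{N_\alpha}\in S_{C_0(K)}$, where $P_\alpha\ni r_\alpha$ and $N_\alpha$ are disjoint clopen compact sets (finite unions of petals $A_\gamma\cup\{r_\gamma\}$ modulo finite sets), so that
$$\|f_\alpha-f_\beta\|_\infty=\begin{cases}2,& c(\{\alpha,\beta\})=1,\\ 1,& c(\{\alpha,\beta\})=0.\end{cases}$$
The value $2$ is forced by a point put into $(P_\alpha\cap N_\beta)\cup(P_\beta\cap N_\alpha)$ by the recursion; the value $1$ by keeping, whenever $c(\{\alpha,\beta\})=0$, the pair $P_\alpha,P_\beta$ disjoint from $N_\beta,N_\alpha$, so that $f_\alpha-f_\beta$ takes only the values $0,\pm 1$ while $P_\alpha\triangle P_\beta\ni r_\alpha$ is nonempty.

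Granting this, three of the four assertions are immediate. Since $K$ is totally disconnected and nonmetrizable, Proposition \ref{C0K-total} yields an uncountable equilateral set in $S_{C_0(K)}$; concretely the characteristic functions $1_{A_\alpha\cup\{r_\alpha\}}$ of the clopen compact petals form an uncountable $1$-equilateral set, which is $\varepsilon$-approximately $1$-equilateral for every $\varepsilon>0$, so $S_{C_0(K)}$ is not hyperlateral (Definition \ref{def-hyperlateral}). Next, if $S_{C_0(K)}=\bigcup_{n\in\N}M_n$, then some $M_n$ contains $\{f_\alpha:\alpha\in X\}$ with $X$ uncountable; as $X$ is not $0$-monochromatic for $c$, it contains a pair with $c=1$, hence a pair at distance $2$, so $diam(M_n)=2$. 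Thus $S_{C_0(K)}$ is not the union of countably many sets of diameters less than $2$, and since $2\in\Sigma(S_{C_0(K)})$ and $diam(S_{C_0(K)})=2$ by Proposition \ref{riesz}, we get $\Sigma(S_{C_0(K)})=\{2\}$. Finally $[0,1]\subseteq{\mathsf K}^+(S_{C_0(K)})$ by Proposition \ref{riesz} together with Proposition \ref{lc-1+}, so once the reverse inclusion is established we obtain ${\mathsf K}^+(S_{C_0(K)})=[0,1]$, whence ${\mathsf k}^+=1\neq2=\sigma$ and $S_{C_0(K)}$ is not almost dichotomous by Proposition \ref{sets-constants}.

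The heart of the matter is that $S_{C_0(K)}$ contains no uncountable $(1+\varepsilon)$-separated set, i.e.\ ${\mathsf K}^+(S_{C_0(K)})\subseteq[0,1]$. Here one uses the structure of a scattered height-$2$ locally compact space: every compact subset meets the closed discrete set $R$ in a finite set and lies in a finite union of petals $A_\gamma\cup\{r_\gamma\}$ together with a finite subset of $D$; consequently $f|R\in c_0(R)$ for every $f\in C_0(K)$, the ideal $\{f\in C_0(K):f|R=0\}$ is isometric to the separable space $c_0(D)$, and $\{d\in D:|f(d)|>\varepsilon\}$ lies in finitely many petals plus a finite set. Given an alleged uncountable $(1+\varepsilon)$-separated $\{g_\xi:\xi<\omega_1\}\subseteq S_{C_0(K)}$, one first discards countably many $\xi$ (using separability of the ideal) so that $\|g_\xi|R\|\geq\varepsilon/3$, then forms the finite root-sets $R_\xi=\{r\in R:|g_\xi(r)|>\varepsilon/6\}$ and the finite exceptional subsets of $D$ attached to each $g_\xi$, and passes to $\Delta$-systems for both, with roots $R_0$ and $Q$, arranging in addition that $g_\xi|R_0$, $g_\xi|Q$ and $g_\xi$ restricted to the countable compact (hence separable) space $\bigcup_{r_\gamma\in R_0}(A_\gamma\cup\{r_\gamma\})$ all cluster within $\varepsilon/10$ of fixed functions. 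A point $x$ with $|g_\xi(x)-g_\eta(x)|>1+\varepsilon$ satisfies $|g_\xi(x)|,|g_\eta(x)|>\varepsilon$, and these reductions successively exclude $x\in R$, $x\in Q$, and $x\in\bigcup_{r_\gamma\in R_0}A_\gamma$; almost disjointness of $\{A_\alpha\}$ then forces $x$ into a petal $A_\gamma$ with $r_\gamma\in R_\xi\setminus R_0$ which is simultaneously ``large'' for $g_\eta$, contradicting the pairwise disjointness of the sets $R_\xi\setminus R_0$. This is the lift, through the short exact sequence $0\to c_0(D)\to C_0(K)\to c_0(R)\to 0$, of the Erd\"os--Preiss decomposition responsible for ${\mathsf K}^+(S_{c_0(\kappa)})=[0,1]$ in Proposition \ref{c0-dich}.

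The two places where I expect genuine work are: (i) the transfinite recursion of the first paragraph, where the almost disjoint family must be arranged so that exactly the crossings demanded by $c(\{\alpha,\beta\})=1$ occur and no spurious ones appear when $c(\{\alpha,\beta\})=0$ --- this is precisely where the failure of $\sigma$-decomposability of the sphere is manufactured, against the natural tendency of Mr\'owka spheres to split into small pieces as in Proposition \ref{C0K-strange}; and (ii) the last step of the $(1+\varepsilon)$-separated analysis, where the finitely many petal-overlap terms $A_\gamma\cap A_\delta$ ($\gamma\neq\delta$) contributed by the $\Delta$-systems must be disposed of --- keeping these under control is presumably what dictates the fine combinatorics of $\{A_\alpha\}$ and links it to step (i).
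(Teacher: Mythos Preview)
Your approach has a fatal structural flaw: you take the base set $D$ of isolated points to be \emph{countable}. This makes $K$ separable ($D$ is dense in $K$), so $C_0(K)$ embeds isometrically into $\ell_\infty$. By Proposition~\ref{oca>dich}, under {\sf OCA} the sphere $S_{C_0(K)}$ is then dichotomous, hence almost dichotomous --- so no construction over a countable $D$ can yield the {\sf ZFC} statement you are after. The recursion you sketch also cannot be carried out as written: at a stage $\alpha$ the clopen compact sets $P_\alpha, N_\alpha$ are determined by finitely many petals together with a finite subset of $D$, yet they must meet or avoid each of the possibly $|\alpha|$-many earlier $N_\beta, P_\beta$ according to the value $c(\{\alpha,\beta\})$; over a countable $D$ there is simply no room for encoding a $2^\omega\times 2^\omega$ colouring this way.

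The paper's construction lives over the \emph{uncountable} base set $\R$ (so $K$ is not separable and {\sf OCA} does not apply) and does not use Sierpi\'nski's colouring at all. The almost disjoint sets $S_\alpha$ are ranges of sequences of reals converging to pairwise distinct limits $l_\alpha$, chosen by a transfinite diagonalization over all ``Cantor nets'' --- systems $(t_s)_{s\in\{0,1\}^{<\omega}}$ with $t_s$ in the dyadic subinterval indexed by $s$. Given a countable decomposition $S_{C_0(K)}=\bigcup_n\X_n$, one manufactures a single Cantor net witnessing failure, finds it as some $\mathcal T_\alpha$ in the enumeration, and obtains $1_{S_\alpha}-1_{\{l_\alpha\}}$ at distance $2$ from some $1_{S_\beta}-1_{\{l_\beta\}}$ in the same piece $\X_{n_0}$. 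The no-$(1+\varepsilon)$-separated argument does proceed via $\Delta$-systems much as you outline, but its closing step is specific to the base set being $\R$: after the reductions one uses sequential compactness to pick $\beta_n$ with the limits $l_{\xi^{\beta_n}_i}$ converging in $\R$, and then chooses a later $\alpha$ whose limits avoid these accumulation points, forcing the needed non-membership $l_{\xi^{\beta_n}_i}\notin S_{\xi^\alpha_i}$.
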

\begin{proof}
The Banach space as in the proposition is isometric to a subspace  $\X$ of $\ell_\infty(\R)$ generated by $c_0(\R)$ and
vectors $1_{S_\alpha}$ for $\alpha<2^\omega$, where $S_\alpha$s are almost disjoint (i.e.,
with finite pairwise intersections). In fact, each
$S_\alpha$ 
is the set of all terms of a sequence of reals converging to $l_\alpha\in \R$ and not containing $l_\alpha$,
and $l_\alpha$s are distinct for $\alpha<2^\omega$. 
The appropriate values of $S_\alpha$s are constructed by transfinite recursion.
 
 Since the generators of $\X$ are closed under taking products in $\ell_\infty(\R)$, 
the Banach space 
$\X$ is a Banach subalgebra of $\ell_\infty(\R)$ and so by the Gelfand theorem it is 
isometric to a Banach space of the form
$C_0(K)$ for some locally compact $K$. The standard arguments like in the case
of almost disjoint families of subsets of $\N$ show that $K$
 is a scattered space of Cantor-Bendixson height $2$
  (with all points of $\R$ being isolated) and weight
   $2^\omega$ (see e.g., \cite{supermrowka}). Such versions of Johnson-Lindenstrauss spaces induced
 by almost disjoint families on uncountable sets rather than on $\N$
 were investigated  for example in \cite{wieslaw}.

To construct $S_\alpha$s (and $l_\alpha$s) first define a Cantor  net.  Consider closed intervals
$I_{s}$ for $s\in \bigcup_{n\in \N}\{0,1\}^n$, where $I_{\emptyset}=[0,1]$
and $I_{s^\frown0}$ and $I_{s^\frown1}$ are the leftside half and the rightside half
of $I_s$ for each $s\in \bigcup_{n\in \N}\{0,1\}^n$.  A Cantor net
will be a sequence $\mathcal T=( t_s: s\in \bigcup_{n\in \N}\{0,1\}^n)$ of  elements
of the interval $[0,1]$ such that $t_s\in int(I_s)$.  Note that for any $x\in \{0,1\}^\N$
the sequence $(t_{x|n})_{n\in \N}$ converges as it is Cauchy since 
$\{t_{x|n}: n\geq m\}\subseteq I_{s|m}$ for every $m\in \N$. Moreover
the limits of two sequences $(t_{x|n})_{n\in \N}$ and $(t_{x'|n})_{n\in \N}$
for distinct $x, x'\in \{0,1\}^\N$ are different if $x$ and $x'$ have infinitely many $0$s
and infinitely many $1$s.

Enumerate all Cantor nets $(\mathcal T_\alpha)_{\alpha<2^\omega}$.
Now suppose that $S_\beta$ and $l_\beta$ were constructed for all $\beta<\alpha$.  
To construct $S_\alpha$ and $l_\alpha$ consider
 $\mathcal T_\alpha=( t_s^\alpha: s\in \bigcup_{n\in \N}\{0,1\}^n)$. 
Let $S_\alpha=(t_{x|n}^\alpha)_{n\in \N}$ and $l_\alpha$ its limit  for $x\in \{0,1\}^\N$
such that the limit $l_\alpha$ is not among the points $l_\beta$ for $\beta<\alpha$
and not among the points $( t_s^\alpha: s\in \bigcup_{n\in \N}\{0,1\}^n)$
(in particular, $l_\alpha\not\in S_\alpha$)
and not among the end-points of the intervals $I_s$.
This completes the construction of $S_\alpha$s and $l_\alpha$s. Now we proceed to
proving the properties of $\X$.

First note that if $S_\X=\bigcup_{n\in \N}\X_n$, 
then there is
$n\in \N$ and $x, y\in \X_n$ such that $\|x-y\|_\infty=2$. We will find such two elements
in the set
$\{1_{S_\alpha}-1_{\{l_\alpha\}}:\alpha<2^\omega\}\subseteq S_\X$.
 To see this consider
a Cantor net $\mathcal T=( t_s: s\in \bigcup_{n\in \N}\{0,1\}^n)$
satisfying the following condition for every $n\in \N$: if there is $\beta<2^\omega$ and 
$1_{S_\beta}-1_{\{l_\beta\}}\in \X_n$ such that $l_\beta\in int(I_s)$ and $|s|=n$, then $t_s=l_\beta$
for some $\beta<2^\omega$ such that $1_{S_\beta}-1_{\{l_\beta\}}\in \X_n$. 
 Let $\alpha<2^\omega$ be such that
$\mathcal T=\mathcal T_\alpha$ and consider  $S_\alpha$.
Let $n_0\in \N$ be such that $1_{S_\alpha}-1_{\{l_\alpha\}}\in \X_{n_0}$.

By the construction $S_\alpha=(t_{x|n}^\alpha)_{n\in \N}$   for some $x\in \{0,1\}^\N$.
 Since $l_\alpha\in int(I_{x|n_0})$ 
we have that $t_{x|n_0}^\alpha=l_{\beta}$ for some $\beta<2^\omega$ and 
$1_{S_{\beta}}-1_{\{l_{\beta}\}}\in \X_{n_0}$. 

Moreover
$\alpha\not=\beta$ because $l_{\beta}\in S_\alpha$. Hence
$$\|(1_{S_\alpha}-1_{\{l_\alpha\}})-(1_{S_{\beta}}-1_{\{l_{\beta}\}})\|\geq
 |(1_{S_\alpha}(l_{\beta})-0)-(0-1_{\{l_{\beta}\}}(l_{\beta}))|=|1-(-1)|=2.$$

Now suppose that $(f_\alpha)_{\alpha<\omega_1}\subseteq S_\X$. Choose $\varepsilon>0$.
We aim at finding $\alpha<\beta<\omega_1$ such that $\|f_\alpha-f_\beta\|<1+\varepsilon$.
Find $g_\alpha$ with $\|g_\alpha\|\leq 1$ such that 
$$g_\alpha= h_\alpha+\sum_{i<j_\alpha} a^\alpha_i1_{S_{\xi^\alpha_i}\setminus F^\alpha_i},$$
where $h_\alpha\in c_{00}(\R)$ has all rational values, $a^\alpha_i\in \Q$, $j_\alpha\in \N$ and
$\xi^\alpha_i\in \omega_1$ and $supp(h_\alpha)$, $S_{\xi^\alpha_0}\setminus F^\alpha_0, \dots,
S_{\xi^\alpha_{j-1}}\setminus F^\alpha_{j-1}$ are pairwise disjoint and
$\|g_\alpha-f_\alpha\|<\varepsilon/2$ for all $\alpha<\omega_1$.
In particular, as $\|g_\alpha\|\leq 1$ we have that $|a_i^\alpha|\leq 1$
for all $\alpha<\omega_1$ and $i<j_\alpha$.
   
It is enough to find $\alpha<\beta<2^\omega$ such that $\|g_\alpha-g_\beta\|\leq 1$.
By passing to an uncountable subset we may assume that $j_\alpha=j\in \N$ for each $\alpha<\omega_1$.
By the $\Delta$-system Lemma, by passing to an uncountable subset we may assume that
$\{\{\xi^\alpha_0, \dots, \xi^\alpha_{j-1}\}: \alpha<\omega_1\}$ forms a $\Delta$-system
 with root $\Delta\subseteq2^\omega$ and for
 each element of $\Delta$ there is $i<j$ such that this element is $\xi^\alpha_i$
 with $a^\alpha_i=a^\beta_i$  and $F^\alpha_i=F^\beta_i$ for all $\alpha, \beta<\omega_1$. 
 By another application of the $\Delta$-system lemma we may assume that
the supports of $h_\alpha$s form a $\Delta$-system with root $\Delta'\subseteq\R$
 on which $h_\alpha$'s are equal. So if we consider the difference $g_\alpha-g_\beta$,
 the parts corresponding to the roots of $\Delta$-systems will cancel.
 Moreover subtracting this common part 
 $$h_\alpha|\Delta'+\sum_{\xi^\alpha_i\in \Delta} a^\alpha_i1_{S_{\xi^\alpha_i}\setminus F^\alpha_i}$$
from $g_\alpha$  does not increase the norm as $supp(h_\alpha)$, $S_{\xi^\alpha_0}\setminus F^\alpha_0, \dots,
S_{\xi^\alpha_{j-1}}\setminus F^\alpha_{j-1}$ are pairwise disjoint.
 This means that we may assume that $\{\{\xi^\alpha_0, \dots, \xi^\alpha_{j-1}\}: \alpha<\omega_1\}$
 is  pairwise disjoint family as well as the supports of $h_\alpha$s
 form a pairwise disjoint family.
 
For each $\alpha<\omega_1$ find pairwise disjoint open intervals $J^\alpha_i\subseteq \R$
 for $i<j$ with rational end-points
such that $l_{\xi^\alpha_i}\in J^\alpha_i$ and for all $k<j$ and $k\not=i$
we have
$$(\supp(h_\alpha)\cup S_{\xi^\alpha_k})\cap J^\alpha_i\subseteq\{l_{\xi^\alpha_i}\}.\leqno (*)$$
This can be done since $S_{\xi^\alpha_k}$ converges to $l_{\xi^\alpha_k}\not=l_{\xi^\alpha_i}$
and $supp(h_\alpha)$ is finite.

By passing to an uncountable set we may assume that
$a^\alpha_i=a_i$, $J^\alpha_i=J_i$ for all $\alpha<\omega_1$. Let
$J=\bigcup_{i<j}J_i$. Note that $supp(g_\alpha)\cap(\R\setminus J)$ 
is finite for each $\alpha<\omega_1$. So by passing to an uncountable
subset we may assume that the sets $supp(g_\alpha)\cap(\R\setminus J)$
form a $\Delta$-system and $g_\alpha$'s agree on its root and
so by Lemma \ref{c0-delta} the difference $g_\alpha-g_\beta$ when
restricted to $\R\setminus J$
has its norm not bigger than $1$ for every $\alpha<\beta<\omega_1$. It follows
that it is enough to find $\alpha<\beta<\omega_1$ such that 
$\|g_\alpha|J-g_\beta|J\|\leq 1$.

Now it is enough to find distinct $\alpha, \beta<\omega_1$
such that for every $i<j$ we have 
$$l_{\xi^\beta_i}\not\in S_{\xi^\alpha_i}\ \hbox{and}
\  l_{\xi^\alpha_i}\not\in S_{\xi^\beta_i}.\leqno (**)$$
This is because then, as $supp(g_\alpha)\cap J_i\subseteq S_{\xi^\alpha_i}\cup\{l_{\xi^\alpha_i}\}$
by (*),
the supports of the elements $g_\alpha|J$ and $g_\beta|J$ can intersect
only on the sets 
$S_{\xi^\alpha_i}\cap S_{\xi^\beta_i}\cap J_i$ for
$i<j$ where $g_\alpha$ and $g_\beta$ have the same values $a_i$,
which gives that $\|g_\alpha-g_\beta\|\leq\max(\|g_\alpha\|, \|g_\beta\|)\leq 1$ and consequently
$\|f_\alpha-f_\beta\|< 1+\varepsilon$ as required.

To obtain (**) first note that since $S_\alpha$s are
countable and we have uncountably many
distinct $l_{\xi^\alpha_i}$s, by passing to an uncountable subset we may assume that
$l_{\xi^\alpha_i}\not\in S_{\xi^\beta_i}$ for any $\beta<\alpha<\omega_1$ and any $i<j$.
Now find distinct $\beta_n<\omega_1$ such that 
for every $i<j$ the sequence $l_{\xi^{\beta_n}_i}$ converges to $r_i\in \R$
and find $\sup\{\beta_n: n\in \N\}<\alpha<\omega_1$ such that for every $i<j$ we have
$l_{\xi^{\alpha}_i}\not=r_{i}$. Then 
there is $n\in \N$ such that $l_{\xi^{\beta_n}_i}\not\in S_{\xi^\alpha_i}$ for each $i<j$,
as required for (**).

To see that $S_{C_0(K)}$ admits an uncountable $1$-equilateral set one can use 
the proof of Proposition \ref{C0K-total} but we can also  see that
the set $\{1_{S_\alpha}: \alpha<2^\omega\}$  is $1$-equilateral.

\end{proof}
 
 The following is the locally compact version  of the result
on the behavior of $C(K)$ spaces for compact $K$ from Section 5 of \cite{equi}
(our Theorem \ref{thm-merc} (3)). Compared to it we need to add
one alternative behavior in item (2) below. This alternative already takes place in {\sf ZFC} for
some separable locally compact $K$ (not only for $c_0(\omega_1)$ corresponding to a nonseparable discrete space)
 for this see Proposition \ref{C0K-strange}. The following result also requires a hypothesis
on the weight of $K$ (the first alternative does not require such hypothesis as
the existence of an uncountable $2$-equilateral set is equivalent to the existence of such a
set in a subspace of density $\omega_1<2^\omega$, under the negation of {\sf CH}).
This additional hypothesis on the weight is necessary as seen in Proposition \ref{C0K-counter}.

\begin{proposition}\label{C0K-small} Assume {\sf MA} and
the negation of {\sf CH}. Let $K$ be a locally compact space of weight less than $2^\omega$.
Then either 
\begin{enumerate}
\item $S_{C_0(K)}$ admits an uncountable $2$-equilateral  set, hence the sphere $S_{C_0(K)}$
is dichotomous with ${\mathsf K}^+(S_{C_0(K)})=[0, 2)$ and
${\Sigma}(S_{C_0(K)})=\{2\}$ or else 
\item for every $\varepsilon>0$ the sphere
$S_{C_0(K)}$ is the union of countably many sets of diameters not bigger than $1+\varepsilon$.
Hence the sphere $S_{C_0(K)}$
is dichotomous with ${\mathsf K}^+(S_{C_0(K)})=[0, 1]$ and
${\Sigma}(S_{C_0(K)})=(1,2]$; $S_{C_0(K)}$ admits
an uncountable $1$-equilateral set but no $r$-equilateral set for $r>1$.
\end{enumerate}
\end{proposition}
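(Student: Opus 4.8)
The plan is to prove the dichotomy by splitting on whether $K$ has a nonmetrizable compact subspace, so that the two cases of the statement correspond to the two cases of this split. If $K$ is metrizable then $C_0(K)$ is separable and alternative (2) holds in a degenerate way, so assume $K$ is nonmetrizable. \emph{Case A: $K$ has a nonmetrizable compact subspace $K'$.} Then $C(K')$ is a nonseparable space of continuous functions on a compact space, so Theorem \ref{thm-merc} (3) (this is where {\sf MA} $+\ \neg${\sf CH} enters) gives an uncountable $2$-equilateral set $\{g_\alpha:\alpha<\omega_1\}\subseteq S_{C(K')}$. Since $K'$ is a compact subset of the locally compact Hausdorff $K$, it is contained in an open set with compact closure; applying the Tietze extension theorem on that closure and multiplying by a Urysohn function equal to $1$ on $K'$ and vanishing off a compact neighbourhood of $K'$, I obtain norm-preserving extensions $\tilde g_\alpha\in S_{C_0(K)}$ with $\tilde g_\alpha|_{K'}=g_\alpha$; then $\|\tilde g_\alpha-\tilde g_\beta\|_\infty\geq\|g_\alpha-g_\beta\|_\infty=2$, so $\{\tilde g_\alpha\}$ is an uncountable $2$-equilateral set in $S_{C_0(K)}$. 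The constants in (1) follow as usual: $diam(S_{C_0(K)})=2$ by Proposition \ref{riesz}, the set is $(r+)$-separated for every $r<2$ so ${\mathsf K}^+(S_{C_0(K)})=[0,2)$, and any countable cover by sets of diameter $<2$ meets it in at most one point per piece so $\Sigma(S_{C_0(K)})=\{2\}$, whence the sphere is dichotomous.

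\emph{Case B: every compact subspace of $K$ is metrizable.} Here I claim alternative (2). Fix $\varepsilon>0$, put $\varepsilon'=\varepsilon/2$, and fix a $\tfrac{\varepsilon}{4}$-dense set $\mathcal D\subseteq S_{C_0(K)}$ of functions with compact support, of cardinality $w(K)<2^\omega$. The key estimate is: if $f,g\in S_{C_0(K)}$ satisfy $\{x:f(x)\geq\varepsilon'\}\cap\{x:g(x)\leq-\varepsilon'\}=\emptyset=\{x:g(x)\geq\varepsilon'\}\cap\{x:f(x)\leq-\varepsilon'\}$, then $\|f-g\|_\infty\leq 1+\varepsilon'$; this is proved by a pointwise case split — at a point of equal sign the difference is $\leq1$, at a point $x$ of opposite sign, say $f(x)>0>g(x)$, the hypothesis forces $f(x)<\varepsilon'$ or $|g(x)|<\varepsilon'$, hence $|f(x)-g(x)|=f(x)+|g(x)|\leq1+\varepsilon'$. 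So it suffices to partition $\mathcal D$ into countably many sets each of whose pairs satisfies this hypothesis. I would then let $\PP$ be the partial order of all finite partial functions $p:\mathcal D\to\N$ such that every fibre $p^{-1}(n)$ satisfies the hypothesis above, ordered by reverse inclusion; since $|\PP|\leq w(K)<2^\omega$, Theorem \ref{ma} applies to $\PP$.

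If $\PP=\bigcup_{k\in\N}\PP_k$ with each $\PP_k$ centered, then $g_k:=\bigcup\PP_k$ is a function (pairwise compatible conditions agree on common domains), each fibre $g_k^{-1}(n)$ satisfies the estimate's hypothesis (witnessed by a common lower bound of two conditions), and $\bigcup_k\dom(g_k)=\mathcal D$ since $\{(f,0)\}\in\PP$ for every $f\in\mathcal D$; so $\mathcal D$ is a countable union of the sets $g_k^{-1}(n)$, each of diameter $\leq1+\varepsilon'$ by the key estimate, and absorbing each point of $S_{C_0(K)}$ into the enlargement of a piece containing a function within $\tfrac{\varepsilon}{4}$ of it yields a cover of $S_{C_0(K)}$ by countably many sets of diameter $\leq1+\varepsilon$, which is (2). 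The other alternative — an uncountable antichain in $\PP$ — is the step I expect to be the main obstacle. A $\Delta$-system reduction on the domains together with pigeonholing on the restrictions to the root and on the assignment pattern on the fresh part should produce uncountably many $f_\alpha\in\mathcal D$ with $\|f_\alpha-f_\beta\|_\infty>1+\varepsilon'$ for $\alpha\neq\beta$, i.e.\ an uncountable $(1+\varepsilon')$-separated subset of $S_{C_0(K)}$; equivalently, writing $A_\alpha=\{x:f_\alpha(x)\geq\varepsilon'\}$, $B_\alpha=\{x:f_\alpha(x)\leq-\varepsilon'\}$ (disjoint compact sets), an uncountable family of pairs that pairwise \emph{cross} ($A_\alpha\cap B_\beta\neq\emptyset$ or $A_\beta\cap B_\alpha\neq\emptyset$). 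One must rule this out in Case B; I would argue as in Section 5 of \cite{equi}: in a compact \emph{metrizable} space any uncountable family of disjoint closed pairs is, by a countable-base diagonalization, simultaneously separated by one of countably many open pairs and therefore fails to pairwise cross, so — after organizing the crossings inside compact subspaces of $K$ and using the compactness of the supports — such an uncountable crossing family forces a nonmetrizable compact subspace of $K$, contradicting the hypothesis of Case B. Hence the antichain alternative cannot occur, $\PP$ is $\sigma$-centered, and (2) holds.

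It remains to read off the constants in (2). The cover just produced gives $r\in\Sigma(S_{C_0(K)})$ for every $r>1$ (take $\varepsilon<r-1$) and shows no uncountable $(1+\eta)$-separated set exists for $\eta>0$, so ${\mathsf K}^+(S_{C_0(K)})\subseteq[0,1]$; Proposition \ref{lc-1+} gives an uncountable $(1+)$-separated set, so $1\in{\mathsf K}^+(S_{C_0(K)})$, and Proposition \ref{riesz} gives $[0,1)\subseteq{\mathsf K}^+(S_{C_0(K)})$, whence ${\mathsf K}^+(S_{C_0(K)})=[0,1]$; Proposition \ref{sets-constants} then forces $\Sigma(S_{C_0(K)})=(1,2]$ and ${\mathsf K}^+(S_{C_0(K)})\cup\Sigma(S_{C_0(K)})=[0,2]$, so $S_{C_0(K)}$ is dichotomous. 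Since $K$ has no nonmetrizable compact subspace, Proposition \ref{C0K-equi} (whose proof in fact produces a $1$-equilateral set) gives an uncountable $1$-equilateral subset of $S_{C_0(K)}$; and no uncountable $r$-equilateral set exists for $r>1$, since it would be an uncountable $(1+\tfrac{r-1}{2})$-separated set, contradicting ${\mathsf K}^+(S_{C_0(K)})=[0,1]$.
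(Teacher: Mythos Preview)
Your Case A is fine, but Case B has a genuine gap exactly where you flag it. From an uncountable antichain in your $\PP$ you want to extract an uncountable pairwise-crossing family of compact pairs $(A_\alpha,B_\alpha)$ and then argue that such a family forces a nonmetrizable compact subspace of $K$. Neither step is justified. For the first, incompatibility of two of your conditions after a $\Delta$-system/pigeonhole reduction only tells you that \emph{some} fibre of the merged colouring contains a crossing pair; it does not hand you a single uncountable set of functions that pairwise cross. For the second, your ``countable-base diagonalization'' argument works only inside a fixed compact metrizable space, but the sets $A_\alpha,B_\alpha$ live in different compact supports and there is no single compact subspace of $K$ containing them all; the hypothesis that every compact subspace is metrizable gives you nothing global to pigeonhole against. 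I do not see how to rescue this without essentially reproving the result.

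The paper avoids the whole difficulty by not splitting on metrizability at all and by choosing a partial order whose antichain alternative \emph{produces} (1) rather than needing to be ruled out. Take a basis $\B$ of size $w(K)$ closed under finite unions and let
\[
\PP=\{(U,V):U,V\in\B,\ \overline U\cap\overline V=\emptyset\},\qquad (U',V')\le(U,V)\iff U'\supseteq U,\ V'\supseteq V.
\]
Two conditions are incompatible exactly when $\overline U\cap\overline{V'}\neq\emptyset$ or $\overline{U'}\cap\overline V\neq\emptyset$, so an uncountable antichain immediately yields an uncountable $2$-equilateral set of Urysohn-type functions (take $f_p$ with $f_p|_{\overline U}=-1$, $f_p|_{\overline V}=1$). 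If there is no such antichain, $\PP$ is ccc and of size $<2^\omega$, so Theorem~\ref{ma} makes it $\sigma$-centred; assigning to each $f$ in a dense set the condition $p_f^\varepsilon=(\{f<-\varepsilon\},\{f>\varepsilon\})$ and reading off the centred pieces gives your ``key estimate'' cover directly. The point is that the dichotomy in the statement is exactly the ccc/non-ccc dichotomy for this $\PP$, not a metrizability split; once you see this, both cases become one-liners and no crossing-family argument is needed.
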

\begin{proof}  Fix $K$ as in the theorem.  Let $\kappa$ be the weight of $K$. Elements of an
open basis of $K$ of cardinality $\kappa$ having compact closures still form a basis for
$K$. So using the Tietze theorem for locally compact spaces we can find
a subfamily of $C_0(K)$ of cardinality $\kappa$ which separates points and
vanishes nowhere. It follows from Stone-Weierstrass theorem for locally compact
spaces that the algebra $\mathcal G$ generated in $C_0(K)$ 
by this family is norm dense in $C_0(K)$ (and of cardinality $\kappa$). Let $\F\subseteq S_{C_0(K)}$ be
the family of  the normalizations  of all elements of $\mathcal G$. It follows
that $\F$ is norm dense in $S_{C_0(K)}$ and of cardinality $\kappa$. 

Let $\B$ be a family of open sets with compact closures  of $K$ which is closed with respect 
to  taking finite unions and
contains all sets of the form $\{x\in K: f(x)<q\}$ for rationals $q<0$ and $f\in \F$
and all sets of the form $\{x\in K: f(x)>q\}$ for rationals $q>0$ and $f\in \F$. It is clear that
 $\B$ is of cardinality $\kappa$
less than $2^\omega$. 

 Consider the partial order
$$\PP=\{(U, V): U, V\in \B, \ \overline U\cap \overline V=\emptyset\}$$ with the order
$(U', V')\leq (U, V)$ if and only if $U'\supseteq U$ and $V'\supseteq V$.
Note that $(U, V)$ is incompatible with $(U', V')$ if and only
if $\overline U\cap \overline V'\not=\emptyset$ or $\overline U'\cap \overline V\not=\emptyset$
as otherwise $(U\cup U', V\cup V')\leq (U, V), (U', V')$ since $\B$ is closed under taking finite unions.

For any $p=(U, V)\in \PP$ we can define a function $f\in C_0(K)$ satisfying $f|\overline U=-1$, $f|\overline V=1$
and $\|f\|_\infty=1$.  Note that, by the above observation about the incompatibility in $\PP$,
 an uncountable antichain $\A\subseteq \PP$ give rise
to uncountable $\{f_p: p\in \A\}$ which is $2$-equilateral.

So either (1) of the proposition holds or  $\PP$ satisfies the c.c.c. Under {\sf MA} this means that
$\PP$ is $\sigma$-centered by the hypothesis on the cardinality of $\PP$ (Theorem \ref{ma}).
So let $\PP=\bigcup_{n\in \N}{\PP_n}$, where each $\PP_n$ is centered.
For every $f\in \F$ for every rational $\varepsilon>0$ consider
$$p_f^\varepsilon=(\{x\in K: f(x)<-\varepsilon\}, \{x\in K: f(x)>\varepsilon\})\in \PP.$$
Note that if $p_f^\varepsilon, p_g^\varepsilon\in \PP_n$ 
for some $n\in \N$, then $f(x)>\varepsilon$ implies
$g(x)\geq -\varepsilon$ and $g(x)>\varepsilon$ implies
$f(x)\geq -\varepsilon$ for any $x\in K$, so
$\|f-g\|\leq\max(1+\varepsilon, 2\varepsilon)$. So if $2\varepsilon\leq 1+\varepsilon$,
we obtain
that   $\F\subseteq S_{C_0(K)}$ is the union of countably many sets 
$$\X_n=\{f\in \F: p_f^\varepsilon\in \PP_n\}$$
for $n\in \N$ of diameter at most $1+\varepsilon$. 
Since $\varepsilon>0$ is an arbitrary rational and
$\F$ is dense in $S_{C_0(K)}$, we conclude that
for every $\varepsilon>0$ the unit sphere $S_{C_0(K)}$
is the countable union of sets of diameters less that $1+\varepsilon$, so (2) of the proposition holds.
\end{proof}

\section{concluding remarks}

Many natural questions besides $(S')$ and $(E')$ from the introduction remain open.
For example little is known on the existence of uncountable equilateral  sets in Banach
spaces of densities between $2^\omega$ and $2^{2^\omega}$. Also
WCG classes of spaces considered in Section 4 are very particular and it is unclear
to what extent they represent the general behavior of WLD Banach spaces. 
It would be interesting to have some general Ramsey-type principles
for these classes of Banach spaces like the principles obtained from {\sf OCA}
and {\sf MA} with the negation of {\sf CH} for subspaces of $\ell_\infty$
and spaces of continuous functions. One should note here
that the unit spheres of nonseparable reflexive Banach spaces admit uncountable
$(1+\varepsilon)$-separated sets by a result of \cite{hkr} but at least
under {\sf CH} there is a hilbertian nonseparable Banach space
which does not admit an uncountable equilateral set by a result of \cite{pk-kamil}.

We are also quite far from having
a list of all possible sets ${\mathsf K}^+(S_\X)$ and ${\Sigma}(S_\X)$
for Banach spaces $\X$ (depending on properties of $\X$). Only 
the case of spaces of continuous functions is complete under an
additional set-theoretic hypothesis (Theorem \ref{thm-merc}(3) and Proposition \ref{C0K-small}).
The notions of ${\mathsf K}^+(S_\X)$, $\Sigma(S_\X)$  
clearly do not capture all potentially possible relevant phenomena.
For example we do not know if it is possible that there is 
a Banach space $\X$ with the unit sphere
$S_\X$ where there is an uncountable $1$-separated set (or even
an uncountable $1$-equilateral set)
but no uncountable $(1+)$-separated set. We even do not know
if there is a unit sphere $S_\X$ with ${\mathsf k}^+(S_\X)=1$
which is almost dichotomous but not dichotomous.
Another topic not touched in this paper concerns the uncountable cardinalities
of separated or equilateral sets depending in the density of the space (see e.g. \cite{cuth}).

\bibliographystyle{amsplain}

\begin{thebibliography}{120}

\bibitem{kalton} F. Albiac, N. Kalton, 
\emph{Topics in Banach space theory}. 2nd revised and updated edition. 
Graduate Texts in Mathematics 233. Cham: Springer (2016). 

\bibitem{antonio}  A. Avil\'es, S. Todorcevic, \emph{Zero subspaces of polynomials on $\ell_1(\Gamma)$}.
 J. Math. Anal. Appl. 350 (2009), no. 2, 427--435.


\bibitem{brass}  P. Brass, \emph{On equilateral simplices in normed spaces}. 
Beitrage Algebra Geom. 40 (1999), no.
2, 303--307.




\bibitem{castillo} J. Castillo, P.  Papini, 
\emph{On Kottman's constants in Banach spaces}. Function spaces IX, 75--84,
Banach Center Publ., 92, Polish Acad. Sci. Inst. Math., Warsaw, 2011. 

\bibitem{cuth} M. C\'uth, O. Kurka, B. Vejnar, \emph{Large separated sets of unit vectors in
Banach spaces of continuous functions}, Colloq. Math. 157 (2019), no. 2, 173--187.



\bibitem{dancer} E. Dancer, B. Sims, \emph{Weak star separability}, Bull. Austral. Math. Soc,
20 (1979), no. 2,  253--257.

\bibitem{dekster} B. V. Dekster, \emph{Simplexes with prescribed edge lengths in Minkowski and Banach spaces}. Acta
Math. Hungar. 86 (2000), no. 4, 343--358.

\bibitem{elton-odell} J. Elton and E. Odell, \emph{The unit ball of every infinite-dimensional normed linear space
 contains a $(1+\varepsilon)$-separated sequence}, Colloq. Math. 44 (1981), 105--109. 
 
\bibitem{engelking} R. Engelking, \emph{General topology}. 
Second edition. Sigma Series in Pure Mathematics, 6. Heldermann Verlag, Berlin, 1989.

 \bibitem{erdos-p} P. Erd\"os, D. Preiss, \emph{Decomposition of spheres in Hilbert spaces}. 
 Comment. Math. Univ. Carolinae 17 (1976), no. 4, 791--795. 
 
\bibitem{farah}  I. Farah, \emph{Analytic quotients: theory of liftings for quotients over analytic ideals on the integers}. Mem.
Amer. Math. Soc. 148 (2000), no. 702.
 
 

\bibitem{fabianetal}  M. Fabian, P. Habala, P. H\'ajek, V. Montesinos, J. Pelant, V. Zizler,
\emph{Functional analysis and infinite-dimensional geometry}. 
CMS Books in Mathematics/Ouvrages de Mathematiques de la SMC, 8. Springer-Verlag, New York, 2001. 

\bibitem{feng} Q. Feng, 
\emph{Homogeneity for open partitions of pairs of reals}.
Trans. Amer. Math. Soc. 339 (1993), no. 2, 659--684. 

\bibitem{wieslaw} J. Ferrer, P. Koszmider, W.  Kubi\'s,
\emph{Almost disjoint families of countable sets and separable complementation properties}. 
J. Math. Anal. Appl. 401 (2013), No. 2, 939--949. 

\bibitem{fk} G.M.Fichtenholz, L.V.Kantorovich, \emph{Sur les operations 
lineares dans l'espace de fonctions bornees}, Studia Math. 5 (1935), 69--98


\bibitem{furi} M. Furi and A. Vignoli, \emph{On a property of the unit sphere in a linear normed space}, Bull. Acad.
Polon. Sc. Ser. Sc. Math. Astr. Phys. 18 (1970), 333-334.

\bibitem{geschke-potent} S.  Geschke, \emph{Potential continuity of colorings}. 
Arch. Math. Logic 47 (2008), no. 6, 567--578.

\bibitem{geschke-dist}  S. Geschke, \emph{Low-distortion embeddings of
 infinite metric spaces into the real line}. Ann. Pure Appl. Logic 157 (2009), no. 2-3, 148-160. 
 
 \bibitem{godefroy} G.  Godefroy, \emph{Compacts de Rosenthal.} Pacific J. Math. 91 (1980), no. 2, 293--306. 
 
 \bibitem{determinacy} G. Godefroy, A.  Louveau, 
\emph{Axioms of determinacy and biorthogonal systems}.
Israel J. Math. 67 (1989), no. 1, 109--116. 

\bibitem{godefroy-tal} G. Godefroy, M. Talagrand, 
\emph{Espaces de Banach repr\'esentables.}
Isr. J. Math. 41, 321--330 (1982). 


\bibitem{wld} A. Gonz\'alez, V. Montesinos, \emph{A note on weakly Lindel\"of determined Banach spaces}, Czechoslov. Math. J. 59 (134) (2009) 613--621.

\bibitem{ad-kottman} O. Guzm\'an, M. Hru\v s\'ak, P.  Koszmider, 
\emph{Almost disjoint families and the geometry of nonseparable spheres}.
{\tt Arxiv:2212.05520}. 

\bibitem{hajek} P. H\'ajek,   V. Montesinos Santaluc\'\i a, J.  Vanderwerff, V.  Zizler,
\emph{Biorthogonal systems in Banach spaces}. 
CMS Books in Mathematics/Ouvrages de Math\'ematiques de la SMC
New York, NY: Springer 2008.

\bibitem{hkr}  P. H\'ajek, T. Kania, T. Russo, \emph{Separated sets and Auerbach systems in Banach spaces}. 
Trans. Amer. Math. Soc. 373 (2020), no. 10, 6961--6998.







\bibitem{jech} T. Jech, \emph{Set theory}. The third millennium edition, revised and expanded.
Springer Monographs in Mathematics. Springer-Verlag, Berlin, 2003.


  
  \bibitem{kechris} A. Kechris, \emph{Classical descriptive set theory}. 
  Graduate Texts in Mathematics, 156. Springer-Verlag, New York, 1995.

\bibitem{tt} T. Kania, T. Kochanek, \emph{Uncountable sets of unit vectors that are separated
by more than $1$}, Studia Math. 232 (2016), no. 1, 19--44.



\bibitem{rinot} M. Kojman, A. Rinot, J. Steprans, \emph{Ramsey theory over partitions III: Strongly Luzin sets
and partition relations}. Proc. Amer. Math. Soc. 151 (2023), no. 1, 369--384.

\bibitem{chang}  P. Koszmider, \emph{Projections in weakly compactly generated Banach spaces and Chang's conjecture}. 
J. Appl. Anal. 11 (2005), no. 2, 187--205.

\bibitem{equi} P.  Koszmider, \emph{Uncountable equilateral sets in Banach spaces of the form $C(K)$}. 
Israel J. Math. 224 (2018), no. 1, 83--103. 



\bibitem{supermrowka} P. Koszmider, N. J.  Laustsen, 
\emph{A Banach space induced by an almost disjoint family, admitting only few operators and decompositions}. 
Adv. Math. 381 (2021), 107613. 


\bibitem{pk-kottman} P. Koszmider, \emph{Banach spaces in which large subsets of spheres concentrate}
To appear in Journal of the Inst. of Math. Jussieu.
{\tt arXiv: 2104.05335}.

\bibitem{pk-hugh} P. Koszmider, H. Wark, \emph{Large Banach spaces with no infinite equilateral sets}. 
Bull. Lond. Math. Soc. 54 (2022), no. 6, 2066--2077.

\bibitem{pk-kamil} P. Koszmider, K. Ryduchowski, 
\emph{Equilateral and separated sets in some Hilbert generated Banach spaces}, To appear in Proc. Amer. Math. Soc.
{\tt arXiv:2301.07413}.

\bibitem{kottman} C. Kottman, 
\emph{Subsets of the unit ball that are separated by more than one}.
Studia Math. 53 (1975), no. 1, 15--27. 




\bibitem{matousek} J. Matou\v sek, \emph{Ramsey-like properties for 
bi-Lipschitz mappings of finite metric spaces}, Comment. Math. Univ. Carolin. 33(3) (1992) 451--463. 

\bibitem{mer-ck}  S. Mercourakis, G.  Vassiliadis, \emph{Equilateral sets in Banach spaces of
 the form $C(K)$}. Studia Math. 231 (2015), no. 3, 241--255. 
 
\bibitem{poor} M. Po\'or, A. Rinot, \emph{A Shelah group in ZFC}, {\tt arXiv:2305.11155}.
 
 \bibitem{preiss-r}  D. Preiss,  V. R\"odl, \emph{Note on decomposition of spheres in Hilbert spaces}. 
 J. Combin. Theory Ser. A 43 (1986), no. 1, 38--44. 
 
\bibitem{riesz} F. Riesz, \emph{\"Uber lineare Funktionalgleichungen}, Acta Math. 41 (1916), 71--98.


\bibitem{shelah} S. Shelah, \emph{A graph which embeds all small graphs on any large set of vertices.} Ann. Pure
Appl. Logic 38 (1988), no. 2, 171--183.

\bibitem{extra}  S. Shelah, J. Steprans, \emph{Extraspecial p-groups}. 
Ann. Pure Appl. Logic 34 (1987), no. 1, 87--97.

\bibitem{ss} S. Shelah, J.  Steprans, 
\emph{A Banach space on which there are few operators}.
Proc. Amer. Math. Soc. 104 (1988), no. 1, 101--105. 
 
\bibitem{terenzi}  P. Terenzi, \emph{Equilater sets in Banach spaces}. Boll. Un. Mat. Ital. A (7) 3 (1989), no. 1, 119--124. 

\bibitem{stevo-acta}  S. Todorcevic, \emph{Partitioning pairs of countable ordinals}. 
Acta Math. 159 (1987), no. 3-4, 261--294. 

\bibitem{stevo-problems} S. Todorcevic, \emph{Partition problems in topology}. 
Contemporary Mathematics, 84. American Mathematical Society, Providence, RI, 1989. 

\bibitem{stevo-boban} S. Todorcevic, B.  Velickovic, 
\emph{Martin's axiom and partitions. }
Compos. Math. 63, 391--408 (1987). 


\bibitem{hugh} H. M. Wark, \emph{A non-separable reflexive Banach space on which there are few operators. }
J. London Math. Soc. (2) 64 (2001), no. 3, 675--689. 

\bibitem{hugh-sm} H. M. Wark, \emph{A non-separable uniformly convex
  Banach space on which there are few operators}. Studia Math. 241 (2018), no. 3, 241--256. 
 
\end{thebibliography}

\end{document}